\titleformat{\section}[hang]
{\normalfont\Large\bfseries}
{\thesection.}{0.5em}{}
\titlespacing*{\section}{0pc}{2pc}{0.25pc}
\titleformat{\subsection}[hang]
{\normalfont\large\bfseries}
{\thesubsection}{0.5em}{}
\titlespacing{\subsection}{0pc}{1.5pc}{0.5pc}
\newcommand{\E}{\mathbb{E}}
\newcommand{\N}{\mathbb{N}}
\newcommand{\Z}{\mathbb{Z}}
\newcommand{\R}{\mathbb{R}}
\newcommand{\C}{\mathbb{C}}
\newcommand{\cA}{\mathcal{A}}
\newcommand{\mb}[1]{\mathbb{#1}}
\DeclareMathOperator{\id}{id}
\DeclareMathOperator{\Tr}{Tr}
\DeclareMathOperator{\tr}{tr}
\DeclareMathOperator{\sa}{sa}
\DeclareMathOperator{\op}{op}
\DeclareMathOperator{\Cov}{Cov}
\DeclareMathOperator{\Lip}{Lip}
\DeclareMathOperator{\diag}{diag}
\DeclareMathOperator{\Spec}{Spec}
\DeclareMathOperator{\free}{free}
\DeclareMathOperator{\Ad}{Ad}
\DeclareMathOperator{\supp}{supp}
\newcommand{\<}{\left\langle}
\renewcommand{\>}{\right\rangle}
\DeclarePairedDelimiter{\ip}{\langle}{\rangle}
\DeclarePairedDelimiter{\norm}{\lVert}{\rVert}
\newcommand{\vi}{\vec{\imath}}
\numberwithin{equation}{section}
\newtheorem{thm}{Theorem}[section]
\newtheorem{thmalpha}{Theorem}
\newtheorem{coralpha}[thmalpha]{Corollary}
\newtheorem{prop}[thm]{Proposition}
\newtheorem{lem}[thm]{Lemma}
\newtheorem{clm}[thm]{Claim}
\newtheorem*{lem*}{Lemma}
\newtheorem{cor}[thm]{Corollary}
\theoremstyle{definition}
\newtheorem{defi}[thm]{Definition}
\newtheorem{nota}[thm]{Notation}
\newtheorem{ex}[thm]{Example}
\newtheorem{rem}[thm]{Remark}
\newtheorem*{defi*}{Definition}
\begin{document}
	
	\title{Strong convergence to operator-valued semicirculars}
	
	\author{David Jekel$^\clubsuit$, Yoonkyeong Lee$^\diamondsuit$, Brent Nelson$^\heartsuit$, Jennifer Pi$^\spadesuit$}
	
	\address{$^\clubsuit$Department of Mathematical Sciences, University of Copenhagen \hfill \url{daj@math.ku.dk}}
	
	\address{$^\diamondsuit$Department of Mathematics, Michigan State University \hfill \url{leeyoo16@msu.edu}}
	
	\address{$^\diamondsuit$ \emph{Current address:} Department of Mathematics, Texas A \& M University}
	
	\address{$^\heartsuit$Department of Mathematics, Michigan State University \hfill \url{brent@math.msu.edu}}
	
	\address{$^\spadesuit$Mathematical Institute, University of Oxford \hfill \url{pi@maths.ox.ac.uk}}
	
	\begin{abstract}
		We establish a framework for weak and strong convergence of matrix models to operator-valued semicircular systems parametrized by operator-valued covariance matrices $\eta = (\eta_{i,j})_{i,j \in I}$.  Non-commutative polynomials are replaced by \emph{covariance polynomials} that can involve iterated applications of $\eta_{i,j}$, leading to the notion of \emph{covariance laws}.  We give sufficient conditions for weak and strong convergence of general Gaussian random matrices and deterministic matrices to a $B$-valued semicircular family and generators of the base algebra $B$.  In particular, we obtain operator-valued strong convergence for continuously weighted Gaussian Wigner matrices, such as Gaussian band matrices with a continuous cutoff, and we construct natural strongly convergent matrix models for interpolated free group factors.
	\end{abstract}
	
	\maketitle
	
	\section*{Introduction} 
	For many self-adjoint random matrix tuples $X^{(n)} = (X_i^{(n)})$, free probability allows one to describe the limit 
	% (in expectation and often almost surely)
	of $\tr_n[p(X^{(n)})]$ for non-commutative polynomials $p$, where $\tr_n$ is the normalized trace on $n \times n$ matrices.   If $(M,\tau)$ is a tracial von Neumann algebra and $X\subset M$, then the random matrix models $X^{(n)}$ are said to \emph{converge weakly to $X$} if $\lim_{n \to \infty} \tr_n(p(X^{(n)}) = \tau(p(X))$ for all $p$ (since $X^{(n)}$ is random, weak convergence can be said to occur in expectation, in probability, or almost surely). In some cases, one can even asymptotically describe the operator norm $\norm{p(X^{(n)})}$ (equivalently, the largest singular value).  We say that $X^{(n)}$ \emph{converges strongly} to $X$ if in addition to weak convergence, $\lim_{n \to \infty} \norm{p(X^{(n)})} \to \norm{p(X)}$ for all $p$ (again, one can specify almost surely, in probability, or in expectation).  Strong convergence was first studied by Haagerup and Thorbj{\o}rnsen \cite{HT2005} for applications in $\mathrm{C}^*$-algebras, and their paper has inspired many subsequent works.  Recently, a striking development due to Hayes \cite{HayesPT} showed that the Peterson--Thom conjecture for the free group von Neumann algebras would follow from a certain conjecture on strong convergence for tensor products of Gaussian matrices, which was then proved by several independent breakthroughs in random matrix theory \cite{BelCap2022,BorCol2023,dlSM2024,Parraud2024tensor,CGVvH2024strong2}.
	
	Our goal is to study weak and strong convergence in the setting of operator-valued free probability, which considers free independence with respect to a non-commutative conditional expectation onto a von Neumann subalgebra $B$, rather than a tracial state onto $\C$ (see \cite{Voiculescu1985,Voiculescu1995,Speicher1998}).  Operator-valued free probability also arises from studying block decompositions of random matrices as well as Gaussian band matrices \cite{Shlyakhtenko1998amalgamation,Shlyakhtenko1998amalgamation}.  Hence, it is natural to expect that, whereas the large-$n$ behavior of the Gaussian unitary ensemble (GUE) is described by a free semicircular family, the large-$n$ behavior of general Gaussian random matrices would be described by operator-valued semicircular families, under some mild hypotheses.
	
	An operator-valued semicircular family $(X_i)_{i \in I}$ (see \cite{Speicher1998,shlyakhtenko1999valued} and \S \ref{subsec: operator valued prelim}) is defined in terms of a specified tracial von Neumann algebra $(B,\tau)$ and a \emph{$B$-valued covariance matrix}, which is a family $\eta=(\eta_{i,j})_{i,j\in I}$ of normal linear maps on $B$ satisfying complete positivity and $\tau$-symmetry conditions (see Definition~\ref{def: operator valued covariance}). Consequently, our goal demands that we study matrix approximations for the operators  $(X_i)_{i\in I}$, the maps $\eta_{i,j}$, and the elements of $B$ jointly.  Although many works have considered the joint distribution of random matrices together with some deterministic matrices, joint finitary approximations for operators and completely positive maps have not been studied systematically.  Thus, our first task is to formulate what operator-valued weak and strong convergence should formally be.
	
	Toward this end, we introduce the notion of \emph{covariance polynomials} in self-adjoint operators $b = (b_\omega)_{\omega \in \Omega}$ and linear maps $(\eta_{i,j})_{i,j \in I}$. These are analogs of non-commutative polynomials which account for  iterated application of the linear maps $\eta_{i,j}$ on elements of the underlying algebra $B$; for example,
	\[
	f(\eta,b) = b_{\omega_1} \eta_{i_1,i_2}(b_{\omega_1} b_{\omega_2}) b_{\omega_3} b_{\omega_1} \eta_{i_2,i_2}(b_{\omega_1} \eta_{i_1,i_3}(b_{\omega_2})) b_{\omega_4}.
	\]
	For the general definition, see \S \ref{subsec:covariance_polynomials}.  These covariance polynomials are a generalization of trace polynomials in \cite{Cebron2013,DHK2013} and polynomials involving a conditional expectation in \cite[\S 2.2]{DGS2021}. They come up naturally in the evaluation of $B$-valued moments of a $B$-valued semicircular family (see Lemma~\ref{lem: semicircular partition formula}), and similarly from general Gaussian random matrices, as we will see below.  We formulate a corresponding notion of \emph{covariance laws} and a version of the GNS construction in \S \ref{subsec:covariance_laws}.  This results in the notions of \emph{weak and strong convergence for covariance laws}, where the ordinary polynomials are replaced by covariance polynomials.
	
	Our main results give sufficient conditions for families of deterministic matrices and Gaussian random matrices to converge weakly or strongly in covariance law to a $B$-valued semicircular family.  As explained in \S \ref{subsec: eta Gaussian ensemble}, for each $\tr_n$-symmetric $\mathbb{M}_n$-valued covariance matrix $\eta=(\eta_{i,j})_{i,j\in I}$, there is a family of mean-zero Gaussian random matrices $(X_i)_{i\in I}$ (unique in distribution) such that
	\[
	\mathbb{E}[X_i a X_j] = \eta_{i,j}(a), \text{ for } a \in \mathbb{M}_{n} \text{ and } i,j \in I,
	\]
	which we call the \emph{$\eta$-Gaussian family}.  
	It is perhaps more standard in the random matrix literature to specify a Gaussian ensemble in terms of the $n^2 |I| \times n^2 |I|$ matrix giving the covariances of the entries $X_i$.  This entrywise covariance has a natural operator-algebraic interpretation in terms of the \emph{Choi matrix} of the completely positive map $\eta$ (see \cite{Choi1975}) given by
	\[
	T = \sum_{i,j \in I} \sum_{s,t = 1}^{n} E_{i,j} \otimes \eta_{i,j}(E_{s,t}) \otimes E_{s,t},
	\]
	which is related to the covariances of the entries by
	\[
	T = \sum_{i,j \in I} \sum_{s,s',t,t' = 1}^{n} \mathbb{E}[(X_i)_{s,s'} \overline{(X_j)_{t,t'}}] \left( E_{i,j} \otimes E_{s,t} \otimes E_{s',t'} \right);
	\]
	see also Remark \ref{rem: entrywise expression}. 
	Bandeira, Boedihardjo, and van Handel \cite{BBvH2023} observed that the operator norm $\norm{T}$ is a useful parameter to ``measure non-commutativity'' of a general Gaussian random matrix, so that we should expect free probabilistic limiting behavior for sequences of $\eta^{(k)}$-Gaussian families $(X_i^{(k)})_{i\in I}$ when the corresponding Choi matrices satisfy $\norm{T^{(k)}} \to 0$ (or when $(\log n(k))^3 \norm{T^{(k)}} \to 0$ if we want strong convergence).  While the applications on weak and strong convergence to free semicirculars in \cite[\S 7]{BBvH2023} focused on the case where $\eta_{i,j}^{(k)}(1) = \delta_{i,j} 1$, they were based on much more general, sharp, non-asymptotic estimates that compare a Gaussian random matrix with a corresponding matrix of semicircular operators. Thus, we are able to use their results and techniques for general covariances $\eta$ while incorporating deterministic matrices.
	
	Our first main result asserts weak convergence of appropriate $\eta$-Gaussian matrices to operator-valued semicirculars. In the following, fix a tracial von Neumann algebra $(M,\tau)$ generated by a subalgebra $B\subset M$ and a $(B,\eta)$-semicircular family $X=(X_i)_{i\in I}$, where $\eta=(\eta_{i,j})_{i,j\in I}$ is a $\tau$-symmetric $B$-valued covariance matrix, and let $b=(b_\omega)_{\omega\in \Omega}$ be a generating tuple for $B$. For a sequence of integers $(n(k))_{k\in \N}\subset \N$, let $\eta^{(k)}=(\eta_{i,j}^{(k)})_{i,j\in I}$ be a $\tr_{n(k)}$-symmetric $\mb{M}_{n(k)}$-valued covariance matrix and for each finite $F\subseteq I$ let $T_F^{(k)}$ be the Choi matrix associated to $\eta^{(k)}_F=(\eta^{(k)}_{i,j})_{i,j\in F}$, let $X^{(k)}=(X_i^{(k)})_{i\in I}$ be an $\eta^{(k)}$-Gaussian family, and let $b^{(k)}=(b^{(k)}_\omega)_{\omega\in \Omega}\subset \mb{M}_{n(k)}$ satisfying $\sup_k \|\eta_{i,j}^{(k)}\| <\infty$ for each $i,j\in I$ and $\sup_k \| b_\omega^{(k)}\| <\infty$ for each $\omega\in \Omega$.

	\begin{thmalpha}[Operator-valued weak convergence; see Theorem~\ref{thm: weak convergence with proof}] \label{introthm: weak convergence}
		With the setup above, assume that $(\eta^{(k)},b^{(k)})$ converges weakly in covariance law to $(\eta,b)$ and assume that $\lim_{k \to \infty} \norm{T_F^{(k)}} = 0$ for all finite $F\subseteq I$.  Then $(\eta^{(k)},b^{(k)},X^{(k)})$ converges weakly in covariance law to $(\eta,b,X)$, both almost surely and in expectation.
	\end{thmalpha}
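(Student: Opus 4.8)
Here is the plan of proof. By definition it suffices to show that $\tr_{n(k)}\bigl(f(\eta^{(k)},b^{(k)},X^{(k)})\bigr)\to\tau\bigl(f(\eta,b,X)\bigr)$, in expectation and almost surely, for every covariance polynomial $f$, and by linearity we may take $f$ to be a covariance monomial involving only indices from a finite $F\subseteq I$ and finitely many generators $b_\omega$. Monomials with no $X$-variable are covered by the hypothesis on $(\eta^{(k)},b^{(k)})$, and monomials of odd degree in $X$ make both sides vanish, so assume $f$ has positive even degree in $X$. I would prove convergence in expectation first, then upgrade to almost sure convergence by concentration.

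For the expectation, the first step is a Wick expansion: since $X^{(k)}=(X_i^{(k)})_{i\in I}$ is a centered Gaussian family whose covariance is the \emph{deterministic} data $\eta^{(k)}$, Gaussian integration by parts yields
\[
\E\bigl[\tr_{n(k)}(f(\eta^{(k)},b^{(k)},X^{(k)}))\bigr]\;=\;\sum_{\pi}\;c_\pi^{(k)},
\]
the sum over all pairings $\pi$ of the occurrences of $X$-variables in $f$, where $c_\pi^{(k)}$ is the trace obtained by replacing each pair of occurrences $X_i^{(k)},X_j^{(k)}$ with an application of $\eta_{i,j}^{(k)}$. When the two occurrences of a pair lie at different depths of the $\eta$-bracket tree of $f$, this replacement is interpreted through the $\mathbb{M}_{n(k)}\,\bar\otimes\,\mathbb{M}_{n(k)}^\circ$ picture, using normality and complete positivity of $\eta^{(k)}$; when $\pi$ is \emph{planar} --- non-crossing and compatible with the bracket tree --- the replacement produces an honest covariance polynomial, so $c_\pi^{(k)}=\tr_{n(k)}\bigl(g_{f,\pi}(\eta^{(k)},b^{(k)})\bigr)$ for a covariance polynomial $g_{f,\pi}$ determined by $(f,\pi)$ alone.

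The second step separates the sum. For planar $\pi$, the hypothesis that $(\eta^{(k)},b^{(k)})\to(\eta,b)$ weakly in covariance law gives $c_\pi^{(k)}\to\tau(g_{f,\pi}(\eta,b))$, and Lemma~\ref{lem: semicircular partition formula}, applied inside $(M,\tau)$, identifies $\sum_{\pi\ \text{planar}}\tau(g_{f,\pi}(\eta,b))=\tau(f(\eta,b,X))$. For non-planar $\pi$ I would prove $|c_\pi^{(k)}|\le C_{f,\pi}\,\norm{T_F^{(k)}}^{\alpha_{f,\pi}}$ for some $\alpha_{f,\pi}>0$, with $C_{f,\pi}$ depending only on $f,\pi$ and on the finite quantities $\sup_k\norm{\eta_{i,j}^{(k)}}$ $(i,j\in F)$ and $\sup_k\norm{b_\omega^{(k)}}$. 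This is exactly where the sharp non-asymptotic comparison of \cite{BBvH2023}, between a Gaussian random matrix and the operator-valued semicircular family with the same covariance, enters: in the genus-type expansion each crossing contraction costs a positive power of the Choi norm $\norm{T_F^{(k)}}$. Since $\norm{T_F^{(k)}}\to0$, the three inputs combine to give $\E[\tr_{n(k)}(f(\ldots))]\to\tau(f(\eta,b,X))$.

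For the almost sure statement, we may assume $\eta\neq0$ (otherwise $X=0$ and the claim reduces to the hypothesis on $(\eta^{(k)},b^{(k)})$), and then $n(k)\to\infty$: for a fixed $i_0$ with $\tau(\eta_{i_0,i_0}(1))>0$ we have $\norm{\eta_{i_0,i_0}^{(k)}(1)}\le n(k)\,\norm{T_{\{i_0\}}^{(k)}}$, while $\tr_{n(k)}(\eta_{i_0,i_0}^{(k)}(1))\to\tau(\eta_{i_0,i_0}(1))>0$ and $\norm{T_{\{i_0\}}^{(k)}}\to0$. Fix a covariance monomial $f$ and regard $\tr_{n(k)}(f(\eta^{(k)},b^{(k)},X^{(k)}))$ as a polynomial in the Gaussian vector of entries of $X^{(k)}$: on the event $\{\,\norm{X_i^{(k)}}\le R\text{ for all }i\in F\,\}$, whose probability is close to $1$ for $R$ large by the operator-norm estimates of \cite{BBvH2023}, this function is Lipschitz with constant $O(n(k)^{-1})$ (using the normalization of $\tr_{n(k)}$ and the uniform bounds on $b^{(k)},\eta^{(k)}$). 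Gaussian concentration then bounds the deviation probability exponentially, hence the variance by a summable sequence, and Borel--Cantelli gives almost sure convergence for that $f$; since there are countably many covariance monomials, almost surely the convergence holds for all of them at once. The step I expect to be the main obstacle is the precise bookkeeping in the two middle steps --- the Wick expansion for \emph{covariance} (rather than ordinary) polynomials, especially contractions across the $\eta$-nesting, together with the uniform-in-$k$ verification that every non-planar pairing carries a positive power of $\norm{T_F^{(k)}}$; the nested $\eta$-structure and the replacement of $B$ by $\mathbb{M}_{n(k)}$ are precisely what force covariance polynomials here, and once the combinatorics is organized the estimates of \cite{BBvH2023} should be robust enough to close the crossing bound.
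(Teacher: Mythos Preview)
Your outline for the expectation statement is in the right spirit, but the paper sidesteps precisely the obstacle you flag as the main one. Rather than doing a Wick expansion directly on a covariance monomial and tracking contractions that cross the $\eta$-nesting, the paper first represents each nested $\eta^{(k)}_{\pi,\vec\imath}[\,\cdot\,]$ as a conditional expectation in \emph{independent copies} of the Gaussian family: by Lemma~\ref{lem: evaluation of NC terms},
\[
\eta^{(k)}_{\pi,\vec\imath}\bigl[g_1(b^{(k)},X^{(k)}),\dots,g_{\ell-1}(b^{(k)},X^{(k)})\bigr]
=\mathbb{E}\bigl[X^{(k)}_{\pi,\vec\imath;1}\,g_1(b^{(k)},X^{(k)})\cdots X^{(k)}_{\pi,\vec\imath;\ell}\,\big|\,X^{(k)}\bigr],
\]
so that $\tr_{n(k)}(f(\eta^{(k)},b^{(k)},X^{(k)}))$ becomes the trace of an \emph{ordinary} non-commutative polynomial in the enlarged Gaussian family $(X^{(k)},X^{(k)}_{\pi,\vec\imath})$ indexed by $I\sqcup[\ell]$. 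One then only needs the Wick/crossing analysis for ordinary monomials (Lemmas~\ref{lem: matrix Wick formula}--\ref{lem: estimate for crossing terms}), with the non-crossing terms matched to the limit via the analogous $B$-freely independent copies and \cite[Proposition~3.8]{shlyakhtenko2000cpentropy}. This two-step reduction (Lemmas~\ref{lem: weak convergence part 1} and~\ref{lem: weak convergence part 2}) completely avoids the ``$\mathbb{M}_n\bar\otimes\mathbb{M}_n^{\circ}$ picture'' and the bookkeeping for pairings across different bracket depths; your direct approach might be made to work, but it is genuinely harder and you have not indicated how the cross-level contractions are controlled.

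Your almost-sure argument has a real gap. Under the weak hypothesis you only know $\norm{T_F^{(k)}}\to 0$, \emph{not} $(\log n(k))^3\norm{T_F^{(k)}}\to 0$, and this is not enough to keep $\norm{X_i^{(k)}}$ bounded in probability: the BBvH spectral estimate contributes a term of order $\norm{T_F^{(k)}}^{1/4}(\log n(k))^{3/4}$, which can diverge (take e.g.\ $\norm{T_F^{(k)}}\asymp(\log n(k))^{-2}$). Truncating at a fixed operator-norm level $R$ therefore discards an event of probability tending to~$1$, while letting $R=R_k\to\infty$ blows up the Lipschitz constant like $R_k^{\deg f-1}$ and the concentration bound deteriorates accordingly. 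The paper addresses exactly this point: it truncates in a non-commutative $L^{2p}$-ball instead, since $(\mathbb{E}\tr_{n(k)}|X_i^{(k)}|^{2p})^{1/2p}$ \emph{is} uniformly bounded under the weak hypothesis (Corollary~\ref{cor: moment bound}), proves that covariance polynomials are $\norm{\,\cdot\,}_2$-Lipschitz on such balls (Lemma~\ref{lem: covariance polynomial Lipschitz}), and shows $\norm{X_i^{(k)}}_{2p}\to\norm{X_i}_{2p}$ almost surely (Lemma~\ref{lem: as convergence of p norm}) so that the cutoff is eventually inactive. The remaining exchange of limit and expectation is handled by a dominated-convergence argument rather than a bare variance bound.
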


	Although Theorem \ref{introthm: weak convergence} takes for granted that $B$ and $\eta$ are already in hand, the existence of such a $B$ and $\eta$ is automatic provided that $\lim_{k \to \infty} \tr_{n(k)}(f(\eta^{(k)},b^{(k)}))$ exists for each covariance polynomial $f$ (see Proposition~\ref{prop: base algebra}), and this convergence can always be arranged by passing to a subsequence.  The proof of Theorem \ref{introthm: weak convergence} is a generalization of \cite[\S 7.1]{BBvH2023}, which uses $\norm{T_F^{(k)}}$ to estimate crossing terms in a Wick expansion (see \S \ref{subsec: Wick expansion} - \ref{subsec: weak convergence in expectation}).  We also work carefully with non-commutative $L^p$ norms to obtain almost sure convergence of the traces without knowing almost sure boundedness of the operator norms as $k \to \infty$ (see \S \ref{subsec: weak almost sure convergence}).
	
	Our second main theorem gives sufficient conditions for strong convergence of covariance laws.
	
	\begin{thmalpha}[Operator-valued strong convergence; see Theorem~\ref{thm: strong convergence general}] \label{introthm: strong convergence}
		With setup above, assume that $\lim_{k \to \infty} (\log n(k))^3 \norm{T_F^{(k)}} = 0$ for all finite $F\subseteq I$.  Assume that $(\eta^{(k)},b^{(k)})$ converge strongly in covariance law to $(\eta,b)$.  Then $(\eta^{(k)},b^{(k)},X^{(k)})$ converges strongly in covariance law to $(\eta,b,X)$, both almost surely and in expectation.
	\end{thmalpha}

	To prove Theorem \ref{introthm: strong convergence}, we first explain how the strong convergence of plain polynomials in $(b^{(k)},X^{(k)})$ follows from combining the results of Bandeira, Boedihardjo, and van Handel \cite{BBvH2023} with recent work of Gao and Kunnawalkam Elayavalli \cite{GKE2026Toeplitz}.  Indeed, because $(b^{(k)},\eta^{(k)})$ converges strongly to $(b,\eta)$, \cite[Corollary 1.4]{GKE2026Toeplitz} shows that $(b^{(k)}, X_{\free}^{(k)})$ converges strongly to $(b,X)$.  Then \cite{BBvH2023} allows to compare the the norm of a polynomial in $(b^{(k)},X^{(k)})$ with the norm of the same polynomial in $(b^{(k)},X_{\free}^{(k)})$, and hence deduce convergence of the norms of ordinary polynomials in $(b^{(k)},X^{(k)})$ (see \S \ref{subsec: norms of polynomials}).
	To finish the proof, we need to upgrade from plain non-commutative polynomials to covariance polynomials, which we accomplish by approximating a covariance polynomial by an ordinary non-commutative polynomial in free copies by an averaging trick (see \S \ref{subsec: averaging argument}).  
	This approximation also behaves well with respect to the random matrix models, in the sense the expected value of the polynomial over the extra independent copies yields the original covariance polynomial.

	We remark that in the first draft of this paper, we simply assumed strong convergence of $(b^{(k)}, S^{(k)})$ to $(b,S)$ as part of the hypothesis, but thanks to \cite{GKE2026Toeplitz} we can now state Theorem \ref{introthm: strong convergence} under the natural and more general hypotheses.  In fact, the results of \cite{GKE2026Toeplitz} also show convergence of $(\eta^{(k)}, b^{(k)},X_{\free}^{(k)})$ to $(\eta, b,X)$ immediately, and hence the main difficulty in upgrading to covariance polynomials comes from the random matrix side, i.e., in comparing the covariance laws of $(\eta^{(k)}, b^{(k)},X^{(k)})$ and $(\eta^{(k)}, b^{(k)},X_{\free}^{(k)})$. 
	It is not clear to us at present whether the Toeplitz algebra and gauge-invariant uniqueness mechanism used in \cite{GKE2026Toeplitz} can be used directly on the random matrix tuples, and it is also an interesting open question how to make the results of \cite{GKE2026Toeplitz} quantitative.

	In \S\ref{sec: weighted GUE}, we apply Theorems~\ref{introthm: weak convergence} and \ref{introthm: strong convergence} to the setting of continuously weighted Gaussian Wigner matrices.  This includes continuous versions of Gaussian band matrices which have been studied in many previous works \cite{Shl96,Shlyakhtenko1998band,BEYY2017band,Au2018band,Au2021band}, as well as Gaussian block matrices as in \cite{FOBS2008mimo} and \cite[\S 9]{MingoSpeicher}.  Fix a family of continuous functions $(h_{i,j})_{i,j\in I}$ on $[0,1]^2$ such that $h_{i,j}(s,t) = h_{j,i}(t,s)$ and $(h_{i,j}(s,t))_{i,j\in F} \geq0$ for all finite $F \subseteq I$.  Equip $L^\infty[0,1]$ with the trace $\tau = \int_{[0,1]}$ and let $\eta_{i,j}: L^\infty[0,1]\to L^\infty[0,1]$ be given by
	\[
	\eta_{i,j}(f)(s) = \int_{[0,1]} h_{i,j}(s,t) f(t)\,dt.
	\]
	Then $\eta=(\eta_{i,j})_{i,j\in I}$ is a $L^\infty[0,1]$-valued covariance matrix. Let $X$ be a $(L^\infty[0,1],\eta)$-semicircular family and let $b = (b_\omega)_{\omega \in \Omega}$ be generators of $L^\infty[0,1]$ taken from $C[0,1]$.   The random matrix approximations are given by a discretization of $h_{i,j}$ as follows. Let $E_n\colon \mb{M}_n\to \mb{D}_n$ be the $\tr_n$-preserving conditional expectation onto the subalgebra of diagonal matrices $\mb{D}_n$, and let us also identify $\mb{D}_n$ as subalgebra of $L^\infty[0,1]$ so that
	\[
	E^{(n)}(f):= \diag\left(n \int_0^{1/n} f,\cdots , n\int_{(n-1)/n}^1 f\right)
	\]
	gives the unique $\tau$-preserving conditional expectation onto $\mb{D}_n$. Then $\eta^{(n)}:=( E^{(n)}\circ \eta_{i,j}\circ E_n)_{i,j\in I}$ defines an $\mb{D}_n$-valued covariance matrix over $\mb{M}_n$. Finally, let $X^{(n)}$ be an $\eta^{(n)}$-Gaussian family and let $b^{(n)}= ( E^{(n)}(b_\omega) )_{\omega\in \Omega}$.
	
	\begin{thmalpha}[{see Theorem~\ref{thm: strong convergence weighted proof}}]\label{introthm: strong convergence weighted}
		For the continuously weighted Gaussian Wigner model described above, $(\eta^{(n)},b^{(n)},X^{(n)})$ converges weakly and strongly in covariance law to $(\eta,b,X)$, both almost surely and in expectation.
	\end{thmalpha}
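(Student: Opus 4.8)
The plan is to deduce Theorem~\ref{introthm: strong convergence weighted} from Theorems~\ref{introthm: weak convergence} and~\ref{introthm: strong convergence}, so the work is entirely in verifying their hypotheses; all of it is routine except the input on strong convergence of the free semicircular models, which is the heart of the argument.

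\emph{Routine hypotheses.} Since $\norm{E^{(n)}}=\norm{E_n}=1$ and $\norm{\eta_{i,j}(f)}_\infty\le\norm{h_{i,j}}_\infty\norm{f}_1\le\norm{h_{i,j}}_\infty\norm{f}_\infty$, we get $\sup_n\norm{\eta_{i,j}^{(n)}}\le\norm{h_{i,j}}_\infty<\infty$ and likewise $\sup_n\norm{b_\omega^{(n)}}\le\norm{b_\omega}_\infty<\infty$. Because $\eta_{i,j}^{(n)}$ factors through $E_n$, one has $\eta^{(n)}_{i,j}(E_{s,t})=0$ for $s\ne t$ and, identifying $\mb D_n$ with step functions, $\eta^{(n)}_{i,j}(E_{s,s})=\sum_r\bigl(n\int_{(r-1)/n}^{r/n}\int_{(s-1)/n}^{s/n}h_{i,j}\bigr)E_{r,r}$, so $T_F^{(n)}$ is block diagonal with $|F|\times|F|$ blocks that are $\tfrac1n$ times an average of the positive semidefinite matrices $(h_{i,j}(s',t'))_{i,j\in F}$; hence $\norm{T_F^{(n)}}\le\tfrac1n\sum_{i\in F}\norm{h_{i,i}}_\infty=O(1/n)$, giving both $\norm{T_F^{(n)}}\to0$ and $(\log n)^3\norm{T_F^{(n)}}\to0$. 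Finally, $(\eta^{(n)},b^{(n)})$ converges weakly in covariance law to $(\eta,b)$: identifying $\mb D_n$ with step functions, every sub-expression of a covariance polynomial evaluated at $(\eta^{(n)},b^{(n)})$ lies in $\mb D_n$ (as $\eta^{(n)}$ maps into $\mb D_n$), and an induction on the structure of the covariance polynomial shows each such step function converges uniformly to the corresponding continuous sub-expression of $f(\eta,b)$ --- using that $E^{(n)}g\to g$ uniformly for $g\in C[0,1]$ at the rate of the modulus of continuity, that $\eta_{i,j}$ preserves $C[0,1]$ with a uniform modulus-of-continuity bound, and that products of uniformly convergent bounded sequences converge uniformly. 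Since $\tr_n=\int_{[0,1]}$ on $\mb D_n$, this gives convergence of all $\tr_n(f(\eta^{(n)},b^{(n)}))$. Thus Theorem~\ref{introthm: weak convergence} applies (giving the weak assertion), and the Choi-norm hypothesis of Theorem~\ref{introthm: strong convergence} holds.

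\emph{Strong convergence of the free models.} It remains to check that $(b^{(n)},S^{(n)})$ converges strongly to $(b,S)$. A freely amalgamated family of copies of an operator-valued semicircular family is again an operator-valued semicircular family, with the block-diagonal covariance $\widehat\eta$, which is the one attached to the block-diagonal continuous weight $\widehat h_{(t,i),(t',j)}=\delta_{t,t'}h_{i,j}$; so it suffices to prove the single-copy statement that $(b^{(n)},X^{(n)}_{\free})$ converges strongly to $(b,X)$ for an arbitrary continuous weight. As $\eta^{(n)}$ factors through $E_n$, the semicircular moment formula (Lemma~\ref{lem: semicircular partition formula}) shows that, with respect to the conditional expectation onto $\mb D_n$, the pair $(b^{(n)},X^{(n)}_{\free})$ has the $*$-distribution of $b^{(n)}$ together with a $(\mb D_n,\eta^{(n)}|_{\mb D_n})$-semicircular family, so operator norms of polynomials may be computed there. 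Now approximate $h$ in sup-norm by weights $h^{(\delta)}$ constant on the cells of a mesh-$\delta$ grid (built from cell midpoints, preserving symmetry and cellwise positivity). For fixed $\delta$ the covariance of the associated $(L^\infty[0,1],\eta^{(\delta)})$-semicircular family and of all its discretizations factors through a fixed finite-dimensional algebra, and an operator-valued semicircular family whose covariance factors through a subalgebra is the corresponding smaller-base semicircular family, free from the base with amalgamation over that subalgebra; hence $X^{(\delta)}$ and its discretizations $X^{(\delta),(n)}$ are realized by the same element --- a fixed matrix $\sum_{p,q}s^{(i)}_{p,q}\otimes e_{p,q}$ of free semicirculars --- over the nested bases $\mb D_n\subseteq L^\infty[0,1]$, and since $b^{(n)}_\omega=E^{(n)}(b_\omega)\to b_\omega$ in operator norm we get $\norm{p(b^{(n)},X^{(\delta),(n)})}\to\norm{p(b,X^{(\delta)})}$. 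Next, $\eta^{(n)}|_{\mb D_n}$ and $\eta^{(\delta)}|_{\mb D_n}$ are both dominated, in the completely positive order, by a common covariance within $O(\norm{h-h^{(\delta)}}_\infty)$; writing a semicircular family with the larger covariance as a coupled copy of one with the smaller covariance plus an independent semicircular of covariance $O(\norm{h-h^{(\delta)}}_\infty)$, hence operator norm $O(\norm{h-h^{(\delta)}}_\infty^{1/2})$, and using the triangle inequality for norms, we obtain $\bigl|\norm{p(b^{(n)},X^{(n)}_{\free})}-\norm{p(b^{(n)},X^{(\delta),(n)})}\bigr|\le C_p\norm{h-h^{(\delta)}}_\infty^{1/2}$ uniformly in $n$, and similarly $\bigl|\norm{p(b,X^{(\delta)})}-\norm{p(b,X)}\bigr|\le C_p\norm{h-h^{(\delta)}}_\infty^{1/2}$. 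Combining and letting $\delta\to0$ gives $\limsup_n\norm{p(b^{(n)},X^{(n)}_{\free})}\le\norm{p(b,X)}$; the matching lower bound and convergence of $*$-moments follow from the weak convergence of $(\eta^{(n)},b^{(n)})$ by expanding $\tr_n(p(b^{(n)},X^{(n)}_{\free}))$ over pair partitions. Thus $(b^{(n)},X^{(n)}_{\free})\to(b,X)$ strongly, hence $(b^{(n)},S^{(n)})\to(b,S)$ strongly, and Theorem~\ref{introthm: strong convergence} applies.

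\emph{Main obstacle.} The routine checks are easy; the crux is the strong convergence of the free semicircular models, where the only handle on $X^{(n)}_{\free}$ is its covariance. Carrying this through the finite-dimensional-base approximation forces one to make precise the reduction from the $\mb M_n$-valued to the $\mb D_n$-valued picture, the identification that turns the discretizations of a finite-covariance semicircular family into literally the same operator over varying base algebras (including grid/subdivision alignment for general $n$), and --- most delicately --- the norm-level, not merely moment-level, comparison of operator-valued semicircular families with nearby covariances via small independent perturbations. That last point, controlling operator norms under perturbations of the covariance, is where the argument needs the most care.
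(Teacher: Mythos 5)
Your routine verifications --- the $O(1/n)$ bound on $\norm{T_F^{(n)}}$, the inductive operator-norm convergence of covariance polynomials in $(\eta^{(n)},b^{(n)})$, and the reduction of the free-copies hypothesis to a single operator-valued semicircular family with block-diagonal continuous weight viewed over the base $\mb{D}_n$ rather than $\mb{M}_n$ --- all match the paper. You diverge at the key step, the strong convergence of $(b^{(n)},X_{\free}^{(n)})$ to $(b,X)$. The paper couples the discrete and continuous families directly on one Fock space: writing $X_i=\ell(\zeta_i)+\ell(\zeta_i)^*$ with $\zeta_i$ built from the positive square root $(g_{i,j})$ of the matrix weight $(h_{i,j})$, and $X_{\free,i}^{(n)}$ from the square root of $(E^{(n)}\otimes E^{(n)})(h_{i,j})$, it obtains $\norm{X_i-X_{\free,i}^{(n)}}\le 2\bigl(\sum_j\norm{g_{i,j}-g_{i,j}^{(n)}}_\infty^2\bigr)^{1/2}\to0$ outright, for every $n$. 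Your covariance-perturbation device --- dominate two nearby covariances by a common one, realize the larger-covariance family as the smaller one plus a free semicircular of operator norm $O(\norm{h-h'}_\infty^{1/2})$, and apply a Lipschitz estimate for $\norm{p(\cdot)}$ --- is sound, but it is intrinsically a same-base comparison, which is why you need the piecewise-constant intermediary $h^{(\delta)}$ to cross from $\mb{D}_n$ to $L^\infty[0,1]$.

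That intermediary is where the genuine gap sits. Your middle step asserts that $E^{(n)}\circ\eta^{(\delta)}\circ E_n$ factors through the fixed algebra $\mb{D}_{1/\delta}$, so that $X^{(\delta),(n)}$ and $X^{(\delta)}$ are ``the same element'' over nested bases. This holds only when $1/\delta$ divides $n$: for general $n$ the $n$-grid does not refine the $\delta$-grid, $\mb{D}_{1/\delta}\not\subseteq\mb{D}_n$, $E^{(n)}\circ E^{(\delta)}\ne E^{(\delta)}$, and the identification fails. The discrepancy is concentrated on the $n$-cells straddling $\delta$-grid lines and cannot be absorbed by your perturbation lemma as stated, since you would be comparing covariances over two finite-dimensional bases neither of which contains the other. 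You flag this as a point ``to be made precise,'' but it is the step that needs a new idea rather than bookkeeping. A repair within your framework is to enlarge both bases to $\mb{D}_n\vee\mb{D}_{1/\delta}$ (semicircularity is preserved under enlarging the base, by the same \cite[Proposition 3.8]{shlyakhtenko2000cpentropy}-type fact you invoke in the reverse direction); over that common base the two weights become $(E^{(n)}\otimes E^{(n)})(h)$ and $(E^{(\delta)}\otimes E^{(\delta)})(h)$, whose sup-norm distance is controlled by the modulus of continuity of $h$ at scale $\max(1/n,\delta)$, so your perturbation lemma applies and the problematic ``same element'' step disappears. Without some such fix the argument as written covers only the subsequence $n\in(1/\delta)\N$.
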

	
	Note that the case of a Gaussian band matrix is obtained in Theorem \ref{introthm: strong convergence weighted} when $|I| = 1$ and $h(s,t) = f(s-t)$ for some nonnegative $f \in C[0,1]$ (see Example~\ref{ex: Guassian_band_matrices}); the weak convergence for such Gaussian band matrices was proved by Shlyakhtenko \cite[Theorem 4.1]{Shlyakhtenko1998band}.  Moreover, if $h_{i,j}(s,t) = \delta_{i=j}$, then we recover the case of independent GUE matrices indexed by $I$.  By choosing the weight appropriately, this construction also provides random matrix models for interpolated free group von Neumann algebras from \cite{Dykema1994,Rad94} (see Example~\ref{ex: interpolated_fgf}), and thus we have the following corollary.
	
	\begin{coralpha}[{See Example \ref{ex: interpolated_fgf}}]
		For each $t \in (1,\infty]$, the interpolated free group factor $L(\mathbb{F}_t)$ has generators $(b,X)$ which admit strongly convergent random matrix models.  In particular, $L(\mathbb{F}_t)$ has a weak-$*$ dense $\mathrm{C}^*$-subalgebra $\cA \subseteq L(\mathbb{F}_t)$ which is matricial field (MF), in the sense of Blackadar-Kirchberg \cite[Def 3.2.1]{BlackadarKirchberg1997}. 
	\end{coralpha}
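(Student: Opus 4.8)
The plan is to realize $L(\F_t)$, for each $t\in(1,\infty]$, as the von Neumann algebra generated by $b$ together with an operator-valued semicircular family $X$ with a \emph{continuous} covariance kernel, so that it fits the continuously weighted Gaussian Wigner framework of \S\ref{sec: weighted GUE}; then I would apply Theorem~\ref{introthm: strong convergence weighted} to obtain a strongly convergent random matrix model, and finally read off the MF property directly from strong convergence.

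\emph{Realizing $L(\F_t)$.} I would take $B=L^\infty[0,1]$ with $\tau=\int_{[0,1]}$ and let $b=(b_\omega)_{\omega\in\Omega}$ be a finite tuple of continuous functions generating $L^\infty[0,1]$ (e.g.\ a single $b_1(s)=s$). The weights would be chosen of a ``block-diagonal corner'' type: an index set $I$ with $h_{i,j}=0$ for $i\ne j$, together with finitely or countably many indices carrying the constant weight $h_{i,i}\equiv 1$ and at most one further index carrying a rank-one kernel $h(s,t)=g(s)g(t)$ with $g\in C[0,1]$ nonnegative. Block-diagonality of $\eta$ makes the corresponding semicircular operators mutually free with amalgamation over $B$; each constant-weight index contributes a scalar semicircular free from $B$, so that (using $L^\infty[0,1]\cong L(\mathbb Z)$) $m$ of them generate $L(\F_{1+m})$; and the rank-one kernel generates an interpolated free group factor over $B$ whose parameter depends on $g$ only through $\tau(\1_{\{g>0\}})$. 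Using the structure theory of operator-valued semicircular von Neumann algebras over diffuse abelian bases together with additivity of free dimension under amalgamated free products over $B$ (with $\mathrm{fdim}(L^\infty[0,1])=1$), as in \cite{Dykema1994,Rad94,shlyakhtenko1999valued}, one arranges $W^*(b,X)\cong L(\F_t)$ for any prescribed $t\in(1,\infty]$; since these isomorphism types are insensitive to smoothing the kernels, the required continuity of the $h_{i,j}$ is not an obstruction (for $t=\infty$ one takes $I$ countably infinite with all weights constant).

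\emph{From Theorem~\ref{introthm: strong convergence weighted} to MF.} With $\eta_{i,j}(f)(s)=\int_{[0,1]}h_{i,j}(s,t)f(t)\,dt$ and $X$ a $(B,\eta)$-semicircular family as above, \S\ref{sec: weighted GUE} produces the discretizations $\eta^{(n)}$, $b^{(n)}=(E^{(n)}(b_\omega))_\omega$ and the $\eta^{(n)}$-Gaussian families $X^{(n)}$, and Theorem~\ref{introthm: strong convergence weighted} gives that $(\eta^{(n)},b^{(n)},X^{(n)})$ converges strongly in covariance law to $(\eta,b,X)$ almost surely. Fixing an outcome on which this holds, $(b^{(n)},X^{(n)})$ becomes a deterministic sequence of self-adjoint matrix tuples with $\norm{p(b^{(n)},X^{(n)})}\to\norm{p(b,X)}$ for every $*$-polynomial $p$ (these being particular covariance polynomials). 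Setting $\cA=\mathrm{C}^*(b,X)\subseteq L(\F_t)$, which is weak-$*$ dense since $b,X$ generate $L(\F_t)$ as a von Neumann algebra, the map sending $p(b,X)$ to the class of $(p(b^{(n)},X^{(n)}))_n$ in $\prod_n\mb{M}_n\big/\bigoplus_n\mb{M}_n$ is well defined, isometric and $*$-multiplicative on $*$-polynomials (well-definedness and isometry from strong convergence, multiplicativity because polynomial identities hold exactly in each coordinate), hence extends to an isometric embedding $\cA\hookrightarrow\prod_n\mb{M}_n\big/\bigoplus_n\mb{M}_n$. By the characterization of MF algebras in \cite[Def 3.2.1]{BlackadarKirchberg1997}, $\cA$ is MF.

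\emph{Main obstacle.} The substantive step is the first one: choosing the continuous weights $h_{i,j}$ explicitly and verifying rigorously that $W^*(b,X)\cong L(\F_t)$ for every $t\in(1,\infty]$. I expect the delicate points to be the free-dimension bookkeeping for amalgamated free products of interpolated free group factors over the common diffuse abelian subalgebra $B$, and checking that $W^*(B,X)$ for a continuous rank-one covariance kernel $g\otimes g$ really has the same isomorphism type as for a step-function approximation of $g$; both should follow from the cited structure theory. Everything downstream --- invoking Theorem~\ref{introthm: strong convergence weighted} and deducing the MF property --- is routine.
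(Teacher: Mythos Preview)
Your overall strategy --- realize $L(\F_t)$ via an $(L^\infty[0,1],\eta)$-semicircular family with continuous kernels, invoke Theorem~\ref{introthm: strong convergence weighted}, then deduce MF from strong convergence via the embedding of $\mathrm{C}^*(b,X)$ into $\prod_n \mb{M}_n / \bigoplus_n \mb{M}_n$ --- is exactly the paper's, and your MF argument is correct.

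The gap is in your kernel choice, and it is fatal for $t\in(1,2)$. A rank-one kernel $h(s,t)=g(s)g(t)$ produces a $(B,\eta)$-semicircular of the form $X=\sqrt{g}\,S\,\sqrt{g}$; since $p:=1_{\{g>0\}}\in B$ satisfies $pX=Xp=X$ and $p$ commutes with the abelian $B$, the projection $p$ lies in the center of $W^*(B,X)$, so this algebra is \emph{not} a factor whenever $0<\tau(p)<1$. Because each constant kernel contributes exactly $1$ to the parameter, your scheme (at most one rank-one kernel plus constant kernels) cannot reach any $t\in(1,2)$ without relying on such a non-factor piece, and the amalgamated-free-product bookkeeping you propose cannot repair this. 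The paper (Example~\ref{ex: interpolated_fgf}) resolves the issue by allowing, in addition to the rank-one kernels, a second kernel type $h(s,t)=|f(s)g(t)|^2+|g(s)f(t)|^2$ with $\supp f\cap\supp g=\emptyset$, corresponding to an \emph{off-diagonal} corner $fSg+\bar g S\bar f$; requiring at least one such index with $m(\supp f\cup\supp g)=1$ and $0<m(\supp f)<1$ lands directly on R\u{a}dulescu's standard generators of $L(\F_t)$ \cite[Definition~4.1]{Rad94}, with contributions $m(\supp f_i)^2$ and $2\,m(\supp f_i)\,m(\supp g_i)$ summing to $t-1$, and this covers all $t\in(1,\infty]$. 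Replace your single rank-one kernel by this off-diagonal one and cite R\u{a}dulescu directly rather than attempting a free-dimension computation.
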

	
	In \S \ref{subsec: shrinking band matrix}, we consider the example of periodic-band matrices with band width $n \varepsilon_n$ where $\varepsilon_n \to 0$ and $(\log n)^3 / n \varepsilon_n \to 0$.  For independent band matrices, we show operator-valued strong convergence to a $C(\R/\Z)$-valued semicircular family with $\eta_{i,j} = \delta_{i=j} \id$, which are a standard scalar-valued semicircular family in tensor position with $C(\R/\Z)$.  To check the strong convergence hypothesis for Theorem \ref{introthm: strong convergence}, we cannot use the same coupling argument as for Theorem \ref{introthm: strong convergence weighted}, but instead we take advantage of the fact that $C(\R/\Z)$ commutes with $\mathrm{C}^*(X)$ in the limit.
	
	Our work brings up several natural open problems for future research.  First, because the estimates of \cite{BBvH2023} have been extended from the Gaussian case to sums of independent matrices in \cite{BvH2024universality}, we expect that there a corresponding generalization of strong convergence to operator-valued semicirculars.  And second, give quantitative estimates for the convergence of norms and traces of covariance polynomials in Theorems \ref{introthm: weak convergence} and \ref{introthm: strong convergence}, after assuming appropriate quantitative versions of the hypotheses.
	
	\section*{Acknowledgments}
	
	We thank the Fields Institute for their hospitality during the Fall 2023 Thematic Program on Operator Algebras and Applications where this project was initiated.  We thank Charles Bordenave for suggesting to investigate band matrices with a shrinking width (\S \ref{subsec: shrinking band matrix}).  We also thank Ramon van Handel for comments on our draft.  We thank David Gao and Srivatsav Kunnawalkam Elayavalli for allowing us to include updated versions of our results using their work.  We thank the referee for careful reading and suggestions to improve the manuscript.
	
	D.~Jekel was partially supported by the Discovery grant ``Logic and $\mathrm{C}^*$-algebras'' from the Natural Sciences and Engineering Research Council (Canada); the Danish Independent Research Fund, grant 1026-00371B; and a Horizon Europe Marie Sk{\l}odowska Curie Action, FREEINFOGEOM, grant 101209517.  Y.~Lee and B.~Nelson were supported by National Science Foundation (US) grant DMS-2247047. J.~Pi was supported by the Engineering and Physical Sciences Research Council (UK), grant EP/X026647/1.
	
	Funded by the European Union.  Views and opinions expressed are those of the author(s) only and do not necessarily reflect 
	those of the European Union or the Research Executive Agency. Neither the European Union nor the granting authority can be held responsible for them.

	\tableofcontents

	\section{Preliminaries} \label{sec: preliminaries}
	
	\subsection{Operator-valued free probability} \label{subsec: operator valued prelim}
	
	We adopt the terminology \emph{tracial von Neumann algebra} to mean a pair $(M,\tau)$ consisting of a von Neumann algebra $M$ and a faithful normal tracial state $\tau\colon M\to \C$. Recall that if $B$ is a von Neumann subalgebra of $M$ then there is a unique faithful normal $\tau$-preserving conditional expectation $E_{B}: M \to B$. In this case, von Neumann subalgebras $B\subset M_i\subset M$, $i\in I$, are \emph{freely independent over $B$} with respect to $E_B$ if whenever $k \in \N$ and $i_1 \neq i_2 \neq \cdots \neq i_k$ and $x_j \in M_{i_j}$ with $E_{B}[x_j] = 0$, then $E_{B}[x_1 \cdots x_k] = 0$. By precomposing with $E_B$, we may extend any (faithful) normal linear map on $B$ to a (faithful) normal linear map on $M$. We will do this implicitly with any trace on $B$ and operator-valued covariance matrix on $B$ (see Definition~\ref{def: operator valued covariance} below).
	
	We next recall non-crossing partitions and $B$-valued cumulants, combinatorial tools that aid moment computations in $B$-valued free probability. For each $k\in \N$ we denote $[k]:=\{1,\ldots, k\}$, and we let $\mathcal{P}(k)$ and  $\mathcal{NC}(k)$  (resp. $\mathcal{P}_2(k)$ and $\mathcal{NC}_2(k)$) denote the set of partitions and non-crossing partitions (resp. pair partitions and non-crossing pair partitions) on $[k]$; for background, see \cite{Speicher1994}.  
	A non-crossing partition can be viewed as a diagram for the composition of some multilinear maps as follows \cite{Speicher1998}.  For $k \in \N$, let $\Lambda_k: M^k \to B$ be a $k$-linear map.  Then for each non-crossing partition $\pi \in \mathcal{NC}(k)$, there is a corresponding $k$-linear map $\Lambda_\pi$ defined recursively as follows:
	\begin{itemize}
		\item If $\pi$ has a single block $[k]$, then $\Lambda_\pi = \Lambda_k$.
		\item Suppose that $\pi \in \mathcal{NC}(k)$ and $\{s+1,\dots,s+\ell\}$ is a block of $\pi$, and let $\pi'$ be the partition of $[k - \ell]$ obtained by removing this block and relabeling the remaining indices in increasing order.  Then
		\[
		\Lambda_\pi[x_1,\dots,x_k] = \begin{cases} \Lambda_{\pi'}[x_1,\dots,x_s, \Lambda_\ell[x_{s+1},\dots,x_\ell] x_{\ell+1}, x_{\ell+2},\dots,x_k], & s + \ell < k, \\
			\Lambda_{\pi'}[x_1,\dots,x_s] \Lambda_\ell[x_{s+1},\dots,x_k], & s + \ell = k.
		\end{cases}
		\]
	\end{itemize}
	Given $B \subseteq M$ tracial von Neumann algebras, the \emph{$B$-valued cumulants} are the unique multilinear maps $K_k: M^k \to B$ satisfying
	\begin{equation} \label{eq: moment-cumulant formula}
		E_{B}[x_1 \cdots x_k] = \sum_{\pi \in \mathcal{NC}(k)} K_\pi[x_1,\dots,x_k] \text{ for } x_1, \dots, x_k \in M.
	\end{equation}
	These also can be defined by M{\"o}bius inversion (see \cite{Speicher1998}).  Moreover, the cumulants satisfy
	\[
	b_0 K_k[x_1 b_1, \dots, x_k b_k] = K_k[b_0 x_1, \dots, b_{k-1} x_k] b_k
	\]
	for $x_1$, \dots, $x_k \in M$ and $b_0$, \dots, $b_{k+1} \in B$.
	
	Operator-valued semicircular families (see \cite[\S 4.2-4.3]{Speicher1998}, \cite{shlyakhtenko1999valued}) are an analog of multivariate Gaussians for the setting of operator-valued free probability, which are specified in terms of operator-valued covariance matrices. The following definition should be compared with {\cite[Definition 2.1]{shlyakhtenko1999valued} and \cite[Theorem 4.3.1]{Speicher1998}.
		
		\begin{defi} \label{def: operator valued covariance}
			Let $(B,\tau)$ be a tracial von Neumann algebra and let $\eta_{i,j}:B\to B$ be normal linear maps for $i,j\in I$. We say that $\eta:=(\eta_{i,j})_{i,j\in I}$ is an \textbf{operator-valued covariance matrix} on $B$ (or a \textbf{$B$-valued covariance matrix}) if for every finite $F \subseteq I$, the map
			\begin{align*}
				\eta_F: B &\to B(\ell^2 I) \otimes B\\
				x &\mapsto \sum_{i,j \in F} e_{i,j} \otimes \eta_{i,j}(x)
			\end{align*}
			is completely positive, where $\{e_{i,j}\in B(\ell^2 I)\colon i,j\in I\}$ are the standard matrix units.  We say that $\eta$ is \textbf{$\tau$-symmetric} if
			\[
			\tau(\eta_{i,j}(x) y) = \tau(x \eta_{j,i}(y)) \qquad x,y \in B.
			\]
		\end{defi}
		
		\begin{rem}
			Note that since $\eta_F$ is completely positive for any finite $F \subseteq I$, in particular $\eta_{i, j}(x)$ is self-adjoint whenever $x$ is self-adjoint, hence
			\[
			\eta_{i,j}(x)^* = \eta_{j,i}(x^*).
			\]
		\end{rem}

		Note that in the case of an infinite index set $I$, our definition is more general than that of \cite[Definition 2.1]{shlyakhtenko1999valued}, which essentially demands one also take $F=I$ in the above definition. This more general definition is better suited to our main theorems, since they concern limits that only involve finitely many indices from $I$ at a time.
		
		\begin{defi} \label{def: operator-valued semicirculars}
			Let $(M,\tau)$ be a tracial von Neumann algebra with von Neumann subalgebra $B$.
			Given a $\tau$-symmetric operator-valued covariance matrix $\eta = (\eta_{i,j})_{i,j \in I}$ on $B$, a tuple $(X_i)_{i\in I} \subset M$ is called a \textbf{$(B,\eta)$-semicircular family}  if
			\begin{align*}
				K_2[X_i, bX_j] &= \eta_{i,j}(b) & \text{for } i,j \in I, b \in B, & \\
				K_k[X_{i_1} b_1,\dots,X_{i_k} b_k] &= 0, & \text{for } i_1, \dots, i_k \in I, b_1, \dots, b_k \in B, & \text{ for } k \neq 2,
			\end{align*}
			where $K_k$ are the cumulants associated to $E_B\colon M\to B$.
		\end{defi}
		
		Note that by \eqref{eq: moment-cumulant formula}, the joint $B$-valued moments of a $(B,\eta)$-semicircular family are uniquely determined by the cumulant conditions in Definition~\ref{def: operator-valued semicirculars}. A moment characterization of $(B,\eta)$-semicircular family can be found in Lemma~\ref{lem: semicircular partition formula} below. 
		
		The existence of operator-valued semicircular families can be shown as follows.  The case where the index set $I$ is finite fits into Shlyakhtenko's framework.  The $\mathrm{C}^*$-correspondence construction in \cite[Lemma 2.2]{shlyakhtenko1999valued} yields a $(B,\eta)$-semicircular family in a von Neumann algebra $\Phi(B,\eta)$.  In particular, \cite[Lemma 2.10]{shlyakhtenko1999valued} shows that there exists a normal conditional expectation $E_B:\Phi(B,\eta)\to B$, and \cite[Proposition 2.20]{shlyakhtenko1999valued} shows that $\tau \circ E_{B}$ is tracial when $\eta$ is $\tau$-symmetric, and in this case $E_{B}$ is faithful by \cite[Proposition 5.2]{shlyakhtenko1999valued}. Moreover, \cite[Definition 4.2.3]{Speicher1998} implies this family satisfies the cumulant conditions in Definition~\ref{def: operator-valued semicirculars} (see also \cite[Example 2.5]{Lee24}).  Now suppose that $I$ is infinite, and for $F \subseteq I$ finite, let $M_F$ be a tracial von Neumann algebra generated by a $(B,\eta|_F)$-semicircular family $(X_{F,i})_{i \in F}$.  For $F \subseteq F'$, the joint distribution of $(B,(X_{F',i})_{i \in F})$ is the same as that of $(B,(X_{F,i})_{i \in F})$, so we obtain a trace-preserving inclusion $M_F \to M_{F'}$.  Let $M$ be the inductive limit of the $M_F$'s over finite $F \subseteq I$, and let $X_i$ be the common image of $X_{F,i}$ in $M$.  Then $(X_i)_{i \in I}$ is a $(B,\eta)$-semicircular family.
		
		\subsection{{$\eta$}-Gaussian random matrix ensembles} \label{subsec: eta Gaussian ensemble}
		
		In order to give a matrix model for operator-valued semicirculars, it is natural to define Gaussian random matrix models associated to operator-valued covariance matrices where the base algebra $B$ is the $n \times n$ matrix algebra $\mathbb{M}_n$.
		
		\begin{prop} \label{eta-GUE exists}
			Let $\eta = (\eta_{i,j})_{i,j \in I}$ be a $\tr_n$-symmetric operator-valued covariance matrix on $\mb{M}_n$.  Then there is a jointly Gaussian family $\{X_i\colon i\in I\}$ of random $n \times n$ self-adjoint matrices with mean zero that satisfy
			\begin{equation} \label{eq: matrix covariance}
				\E[X_i a X_j] = \eta_{i,j}(a) \qquad a\in \mb{M}_n.
			\end{equation}
			The distribution of the family $\{X_i\colon i\in I\}$ is uniquely determined by $\eta$.
		\end{prop}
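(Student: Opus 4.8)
The plan is to build the family $\{X_i\}_{i\in I}$ explicitly from the Choi matrix of $\eta$, using the fact that $\tr_n$-symmetry plus complete positivity pins down a Gaussian family up to distribution. First I would reduce to the case of finite $I$: for each finite $F\subseteq I$, complete positivity of $\eta_F$ gives a family on $F$, and $\tr_n$-symmetry will make the joint distributions consistent under $F\subseteq F'$, so the general case follows by Kolmogorov extension. So assume $I$ is finite. Flattening the self-adjoint matrices $X_i$ into a real Gaussian vector in $\bigoplus_{i\in I}\mb{M}_n^{\sa}\cong\R^{|I|n^2}$, a mean-zero Gaussian family is determined by its covariance form $Q$, a positive semidefinite bilinear form on this space; conversely every positive semidefinite $Q$ arises from a (unique in distribution) mean-zero Gaussian family. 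Thus it suffices to show that the prescription \eqref{eq: matrix covariance} determines, and is compatible with, such a $Q$.

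The key computation is to identify the bilinear form associated to $\eta$. Polarizing, \eqref{eq: matrix covariance} is equivalent to prescribing $\E[\tr_n(X_i a)\,\tr_n(X_j c)]$, or equivalently $\E[\langle X_i, a^*\rangle\langle X_j,c^*\rangle]$ in the trace inner product, for all $a,c\in\mb{M}_n$ and $i,j\in F$; indeed, using $\langle X_i b X_j, a\rangle = \langle X_i, a X_j^* b^*\rangle$ together with $X_j^*=X_j$ and running over a basis, one sees that knowing $\E[X_i a X_j]$ for all $a$ is the same as knowing the full covariance kernel $(i,a)\times(j,c)\mapsto \E[\langle X_i,a\rangle\langle X_j,c\rangle]$. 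Concretely, I would set $\eta^*$ to be the operator on $B(\ell^2 F)\otimes\mb{M}_n$ with entries $\eta_{i,j}$, and check that the required covariance form is $Q\big((a_i)_i,(c_j)_j\big)=\sum_{i,j}\langle \eta_{i,j}(c_j^*), a_i^*\rangle_{\tr_n}$, which is exactly (a rescaling of) the quadratic form of the Choi matrix $T=\sum_{i,j}\sum_{s,t}E_{i,j}\otimes\eta_{i,j}(E_{s,t})\otimes E_{s,t}$ described in the introduction. The two conditions to verify are: (i) $Q$ is \emph{symmetric} as a bilinear form on the real vector space of self-adjoint tuples --- this is precisely where $\tr_n$-symmetry $\tau(\eta_{i,j}(x)y)=\tau(x\eta_{j,i}(y))$ is used, since it gives $\langle\eta_{i,j}(c),a\rangle=\langle\eta_{j,i}(a),c\rangle$ when $a,c$ are self-adjoint and reality is handled by a short conjugation argument; and (ii) $Q$ is \emph{positive semidefinite} --- this is exactly complete positivity of $\eta_F$, since $Q$ evaluated on a tuple is, up to the identification above, a vector state of $\eta_F$ applied to a positive element, equivalently $\langle T\xi,\xi\rangle\ge 0$ for the corresponding vector $\xi$. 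Given (i) and (ii), the standard existence/uniqueness theorem for real Gaussian vectors with prescribed covariance produces the family, and unwinding the identifications shows it satisfies \eqref{eq: matrix covariance}; uniqueness in distribution is immediate since a mean-zero Gaussian family is determined by its covariance form, and \eqref{eq: matrix covariance} determines $Q$.

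Finally I would patch the finite pieces together: for finite $F\subseteq F'\subseteq I$, the covariance form $Q_{F'}$ restricted to tuples supported on $F$ equals $Q_F$ because $\eta_{i,j}$ for $i,j\in F$ is unchanged, so the Gaussian families are consistent, and Kolmogorov's extension theorem yields a family $\{X_i\}_{i\in I}$ with the stated property, unique in distribution. The main obstacle I anticipate is purely bookkeeping: carefully matching up the three natural but distinct pictures --- the entrywise covariance matrix of the $X_i$, the Choi matrix $T$, and the bilinear form $Q$ on self-adjoint tuples --- and in particular being careful that self-adjointness of the $X_i$ forces the relevant vector space and its Gaussian measure to be \emph{real}, so that $\tau$-symmetry (rather than Hermiticity of some complex form) is the correct hypothesis guaranteeing symmetry of $Q$. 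Everything else is a direct invocation of complete positivity for positive semidefiniteness and the classical Gaussian existence theorem.
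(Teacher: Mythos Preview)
Your proposal is correct and follows essentially the same approach as the paper: reduce to finite $I$ via Kolmogorov extension, identify the required covariance form with the Choi matrix $T$ of $\eta_F$, invoke complete positivity (Choi's theorem) for positive semidefiniteness, and use $\tr_n$-symmetry to ensure the form is real/symmetric on the self-adjoint tuples. The paper carries out the ``bookkeeping obstacle'' you anticipate by explicitly verifying $\Tr_n(T_{i,j}[A^*]B)=\Tr_n(A^*T_{j,i}[B])$ on matrix units, which is exactly your point (i).
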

		
		\begin{proof}
			By the Kolmogorov extension theorem, it suffices to prove the claim when the index set $I$ is finite.  In general, to obtain existence of a jointly Gaussian family of real random variables, one only has to specify a positive semi-definite covariance matrix (here ``covariance matrix'' is meant in the usual sense in probability theory as opposed to ``operator-valued covariance matrix'').  Since $X:=(X_i)_{i\in I}$ should be a random variable in the real vector space $(\mathbb{M}_n)_{\sa}^{\oplus I}$, using the Hilbert--Schmidt inner product with respect to $\Tr_n$, the covariance matrix should be a positive semi-definite operator $T$ on  $(\mathbb{M}_n)_{\sa}^{\oplus I}$.  To translate \eqref{eq: matrix covariance} into a covariance matrix, recall that for a completely positive map $\eta: \mathbb{M}_n \to B(\ell^2 I) \otimes \mathbb{M}_n$, the \emph{Choi matrix} is the element
			\[
			T = \sum_{i,j \in I} \sum_{k,\ell=1}^n E_{i,j} \otimes \eta_{i,j}(E_{k,\ell}) \otimes E_{k,\ell} \in B(\ell^2 I) \otimes \mathbb{M}_n \otimes \mathbb{M}_n.
			\]
			Choi's theorem \cite[Theorem 2]{Choi1975} shows that $\eta$ is completely positive if and only if $T \geq 0$.  To view $T$ as a linear transformation of $\mathbb{M}_n^{\oplus I}$, recall that $B(\C^I \otimes \mathbb{M}_n)$ is naturally identified with $B(\ell^2 I) \otimes \mathbb{M}_n \otimes \mathbb{M}_n^{\op}$, where the $B(\ell^2 I)$ acts on $\C^I$, the first $\mathbb{M}_n$ acts on $\mathbb{M}_n$ by left multiplication, and the opposite algebra $\mathbb{M}_n^{\op}$ acts on $\mathbb{M}_n$ by right multiplication.  Moreover, $\mathbb{M}_n$ is isomorphic to $\mathbb{M}_n^{\op}$ by sending $E_{k,\ell}$ to $E_{\ell,k}^{\op}$, so we can write $T$ as
			\begin{equation} \label{eq: T definition}
				T = \sum_{i,j \in I} \sum_{k,\ell=1}^n E_{i,j} \otimes \eta_{i,j}(E_{k,\ell}) \otimes E_{\ell,k}^{\op} \in B(\ell^2 I) \otimes \mathbb{M}_n \otimes \mathbb{M}_n^{\op}.
			\end{equation}
			Thus, $T$ is now a nonnegative $\mathbb{C}$-linear transformation of $\C^I \otimes_{\C} \mathbb{M}_n$.  Since we actually want random variables in the real subspace $\R^I \otimes_{\R} (\mathbb{M}_n)_{\sa}$, it remains to show that $T$ leaves this real subspace invariant, or equivalently that $T[A^*] = (T[A])^*$ for $A \in \C^I \otimes_{\C} \mathbb{M}_n$.  Letting $T_{i,j} = \sum_{k,\ell} \eta_{i,j}(E_{k,\ell}) \otimes E_{\ell,k}^{\op}$, it suffices to check that $\Tr_n(T_{i,j}[A^*]B) = \Tr_n(A^* T_{j,i}[B])$, and by linearity it suffices to consider basis elements $E_{s,t}$, $E_{s',t'}$ for $s, t, s', t' \in [n]$.  Using the $\tr_n$-symmetry condition for $\eta$,
			\begin{align*}
				\Tr_n(T_{i,j}[E_{s,t}^*] E_{s',t'}) &= \sum_{k,\ell \in [n]} \Tr_n(\eta_{i,j}(E_{k,\ell}) E_{t,s} E_{\ell,k} E_{s',t'})\\
				&= \Tr_n(\eta_{i,j}(E_{s',s}) E_{t,t'}) \\
				&= \Tr_n(E_{s',s} \eta_{j,i}(E_{t,t'})) = \Tr_n(E_{s,s'}^* \eta_{j,i}(E_{t,t'})),
			\end{align*}
			which evaluates to $\Tr_n(E_{s,t}^* T_{j,i}[E_{s',t'}])$ by a symmetric sequence of manipulations.
			
			Finally, uniqueness holds because the joint distribution of a Gaussian family is determined by its covariance.
		\end{proof}
		
		\begin{defi} \label{def: eta-Gaussian}
			Let $\eta = (\eta_{i,j})_{i,j \in I}$ be a $\tr_n$-symmetric operator-valued covariance matrix.
			An \textbf{$\eta$-Gaussian family} is a tuple $(X_i)_{i\in I}$ of random $n \times n$ self-adjoint matrices whose entries are mean-zero Gaussian random variables satisfying \eqref{eq: matrix covariance}.
			We define the \textbf{associated Choi matrix of $\eta$} to be the matrix $T$ as defined in \eqref{eq: T definition}.
		\end{defi}
		
		\begin{rem} \label{rem: entrywise expression}
			Note that the Choi matrix is the same as the \emph{covariance matrix} of the entries in the usual sense, which is denoted $\Cov(X)$ in \cite{BBvH2023}.  Indeed, considering the case where $|I| = 1$ for simplicity,
			\[
			\Cov(X)_{s,t;s',t'} = \mathbb{E}[X_{s,t} \overline{X}_{s',t'}] = \mathbb{E}[\Tr_n(E_{s,t}^*X) \Tr_n(X E_{s',t'})] = \Tr_n[E_{s,t}^* T[E_{s',t'}]].
			\]
		\end{rem}

		\begin{rem}
			The $\eta$-Gaussian family also has the following invariance property:  Suppose that $B \subseteq \mb{M}_n$ is a unital $*$-subalgebra such that $\eta = \eta \circ E_{B} = E_{B} \circ \eta$.  Given a unitary matrix $u\in B' \cap \mb{M}_n$, for all $a\in \mb{M}_n$ one has
			\[
			\E[ (uX_i u^*)a(u X_j u^*)] = u \eta_{ij}(E_{B}[u^*a u]) u^*= \eta_{ij}(E_B[a]) = \eta_{i,j}(a),
			\]
			where we have used $E_B[u^* a u]= E_B[a]$. Hence, the $\eta$-Gaussian distribution is invariant under conjugation by unitaries which commute with $B$. 
		\end{rem}
		
		Besides the description of Gaussian matrix tuples in terms of the operator-valued covariance $\eta$ and its Choi matrix $T$, Gaussian matrices are often described as a matrix linear combination of classical Gaussians:
		\begin{equation} \label{eqn: Gaussian matrix setup BBvH}
			X_i = \sum_{k=1}^m g_k a_{k,i},
		\end{equation}
		where the $g_k$'s are independent standard normals (see \cite{BBvH2023}).  Taking $|I| = 1$ for simplicity, so $X= \sum_k g_k a_k$, then the operator-valued covariance can be expressed as
		\[
		\eta(b) = \mathbb{E}[XbX] = \sum_{k=1}^m a_k b a_k,
		\]
		which means precisely that the $a_{k,i}$'s are a family of Kraus operators for the completely positive map $\eta$.  Choi \cite[Theorem 1]{Choi1975} showed that every completely positive map between matrix algebras admits Kraus operators (which are not unique in general).  The ability to choose self-adjoint Kraus operators corresponds to the property that $\eta$ is $\tr_n$-symmetric (in fact, it is easy to verify this based on the fact that every jointly Gaussian variable in $(\mathbb{M}_n)_{\sa}$ can be expressed in the form \eqref{eqn: Gaussian matrix setup BBvH}).
		
		Hence, overall, we have three equivalent ways to specify a Gaussian random matrix model:
		\begin{itemize}
			\item By giving an operator-valued covariance matrix $\eta=(\eta_{i,j})_{i,j\in I}$ that should satisfy $\eta_{i,j}[a] = \mathbb{E}[X_iaX_j]$. 
			\item By giving an operator $T$ on $(\mathbb{M}_n)_{\sa}^{\oplus I}$ specifying covariances of the entries $\Cov((X_i)_{r,s}, (X_j)_{t,u})$ (which is the Choi matrix of $\eta$).
			\item By expressing the matrix model as $X_i = \sum_{k=1}^m g_k a_{k,i}$ where the $g_k$'s are independent normal random variables and the $a_{k,i}$'s are deterministic matrices (which are Kraus operators for $\eta$).
		\end{itemize}
		The translation between these different viewpoints on a Gaussian matrix also leads to a natural interpretation of random matrix results such as those of \cite{BBvH2023} in terms of operator-valued semicirculars.  Indeed, it is a common technique to compare a Gaussian random matrix $X = \sum_{t=1}^m g_t a_t$ as above with its free counterpart $X_{\operatorname{free}} = \sum_{t=1}^m a_t \otimes s_t \in \mathbb{M}_n \otimes M$ where the $s_t$'s are freely independent semicirculars in a tracial von Neumann algebra $M$.  The matrix $X_{\operatorname{free}}$ can in fact be reformulated as an $\mathbb{M}_n$-valued semicircular operator, and the same holds for several matrix models.
		
		\begin{prop} \label{prop: identifying X free}
			Let $X = (X_i)_{i \in I}$ be Gaussian random matrices expressed as
			\[
			X_i = \sum_{k=1}^m g_k a_{k,i}
			\]
			for self-adjoint matrices $a_{k,i}$ and independent standard normal random variables $g_k$, and let $\eta=(\eta_{i,j})_{i,j\in I}$ be the corresponding operator-valued covariance matrix $\eta_{i,j}[a] = \mathbb{E}[X_i a X_j]$.  Let $(S_k)_{k=1}^m$ be a free semicircular family in a tracial von Neumann algebra $(M,\tau)$, and let
			\[
			X_{\operatorname{free},i} = \sum_{k=1}^m a_{k,i} \otimes S_k \in \mathbb{M}_n \otimes M.
			\]
			Then $X_{\operatorname{free}}=\{X_{\operatorname{free},i}\colon i\in I\}$ is an $(\mathbb{M}_n,\eta)$-semicircular family.
		\end{prop}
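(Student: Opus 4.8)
The plan is to compute the $\mathbb{M}_n$-valued free cumulants of $X_{\operatorname{free}} = (X_{\operatorname{free},i})_{i \in I}$ inside $N := \mathbb{M}_n \otimes M$, equipped with the trace $\tr_n \otimes \tau$ and the conditional expectation $E := \id_{\mathbb{M}_n} \otimes \tau \colon N \to \mathbb{M}_n$ (which is the unique $(\tr_n \otimes \tau)$-preserving conditional expectation onto $\mathbb{M}_n \cong \mathbb{M}_n \otimes 1$), and then to verify the cumulant conditions of Definition~\ref{def: operator-valued semicirculars}. Observe first that each $X_{\operatorname{free},i}$ is self-adjoint since the $a_{k,i}$ and the $S_k$ are, and that $\eta_{i,j}(a) = \mathbb{E}[X_i a X_j] = \sum_k a_{k,i} a a_{k,j}$ is a $\tr_n$-symmetric $\mathbb{M}_n$-valued covariance matrix, as discussed before the statement ($\tr_n$-symmetry being immediate from $a_{k,i}^* = a_{k,i}$), so the objects in the conclusion make sense.

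The key step is the following formula for the $\mathbb{M}_n$-valued cumulants $K_k$ of $N$ on elementary tensors: for $c_1, \dots, c_k \in \mathbb{M}_n$ and $s_1, \dots, s_k \in M$,
\[
K_k[c_1 \otimes s_1, \dots, c_k \otimes s_k] = (c_1 c_2 \cdots c_k)\, \kappa_k(s_1, \dots, s_k),
\]
where $\kappa_k$ denotes the ordinary ($\C$-valued) free cumulant functional of $(M,\tau)$. To prove this, define multilinear maps $\widetilde{K}_k \colon N^k \to \mathbb{M}_n$ by the displayed right-hand side on elementary tensors (well-defined and multilinear since $(c,s) \mapsto c \otimes s$ is bilinear and elementary tensors span $N$). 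Using the recursive construction of $\widetilde{K}_\pi$ from \S\ref{subsec: operator valued prelim} — peeling off an interval block $\{s+1, \dots, s+\ell\}$ of $\pi \in \mathcal{NC}(k)$ — an induction on the number of blocks of $\pi$ shows that $\widetilde{K}_\pi[c_1 \otimes s_1, \dots, c_k \otimes s_k] = (c_1 \cdots c_k)\, \kappa_\pi(s_1, \dots, s_k)$, where $\kappa_\pi$ is the product of scalar cumulants over the blocks of $\pi$; the only points to check are that the intermediate $\mathbb{M}_n$-coefficients $\widetilde{K}_\ell[\,\cdot\,]$ produced along the way are $\C$-multiples of the matrix product $c_{s+1}\cdots c_{s+\ell}$ (hence the cumulant scalar is central and can be pulled out of $\widetilde{K}_{\pi'}$ by multilinearity), and that an interval block is reattached in a position that rebuilds the product $c_1\cdots c_k$ in its original order. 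Summing over $\pi$ and invoking the scalar moment-cumulant formula (the $B = \C$ case of \eqref{eq: moment-cumulant formula}),
\[
\sum_{\pi \in \mathcal{NC}(k)} \widetilde{K}_\pi[c_1 \otimes s_1, \dots, c_k \otimes s_k] = (c_1 \cdots c_k)\, \tau(s_1 \cdots s_k) = E\big[(c_1 \otimes s_1) \cdots (c_k \otimes s_k)\big],
\]
so by the uniqueness of the multilinear maps satisfying the moment-cumulant formula \eqref{eq: moment-cumulant formula} we conclude $\widetilde{K}_k = K_k$.

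Finally, apply the formula with $c \otimes s$ of the form $a_{k,i} \otimes S_k$. Since $X_{\operatorname{free},i} = \sum_k a_{k,i} \otimes S_k$ and, for $b \in \mathbb{M}_n$, $b X_{\operatorname{free},j} = \sum_\ell (b a_{\ell,j}) \otimes S_\ell$, multilinearity of $K_2$ and $\kappa_2(S_k, S_\ell) = \delta_{k,\ell}$ give
\[
K_2[X_{\operatorname{free},i}, b X_{\operatorname{free},j}] = \sum_{k,\ell} (a_{k,i} b a_{\ell,j})\, \kappa_2(S_k, S_\ell) = \sum_k a_{k,i} b a_{k,j} = \eta_{i,j}(b),
\]
while for $k \neq 2$ and $b_1, \dots, b_k \in \mathbb{M}_n$,
\[
K_k[X_{\operatorname{free},i_1} b_1, \dots, X_{\operatorname{free},i_k} b_k] = \sum_{k_1, \dots, k_k} (a_{k_1,i_1} b_1 \cdots a_{k_k,i_k} b_k)\, \kappa_k(S_{k_1}, \dots, S_{k_k}) = 0,
\]
since all free cumulants of order $\neq 2$ of a semicircular family vanish. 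Hence $X_{\operatorname{free}}$ satisfies the defining cumulant conditions of an $(\mathbb{M}_n, \eta)$-semicircular family. The only genuine work is the induction in the middle paragraph; everything else is bookkeeping with multilinearity, and the subtle point in that induction is purely combinatorial — tracking that the intermediate cumulant scalars are central in $\mathbb{M}_n$ and that interval blocks reinsert in the correct slot so that the $\mathbb{M}_n$-coefficients multiply up to $c_1 \cdots c_k$ in order.
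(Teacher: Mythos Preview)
Your proof is correct and follows the same strategy as the paper's: verify the cumulant conditions of Definition~\ref{def: operator-valued semicirculars} by exploiting that the $\mathbb{M}_n$-valued cumulants on $\mathbb{M}_n \otimes M$ are governed by the scalar free cumulants of $(M,\tau)$. The only difference is that you give a self-contained derivation of the tensor-cumulant identity $K_k[c_1 \otimes s_1,\dots,c_k \otimes s_k] = (c_1\cdots c_k)\,\kappa_k(s_1,\dots,s_k)$, whereas the paper obtains the vanishing of higher cumulants by citing \cite[Proposition 9.3.13]{MingoSpeicher} and \cite[Theorem 6.2]{NSS02} and then reads off $K_2$ directly from the second moment $E_{\mathbb{M}_n}[X_{\operatorname{free},i}\,b\,X_{\operatorname{free},j}]$.
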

		
		\begin{proof}
			Noting that the expectation from $\mb{M}_n\otimes M$ onto $\mathbb{M}_n$ is given by $\id \otimes \tau$, the free independence and vanishing first moments of $(S_k)_{k=1}^m$ imply all $\mathbb{M}_n$-valued cumulants vanish except those of order two (see \cite[Proposition 9.3.13]{MingoSpeicher} and \cite[Theorem 6.2]{NSS02}). For the order two cumulants we have:
			\[
			K_2[X_{\operatorname{free},i} b, X_{\operatorname{free},j}] = E_{\mathbb{M}_n}[X_{\operatorname{free},i} b X_{\operatorname{free},j}]= \sum_{k=1}^m a_{k,i} b a_{k,j} = \mathbb{E}[X_i b X_j].
			\]
			Thus $X_{\operatorname{free}}$ is an $(\mathbb{M}_n,\eta)$-semicircular family.
		\end{proof}
		
		\section{Covariance polynomials and laws} \label{sec: covariance laws}
		
		Suppose $(B,\tau)$ is a tracial von Neumann algebra  generated by a tuple ${b}=(b_\omega)_{\omega\in \Omega}$ and equipped with an operator-valued covariance matrix $\eta=(\eta_{i,j})_{i,j\in I}$. In order to conveniently encode $\eta$, we will consider the smallest $*$-subalgebra of $B$ that is invariant under the $\eta_{i,j}$'s and contains each $b_\omega$. Formally, this $*$-subalgebra is spanned by monomials in ${b}$ that are interspersed with (potentially iterated) applications of the $\eta_{i,j}$ (e.g. $b_{\omega_1}\eta_{i_1,j_1}(b_{\omega_2} \eta_{i_2,j_2}(b_{\omega_3}) b_{\omega_4})  b_{\omega_5}$). We call the elements of this $*$-subalgebra \emph{covariance polynomials} due to the relation $E_B(X_i b X_j) = \eta_{i,j}(b)$ when $\{X_i : i \in I \}$ is an $(B,\eta)$-semicircular family. We formalize this notion in Section~\ref{subsec:covariance_polynomials} below.  Covariance polynomials were previously used in \cite[\S 2.2]{DGS2021} in the case when there is only a single map $\eta$ which is a conditional expectation onto a subalgebra $D$, and they can be viewed as an operator-valued extension of the trace polynomials that often arise in free probability and random matrices \cite{Cebron2013,DHK2013}. 
		
		Like non-commutative polynomials, covariance polynomials admit a natural abstraction that can be evaluated in any concrete setting $(B,\eta)$ as above. We define a \emph{covariance law} to be a positive state on the abstract space of covariance polynomials, and the space of such laws is studied in Section~\ref{subsec:covariance_laws}. It is shown that, up to a boundedness condition, all such laws arise from a tracial von Neumann algebra equipped with an operator-valued covariance matrix (see Proposition~\ref{prop:concrete_covariance_laws}). This implies, for instance, that $B$, $\tau$, and $\eta$ from Theorem~\ref{introthm: weak convergence} are all guaranteed to exist by the assumptions on the finite-dimensional data (see Proposition~\ref{prop: base algebra}). We also note that pointwise convergence in the space of covariance laws provides a convenient formalism for the weak convergence of covariance polynomials.

		\subsection{Covariance polynomials}\label{subsec:covariance_polynomials}

		In order to define covariance polynomials, we adapt the formalism of \cite[Section 9.1]{MingoSpeicher}. 
		
		\begin{defi} \label{def: cov monomial}
			Let $A$ be a unital algebra equipped with a family of linear maps $\eta_{i,j}\colon A\to A$, $i,j\in I$. Fix $k \geq 1$. 
			We define $\eta_{\pi, \vec{\imath}}[a_1,\ldots,a_{2k-1}]$ for a partition $\pi \in \mathcal{NC}_2(2k)$ and a tuple of indices $\vec{\imath} = (i_1, \ldots, i_{2k})\in I^{2k}$ recursively as follows:
			\begin{itemize}
				% \item For $k = 2$, and $\pi = \{ \{1\}, \{2\} \}$, we have $\Lambda(\eta,\pi,\vec{\imath})[b] = b$.
				\item For $k = 1$ and $\pi = \{\{1,2\}\}$, we have $\eta_{\pi, i}[a_1] = \eta_{i_1,i_2}(a_1)$.
				% \item For $k > 2$, if $\{j \} \in \pi$, then $\Lambda(\eta,\pi,\vec{\imath})[b]$ is simply the multiplication of the two halves of $\pi$ separated by $j$:
				% \[
				% \Lambda(\eta, \, \pi \restriction_{[j-1]}, \, \vec{\imath} \restriction_{[j-1]} )[b_1,\dots,b_{j-2},b_{j-1}] \cdot b_j \cdot \Lambda(\eta, \, \pi \restriction_{\{j+1, \ldots, k\}}, \, \vec{\imath} \restriction_{\{j+1, \ldots, k\}} )[b_{j+1},\dots,b_{k}].
				% \]
				\item For $k > 1$, if $\{j,j+1\} \in \pi$, then $\eta_{\pi, i}[a_1,\ldots, a_{2k-1}]$ is 
				\[
				\eta_{\pi \setminus \{j,j+1\}, \vec{\imath}\setminus\{i_j, i_{j+1}\}}[a_1,\dots,a_{j-2},a_{j-1}\eta_{i_j,i_{j+1}}(a_j)a_{j+1},a_{j+2},\dots,a_{2k-1}],
				\]
				where $\pi \setminus \{j,j+1\}$ and $\vec{\imath}\setminus\{i_j, i_{j+1}\}$ are given by restricting $\pi$ and $\vec{\imath}$ to $[2k]\setminus\{ j,j+1\}$ and reindexing the latter to identify it with $[2(k-1)]$.
			\end{itemize}
			We also take the convention for $k=0$ that $\eta_{\pi,\vi}=1$ where $\pi$ and $\vi$ are the empty partition and tuple, respectively.
		\end{defi}

		The above is best illustrated via an example with pictures.
		
		\begin{ex}\label{ex:visualizing_cov_poly}
			Fix $a_1,a_2,a_3\in A$ and a tuple $\vec{\imath}=(i_1,i_2,i_3,i_4)\in I^4$. For $\pi_1=\{\{1,4\}, \{2,3\}\}$ and $\pi_2=\{\{1,2\},\{3,4\}\}$ in $\mathcal{NC}_2(4)$, we have
			\[
			\eta_{\pi_1,\vec{\imath}}[a_1,a_2,a_3]= \eta_{i_1,i_4}(a_1 \eta_{i_2,i_3}(a_2) a_3) \qquad \text{ and } \qquad \eta_{\pi_2,\vec{\imath}}[a_1,a_2,a_3]= \eta_{i_1,i_2}(a_1) a_2 \eta_{i_3,i_4}(a_3).
			\]
			These can be visualized as follows:
			\begin{center}
				\begin{tikzpicture}[thick,font=\small]
					\path (0,0) node[] (a) {$i_1$}
					(0.5, -0.2) node[]  {$a_1$}
					(1,0) node[] (b) {$i_2$}
					(1.5, -0.2) node[]  {$a_2$}
					(2,0) node[] (c) {$i_3$}
					(2.5, -0.2) node[]  {$a_3$}
					(3,0) node[] (d) {$i_4$};
					\draw (a) -- +(0,0.75) -| (d);
					\draw (b) -- +(0,0.5) -| (c);
				\end{tikzpicture}
				\hspace*{2cm}
				\begin{tikzpicture}[thick,font=\small]
					\path (0,0) node[] (a) {$i_1$}
					(0.5, -0.2) node[]  {$a_1$}
					(1,0) node[] (b) {$i_2$}
					(1.5, -0.2) node[]  {$a_2$}
					(2,0) node[] (c) {$i_3$}
					(2.5, -0.2) node[]  {$a_3$}
					(3,0) node[] (d) {$i_4$};
					\draw (a) -- +(0,0.75) -| (b);
					\draw (d) -- +(0,0.75) -| (c);
				\end{tikzpicture}
			\end{center}
			That is, a pairing $\{j,k\}$ in a partition corresponds to an application of $\eta_{i_j,i_k}$ to the expression underneath it. Note that compositions can be achieved through nesting partitions (e.g. $\eta_{\pi_1,\vec{\imath}}[1, a, 1] = \eta_{i_1,i_4}(\eta_{i_2,i_3}(a))$ for $a\in A$), and products be achieved by adjacent pairings (e.g. $\eta_{\pi_2,\vi}[a_1,1,a_2] = \eta_{i_1,i_2}(a_1)\eta_{i_3,i_4}(a_2)$ for $a_1,a_2\in A$).
		\end{ex}
		
		The convenience of the above formalism is illustrated by the following moment characterization of a $(B,\eta)$-semicircular family.
		
		\begin{lem}\label{lem: semicircular partition formula}
			Let $(M,\tau)$ be a tracial von Neumann algebra and let $B\subset M$ be a von Neumann subalgebra equipped with a $\tau$-symmetric operator-valued covariance matrix $\eta=(\eta_{i,j})_{i,j\in I}$. Then $(x_i)_{i\in I}\subset M$ is a $(B,\eta)$-semicicular family if and only if for all $\vec{\imath}=(i_1,\ldots, i_\ell)\in I^\ell$ and $b_1,\ldots, b_{\ell-1}\in B$ one has
			\[
			E_B[x_{i_1}b_1 x_{i_2} \cdots b_{\ell-1} x_{i_\ell}] = \sum_{\pi\in \mathcal{NC}_2(\ell)} \eta_{\pi,\vec{\imath}}[b_1,\ldots, b_{\ell-1}].
			\]
		\end{lem}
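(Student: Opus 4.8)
The plan is to feed the moment--cumulant formula \eqref{eq: moment-cumulant formula} the grouped factors $a_1 := x_{i_1}$ and $a_m := b_{m-1}x_{i_m}$ for $2 \le m \le \ell$, so that $x_{i_1}b_1 x_{i_2}\cdots b_{\ell-1}x_{i_\ell} = a_1 a_2\cdots a_\ell$ and hence
\[
E_B[x_{i_1}b_1 x_{i_2}\cdots b_{\ell-1}x_{i_\ell}] \;=\; \sum_{\pi\in\mathcal{NC}(\ell)} K_\pi[a_1,\dots,a_\ell].
\]
For the forward implication, assume $(x_i)_{i\in I}$ is $(B,\eta)$-semicircular. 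Evaluating each $K_\pi$ by peeling interval blocks as in \S\ref{subsec: operator valued prelim} and repeatedly using the $B$-bimodularity of the cumulants recalled there, every block of size $t$ contributes a factor of the form $K_t[x_{j_1}c_1,\dots,x_{j_{t-1}}c_{t-1},x_{j_t}]$ with $c_1,\dots,c_{t-1}\in B$, which vanishes unless $t = 2$ by Definition~\ref{def: operator-valued semicirculars}. Thus only $\pi\in\mathcal{NC}_2(\ell)$ survive; in particular both sides vanish when $\ell$ is odd.

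The heart of the matter is then the following purely combinatorial identity: if $K_2[x_i,bx_j] = \eta_{i,j}(b)$ for all $i,j\in I$ and $b\in B$, then for every $\pi\in\mathcal{NC}_2(\ell)$ one has $K_\pi[x_{i_1},\,b_1 x_{i_2},\,\dots,\,b_{\ell-1}x_{i_\ell}] = \eta_{\pi,\vec{\imath}}[b_1,\dots,b_{\ell-1}]$. I would prove this by induction on $\ell$: pick an interval block of $\pi$, which, since all blocks are pairs, is an adjacent pair $\{s,s+1\}$; peel it off in the recursion defining $K_\pi$, evaluate $K_2[b_{s-1}x_{i_s},\,b_s x_{i_{s+1}}] = b_{s-1}\,\eta_{i_s,i_{s+1}}(b_s)$ using bimodularity and the hypothesis, and observe that the resulting reduced word is
\[
x_{i_1} b_1\cdots x_{i_{s-1}}\big(b_{s-1}\,\eta_{i_s,i_{s+1}}(b_s)\,b_{s+1}\big)x_{i_{s+2}}\cdots x_{i_\ell},
\]
which is precisely the reduction appearing in the recursive Definition~\ref{def: cov monomial} of $\eta_{\pi,\vec{\imath}}$ (with the boundary cases $s=1$ and $s=\ell-1$ producing a modified leading, resp. trailing, $B$-coefficient). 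The inductive hypothesis applied to $\pi\setminus\{s,s+1\}\in\mathcal{NC}_2(\ell-2)$ then closes the argument, and combined with the previous paragraph this gives the forward direction.

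For the converse, suppose the moment formula holds for all indices and all $b_j\in B$. The $\ell=1$ case gives $E_B[x_i]=0$, so $K_2[x_i,bx_j] = E_B[x_i b x_j] - E_B[x_i]E_B[bx_j] = \eta_{i,j}(b)$; then an induction on $k$ via the moment--cumulant formula together with the combinatorial identity above (which, once $K_2[x_i,bx_j]=\eta_{i,j}(b)$ is known, matches every surviving $K_\pi$ with the corresponding $\eta_{\pi,\vec{\imath}}$) forces $K_k[x_{i_1}c_1,\dots,x_{i_k}c_k]=0$ for all $k\neq 2$ and $c_j\in B$, which by bimodularity is exactly the cumulant condition of Definition~\ref{def: operator-valued semicirculars}. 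Alternatively, one can invoke the existence of a $(B,\eta)$-semicircular family together with the fact that the moments in the statement determine the full $B$-valued distribution, hence all $B$-valued cumulants, of $(x_i)_{i\in I}$.

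The main obstacle I anticipate is the bookkeeping in the combinatorial identity: aligning the argument-grouping conventions of the cumulant recursion in \S\ref{subsec: operator valued prelim} with those of Definition~\ref{def: cov monomial}, and carefully handling the boundary cases where the peeled pair sits at the very start or end of the word, so that the inner $B$-valued cumulant value is absorbed into a leading or trailing coefficient rather than into an adjacent argument. None of this is deep, but it needs to be carried out with care, and the bimodular shuffling of the $B$-coefficients underlies both directions.
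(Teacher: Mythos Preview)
Your proposal is correct and, for the forward direction, follows exactly the paper's approach: apply the moment--cumulant formula, use Definition~\ref{def: operator-valued semicirculars} to reduce to $\mathcal{NC}_2$, and then match the recursions for $K_\pi$ and $\eta_{\pi,\vec{\imath}}$ (the paper simply says ``compare the recursive formulas,'' while you spell out the induction). For the converse, your primary route---a direct induction showing $K_k=0$ for $k\neq 2$---differs from the paper, which takes the shortcut you list as your alternative: it invokes the existence of a $(B,\eta)$-semicircular family $y$, observes that $y$ satisfies the same moment formula by the forward implication, and concludes that $x$ has the same $B$-valued cumulants as $y$. Both arguments are valid; the paper's is quicker but relies on the (nontrivial) existence result discussed after Definition~\ref{def: operator-valued semicirculars}, whereas your induction is self-contained.
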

		\begin{proof}
			$(\Rightarrow):$ Fix $\vec{\imath}\in I^\ell$ and $b_1,\ldots, b_{\ell-1}$. We can expand the conditional expectation on the left side of the above equation using the moment-cumulant formula (\ref{eq: moment-cumulant formula}), but Definition~\ref{def: operator-valued semicirculars} implies only non-crossing pair partitions contribute:
			\[
			E_B[x_{i_1}b_1 x_{i_2} \cdots b_{\ell-1} x_{i_\ell}] = \sum_{\pi\in \mathcal{NC}_2(\ell)} K_\pi[ x_{i_1} b_1,x_{i_2}b_2,\ldots, x_{i_{\ell-1}} b_{\ell-1}, x_{i_\ell} ].
			\]
			The claimed equality then follows by comparing the recursive formulas for $K_\pi$ and $\eta_{\pi,\vec{\imath}}$ and using that $K_2[x_{i_j} b_j, x_{i_k} b_k] = \eta_{i_j,i_k}(b_j) b_k$.\\
			
			\noindent $(\Leftarrow):$ Suppose that $x = (x_i)_{i \in I}$ satisfies the formula above.  Let $y = (y_i)_{i \in I}$ be a $(B,\eta)$-semicircular family.  By $(\Rightarrow)$, $y$ also satisfies the formula above, hence the $B$-valued moments of $x$ and $y$ agree, and so $x$ is a $(B,\eta)$-semicircular family. 
			%Since $\mathcal{NC}_2(1)=\emptyset$, the assumed formula implies $K_1[x_i] = E_B[x_i]=0$. Consequently,
			%    \[
			%        K_2[x_i,bx_j] = E_B[x_i b x_j] = \eta_{i,j}(b)
			%    \]
			%for all $i,j\in I$ and $b\in B$. We now proceed by induction on $\ell \geq 3$ to show that $K_\ell[x_{i_1} b_1,\ldots, x_{i_\ell} ]=0$ for all $\vec{\imath}=(i_1,\ldots, i_\ell)\in I^\ell$ and $b_1,\ldots, b_{\ell-1} \in B$. For the base case $\ell=3$, $\mathcal{NC}_2(3)=\emptyset$ in the assumed formula and (\ref{eq: moment-cumulant formula}) give
			%    \[
			%       0 = E_B[x_{i_1}b_1 x_{i_2} b_2 x_{i_3}] = \sum_{\pi\in \mathcal{NC}(3)} K_\pi[x_{i_1}b_1, x_{i_2} b_2, x_{i_3}] = K_3[x_{i_1}b_1, x_{i_2} b_2, x_{i_3}],
			%    \]
			%where all the other terms in the sum vanish by $K_1[x_i]=0$. Now suppose all cumulants of order $3\leq k \leq \ell-1$ vanish. Then using (\ref{eq: moment-cumulant formula}) again gives 
			%    \[
			%       \sum_{\pi\in \mathcal{NC}_2(\ell)} \eta_{\pi,\vec{\imath}}[b_1,\ldots, b_{\ell-1}] =  E_B[x_{i_1}b_1 x_{i_2} \cdots b_{\ell-1} x_{i_\ell}] = \sum_{\sigma\in \mathcal{NC}(\ell)} K_\sigma[ x_{i_1} b_1,x_{i_2}b_2,\ldots, x_{i_{\ell-1}} b_{\ell-1}, x_{i_\ell} ]
			%   \]
			%The only terms in the right-most expression that survive are $\sigma \in \mathcal{NC}_2(\ell)$ and $\sigma=\{ [\ell]\}$. The former terms sum to match the left-most expression, and so the latter term (which corresponds to the order $\ell$ cumulant) must be zero.
		\end{proof}

		\begin{defi}\label{defi:covariance_polynomial}
			Let $B$ be a von Neumann algebra equipped with an operator-valued covariance matrix $\eta=(\eta_{i,j})_{i,j\in I}$. Given a unital $*$-subalgebra $A\subset B$, a \textbf{covariance polynomial over} $(A,\eta)$ is an element of
			\[
			\text{span}\left\{ a_0 \eta_{\pi,\vec{\imath}}[a_1,\ldots, a_{2k-1}] a_{2k}\ \colon\ k\in \N,\ a_0,\ldots, a_{2k}\in A,\ \pi\in \mathcal{NC}_2(2k),\ \vec{\imath}\in I^{2k} \right\}.
			\]
			Given a tuple $b=(b_\omega)_{\omega\in \Omega}\subset B$, a \textbf{covariance polynomial over} $(b,\eta)$ is a covariance polynomial over $(\C\<b\>,\eta)$, and we denote the set of such elements by $\C_\eta\<b\>$. A \textbf{covariance monomial} over $(b,\eta)$ will mean an element of the form
			\[
			m_0(b)\eta_{\pi,\vec{\imath}}[m_1(b),\ldots, m_{2k-1}(b)] m_{2k}(b),
			\]
			where $m_0(b),\ldots, m_{2k}(b)$ are monic $*$-monomials over $b$ (possibly of degree zero).
		\end{defi}
		
		Note that a covariance monomial $m_0(b)\eta_{\pi,\vec{\imath}}[m_1(b),\ldots, m_{2k-1}(b)] m_{2k}(b)$ can be visualized as in Example~\ref{ex:visualizing_cov_poly}, where $m_0(b)$ and $m_{2k}(b)$ are placed outside of the diagram on the left and right, respectively. Also note that $\C\<b\>\subset \C_\eta \<b\>$, since by convention we can consider empty partitions and tuples.
		
		One can also consider formal covariance polynomials over a tuple of indeterminates $t=(t_\omega)_{\omega\in \Omega}$ and a family of formal linear maps $\Lambda=(\Lambda_{i,j})_{i,j\in I}$ as follows. For each $i,j\in I$, define $\Lambda_{i,j}[m(t)]$ for a monomial $m(t)\in \C\<t\>$ to be a new variable that is algebraically free from $\C\<t\>$, and then extend $\Lambda_{i,j}$ linearly to obtain a map on $\C\<t\>$. Set $\Lambda_{i,j}[m(t)]^*:=\Lambda_{j,i}[m(t)^*]$ so that these new variables are in fact $*$-algebraically free from $\C\<t\>$. Let us refer to the unital $*$-algebra generated by $\C\<t\>$ and the new variables $\{\Lambda_{i,j}[m(t)]\colon i,j\in I,\text{ monomials }m(t)\in \C\<t\>\}$ as \emph{depth $1$ covariance polynomials} (whereas $\C\<t\>$ are \emph{depth $0$ covariance polynomials}). A depth $1$ covariance \emph{monomial} is a then a product of the form
		\[
		m_1(t)\Lambda_{i_1,j_1}[m_1(t)] m_2(t) \Lambda_{i_2,j_2}[m_2(t)]\cdots \Lambda_{i_d,j_d}[m_d(t)] m_{d+1}(t).
		\]
		For each $i,j\in I$, we then define $\Lambda_{i,j}$ on such monomials as a yet another new formal variable (with appropriate adjoint) that is $*$-algebraically free from the depth $1$ covariance polynomials. Proceeding as above we obtain the \emph{depth 2 covariance polynomials}. Inductively, we obtain covariance polynomials of arbitrary depth, and their direct limit yields a unital $*$-algebra that we denote by $\C_{I}\<\Omega\>$. Note that $\C_I\<\Omega\>$ is just the unital $*$-algebra generated by a family of algebraically free variables indexed by $t$  and $\Lambda_{i,j}(f)$ for $i,j\in I$ and $f$ a depth $n$ covariance monomial with $n\in \N_0$. For each $i,j\in I$, we can define $\Lambda_{i,j}\colon \C_I\<\Omega\> \to \C_I\<\Omega\>$ by its restrictions to each depth $n$ (with values in depth $n+1$). Consequently, for $\pi\in \mathcal{NC}_2(2k)$, $\vec{\imath}\in I^{2k}$, and $p_1(t),\ldots, p_{2k-1}(t)\in \C\<t\>$ we can make sense of expressions of the form
		\[
		\Lambda_{\pi,\vi}[p_1(t),\ldots, p_{2k-1}(t)]
		\]
		as elements in $\C_I\<\Omega\>$ by expanding recursively as in Definition~\ref{def: cov monomial}. In fact, $\C_I\<\Omega\>$ is spanned by elements of the form
		\begin{align}\label{eqn:formal_cov_mon}
			m_0(t) \Lambda_{\pi,\vi}[m_1(t),\ldots,m_{2k-1}(t)] m_{2k}(t)
		\end{align} 
		for monomials $m_0(t),\ldots, m_{2k}(t)\in \C\<t\>$. We also define for each finite subset $F\subset I$ a linear map $\Lambda_F\colon \C_I\<\Omega\>\to \C_I\<\Omega\>\otimes \mathbb{M}_{|F|}$ by
		\[
		\Lambda_F(f) = (\Lambda_{i,j}(f) )_{i,j\in F} \qquad f\in \C_I\<\Omega\>.
		\]
		For $F=\emptyset$, we take the convention that $\Lambda_\emptyset$ is the identity map on $\C_I\<\Omega\>$.

		\begin{defi} \label{defi: formal covariance polynomials}
			Given index sets $\Omega$ and $I$, we call the elements of $\C_I\<\Omega\>$ \textbf{covariance polynomials}. We call an element of form in (\ref{eqn:formal_cov_mon}) a \textbf{covariance monomial}. Given a covariance polynomial
			\[
			f= \sum_{j=1}^N m_0^{(j)}(t) \Lambda_{\pi_j,\vi_j}[m_1^{(j)}(t),\ldots, m_{2k_j-1}^{(j)}(t)] m_{2k_j}^{(j)}(t),
			\]
			the \textbf{depth} of $f$ is the largest number of nested occurrences of $\Lambda$, while the \textbf{degree} of $f$ is the largest degree of the $*$-monomials $m_0^{(j)}(t) m_1^{(j)}(t)\cdots m_{2k_j}^{(j)}(t)$, $1\leq j\leq N$. 
		\end{defi}

		If we visualize a covariance monomial as in Example~\ref{ex:visualizing_cov_poly}, then its depth is simply the largest number of nested intervals. For instance, with $\pi=\pi_1,\pi_2$ as in that example, the depth of $m_0(t)\Lambda_{\pi,\vi}[m_1(t),m_2(t),m_3(t)]m_4(t)$ would be $2$ and $1$, respectively.
		
		Given a tuple $b=(b_\omega)_{\omega\in \Omega}$ in a von Neumann algebra $B$ equipped with an operator-valued covariance matrix $\eta=(\eta_{i,j})_{i,j\in I}$, there is a natural evaluation map:
		\begin{align*}
			\text{ev}_{(b,\eta)}\colon \C_I\<\Omega\>&\to \C_\eta\<b\>\\
			m_0(t) \Lambda_{\pi,\vi}[m_1(t),\ldots, m_{2k-1}(t)]m_{2k}(t) & \mapsto m_0(b) \eta_{\pi,\vi}[m_1(b),\ldots, m_{2k-1}(b)]m_{2k}(b).
		\end{align*}
		Note that this map is actually a $*$-homomorphism.

		\subsection{The space of covariance laws}\label{subsec:covariance_laws}
		
		We consider here an abstraction of our examples that will be useful later for expressing notions of weak convergence.

		\begin{defi}
			For index sets $\Omega$ and $I$, a \textbf{covariance law} over $\C_I\<\Omega\>$ is a linear functional $\mu\colon \C_I\<\Omega\>\to \C$ satisfying the following conditions:
			\begin{itemize}
				\item $\mu(1)=1$; \hfill (\textbf{unital})
				
				% \item $\mu(f^\sharp_.f)\geq 0$ for all $f\in \C_\bullet\<x_\omega\colon \omega\in \Omega\>$;
				
				\item $\mu(fg)=\mu(gf)$ for all $f,g\in \C_I\<\Omega\>$; \hfill (\textbf{tracial})
				
				\item $\mu(\Lambda_{i,j}(f) g) = \mu(f \Lambda_{j,i}(g))$ for all $i,j\in I$ and $f,g\in \C_I\<\Omega\>$; \hfill (\textbf{symmetric})
				
				\item for each finite subset $F\subset I$ and $n\in \N$ \hfill (\textbf{completely positive})
				\[
				\mu\otimes \tr_{|F|}\otimes \tr_n( (\Lambda_F\otimes I_n)(f^*f) g^*g)\geq 0 
				\]
				for $f\in \C_I\<\Omega\> \otimes \mb{M}_n$ and $g\in \C_I\<\Omega\>\otimes \mb{M}_{|F|}\otimes \mb{M}_n$ .
			\end{itemize}
			Consider tuples of positive numbers
			\[
			R:=(R_\omega)_{\omega\in \Omega} \subset (0,\infty) \qquad \text{ and } \qquad S:=(S_{i,j})_{i,j\in I} \subset (0,\infty).
			\]
			For a monic $*$-monomial $m(t) = t_{\omega_1}^{\epsilon_1}\cdots t_{\omega_d}^{\epsilon_d}$ with $\epsilon_1,\ldots, \epsilon_d\in \{1,*\}$, denote $R[m(t)] = R_{\omega_1}\cdots R_{\omega_d}$, and set $R[1]=1$.  
			We say $\mu$ is \textbf{$(R,S)$-exponentially bounded} if for all covariance monomials one has
			\[
			|\mu( m_0(t) \Lambda_{\pi,\vi}[m_1(t),\ldots, m_{2k-1}(t)] m_{2k}(t))| \leq \prod_{\{j,k\}\in \pi} S_{i_j, i_k} \prod_{j=0}^{2k} R[m_j(t)].
			\]
			% for $f=m_0(t)\Lambda_{\pi,\vi}[m_1(t),\ldots, m_{2k-1}(t)] m_{2k}(t)$ with $m_0(t),\ldots, m_{2k}(t)$ $*$-monomials, $F\subset I$ a finite subset, and $B\in \C_I\<\Omega\>\otimes \mathbb{M}_{|F|}$ one has
			%     \[
			%         \mu\otimes \tr_{|F|}( B^* \Lambda_F( f^* f) B) \leq S^{2p+1} R[m_0(t)\cdots m_{2k}(t)]^2 \mu\otimes \tr_{|F|}(B^*B),
			%     \]
			% where $p$ is the number of pairs in $\pi$. 
			We denote by $\mathbb{L}^\infty(\Omega,I)$ the set of covariance laws over $\C_I\<\Omega\>$ that are $(R,S)$-exponentially bounded for some functions $(R,S)$.
		\end{defi}
		
		Let $B$ be a von Neumann algebra equipped with a faithful normal tracial state $\tau$ and a $\tau$-symmetric operator-valued covariance matrix $\eta=(\eta_{i,j})_{i,j\in I}$.  For any tuple $b=(b_\omega)_{\omega\in \Omega} \subset B$, consider the map $\tau_{(b,\eta)}\colon \C_I\<\Omega\>\to \C$ defined by
		\[
		\tau_{(b,\eta)}(f):= \tau( f(b,\eta)) \qquad f\in \C_I\<\Omega\>,
		\]
		where we recall that $f(b,\eta) = \text{ev}_{(b,\eta)}(f)$ from the previous section. It is straightforward to verify that $\tau_{(b,\eta)}$ is a covariance law over $\C_I\<\Omega\>$. Moreover, if we set
		\[
		R_\omega:= \|b_\omega\|,\ \omega\in \Omega \qquad \text{ and } \qquad S_{i,j}:= \|\eta_{i,j}\|,\ i,j\in I,
		\]
		then $\tau_{(b,\eta)}$ is $\left( (R_\omega)_{\omega\in \Omega}, (S_{i,j})_{i,j\in I} \right)$-exponentially bounded. Thus $\tau_{(b,\eta)}\in \mathbb{L}^\infty(\Omega,I)$. The following proposition tells us that, in fact, every element of $\mathbb{L}^\infty(\Omega,I)$ arises in this way.

		\begin{prop}\label{prop:concrete_covariance_laws}
			Let $\Omega$ and $I$ be index sets. For every $\mu\in \mathbb{L}^\infty(\Omega,I)$ there exists a von Neumann algebra $B$, a faithful normal tracial state $\tau$ on $B$, a $\tau$-symmetric operator-valued covariance matrix $\eta= (\eta_{i,j})_{i,j\in I}$ on $B$, and a generating tuple $b=(b_\omega)_{\omega\in \Omega}\subset B$ so that $\tau_{(b,\eta)}= \mu$. Moreover, if $\mu$ is $(R,S)$-exponentially bounded, then $\|b_\omega\|\leq R_\omega$ for all $\omega\in \Omega$ and
			\[
			\|\eta_{i,j}\| \leq \left(S_{i,i}^2 + S_{i,j}^2 + S_{j,i}^2 + S_{j,j}^2 \right)^{\frac12},
			\]
			for all $i,j\in I$.
		\end{prop}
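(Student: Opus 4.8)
The plan is a GNS-type construction: realize $\mu$ as a vacuum state, take $B$ to be the von Neumann algebra it generates, and reconstruct $\eta$ from the formal maps $\Lambda_{i,j}$. First I would form the sesquilinear form $\langle f,g\rangle=\mu(g^*f)$ on $\C_I\langle\Omega\rangle$, which is positive semidefinite because complete positivity (with $F=\emptyset$, $n=1$, $g=1$) forces $\mu(f^*f)\ge 0$. Quotient by the kernel $N$, complete to a Hilbert space $H$ with cyclic vector $\xi=1+N$, and let $\lambda$ be the left regular representation of $\C_I\langle\Omega\rangle$ on $H$. The first nontrivial point is that $\lambda(f)$ is bounded for every covariance polynomial: writing $f$ as a finite sum of covariance monomials and applying iterated Cauchy--Schwarz to $\mu(g^*(f^*f)^{2^k}g)$, the $(R,S)$-exponential bound (whose weight is submultiplicative under the product of covariance monomials) forces $\|\lambda(f)\|^2=\sup_g \mu(g^*f^*fg)/\mu(g^*g)<\infty$; in particular $\|\lambda(t_\omega)\|\le R_\omega$. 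Then set $B:=\lambda(\C_I\langle\Omega\rangle)''$ and $\tau:=\langle\,\cdot\,\xi,\xi\rangle$; traciality of $\mu$ makes $\tau$ a trace, $\xi$ is cyclic and separating (using traciality of $\mu$, so $N$ is two-sided and the right regular representation is defined), so $\tau$ is a faithful normal tracial state, and $b_\omega:=\lambda(t_\omega)$ has $\|b_\omega\|\le R_\omega$.

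Next I would reconstruct $\eta$. On the $\sigma$-weakly dense subalgebra $\lambda(\C_I\langle\Omega\rangle)$ set $\eta_{i,j}(\lambda(f)):=\lambda(\Lambda_{i,j}(f))$. This is well defined: if $\lambda(h)=0$ then $h\in N$, so Cauchy--Schwarz gives $\mu(kh)=0$ for every $k$ (split $k$ into self-adjoint parts), whence for all $f,g$, using traciality and the symmetry axiom, $\mu(g^*\Lambda_{i,j}(h)f)=\mu(\Lambda_{i,j}(h)(fg^*))=\mu(h\,\Lambda_{j,i}(fg^*))=0$, so all matrix coefficients of the bounded operator $\lambda(\Lambda_{i,j}(h))$ vanish. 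The complete-positivity axiom, unwound through the GNS picture, says precisely that for each finite $F\subseteq I$ the map $\eta_F=(\eta_{i,j})_{i,j\in F}\colon\lambda(\C_I\langle\Omega\rangle)\to\lambda(\C_I\langle\Omega\rangle)\otimes\mathbb{M}_{|F|}$ is completely positive (by faithfulness of $\mu\otimes\tr_{|F|}\otimes\tr_n$, the inequality with $n$ and the squares $f^*f$, $g^*g$ is equivalent to positivity of the operator $(\lambda\otimes\id)((\Lambda_F\otimes I_n)(f^*f))$). Hence $\|\eta_F\|=\|\eta_F(1)\|<\infty$, and $\tau$-symmetry of $\mu$ computes $(\tau\otimes\tr_{|F|})\circ\eta_F=\tau(\,\cdot\,d_F)$ with $d_F=\tfrac1{|F|}\sum_{k\in F}\eta_{k,k}(1)\in B$ bounded and positive; combined with the Kadison--Schwarz inequality this makes $\eta_F$ bounded on $L^2$ as well, so a standard density/normality bootstrap extends it to a normal completely positive map $B\to B\otimes\mathbb{M}_{|F|}$. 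These extensions are consistent in $F$, giving normal maps $\eta_{i,j}\colon B\to B$, and the symmetry axiom, extended by normality, makes $\eta$ a $\tau$-symmetric $B$-valued covariance matrix.

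Since $\lambda$ is a $*$-homomorphism with $\lambda(t_\omega)=b_\omega$ and $\lambda(\Lambda_{i,j}(\,\cdot\,))=\eta_{i,j}(\lambda(\,\cdot\,))$, it coincides with the evaluation map $\ev_{(b,\eta)}$ (both are $*$-homomorphisms from $\C_I\langle\Omega\rangle$ agreeing on the generators $t_\omega$ and intertwining $\Lambda_{i,j}$ with $\eta_{i,j}$); hence $\tau_{(b,\eta)}(f)=\tau(\lambda(f))=\mu(f)$ for all $f$, and $\C_\eta\langle b\rangle=\lambda(\C_I\langle\Omega\rangle)$ is $\sigma$-weakly dense, so $b$ generates $B$. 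For the quantitative claims, $\|b_\omega\|\le R_\omega$ was already noted; for $F=\{i,j\}$ the map $\eta_{i,j}$ is a corner of the completely positive $\eta_F$, so $\|\eta_{i,j}\|\le\|\eta_F\|=\|\eta_F(1)\|\le\big(\sum_{k,\ell\in F}\|\eta_{k,\ell}(1)\|^2\big)^{1/2}$ (an operator matrix is bounded by the $\ell^2$-norm of its entrywise norms), and $\|\eta_{k,\ell}(1)\|=\|\lambda(\Lambda_{k,\ell}(1))\|\le(S_{k,\ell}S_{\ell,k})^{1/2}$ by the same iterated Cauchy--Schwarz estimate applied to the covariance monomials $(\Lambda_{\ell,k}(1)\Lambda_{k,\ell}(1))^{2^m}$ (the case $k=\ell$ giving $S_{k,k}$); with $2S_{i,j}S_{j,i}\le S_{i,j}^2+S_{j,i}^2$ this yields the stated bound.

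The main obstacles are (i) the boundedness estimate for $\lambda$ in the first step --- without it $B$ is not even defined --- and (ii) passing from the complete-positivity inequality for $\mu$ on polynomials to an honest normal completely positive map $\eta_F$ on all of $B$, including the $L^2$-boundedness and normality bootstrap. An alternative to (ii) is to build a $B$-$B$ correspondence directly from the complete-positivity data and run Shlyakhtenko's Fock-space construction, so that $\eta_{i,j}(\,\cdot\,)=E_B(X_i\,\cdot\,X_j)$ for the associated $(B,\eta)$-semicircular family $X$; this makes normality, complete positivity and $\tau$-symmetry of $\eta$ transparent, but the positivity of the enlarged state on the semicircular algebra is again equivalent to complete positivity of $\mu$, so the essential difficulty is unchanged.
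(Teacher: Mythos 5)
Your proposal follows essentially the same route as the paper's proof: represent $\mu$ by GNS as an exponentially bounded tracial law on the free $*$-algebra $\C_I\langle\Omega\rangle$ (the paper cites the standard theory for this rather than re-running the iterated Cauchy--Schwarz argument), define $\eta_{i,j}$ on the image of the polynomials by $\eta_{i,j}\circ\pi=\pi\circ\Lambda_{i,j}$, use the symmetry axiom for well-definedness and normality, the complete-positivity axiom for positivity, and then extend to $B$ via operator-norm and $L^2$ bounds before reading off the quantitative estimates. The one point you assert rather than prove, and which the paper treats with a dedicated approximation argument, is the bound $\|\eta_F(\pi(f))\|\le\|\eta_F(1)\|\,\|\pi(f)\|$: since $\eta_F$ is a priori positive only on sums of squares of polynomials, one must approximate $(\|\pi(f)\|^2-\pi(f)^*\pi(f))^{1/2}$ by elements $\pi(g_n)$ and use the symmetry of $\mu$ to conclude $\eta_F(\|\pi(f)\|^2-\pi(f)^*\pi(f))\ge 0$ before your Kadison--Schwarz step closes the argument.
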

		\begin{proof}
			Assume that $\mu$ is $(R,S)$-exponentially bounded. Recall that $\C_I\<\Omega\>$ is the free $*$-algebra generated by indeterminates $t_\omega$ for $\omega\in \Omega$ and $\Lambda_{i,j}(f)$ for $i,j\in I$ and covariance monomials $f$, and by definition $\mu$ defines an exponentially bounded non-commutative law on this free $*$-algebra. (Recall that $\Lambda_\emptyset$ is the identity map by convention so that the positivity of $\mu$ follows from its complete positivity.) Thus if we let $J$ denote the indexing set for these generating indeterminates, then there exists a von Neumann algebra $B$ generated by a tuple $x=(x_j)_{j\in J}$ and admitting a faithful normal tracial state $\tau$ so that $\mu$ equals the law of $x$ with respect to $\tau$ (see \cite[Proposition 5.2.14, Theorem 5.2.24, Corollary 5.2.27]{AGZ2009}). The evaluation map $\text{ev}_x$ composed with the isomorphism $\C_I\<\Omega\>\cong \C\<J\>$ yields a unital $*$-homomorphism
			\[
			\pi\colon \C_I\<\Omega\> \to B
			\]
			satisfying
			\begin{align}\label{eqn:expo_bound_to_actual_bound}
				\| \pi\left( m_0(t) \Lambda_{\pi,\vi}[m_1(t),\ldots, m_{2k-1}(t)] m_{2k}(t)\right)\| \leq \prod_{\{j,k\}\in \pi} S_{i_j, i_k} \prod_{j=0}^{2k} R[m_j(t)].
			\end{align}
			Let us denote $b_\omega:=\pi(t_\omega)=x_\omega$ for each $\omega\in \Omega \subset J$.
			
			Now, for each $i,j\in I$ and $f\in \C_I\<\Omega\>$, define a linear map on the $*$-algebra $\pi(\C_I\<\Omega\>) = \C\<x\>$ by
			\[
			\eta_{i,j}(\pi(f)):=\pi( \Lambda_{i,j}(f)).
			\]
			This is well-defined since if $\pi(f)=0$, then using the symmetry of $\mu$ we have
			\[
			\| \eta_{i,j}(\pi(f)) \|_\tau^2 = \mu( \Lambda_{i,j}(f)^* \Lambda_{i,j}(f)) = \mu( \Lambda_{j,i}(\Lambda_{i,j}(f)^*) f) = \tau( \pi( \Lambda_{j,i}( \Lambda_{i,j}(f)^*)) \pi(f)  ) =0,
			\]
			and hence $\eta_{i,j}(\pi(f))=0$ by the faithfulness of $\tau$. We also define for each finite subset $F\subset I$ the map
			\[
			\eta_F( \pi(f)) :=( \eta_{i,j}( \pi(f) ) )_{i,j\in F}.
			\]
			By \cite[Proposition 7.3.4]{AP2017}, $L^2(B\otimes \mathbb{M}_{|F|}\otimes \mathbb{M}_n)_+$ is a self-dual cone for all $n\in \N$ with dense subset
			\[
			\{H^* H\colon H\in \pi(\C_I\<\Omega\>)\otimes \mathbb{M}_{|F|}\otimes \mathbb{M}_n\},
			\]
			and so it follows from the complete positivity of $\mu$ that $\eta_F\otimes I_n(\pi(f)^*\pi(f)) \geq 0$ for all $f\in \C_I\<\Omega\>\otimes \mathbb{M}_n$. In particular, $\eta_F(1)$ is a positive element of $B\otimes \mathbb{M}_{|F|}$.
			
			We next show $\|\eta_F( \pi(f) ) \|\leq \| \eta_F(1)\| \| \pi(f)\|$ for all finite subsets $F\subset I$ and $f\in \C_I\<\Omega\>$. Indeed, the $2$-positivity of $\eta_F$ on $\mathcal{A}$ implies
			\[
			0 \leq (\eta_F\otimes I_2)\left(\begin{array}{cc} 1 & \pi(f) \\ 0 & 0 \end{array} \right)^* \left(\begin{array}{cc} 1 & \pi(f) \\ 0 & 0 \end{array} \right) = \left(\begin{array}{cc} \eta_F(1) & \eta_F(\pi(f)) \\ \eta_F(\pi(f))^* & \eta_F(\pi(f)^*\pi(f)) \end{array} \right),
			\]
			from which it follows that $\eta_F(\pi(f))^*\eta_F(\pi(f)) \leq \|\eta_F(1)\|\eta_F\left(\pi(f)^*\pi(f)\right)$ (see, for example, \cite[Exercise 3.2]{Paulsen2002}). Let  $(g_n)_{n\in \N}\subset \C_I\<\Omega\>$ be a sequence so that $\pi(g_n)$ approximates $(\|\pi(f)\|^2 - \pi(f)^*\pi(f))^{1/2}$ in norm. For $H=( \pi(h_{i,j}) )_{i,j\in F}\in \pi(\C_I\<\Omega\>) \otimes \mb{M}_{|F|}$, the symmetry condition on $\mu$ gives
			\begin{align*}
				\< \eta_F( \|\pi(f)\|^2 - \pi(f)^* \pi(f)) H, H\>_{\tau\otimes \tr_{|F|}} &= \frac{1}{|F|} \sum_{i,j,k\in F} \tau\left( \pi(h_{j,i})^* \eta_{j,k}[\|\pi(f)\|^2 - \pi(f)^* \pi(f)] \pi(h_{k,i}) \right)\\
				&=\frac{1}{|F|} \sum_{i,j,k\in F} \tau\left(  [\|\pi(f)\|^2 - \pi(f)^* \pi(f)] \eta_{k,j}( \pi(h_{k,i} h_{j,i}^*) ) \right)\\
				&= \lim_{n\to\infty} \frac{1}{|F|} \sum_{i,j,k\in F} \tau\left(  \pi( g_n^*g_n ) \eta_{k,j}(\pi( h_{k,i} h_{j,i}^*)) \right)\\
				&= \lim_{n\to\infty} \< \eta_F( \pi( g_n^* g_n) ) H, H\>_{\tau\otimes \tr_{|F|}} \geq 0.
			\end{align*}
			Thus $\eta_F( \|\pi(f)\|^2 - \pi(f)^* \pi(f))\geq 0$, and combined with the previous inequality we have
			\[
			\eta_F(\pi(f))^* \eta_F(\pi(f)) \leq \|\eta_F(1)\| \eta_F( \|\pi(f)\|^2) = \|\eta_F(1)\| \eta_F(1) \|\pi(f)\|^2.
			\]
			Taking operator norms then gives the claimed inequality.
			
			The above implies that each $\eta_F$, and hence each $\eta_{i,j}$, admits a unique bounded extension to the norm closure of $\pi(\C_I\<\Omega\>)$. In order to show that the $\eta_{i,j}$ can be further extended to normal maps on $B$, we first show they admit $2$-norm bounded extensions to $L^2(B,\tau)$. Fix $i,j\in I$ and observe that $\eta_{i,j}(\pi(f))^* = \eta_{j,i}( \pi(f) ^*)$ for all $f\in \C_I\<\Omega\>$ since $\eta_{\{i,j\}}$ extends to a (completely) positive map on the norm closure of $\pi(\C_I\<\Omega\>)$. This along with the symmetry of $\mu$ gives
			\[
			\| \eta_{i,j}(\pi(f)) \|_\tau^2 = \tau\left( \eta_{j,i}(\pi(f)^*) \eta_{i,j}(\pi(f)) \right) = \tau\left( \pi(f)^* \eta_{i,j}^2( \pi(f) ) \right) \leq \| \pi(f)\|_\tau \| \eta_{i,j}^2( \pi(f) ) \|_\tau.
			\]
			Iterating this $d+1$ times we obtain
			\[
			\| \eta_{i,j}( \pi(f))\|_\tau^2 \leq \|\pi(f)\|_\tau^{1+\frac12 + \cdots + \frac{1}{2^d}} \| \eta_{i,j}^{2^{d+1}}( \pi(f) ) \|_\tau^{\frac{1}{2^{d}}}.
			\]
			Since $\| \eta_{i,j}^{2^{d+1}}( \pi(f) ) \|_\tau \leq \| \eta_{i,j}^{2^{d+1}}( \pi(f))\| \leq \|\eta_{i,j}\|^{2^{d+1}} \|\pi(f)\|$, it follows that
			\[
			\| \eta_{i,j}(\pi(f))\|_\tau^2 \leq \limsup_{d\to\infty} \|\pi(f)\|_\tau^2 \|\eta_{i,j}\|^{2} \|\pi(f)\|^{\frac{1}{2^d}} = \|\pi(f)\|_\tau^2 \|\eta_{i,j}\|^2.
			\]
			Consequently, each $\eta_{ij}$ extends to a bounded map on $L^2(B,\tau)$. For $a\in B$, we claim that $\eta_{i,j}(a)\in B$. Indeed, by the Kaplansky Density Theorem, let $(f_n)_{n\in \N} \subset \C_I\<\Omega\>$ be a sequence satisfying $\|\pi(f_n)\| \leq \|a\|$ and $\| \pi(f_n) - a \|_\tau \to 0$. Then $( \eta_{i,j}( \pi(f_n) ) )_{n\in \N}$ converges to $\eta_{i,j}(a)$ in $2$-norm, but since $\| \eta_{i,j}( \pi(f_n) ) \| \leq \| \eta_{i,j}(1)\| \| a\|$ for all $n\in \N$ it must be that $\eta_{i,j}(a)\in B$ (see \cite[Proposition III.5.3]{TakesakiI}). 
			To see that $\eta_{i,j}$ is normal, it suffices to show that it is weak operator topology continuous on the unit ball of $B$ (this is a standard consequence of the Krein-\v Smulian theorem). 
			Restricting to this norm ball further reduces the task to showing $a\mapsto \< \eta_{i,j}(a) \xi, \eta\>_\tau$ is weak operator topology continuous for $\xi,\eta$ varying the dense subspace $\pi(\C_I\<\Omega\>)$. But this is a consequence of the symmetry of $\mu$ since
			\[
			\< \eta_{i,j}(a) \pi(f), \pi(g)\>_\tau = \tau( \pi(g)^* \eta_{i, j}(a) \pi(f) ) = \tau( \eta_{j,i}( \pi(f)\pi(g)^*) a)
			\]
			is normal as a map on $a$ for all $f,g\in \C_I\<\Omega\>$. 
			
			Hence each $\eta_{i,j}\colon B\to B$ is normal and therefore $\eta:=(\eta_{i,j})_{i,j}$ forms a $\tau$-symmetric operator-valued covariance matrix on $B$ by inheriting the remaining properties from $\mu$. The equality $\pi(f) = \text{ev}_{(b,\eta)}(f)$ for $f\in \C_I\<\Omega\>$ can be verified by inducting on the depth of $f$, where the base case and induction step use our definitions of $b_\omega$ and $\eta_{i,j}$, respectively. Thus $\tau_{(b,\eta)} = \tau\circ \text{ev}_{(b,\eta)} = \tau\circ \pi = \mu$ by our initial construction of $(B,\tau)$.
			
			Finally, the bound $\|b_\omega\|\leq R_\omega$ follows from (\ref{eqn:expo_bound_to_actual_bound}) applied to the covariance monomial $t_\omega$ for each $\omega\in \Omega$. To see the bound for $\|\eta_{i,j}\|$, $i,j\in I$,  we note that
			\[
			\| \eta_{i,j}\| \leq \| \eta_{\{i,j\}}\| = \| \eta_{\{i,j\}}(1)\| \leq \left( \sum_{k,\ell\in \{i,j\}} \| \eta_{k,\ell}(1)\|^2 \right)^{\frac12} \leq \left( \sum_{k,\ell\in \{i,j\}} S_{k,\ell}^2 \right)^{\frac12},
			\]
			where the last inequality follows from (\ref{eqn:expo_bound_to_actual_bound}) applied to the covariance monomials $\Lambda_{k,\ell}(1)$.
		\end{proof}
		
		As an application of the previous proposition, we show that $B$, $\tau$, and $\eta$ in Theorem~\ref{introthm: weak convergence} are guaranteed to exist by the assumptions on the finite-dimensional data. We remark that the analogue of this result in the scalar-valued case is that the $n(k)\times n(k)$ identity matrices converge in distribution to the identity of the limiting von Neumann algebra.
		
		\begin{prop} \label{prop: base algebra}
			Fix index sets $\Omega, I$ and a sequence of positive integers $(n(k))_{k\in \N}\subset \N$. Further fix for each $k\in \N$ a tuple $b^{(k)}:=(b^{(k)}_\omega)_{\omega\in \Omega}\subset \mb{M}_{n(k)}$ and a $\tr_{n(k)}$-symmetric covariance matrix $\eta^{(k)}=(\eta^{(k)}_{i,j})_{i,j\in I}$ on $\mb{M}_{n(k)}$. Assume $\sup_k \|b_\omega^{(k)}\| <\infty$ for all $\omega\in \Omega$, $\sup_{k} \|\eta_{i,j}^{(k)}\| <\infty$ for all $i,j\in I$, and that
			\[
			\lim_{k\to\infty} \tr_{n(k)}\left[ f(b^{(k)},\eta^{(k)})\right]
			\]
			exists for all covariance polynomials $f\in \C_I\<\Omega\>$. Then there exists a von Neumann algebra $B$, a faithful normal tracial state $\tau$ on $B$, a $\tau$-symmetric operator-valued covariance matrix $\eta= (\eta_{i,j})_{i,j\in I}$ on $B$, and a generating tuple $b=(b_\omega)_{\omega\in \Omega}\subset B$ so that the above limit is given by $\tau(f(b,\eta))$ for all covariance polynomials $f$.
		\end{prop}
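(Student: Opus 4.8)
The plan is to package the limiting data as a single covariance law and then invoke Proposition~\ref{prop:concrete_covariance_laws}. Define $\mu \colon \C_I\<\Omega\> \to \C$ by
\[
\mu(f) := \lim_{k\to\infty} \tr_{n(k)}\bigl[ f(b^{(k)},\eta^{(k)}) \bigr],
\]
which exists by hypothesis and is clearly linear. The substance of the argument is to verify that $\mu \in \mathbb{L}^\infty(\Omega,I)$; each defining property will be inherited from the corresponding finite-dimensional property of $(\tr_{n(k)}, b^{(k)}, \eta^{(k)})$ by passing to the limit.

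First I would record that $\ev_{(b^{(k)},\eta^{(k)})}$ is a unital $*$-homomorphism that intertwines the formal maps with the concrete ones, i.e.\ $(\Lambda_{i,j}(f))(b^{(k)},\eta^{(k)}) = \eta^{(k)}_{i,j}(f(b^{(k)},\eta^{(k)}))$, so that $(\Lambda_F\otimes I_n)(h)$ evaluates to $(\eta^{(k)}_F\otimes I_n)(h(b^{(k)},\eta^{(k)}))$. Granting this: unitality follows from $\tr_{n(k)}[1]=1$; traciality from each $\tr_{n(k)}$ being a trace; the symmetry identity $\mu(\Lambda_{i,j}(f)g)=\mu(f\Lambda_{j,i}(g))$ from the $\tr_{n(k)}$-symmetry of $\eta^{(k)}$; and complete positivity from the complete positivity of $\eta^{(k)}_F$ together with the elementary fact that $\tau(xy)\ge 0$ for positive $x,y$ in a tracial matrix algebra, applied to $x=(\eta^{(k)}_F\otimes I_n)\bigl(f(b^{(k)},\eta^{(k)})^* f(b^{(k)},\eta^{(k)})\bigr)\ge 0$ and $y=g(b^{(k)},\eta^{(k)})^* g(b^{(k)},\eta^{(k)})\ge 0$ inside $\mathbb{M}_{n(k)}\otimes\mathbb{M}_{|F|}\otimes\mathbb{M}_n$. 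In each case the relation holds at every level $k$ and is preserved in the limit.

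For the exponential bound I would set $R_\omega:=\sup_k\|b^{(k)}_\omega\|$ and $S_{i,j}:=\sup_k\|\eta^{(k)}_{i,j}\|$, both finite by hypothesis, and prove by induction on the number of blocks of $\pi$ — using submultiplicativity of the operator norm and $\|\eta^{(k)}_{i,j}(a)\|\le\|\eta^{(k)}_{i,j}\|\,\|a\|$ — that
\[
\bigl\| \bigl(m_0(t)\Lambda_{\pi,\vi}[m_1(t),\dots,m_{2k-1}(t)]m_{2k}(t)\bigr)(b^{(k)},\eta^{(k)}) \bigr\| \le \prod_{\{p,q\}\in\pi} S_{i_p,i_q}\,\prod_{j=0}^{2k} R[m_j(t)].
\]
Since $|\tr_{n(k)}[\,\cdot\,]|\le\|\cdot\|$, this bound passes to $|\mu(\cdot)|$ in the limit, so $\mu$ is $(R,S)$-exponentially bounded and hence lies in $\mathbb{L}^\infty(\Omega,I)$. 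Applying Proposition~\ref{prop:concrete_covariance_laws} then produces a tracial von Neumann algebra $(B,\tau)$, a $\tau$-symmetric $B$-valued covariance matrix $\eta$, and a generating tuple $b$ with $\tau_{(b,\eta)}=\mu$, i.e.\ $\tau(f(b,\eta))=\lim_{k\to\infty}\tr_{n(k)}[f(b^{(k)},\eta^{(k)})]$ for every covariance polynomial $f$, which is the claim.

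I expect the only (minor) obstacle to be bookkeeping: keeping the tensor-factor orderings straight so that the complete positivity axiom for $\mu$ lines up with the complete positivity of $\eta^{(k)}_F$ after evaluation, and checking the operator-norm estimate for general covariance monomials against the recursive definition of $\eta_{\pi,\vi}$. Neither is a genuine difficulty, but both require a little care.
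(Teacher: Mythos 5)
Your proposal is correct and follows essentially the same route as the paper: define $\mu$ as the limit of the finite-dimensional traces, check that it is a covariance law inheriting each axiom from the $\tr_{n(k)}$-symmetric data, verify $(R,S)$-exponential boundedness with $R_\omega=\sup_k\|b_\omega^{(k)}\|$ and $S_{i,j}$ controlled by the uniform bounds on $\eta_{i,j}^{(k)}$, and conclude via Proposition~\ref{prop:concrete_covariance_laws}. The paper states these verifications more tersely but the argument is identical.
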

		\begin{proof}
			Define a linear functional $\mu$ on covariance polynomials by
			\[
			\mu(f):= \lim_{k\to\infty} \tr_{n(k)}\left[ f( b^{(k)},\eta^{(k)})\right].
			\]
			That $\mu$ is a covariance law follows immediately from the properties of the covariance matrices $\eta^{(k)}$. Moreover, the covariance law determined by $\tr_{n(k)}$ and $\eta^{(k)}$ is $( (\|b_\omega^{(k)}\|)_{\omega\in \Omega}, (\|\eta_{i,j}^{(k)}\|)_{i,j\in I})$-exponentially bounded. So letting
			\[
			R:=\left( \sup_{k\in \N}  \|b_\omega^{(k)}\| \right)_{\omega\in \Omega} \qquad \text{ and } \qquad S:= \left(\sup_{k\in \N} \| \eta_{i,j}^{(k)}(1)\| \right)_{i,j\in I},
			\]
			it follows that $\mu$ is $(R,S)$-exponentially bounded. Appealing to Proposition~\ref{prop:concrete_covariance_laws} then completes the proof.
		\end{proof}

		\section{Proof of weak convergence} \label{sec: weak convergence}
		
		The following notation will be used throughout this section. Let $\eta=(\eta_{i,j})_{i,j\in I}$ be a $\tr_n$-symmetric $\mb{M}_n$-valued covariance matrix and let $X = (X_i)_{i \in I}$ be an $\eta$-Gaussian family.  For each pair partition $\pi \in \mathcal{P}_2(\ell)$ and each $\vec{\imath} = (i_1,\dots,i_\ell)\in I^\ell$, define the $\ell$-tuple $X_{\pi, \vec{\imath}} = \left(X_{\pi,\vec{\imath};r}\right)_{r \in [\ell]}$ as follows.  Let $Y_{\{r,s\}} = \left(Y_{\{r,s\},i}\right)_{i \in I}$ be independent copies of $X$ indexed by the blocks $\{r,s\}\in \pi$.  For each block $\{r,s\}$, set $X_{\pi,\vec{\imath};r} = Y_{\{r,s\},i_r}$ and $X_{\pi,\vec{\imath};s} = Y_{\{r,s\},i_s}$. When $\eta$, $X$, and $X_{\pi,\vec{\imath}}$ are needed over varying matrix algebras $(\mb{M}_{n(k)})_{k\in \N}$, we will decorate them (and their entries) with a superscript: $\eta^{(k)}$, $X^{(k)}$, and $X^{(k)}_{\pi,\vec{\imath}}$.
		
		\subsection{Wick expansion} \label{subsec: Wick expansion}
		
		As usual in random matrix theory, we want to expand moments as a sum over pair partitions using the Wick formula. The following lemma is an immediate consequence of the Wick formula, and is an adaptation of \cite[Corollary 7.4]{BBvH2023} with deterministic matrices $B_j$ inserted between the $X$'s.
		
		\begin{lem} \label{lem: matrix Wick formula}
			Let $X$ and $X_{\pi,\vec{\imath}}$ be as above.  Let $b_1$, \dots, $b_{\ell-1}\in \mathbb{M}_n$ be deterministic matrices.  Then
			\[
			\mathbb{E} [X_{i_1} b_1 X_{i_2} \cdots b_{\ell-1} X_{i_\ell}] = \sum_{\pi \in \mathcal{P}_2(\ell)} \mathbb{E}[X_{\pi,\vec{\imath};1} b_1 X_{\pi,\vec{\imath};2} \cdots b_{\ell-1} X_{\pi,\vec{\imath};\ell}].
			\]
		\end{lem}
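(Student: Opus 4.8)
The plan is to reduce this to the scalar Wick formula (Isserlis's theorem) applied entrywise, and then recognize the resulting sum over pair partitions in the combined index as exactly the stated sum over $\pi \in \mathcal{P}_2(\ell)$ with the independent copies $Y_{\{r,s\}}$ plugged in.

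First I would write out the left-hand side in coordinates. Fixing matrix entries, $\mathbb{E}[X_{i_1} b_1 X_{i_2} \cdots b_{\ell-1} X_{i_\ell}]_{p_0, p_\ell}$ expands as a sum over intermediate indices $p_1, \dots, p_{\ell-1}$ of $(b_1)_{p_1,p_1'} \cdots$ times $\mathbb{E}\bigl[(X_{i_1})_{p_0,p_1} (X_{i_2})_{p_1',p_2} \cdots (X_{i_\ell})_{p_{\ell-1}',p_\ell}\bigr]$, where I am being slightly schematic about how the $b_j$'s interleave. Since the entries of $X = (X_i)_{i\in I}$ are jointly mean-zero Gaussian, the Wick/Isserlis formula applies to this last expectation: it equals $\sum_{\pi \in \mathcal{P}_2(\ell)} \prod_{\{r,s\}\in \pi} \mathbb{E}\bigl[(X_{i_r})_{\cdot,\cdot}(X_{i_s})_{\cdot,\cdot}\bigr]$, the product of pairwise covariances of the matrix entries.

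Next, the key observation: for a single block $\{r,s\}$, the product over that block of the entry-covariances of $X_{i_r}$ and $X_{i_s}$ is precisely the same as the corresponding contribution coming from a fresh independent copy $Y_{\{r,s\}}$ of the whole family $X$, because $\mathbb{E}[(X_{i_r})_{a,b}(X_{i_s})_{c,d}] = \mathbb{E}[(Y_{\{r,s\},i_r})_{a,b}(Y_{\{r,s\},i_s})_{c,d}]$ by definition of the $Y$'s having the same law as $X$. Moreover, since distinct blocks use distinct independent copies, the cross-covariances between different blocks vanish, so applying the Wick formula to the family $X_{\pi,\vec\imath} = (X_{\pi,\vec\imath;r})_r$ — whose only nonvanishing entry-covariances are those within a single block $\{r,s\}$ of $\pi$ — yields exactly the single term indexed by $\pi$ in its own Wick expansion, namely $\mathbb{E}[X_{\pi,\vec\imath;1} b_1 X_{\pi,\vec\imath;2}\cdots b_{\ell-1} X_{\pi,\vec\imath;\ell}] = \prod_{\{r,s\}\in\pi}(\text{entry-covariance terms})$ summed against the $b_j$'s. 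Summing over $\pi$ and reassembling the $b_j$'s (reversing the coordinate expansion) gives the claimed identity.

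I expect the main obstacle to be purely bookkeeping: carefully tracking how the deterministic matrices $b_1, \dots, b_{\ell-1}$ thread through the contracted indices so that, after applying the scalar Wick formula term by term, the $\pi$-term genuinely repackages as $\mathbb{E}[X_{\pi,\vec\imath;1} b_1 \cdots b_{\ell-1} X_{\pi,\vec\imath;\ell}]$ rather than some differently-bracketed expression. Since the $b_j$'s are deterministic they pull out of every expectation, so this is just a matter of matching the index contractions on both sides; there is no analytic content. Given that the lemma is stated as ``an immediate consequence of the Wick formula,'' I would keep the proof to a sentence or two: apply Wick entrywise on the left, note that for each $\pi$ the entrywise covariances of $X$ within each block coincide with those of the independent-copy family $X_{\pi,\vec\imath}$, and observe that the $\pi$-indexed term on the left is therefore the full (single-term) Wick expansion of the corresponding expression on the right.
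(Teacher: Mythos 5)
Your proposal is correct and follows essentially the same route as the paper: apply the Wick formula to the jointly Gaussian family, and observe that the $\pi$-term of the left-hand side's expansion coincides with the unique surviving term in the Wick expansion of $\mathbb{E}[X_{\pi,\vec{\imath};1}b_1\cdots b_{\ell-1}X_{\pi,\vec{\imath};\ell}]$, since the independent copies have matching within-block covariances and vanishing cross-block covariances. The only (cosmetic) difference is bookkeeping: the paper first traces against an arbitrary $b_\ell$ and writes $X_i=\sum_k g_k a_{k,i}$ so that Wick is applied to the scalar standard normals $g_k$, whereas you apply Wick directly to the matrix entries.
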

		
		\begin{proof}
			It suffices to prove that the left-hand side and right-hand side agree after we apply $\tr_n(\ \cdot\ b_\ell)$ to both sides for an arbitrary deterministic matrix $b_\ell$.  Since $X$ is jointly Gaussian, it can be written as $X_i = \sum_{s=1}^m g_s a_{i,s}$ where $a_{i,s}$ are deterministic matrices and $g_i$ are independent standard normal random variables.  Then by the Wick formula,
			\begin{align*}
				\mathbb{E} \circ \tr_n[X_{i_1} b_1 \cdots b_{\ell-1} X_{i_\ell} b_\ell] &= \sum_{s_1, \dots, s_\ell \in [m]} \mathbb{E}[g_{s_1} \cdots g_{s_\ell}] \tr_n[a_{i_1,s_1} b_1 \cdots a_{i_\ell,s_\ell} b_\ell] \\
				&= \sum_{\pi \in \mathcal{P}_2(\ell)} \sum_{s_1, \dots, s_\ell \in [m]} \prod_{\{u,v\} \in \pi} \Cov(g_{s_u} g_{s_v}) \tr_n[a_{i_1,s_1} b_1 \cdots a_{i_\ell,s_\ell} b_\ell].
			\end{align*}
			As for the right-hand side, we can expand $\mathbb{E}[X_{\pi,\vec{\imath};1} b_1 X_{\pi,\vec{\imath},2} \cdots b_{\ell-1} X_{\pi,\vec{\imath},\ell}]$ in the same manner as a sum over partitions $\pi' \in \mathcal{P}_2(\ell)$, and in this case only the partition $\pi' = \pi$ will contribute.  Hence, summing this expression over $\pi$ yields $\mathbb{E} \circ \tr_n[X_{i_1} b_1 \cdots b_{\ell-1} X_{i_\ell} b_\ell]$, as desired.
		\end{proof}
		
		The next lemma evaluates the terms corresponding to non-crossing partitions in terms of $\eta$, obtaining the expression analogous to the moments of an operator-valued semicircular operator (see Lemma~\ref{lem: semicircular partition formula}).
		
		\begin{lem} \label{lem: evaluation of NC terms}
			Let $X$ and $X_{\pi,\vec{\imath}}$ be as above.  Suppose that $\pi$ is a non-crossing pair partition.  Then for $b_1,\dots,b_{\ell-1} \in \mathbb{M}_{n}$, we have
			\[
			\mathbb{E}[X_{\pi,\vec{\imath};1} b_1 X_{\pi,\vec{\imath};2} \cdots b_{\ell-1} X_{\pi,\vec{\imath};\ell}] = \eta_{\pi,\vec{\imath}}[b_1,\dots,b_{\ell-1}]. 
			\]
		\end{lem}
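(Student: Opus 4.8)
The plan is to prove the identity by induction on the number of blocks of the non-crossing pair partition $\pi$, peeling off an ``innermost'' block (one of the form $\{j,j+1\}$, which must exist for any non-crossing pair partition with at least one block). The base case $\ell = 2$, $\pi = \{\{1,2\}\}$ is simply the defining covariance relation $\mathbb{E}[X_{i_1} b_1 X_{i_2}] = \eta_{i_1,i_2}(b_1)$, which holds because $X_{\pi,\vec{\imath}}$ in this case is a single copy $(X_{i_1}, X_{i_2})$ of $X$ and $\eta_{i_1,i_2}(b_1) = \mathbb{E}[X_{i_1} b_1 X_{i_2}]$ by \eqref{eq: matrix covariance}.

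For the inductive step, fix a non-crossing pair partition $\pi \in \mathcal{NC}_2(\ell)$ with more than one block and pick an innermost block $\{j,j+1\} \in \pi$. The key observation is that, by construction of $X_{\pi,\vec{\imath}}$, the two entries $X_{\pi,\vec{\imath};j}$ and $X_{\pi,\vec{\imath};j+1}$ form a copy $(Y_{\{j,j+1\},i_j}, Y_{\{j,j+1\},i_{j+1}})$ of $(X_{i_j}, X_{i_{j+1}})$ which is independent of all the other entries $X_{\pi,\vec{\imath};r}$, $r \notin \{j,j+1\}$, since those belong to other (independent) copies of $X$ indexed by the other blocks. Therefore I can condition on the independent family $\{Y_{\{r,s\}} : \{r,s\} \neq \{j,j+1\}\}$ and integrate out $Y_{\{j,j+1\}}$ first: writing the matrix product $\cdots b_{j-1} X_{\pi,\vec{\imath};j} b_j X_{\pi,\vec{\imath};j+1} b_{j+1} \cdots$ and using linearity of $\mathbb{E}$ together with the relation $\mathbb{E}[X_{i_j} a X_{i_{j+1}}] = \eta_{i_j,i_{j+1}}(a)$ applied to the (conditionally deterministic) matrix $a = b_j$, the middle segment collapses to $b_{j-1}\, \eta_{i_j,i_{j+1}}(b_j)\, b_{j+1}$. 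Here one must be slightly careful at the boundary cases $j = 1$ or $j+1 = \ell$ (i.e.\ when $X_{\pi,\vec{\imath};j}$ or $X_{\pi,\vec{\imath};j+1}$ is at the far left or right of the product), but since $X_{\pi,\vec\imath}$ is constructed so that innermost blocks are adjacent indices, and a nonempty non-crossing pair partition on $[\ell]$ always has an innermost block in the ``interior'' position unless $\ell = 2$, the generic argument applies; in the edge case we use $\mathbb{E}[X_{i_1} b_1 X_{i_2} \cdots]$ with the pair on the outside, which is handled by the same $\eta_{i_1,i_\ell}$ relation with the remaining product inside.

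After collapsing the block $\{j,j+1\}$, what remains is exactly $\mathbb{E}[X_{\pi',\vec{\imath}';1} b_1' \cdots b_{\ell-3}' X_{\pi',\vec{\imath}';\ell-2}]$ where $\pi' = \pi \setminus \{j,j+1\} \in \mathcal{NC}_2(\ell-2)$, $\vec{\imath}' = \vec{\imath} \setminus \{i_j, i_{j+1}\}$, and the new coefficient list is $(b_1, \dots, b_{j-2}, b_{j-1}\eta_{i_j,i_{j+1}}(b_j)b_{j+1}, b_{j+2}, \dots, b_{\ell-1})$ — and this is legitimate because the independent copies $Y_{\{r,s\}}$ for $\{r,s\} \neq \{j,j+1\}$ are precisely the data defining $X_{\pi',\vec{\imath}'}$. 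By the induction hypothesis this equals $\eta_{\pi',\vec{\imath}'}[b_1,\dots,b_{j-1}\eta_{i_j,i_{j+1}}(b_j)b_{j+1},\dots,b_{\ell-1}]$, which by the recursive Definition~\ref{def: cov monomial} of $\eta_{\pi,\vec{\imath}}$ is exactly $\eta_{\pi,\vec{\imath}}[b_1,\dots,b_{\ell-1}]$.

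The main obstacle, such as it is, is purely bookkeeping: one must check that the recursion used to define $\eta_{\pi,\vec{\imath}}$ in Definition~\ref{def: cov monomial} and the recursion obtained by integrating out an innermost block of $X_{\pi,\vec{\imath}}$ match up on the nose, including the boundary conventions when the collapsed block sits at an end of the product. Everything else — the use of independence to condition, the linearity of the conditional expectation, and the base relation $\mathbb{E}[X_i a X_j] = \eta_{i,j}(a)$ — is routine. I expect the cleanest writeup to simply state ``induct on the number of blocks of $\pi$, peeling off an innermost pair $\{j,j+1\}$; by independence of the corresponding copy of $X$ from the rest, $\mathbb{E}$ applied to the inner $X_{\pi,\vec{\imath};j}\, b_j\, X_{\pi,\vec{\imath};j+1}$ yields $\eta_{i_j,i_{j+1}}(b_j)$, and the result follows by comparison with the recursion for $\eta_{\pi,\vec{\imath}}$.''
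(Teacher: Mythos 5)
Your proof is correct and rests on the same two pillars as the paper's: induction on the number of blocks of $\pi$, and independence of the copies $Y_{\{r,s\}}$ indexed by distinct blocks. The only difference is the choice of decomposition: you peel off an innermost adjacent pair $\{j,j+1\}$, which mirrors the recursion in Definition~\ref{def: cov monomial} on the nose, whereas the paper splits on whether $\pi$ has several outer blocks (in which case it factors the expectation over the concatenation $\pi = \pi_1 \sqcup \pi_2$) or a single outer block $\{1,\ell\}$ (in which case it conditions on $X_{\pi,\vec{\imath};1}, X_{\pi,\vec{\imath};\ell}$ and integrates out the inside). Both routes carry the same bookkeeping burden of matching the probabilistic recursion to the algebraic one, and yours arguably does so more directly. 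One small correction: your claim that a non-crossing pair partition with $\ell > 2$ always has an innermost block in an interior position is false --- $\pi = \{\{1,2\},\{3,4\}\}$ has only boundary innermost blocks --- but this does not damage the argument, since when $j=1$ (or $j+1=\ell$) the collapsed factor $\eta_{i_1,i_2}(b_1)b_2$ is a deterministic left (or right) multiplier that pulls out of the expectation by linearity, which is exactly the boundary convention implicit in Definition~\ref{def: cov monomial} (cf.\ Example~\ref{ex:visualizing_cov_poly}).
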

		\begin{proof}
			We proceed by induction on the number of blocks in the partition.  The case where $\pi$ has one block is immediate from the defining equation (\ref{eq: matrix covariance}) in the definition of an $\eta$-Gaussian family.  Consider a non-crossing pair partition $\pi$ with more than one block.  Recall that an \emph{outer block} is a block that is not nested inside any other block.  Since $\pi$ is non-crossing, if it has more than one outer block, then we can write $\pi$ as the concatenation of two partitions $\pi_1 \in \mathcal{NC}_2(\ell_1)$ and $\pi_2 \in \mathcal{NC}_2(\ell_2)$ with $\ell_1 + \ell_2 = \ell$.  Correspondingly, let $\vec{\imath}_1$ and $\vec{\imath}_2$ be the restrictions of $\vec{\imath}$ to $\{1,\dots,\ell_1\}$ and $\{\ell_1+1,\dots,\ell\}$.  Then the variables $(X_{\pi,\vec{\imath};r})_{r \in \{1,\dots,\ell_1\}}$ and $(X_{\pi,\vec{\imath};r})_{r \in \{\ell_1+1,\dots,\ell\}}$ are independent.  Thus,
			\begin{align*}
				\mathbb{E}[X_{\pi,\vec{\imath};1} & b_1 X_{\pi,\vec{\imath};2} \cdots b_{\ell-1} X_{\pi,\vec{\imath};\ell}] \\
				&= \mathbb{E}[X_{\pi,\vec{\imath};1} b_1 X_{\pi,\vec{\imath};2} \cdots b_{\ell_1-1} X_{\pi,\vec{\imath};\ell_1}] b_{\ell_1} \mathbb{E}[X_{\pi,\vec{\imath};\ell_1+1} b_{\ell_1+1} X_{\pi,\vec{\imath};\ell_1+2} \cdots b_{\ell-1} X_{\pi,\vec{\imath};\ell}] \\
				&= \mathbb{E}[X_{\pi_1,\vec{\imath}_1;1} b_1 X_{\pi_1,\vec{\imath}_1;2} \cdots b_{\ell_1-1} X_{\pi_1,\vec{\imath}_1;\ell_1}] b_{\ell_1} \mathbb{E}[X_{\pi_2,\vec{\imath}_2;1} b_{\ell_1+1} X_{\pi_2,\vec{\imath}_2;\ell_1+2} \cdots b_{\ell-1} X_{\pi_2,\vec{\imath}_2;\ell}] \\
				&= \eta_{\pi_1,\vec{\imath}_1}[b_1,\dots,b_{\ell_1-1}] b_{\ell_1} \eta_{\pi_2,\vec{\imath}_2}[b_{\ell_1+1},\dots,b_\ell] \\
				&= \eta_{\pi,\vec{\imath}}[b_1,\dots,b_\ell],
			\end{align*}
			where we have applied the inductive hypothesis to $\pi_1$ and $\pi_2$ and the recursive definition of $\eta_{\pi,\vec{\imath}}$.
			
			On the other hand, if $\pi$ has only one outer block, then $\{1,\ell\} \in \pi$.  Let $\pi' = \{\{r,s\} \subseteq [\ell-2]: \{r+1,s+1\} \in \pi \}$ be the partition on $[\ell-2]$ obtained from the remaining blocks.  Correspondingly, let $\vec{\imath}'$ be the string of indices obtained by restricting to $\{2,\dots,\ell-1\}$.  Recall that $X_{\pi,\vec{\imath};1}$ and $X_{\pi,\vec{\imath};\ell}$ are independent of the other variables.  Let $\mathcal{F}$ be the $\sigma$-algebra generated by $X_{\pi,\vec{\imath};1}$ and $X_{\pi,\vec{\imath};\ell}$.  Hence,
			\begin{align*}
				\mathbb{E}[X_{\pi,\vec{\imath};1} b_1 X_{\pi,\vec{\imath};2} \cdots b_{\ell-1} X_{\pi,\vec{\imath};\ell}] &= \mathbb{E} [\mathbb{E}[X_{\pi,\vec{\imath};1} b_1 X_{\pi,\vec{\imath};2} \cdots b_{\ell-1} X_{\pi,\vec{\imath};\ell} \mid \mathcal{F}] ] \\
				&= \mathbb{E} [X_{\pi,\vec{\imath};1} b_1 \, \mathbb{E}[ X_{\pi,\vec{\imath};2} b_2 \cdots b_{\ell-2} X_{\pi,\vec{\imath};\ell-1} \mid \mathcal{F}] \, b_{\ell-1} X_{\pi,\vec{\imath};\ell} ] \\
				&= \mathbb{E} [X_{\pi,\vec{\imath};1} b_1 \, \mathbb{E}[ X_{\pi,\vec{\imath};2} b_2 \cdots b_{\ell-2} X_{\pi,\vec{\imath};\ell-1}] \, b_{\ell-1} X_{\pi,\vec{\imath};\ell} ] \\
				&= \eta_{i_1,i_\ell}[b_1 \, \eta_{\pi',\vec{\imath}}[b_2,\dots,b_{\ell-2}] \, b_{\ell-1}] \\
				&= \eta_{\pi,\vec{\imath}}[b_1,\dots,b_\ell].\qedhere
			\end{align*}
		\end{proof}
		
		Thus, the main task to obtain convergence is to bound the terms coming from partitions with crossings.  In the next subsection, we recall the ingredients from \cite{BBvH2023} to estimate this.
		
		\subsection{Bounds for crossing terms} \label{subsec: bounds for crossing}
		
		A key ingredient in the analysis of \cite{BBvH2023} is the \emph{matrix alignment parameter} first introduced by Tropp \cite[\S 3.2]{Tropp2018}.  When $X$ and $X'$ are Gaussian random matrices of mean zero, one defines
		\[
		w(X,X')^4 = \sup_{U,V,W \in \mathbb{U}_n} \norm{\mathbb{E}[XU\tilde{X}'VXW\tilde{X}'] },
		\]
		where $\tilde{X}'$ is a copy of $X'$ independent of $X$.  Note that we can just as well replace unitary matrices with any matrices of norm $\leq 1$ since these are the convex hull of the unitary group (see \cite[Proof of Lemma 4.5]{BBvH2023}).  The importance of $w(X,X')$ for is that it can be used to control the contributions of crossings in the expansion of moments of the Gaussian matrix \cite{Tropp2018}.
		
		The alignment parameter $w(X,X')$ is difficult to compute, so \cite{BBvH2023} give practical bounds in terms of more natural parameters.  The first parameter $\sigma(X)$ is given by $\sigma(X)^2 = \norm{\mathbb{E}X^2}$, and in our terminology:
		\[
		\sigma(X) = \norm*{\sum_{j=1}^m a_j^2 }^{1/2} = \norm*{\mathbb{E} X^2}^{1/2} = \norm*{\eta(1)}^{1/2},
		\]
		where $a_j$ are deterministic matrices such that $X = \sum_{j=1}^m g_j a_j$ for independent normal random variables $g_j$ as described in Section \ref{subsec: eta Gaussian ensemble}.
		The second main parameter of \cite{BBvH2023} is defined as $v(X) = \norm{\Cov(X)}^{1/2}$, where $\Cov(X) \in \mathbb{M}_{n^2}$ is the covariance matrix.  In the discussion following Definition~\ref{def: eta-Gaussian} we showed that the covariance matrix is simply the Choi matrix $T$ associated to the completely positive map, and so
		\[
		v(X) = \norm{\Cov(X)}^{1/2} = \norm{T}^{1/2}.
		\]
		
		\begin{prop}[{\cite[Proposition 4.6]{BBvH2023}}]\label{prop: matrix alignment estimate}
			For Gaussian matrices $X$ and $X'$ with mean zero, one has
			\[
			w(X,X')^4 \leq v(X) \sigma(X) v(X') \sigma(X').
			\]
		\end{prop}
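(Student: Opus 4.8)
The plan is to unwind the definition of $w(X,X')$ into a concrete matrix inequality and then estimate the resulting sum by means of a rank‑one ``crossing lemma.''

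\emph{Reduction.} First I would write $X = \sum_{k=1}^{m} g_k A_k$ and $\tilde X' = \sum_{l=1}^{m'} \tilde h_l B_l$ with self‑adjoint matrices $A_k,B_l$ and $\{g_k\}\cup\{\tilde h_l\}$ an independent family of standard real Gaussians; this is possible because $X,\tilde X'$ are mean‑zero self‑adjoint Gaussian matrices and $\tilde X'$ may be taken independent of $X$. Since the unit ball of $\mathbb{M}_n$ is the convex hull of $\mathbb{U}_n$ and $(U,V,W)\mapsto \norm{\E[XU\tilde X'VXW\tilde X']}$ is convex separately in each argument, the supremum defining $w(X,X')^4$ is attained at unitaries; and the Wick formula, together with the independence of the $g$'s from the $\tilde h$'s (so that only the pairing joining the two $g$'s and joining the two $\tilde h$'s survives), gives
\[
\E[XU\tilde X'VXW\tilde X'] = \sum_{k,l} A_k U B_l V A_k W B_l .
\]
Recalling $\sigma(X)^2 = \norm{\sum_k A_k^2} = \norm{\E X^2}$ and $v(X)^2 = \norm{\Cov(X)} = \norm{T}$ (and likewise for $X'$), it then suffices to prove $\norm{\sum_{k,l} A_k U B_l V A_k W B_l} \le v(X)\sigma(X)v(X')\sigma(X')$ for all contractions $U,V,W$.

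\emph{The crossing lemma.} Writing $\eta_X(M) = \E[XMX] = \sum_k A_k M A_k$, the estimate I would isolate is that, for all vectors $\xi,\zeta$,
\[
\norm{\eta_X(\xi\zeta^*)} = \Bigl\| \sum_k (A_k\xi)(A_k\zeta)^* \Bigr\| \le v(X)\,\sigma(X)\,\norm{\xi}\norm{\zeta},
\]
and similarly for $\eta_{X'}$. This follows from the Cauchy--Schwarz inequality $\norm{\sum_k u_k w_k^*} \le \norm{\sum_k u_k u_k^*}^{1/2}\norm{\sum_k w_k w_k^*}^{1/2}$ for the column operator with blocks $u_k$: taking $u_k = A_k\xi$, complete positivity gives $\norm{\sum_k (A_k\xi)(A_k\xi)^*} = \norm{\eta_X(\xi\xi^*)} \le \sigma(X)^2\norm{\xi}^2$ (since $\xi\xi^*\le\norm{\xi}^2 1$ forces $\eta_X(\xi\xi^*)\le\norm{\xi}^2\eta_X(1)$), while taking $w_k = A_k\zeta$,
\[
\Bigl\|\sum_k (A_k\zeta)(A_k\zeta)^*\Bigr\| = \sup_{\norm{\mu}=1}\sum_k |\langle A_k\zeta,\mu\rangle|^2 = \sup_{\norm{\mu}=1}\sum_k |\Tr(A_k\,\mu\zeta^*)|^2 \le v(X)^2\norm{\zeta}^2 ,
\]
because $\mu\zeta^*$ has Hilbert--Schmidt norm $\norm{\zeta}$ and $\sum_k |\Tr(A_k N)|^2$ equals $\langle \Cov(X)\widehat N,\widehat N\rangle$ for the vectorization $\widehat N$ of $N$, hence is at most $v(X)^2\norm{N}_2^2$.

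\emph{Combining.} Finally I would bound $\norm{\sum_{k,l} A_k U B_l V A_k W B_l} = \sup_{\norm{\xi}=\norm{\zeta}=1}\bigl|\langle(\cdots)\xi,\zeta\rangle\bigr|$ by reorganizing the double sum, using cyclicity of the trace, so that the crossing lemma can be applied once to the $A$‑pair and once to the $B$‑pair: summing first over $k$ exhibits the $A$‑pair inside $\eta_X$ applied to the rank‑one matrix $(WB_l\xi)\zeta^*$, yielding $\sum_l \Tr\bigl(\eta_X((WB_l\xi)\zeta^*)\,UB_lV\bigr)$, and re‑expanding and resumming over $l$ exhibits the $B$‑pair inside $\eta_{X'}$ applied to a rank‑one matrix formed from $\xi$ and $U^*A_k\zeta$; the two rank‑one applications contribute $v(X)\sigma(X)$ and $v(X')\sigma(X')$ respectively, and the contractions and a residual factor of operator norm at most one are absorbed. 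The delicate point — and the step I expect to be the main obstacle — is carrying out this reorganization while always keeping the Kraus operators $A_k,B_l$ ``sandwiched,'' so that no uncontrolled $\norm{A_k}$ or $\norm{B_l}$ ever enters; this is precisely what the crossing structure of the word is designed to make possible, and the full bookkeeping is exactly \cite[Proposition 4.6 and its proof]{BBvH2023}.
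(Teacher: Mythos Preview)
The paper gives no proof of this proposition; it is quoted from \cite[Proposition 4.6]{BBvH2023} and used as a black box. So there is no ``paper's own proof'' to compare your argument against---your sketch is an attempt to reconstruct the argument of the cited reference.

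Your reduction and your crossing lemma are both correct. The Wick expansion of $\E[XU\tilde X'VXW\tilde X']$ does yield $\sum_{k,l} A_k U B_l V A_k W B_l$, and the rank-one estimate $\norm{\eta_X(\xi\zeta^*)} \le v(X)\sigma(X)\norm{\xi}\norm{\zeta}$ follows cleanly from the matrix Cauchy--Schwarz together with the two bounds you give on $\norm{\sum_k (A_k w)(A_k w)^*}$.

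The combining step, however, is not actually carried out. Summing over $k$ first, as you suggest, produces $\sum_l \eta_X(U B_l V)\,W B_l$, which is not $\eta_X$ applied to a rank-one matrix, so the crossing lemma does not apply directly; a further Cauchy--Schwarz iteration is needed to disentangle the two $B_l$'s, and arranging this so that each of $v(X),\sigma(X),v(X'),\sigma(X')$ appears exactly once (rather than, say, $\sigma(X)^2\sigma(X')^2$, which a naive Cauchy--Schwarz on the $(k,l)$ sum gives) is where the real work lies. You explicitly defer this step back to \cite[Proposition 4.6]{BBvH2023}, which is the very statement being proved, so the proposal is circular at its crux. The ingredients are right; what is missing is the explicit execution of that iteration.
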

		
		We adapt the matrix alignment parameter to several matrices as follows.
		
		\begin{defi}
			Let $\eta = (\eta_{i,j})_{i,j \in I}$ be a $\tr_n$-symmetric $\mb{M}_n$-valued covariance matrix.  Let $X = (X_i)_{i \in I}$ and $X' = (X_i)_{i \in I}$ be independent $\eta$-Gaussian families.  Let $(\mathbb{M}_n)_1$ denote the unit ball in operator norm in $\mathbb{M}_n$.  For $i, j, i', j'$, define
			\[
			w(\eta,i,j,i',j') = \sup_{U,V,W \in (\mathbb{M}_n)_1} \norm{ \mathbb{E}[X_iUX_{i'}'VX_jWX_{j'}']}^{1/4}
			\]
		\end{defi}
		
		\begin{lem} \label{lem: matrix alignment estimate 2}
			Let $\eta = (\eta_{i,j})_{i,j \in I}$ be a $\tr_n$-symmetric $\mb{M}_n$-valued covariance matrix, and for each finite $F\subseteq I$ let $T_F$ be the Choi matrix associated to $\eta_F=(\eta_{i,j})_{i,j\in F}$. Then for all $i,j,i',j'\in F$ we have
			\[
			w(\eta,i,j,i',j')^4 \leq 2 \norm{T_F} \norm{\eta_F(1)}.
			\]
		\end{lem}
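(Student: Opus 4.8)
The plan is to reduce the four-index quantity $w(\eta,i,j,i',j')$ to a single-matrix alignment parameter $w(X,X')$ by a diagonal-embedding trick, and then apply Proposition \ref{prop: matrix alignment estimate} together with the identifications $\sigma(X)^2 = \norm{\eta(1)}$ and $v(X)^2 = \norm{T}$ already recorded in \S\ref{subsec: bounds for crossing}. First I would package the family $(X_i)_{i\in F}$ into a single larger Gaussian matrix. The natural choice is $Y = \sum_{i\in F} e_{i,i}\otimes X_i \in \mathbb{M}_{|F|}\otimes \mathbb{M}_n$, a block-diagonal matrix whose $i$-th diagonal block is $X_i$; let $Y' = \sum_{i\in F} e_{i,i}\otimes X_i'$ be the analogous matrix built from the independent copy $X'$. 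Then $Y$ is a mean-zero Gaussian matrix over $\mathbb{M}_{|F|}\otimes\mathbb{M}_n$, and its operator-valued covariance is exactly $\eta_F$: indeed $\mathbb{E}[Y a Y] = \sum_{i,j} e_{i,j}\otimes \eta_{i,j}(a_{ij})$ for block matrices $a=(a_{ij})$, so $\sigma(Y)^2 = \norm{\mathbb{E}Y^2} = \norm{\eta_F(1)}$ and $v(Y)^2 = \norm{\Cov(Y)} = \norm{T_F}$.

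Next I would show that $\sup_{i,j,i',j'} w(\eta,i,j,i',j')^4 \leq C\, w(Y,Y')^4$ for a small explicit constant $C$. Given matrices $U,V,W\in(\mathbb{M}_n)_1$ and indices $i,j,i',j'$, choose the block matrices $\tilde U = e_{i',i}\otimes U$, $\tilde V = e_{j,i'}\otimes V$, $\tilde W = e_{j',j}\otimes W$ in $(\mathbb{M}_{|F|}\otimes\mathbb{M}_n)_1$; tracking which blocks survive in the product $Y\tilde U Y' \tilde V Y \tilde W Y'$ one finds $\mathbb{E}[Y\tilde U Y'\tilde V Y\tilde W Y'] = e_{j',i}\otimes \mathbb{E}[X_i U X_{i'}' V X_j W X_{j'}']$, whose norm equals $\norm{\mathbb{E}[X_iUX_{i'}'VX_jWX_{j'}']}$. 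Taking the supremum over $U,V,W$ and over the four indices gives $\sup_{i,j,i',j'} w(\eta,i,j,i',j')^4 \le w(Y,Y')^4$, so in fact $C = 1$ on this route. Combining with Proposition \ref{prop: matrix alignment estimate} applied to $X=Y$, $X'=Y'$ yields $w(Y,Y')^4 \le v(Y)\sigma(Y)v(Y')\sigma(Y') = \norm{T_F}\norm{\eta_F(1)}$, which is even stronger than the claimed bound with the factor $2$; the factor $2$ in the statement simply gives room and the clean bound follows a fortiori.

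The main obstacle I anticipate is bookkeeping rather than conceptual: making sure the block-matrix computation of $\mathbb{E}[Y a Y]$, and of the mixed fourth moment $\mathbb{E}[Y\tilde U Y'\tilde V Y\tilde W Y']$, is done correctly so that exactly one matrix unit $e_{*,*}$ survives and the scalar-index Gaussian expectation appears with the right arrangement of $U,V,W$. One must also confirm that $Y$ genuinely has a covariance of the form required by Proposition \ref{prop: matrix alignment estimate} and that $\norm{\Cov(Y)}$ really is $\norm{T_F}$ — this is immediate from the discussion after Definition \ref{def: eta-Gaussian} once one observes that the Choi matrix of $\eta_F$ viewed over $\mathbb{M}_{|F|}\otimes\mathbb{M}_n$ is (up to the fixed identification $B(\ell^2 F)\otimes\mathbb{M}_n\cong \mathbb{M}_{|F|n}$) the same object as $T_F$. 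If one prefers to avoid even the appearance of needing the factor $2$, the argument above shows it is unnecessary; alternatively, one can split the supremum in the definition of $w(\eta,\cdot)$ using a triangle/polarization step, at the cost of introducing the harmless constant, but the diagonal-embedding route seems cleanest.
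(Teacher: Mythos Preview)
Your approach is correct and in fact slightly cleaner than the paper's. Both arguments reduce to Proposition~\ref{prop: matrix alignment estimate} via a block-diagonal embedding, but the paper embeds only the two relevant indices at a time, taking $Y = X_i \oplus X_j$ and $Y' = X_{i'}' \oplus X_{j'}'$ in $\mathbb{M}_{2n}$. This forces it to handle the degenerate case $i = j$ (or $i' = j'$) separately: when $i = j$ the block matrix $X_i \oplus X_i$ has \emph{repeated} correlated blocks, so $\Cov(Y)$ picks up an all-ones $2\times 2$ factor, and this is precisely where the constant $2$ in the statement comes from. Your full $|F|\times|F|$ embedding $Y = \sum_{k\in F} e_{k,k}\otimes X_k$ never repeats blocks, so the issue does not arise; the map $E_{i,j} \mapsto e_{i,j}\otimes e_{i,j}$ (or $e_{i,j}\otimes e_{j,i}^{\op}$, depending on convention) is an injective $*$-homomorphism of $\mathbb{M}_{|F|}$, which is exactly what gives $\norm{\Cov(Y)} = \norm{T_F}$ on the nose. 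Thus you genuinely obtain $w(\eta,i,j,i',j')^4 \leq \norm{T_F}\norm{\eta_F(1)}$ without the $2$, as you observed.

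Two small bookkeeping points: your indices on $\tilde U,\tilde V,\tilde W$ are transposed (one needs $\tilde U = e_{i,i'}\otimes U$, $\tilde V = e_{i',j}\otimes V$, $\tilde W = e_{j,j'}\otimes W$ so that the successive block products pick out $X_i, X_{i'}', X_j, X_{j'}'$ in order); and $\sigma(Y)^2 = \norm{\mathbb{E}Y^2} = \max_{k\in F}\norm{\eta_{k,k}(1)}$, which is $\leq \norm{\eta_F(1)}$ rather than equal to it, though the inequality is what you need. Neither affects the validity of the argument.
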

		
		\begin{proof}
			Consider the random matrices $Y = X_i \oplus X_j$ and $Y' = X_{i'}' \oplus X_{j'}'$ in $\mathbb{M}_{2n} = \mathbb{M}_2 \otimes \mathbb{M}_n$.  For $U, V, W \in (\mathbb{M}_n)_1$, observe that
			\[
			\begin{bmatrix} X_iUX_{i'}'VX_jWX_{j'}' & 0 \\ 0 & 0 \end{bmatrix}
			=
			\begin{bmatrix} X_i & 0 \\ 0 & X_j \end{bmatrix}
			\begin{bmatrix} U & 0 \\ 0 & 0 \end{bmatrix}
			\begin{bmatrix} X_{i'}' & 0 \\ 0 & X_{j'}' \end{bmatrix}
			\begin{bmatrix} 0 & V \\ 0 & 0 \end{bmatrix}
			\begin{bmatrix} X_i & 0 \\ 0 & X_j \end{bmatrix}
			\begin{bmatrix} 0 & 0 \\ 0 & W \end{bmatrix}
			\begin{bmatrix} X_{i'}' & 0 \\ 0 & X_{j'}' \end{bmatrix}
			\begin{bmatrix} 0 & 0 \\ I & 0 \end{bmatrix},
			\]
			and therefore
			\[
			w(\eta,i,j,i',j')^4 \leq w(Y,Y')^4.
			\]
			By Proposition~\ref{prop: matrix alignment estimate}, this is bounded by $v(Y) \sigma(Y) v(Y') \sigma(Y')$.  We note that
			\[
			\sigma(Y)^2 = \norm*{ \mathbb{E}[(X_i \oplus X_j)^2] } = \norm*{\eta_{i,i}(1) \oplus \eta_{j,j}(1)} \leq \norm{\eta_F(1)},
			\]
			and similarly for $\sigma(Y')$, so that $\sigma(Y) \sigma(Y') \leq \norm{\eta_F(1)}$.  To estimate $v(Y)$,  using the decomposition of $\mathbb{M}_{2n}$ into $2 \times 2$ blocks, we see that
			\begin{align*}
				\Cov(Y) &= \sum_{s,t \in [2n]} \mathbb{E}[Y E_{s,t} Y] \otimes E_{t,s}^{\op} \in (\mathbb{M}_2 \otimes \mathbb{M}_n) \otimes (\mathbb{M}_2 \otimes \mathbb{M}_n)^{\op} \\
				&= \sum_{s,t \in [n]} (E_{1,1} \otimes \eta_{i,i}(E_{s,t})) \otimes (E_{1,1} \otimes E_{t,s})^{\op} \\
				& \quad +\sum_{s,t \in [n]} (E_{1,2} \otimes \eta_{i,j}(E_{s,t})) \otimes (E_{2,1} \otimes E_{t,s})^{\op} \\
				& \quad +\sum_{s,t \in [n]} (E_{2,1} \otimes \eta_{j,i}(E_{s,t})) \otimes (E_{1,2} \otimes E_{t,s})^{\op} \\
				& \quad +\sum_{s,t \in [n]} (E_{2,2} \otimes \eta_{j,j}(E_{s,t})) \otimes (E_{2,2} \otimes E_{t,s})^{\op} 
			\end{align*}
			Since the $2 \times 2$ matrix $E_{p,q}$ is always paired with $E_{q,p}^{\op}$, the norm of this is the same if we remove the second copy of $E_{q,p}^{\op}$.  This yields the operator $T_0$ corresponding to the completely positive map
			\[
			\eta_{\{i,j\}} =
			\begin{pmatrix}
				\eta_{i,i} & \eta_{i,j} \\ \eta_{j,i} & \eta_{j,j}
			\end{pmatrix}.
			\]
			In the case that $i \neq j$, $T_0$ is a corner of the operator $T_F$ given by compressing onto the coordinates $i$ and $j$ in $\mb{M}_{|F|}$, and hence $\norm{\Cov(Y)} \leq \norm{T}$.  In the case that $i = j$, $T_0$ is the $2 \times 2$ matrix of all ones tensored with the corner of $T_F$ corresponding to $\eta_{i,i}$, so that $\norm{\Cov(Y)} \leq 2 \norm{T_F}$.  Similarly, $\norm{\Cov(Y')} \leq 2\norm{T_F}$, so overall
			\[
			v(Y) v(Y') = \norm{\Cov(Y)}^{1/2} \norm{\Cov(Y')}^{1/2} \leq 2 \norm{T_F}.\qedhere
			\]
		\end{proof}
		
		Next we have the following bound for the terms corresponding to partitions $\pi$ with crossings, which is a direct adaptation of \cite[Lemma 7.5]{BBvH2023} incorporating the deterministic matrices.
		
		\begin{lem}[{Cf. \cite[Lemma 7.5]{BBvH2023}}] \label{lem: estimate for crossing terms}
			Let $X$ and $X_{\pi,\vec{\imath}}$ be as above and set $F:=\{i_1,\ldots, i_\ell\}$.  Suppose that the partition $\pi$ has a crossing.  Then for $b_1,\ldots, b_\ell \in \mathbb{M}_{n}$, we have
			\[
			\left| \mathbb{E} \circ \tr_n[X_{\pi,\vec{\imath};1} b_1 X_{\pi,\vec{\imath};2} \cdots b_{\ell-1} X_{\pi,\vec{\imath};\ell} b_\ell] \right| \leq 2 \norm{T_F} \norm{\eta_F(1)} \norm{b_1} \cdots  \norm{b_\ell} \max_{i\in F} \mathbb{E} \tr_n(|X_i|^{\ell-4}).
			\]
			%Furthermore, assuming that $(\log n(k))^3 \norm{T} \to 0$, there is a constant
		\end{lem}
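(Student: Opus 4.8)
\emph{Plan.} The statement is a direct adaptation of \cite[Lemma 7.5]{BBvH2023}, the only new ingredient being that the deterministic matrices $b_j$ are inert and contribute the factor $\prod_j\|b_j\|$; so the plan is to reproduce that argument with the $b_j$'s carried along and with Lemma~\ref{lem: matrix alignment estimate 2} supplying the alignment estimate.

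First I would isolate a crossing. Since $\pi$ has a crossing, there are two blocks $\{a,c\}$ and $\{b,d\}$ with $a<b<c<d$, and I would choose such a pair so that the overlap width $c-b$ is minimal; a short combinatorial check then shows that $\pi$ restricts to a pair partition of $\{b+1,\dots,c-1\}$ (otherwise a block meeting $(b,c)$ would produce a crossing of strictly smaller width). Let $Y$ and $Y'$ be the independent copies of $X$ carried by $\{a,c\}$ and $\{b,d\}$ respectively, and let $\mathcal F$ be the $\sigma$-algebra generated by all the other copies. Conditioning on $\mathcal F$ and using cyclicity of $\tr_n$, one rewrites
\[
\mathbb{E}\circ\tr_n[X_{\pi,\vec{\imath};1}b_1\cdots X_{\pi,\vec{\imath};\ell}b_\ell] = \mathbb{E}\,\tr_n\!\left[\Psi(E_0,E_1,E_2)\,E_3\right],
\]
where $E_0,E_1,E_2,E_3$ are the $\mathcal F$-measurable products of the $b_j$'s and of the other copies sitting between consecutive occurrences of $Y,Y'$ around the trace, and $\Psi(U,V,W)=\mathbb{E}_{Y,Y'}\!\left[Y_{i_a}UY'_{i_b}VY_{i_c}WY'_{i_d}\right]$ is precisely the four-point function from the definition of the alignment parameter.

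Next I would convert $\Psi$ into the claimed constants: by trilinearity $\|\Psi(E_0,E_1,E_2)\|\le\|E_0\|\,\|E_1\|\,\|E_2\|\,w(\eta,i_a,i_c,i_b,i_d)^4$, and Lemma~\ref{lem: matrix alignment estimate 2} bounds the alignment parameter by $2\|T_F\|\,\|\eta_F(1)\|$. It then remains to estimate $\mathbb{E}\!\left[\|E_0\|\,\|E_1\|\,\|E_2\|\,\tr_n(|E_3|)\right]$. Using $\tr_n(|xy|)\le\|x\|\,\tr_n(|y|)$ and submultiplicativity of the operator norm, the $b_j$'s pull out as $\prod_j\|b_j\|$, and one is left with a moment of the remaining $\ell-4$ copies of $X$, which is dominated by $\max_{i\in F}\mathbb{E}\,\tr_n(|X_i|^{\ell-4})$ via the generalized Hölder inequality for normalized Schatten norms together with Hölder's inequality for the expectation.

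I expect the last step to be the main obstacle: one must choose the crossing and the cyclic split carefully (this is where the minimality from the first step is used, integrating out the self-contained copies living in the inner gaps so that the surviving copies enter a single normalized trace) so that the residual moment is exactly $\max_{i\in F}\mathbb{E}\,\tr_n(|X_i|^{\ell-4})$ and no spurious constant appears; a careless split would place some of the copies in operator norm and yield only a weaker bound. This bookkeeping is the technical core of \cite[Lemma 7.5]{BBvH2023}; everything else—the Wick expansion (Lemma~\ref{lem: matrix Wick formula}) and the insertion of the $b_j$'s—is routine.
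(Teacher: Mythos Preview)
Your overall architecture (isolate a crossing, bound the four-point function by the alignment parameter, then H\"older) is the right one, but the specific implementation has a genuine gap. When you bound $\|\Psi(E_0,E_1,E_2)\|\le w^4\|E_0\|\,\|E_1\|\,\|E_2\|$ you have placed three of the four intervening products in \emph{operator} norm. The random copies of $X$ sitting inside $E_0$ and $E_2$ are now trapped in $\|\cdot\|_\infty$, and there is no way to convert $\mathbb{E}[\|X\|^m]$ back into the \emph{trace} moment $\mathbb{E}\,\tr_n(|X|^{\ell-4})$ without a dimension-dependent loss. Your minimality trick only makes the inner gap $(b,c)$ self-contained; it says nothing about the gaps $(a,b)$ and $(c,d)$, which can (and generically do) contain legs of blocks whose partners lie elsewhere, so ``integrating out'' does not empty $E_0$ and $E_2$ of random content. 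The closing claim that this bookkeeping is already the content of \cite[Lemma~7.5]{BBvH2023} is a misremembering of that lemma.

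The paper's proof avoids this entirely by never passing to operator norm for the intervening pieces. It picks \emph{any} crossing $r_1<r_2<s_1<s_2$ (no minimality), cyclically rotates so $r_1=1$, and writes the product as $X_{r_1}Y_1 X_{r_2}Y_2 X_{s_1}Y_3 X_{s_2}Y_4$. The point of \cite[Lemma~4.5]{BBvH2023}, adapted to this setting, is that it already delivers a \emph{Schatten} bound
\[
|\mathbb{E}\,\tr_n[X_{r_1}Y_1 X_{r_2}Y_2 X_{s_1}Y_3 X_{s_2}Y_4]|\;\le\; w(\eta,i_{r_1},i_{s_1},i_{r_2},i_{s_2})^4\prod_{t=1}^4\bigl(\mathbb{E}\,\tr_n|Y_t|^{p_t}\bigr)^{1/p_t},\qquad \sum_t \tfrac{1}{p_t}=1,
\]
with the $Y_t$ independent of the four crossing variables. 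Choosing $p_t=(\ell-4)/\ell_t$ where $\ell_t$ is the number of $X$'s in $Y_t$, and applying the non-commutative H\"older estimate \eqref{eq: NC Holder estimate} to each $Y_t$, gives exactly $\prod_j\|b_j\|\cdot\max_{i\in F}\mathbb{E}\,\tr_n(|X_i|^{\ell-4})$, since $\ell_t p_t=\ell-4$ for every $t$. Lemma~\ref{lem: matrix alignment estimate 2} then supplies $w^4\le 2\|T_F\|\,\|\eta_F(1)\|$. In short: replace your $(L^\infty,L^\infty,L^\infty,L^1)$ endpoint by the interpolated $(L^{p_1},\dots,L^{p_4})$ bound that \cite[Lemma~4.5]{BBvH2023} actually gives, and drop the minimality step, which is not needed.
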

		
		\begin{proof}
			First, we observe the following estimate for a monomial, which follows from the non-commutative H{\"o}lder's inequality applied to $L^\infty(\Omega) \otimes \mathbb{M}_{n}$:
			\begin{equation} \label{eq: NC Holder estimate}
				(\mathbb{E} \tr_{n} |X_{i_1} b_1 \cdots X_{i_\ell} b_\ell|^p)^{1/p} \leq \prod_{t=1}^\ell \norm{b_t}  (\mathbb{E} \tr_n |X_{i_t}|^{\ell p})^{1/\ell p} \leq \prod_{t=1}^\ell \norm{b_t} \max_{i\in F} (\mathbb{E} \tr_n |X_{i}|^{\ell p})^{1/p}.
			\end{equation}
			Now suppose that we have a crossing $r_1 < r_2 < s_1 < s_2$ with $\{r_1,s_1\} \in \pi$ and $\{r_2,s_2\} \in \pi$.  By cyclic symmetry of the trace and of partitions, assume without loss of generality that $r_1 = 1$.  Thus, we have
			\[
			X_{\pi,\vec{\imath};1} b_1 X_{\pi,\vec{\imath};2} \cdots b_{\ell-1} X_{\pi,\vec{\imath};\ell} b_\ell = X_{\pi,\vec{\imath},r_1} Y_1 X_{\pi,\vec{\imath},r_2} Y_2 X_{\pi,\vec{\imath};s_1} Y_3 X_{\pi,\vec{\imath};s_2} Y_4,
			\]
			where $Y_1$, \dots, $Y_4$ are the products of the intervening terms.  Let $\ell_j$ be the degree of $Y_j$ in the $X_{\pi,\vec{\imath}}$'s, so that $\ell_1 + \cdots + \ell_4 = \ell - 4$.  Let $p_j = (\ell - 4) / \ell_j$, so that $1/p_1 + \cdots + 1/p_4 = 1$. 
			Note that the $Y_j$'s are independent of $X_{\pi,\vec{\imath},r_1}$, $X_{\pi,\vec{\imath},r_2}$, $X_{\pi,\vec{\imath};s_1}$, $X_{\pi,\vec{\imath};s_2}$, and so by adapting \cite[Lemma 4.5]{BBvH2023} to this setting, we obtain
			\[
			|\mathbb{E} \tr_n[X_{\pi,\vec{\imath},r_1} Y_1 X_{\pi,\vec{\imath},r_2} Y_2 X_{\pi,\vec{\imath};s_1} Y_3 X_{\pi,\vec{\imath};s_2} Y_4] | \leq w(\eta,i_{r_1},i_{s_1},i_{r_2},i_{s_2})^4 \prod_{t=1}^4 (\mathbb{E} \tr_n |Y_t|^{p_t} )^{1/p_t}.
			\]
			Applying Lemma \ref{lem: matrix alignment estimate 2} to the $w$ term and applying \eqref{eq: NC Holder estimate} to the $Y_t$'s, we can bound this by
			\begin{align*}
				2 \norm{T_F} \norm{\eta_F(1)} \norm{b_1} \cdots \norm{b_\ell} \prod_{t=1}^4 \max_{i\in F} (\mathbb{E} \tr_n |X_{i}|^{\ell_t p_t})^{1/p_t} &= 2 \norm{T_F} \norm{\eta_F(1)} \norm{b_1} \cdots \norm{b_\ell} \prod_{t=1}^4 \max_{i\in I} (\mathbb{E} \tr_n |X_{i}|^{\ell-4})^{1/p_t} \\
				&= 2 \norm{T_F} \norm{\eta_F(1)} \norm{b_1} \cdots \norm{b_\ell} \max_{i\in F} (\mathbb{E} \tr_n |X_{i}|^{\ell-4}). \qedhere
			\end{align*}
		\end{proof}
		
		\subsection{Convergence in expectation} \label{subsec: weak convergence in expectation}
		
		With the estimates for the crossing terms in hand, we are now ready to prove convergence in expectation.  To bound the right-hand side from Lemma~\ref{lem: estimate for crossing terms}, we only need some a priori bound on $\max_{i\in F} \mathbb{E} \tr_n(|X_i^{(k)}|^{\ell-4})$.  Hence, we recall the following moment bound from \cite{BBvH2023}, translated into our notation.
		
		\begin{thm}[{\cite[Theorem 2.7]{BBvH2023}}] \label{thm: BBvH moment bound}
			Let $\eta: \mathbb{M}_n \to \mathbb{M}_n$ be completely positive, let $X$ be an $\eta$-Gaussian random matrix, and let $T$ be the Choi matrix of $\eta$.  Let $X_{\operatorname{free}}$ be an $(\mathbb{M}_n,\eta)$-semicircular.  Let $p \in \N$.  Then
			\[
			|(\mathbb{E} \tr_n(X^{2p}))^{1/2p} - \tau(X_{\operatorname{free}}^{2p})^{1/2p}| \leq 2 p^{3/4} (v(X) \sigma(X))^{1/2} = 2 p^{3/4} \norm{\eta(1)}^{1/4} \norm{T}^{1/4}.
			\]
		\end{thm}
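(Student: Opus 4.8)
The statement is quoted from \cite[Theorem~2.7]{BBvH2023}; let me nevertheless sketch how one would prove it within the framework developed above. Write $X=\sum_{k=1}^{m}g_k a_k$ with $g_k$ i.i.d.\ standard normal and $a_k=a_k^*\in\mathbb{M}_n$, so that $X_{\operatorname{free}}=\sum_{k=1}^{m}a_k\otimes S_k$ is an $(\mathbb{M}_n,\eta)$-semicircular family by Proposition~\ref{prop: identifying X free}. The plan is to carry out a direct moment comparison. First I would Wick-expand $\mathbb{E}\,\tr_n(X^{2p})$ over $\mathcal{P}_2(2p)$ via Lemma~\ref{lem: matrix Wick formula} (all intervening matrices equal to the identity) and expand $\tau(X_{\operatorname{free}}^{2p})$ over $\mathcal{NC}_2(2p)$ via Lemma~\ref{lem: semicircular partition formula}. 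By Lemma~\ref{lem: evaluation of NC terms} the non-crossing contributions to $\mathbb{E}\,\tr_n(X^{2p})$ equal $\tr_n(\eta_{\pi,\vec{\imath}}[I,\dots,I])$ and hence match those of $\tau(X_{\operatorname{free}}^{2p})$ term by term; thus $\mathbb{E}\,\tr_n(X^{2p})-\tau(X_{\operatorname{free}}^{2p})$ is a sum over the \emph{crossing} pair partitions of $[2p]$.

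One cannot finish by bounding each crossing term with Lemma~\ref{lem: estimate for crossing terms} and summing: there are $(2p-1)!!-C_p$ of them ($C_p$ the Catalan number), which is far too lossy for the $p^{3/4}$ rate. To get the sharp estimate I would run the interpolation of \cite[\S 4]{BBvH2023}: realize $X$ and $X_{\operatorname{free}}$ on a common tracial probability space and set
\[
X_t := \sqrt{1-t}\,\sum_{k=1}^{m} g_k a_k + \sqrt{t}\,\sum_{k=1}^{m} a_k \otimes S_k, \qquad t \in [0,1],
\]
so that $X_0=X$ and $X_1=X_{\operatorname{free}}$, and then differentiate $M_t:=(\mathbb{E}\otimes\tr_n\otimes\tau)(X_t^{2p})$. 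Applying Gaussian integration by parts to the $g_k$ and the Schwinger--Dyson (conjugate-variable) equation to the semicircular $S_k$, the first-order contributions agree — because a standard normal and a standard semicircular have matching variance — and cancel; what remains is a fourth-order term of the ``double crossing'' shape governed by Tropp's matrix alignment parameter \cite{Tropp2018}. By the four-point bound \cite[Lemma~4.5]{BBvH2023} — the very input behind our Lemma~\ref{lem: estimate for crossing terms} — this remainder is controlled by $w^4\le\bigl(v(X)\sigma(X)\bigr)^2$ via Proposition~\ref{prop: matrix alignment estimate}, the relevant parameters staying comparable to $v(X)\sigma(X)$ along the whole interpolation since the variance map is preserved, while a non-commutative H\"older estimate as in \eqref{eq: NC Holder estimate} handles the intervening sub-words; since there are $O(p^{4})$ choices of the four crossing positions in a word of length $2p$, one arrives at an estimate of the form
\[
\Bigl| \tfrac{d}{dt} M_t \Bigr| \;\leq\; C\, p^{4}\, \bigl( v(X)\sigma(X) \bigr)^{2}\, M_t^{\,1 - 2/p}\, g(t)
\]
for an absolute constant $C$ and some integrable weight $g$ on $[0,1]$ (the $t$-dependence compensating the singular factors produced by differentiating $X_t$).

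Finally I would convert this into the asserted bound on $2p$-th roots. Writing $f(t):=M_t^{1/2p}$, the displayed inequality becomes $\bigl|\tfrac{d}{dt}(f^{4})\bigr|\le C'\,p^{3}\,\bigl(v(X)\sigma(X)\bigr)^{2}\,g(t)$ (the powers of $M_t$ cancel exactly), so integrating over $[0,1]$ bounds $|f(1)^4-f(0)^4|$ by $C''p^{3}\bigl(v(X)\sigma(X)\bigr)^{2}$, and then subadditivity of $x\mapsto x^{1/4}$ gives $|f(1)-f(0)|\le (C'')^{1/4}p^{3/4}\bigl(v(X)\sigma(X)\bigr)^{1/2}$; tracking the constants so that $(C'')^{1/4}\le 2$ yields the statement. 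I expect the main obstacle to be exactly this quantitative heart of \cite{BBvH2023}: organizing the integration-by-parts/Schwinger--Dyson computation so that the first-order terms visibly cancel, isolating the order-four remainder together with its compensating $t$-weight, and tracking constants so as to land on the exponent $3/4$ — which is the genuinely delicate point, arising from taking a $2p$-th root of an order-four increment rather than from any single coarse bound. Since the theorem is needed here only to supply an a priori bound on $\mathbb{E}\,\tr_n(|X_i^{(k)}|^{\ell-4})$ in the sequel, it suffices for us to cite it.
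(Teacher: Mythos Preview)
The paper does not give its own proof of this theorem; it is quoted directly from \cite[Theorem~2.7]{BBvH2023} and used only as a black box to feed Corollary~\ref{cor: moment bound}. You correctly note this at both the beginning and the end of your proposal, and your sketch of the interpolation argument is a reasonable high-level outline of how the result is obtained in \cite{BBvH2023}, so there is nothing in the present paper to compare against.
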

		
		\begin{cor} \label{cor: moment bound}
			Let $\eta: \mathbb{M}_n \to \mathbb{M}_n$ be completely positive, let $X$ be an $\eta$-Gaussian random matrix, and let $T$ be the Choi matrix of $\eta$.  Then for $p \in \N$,
			\[
			(\mathbb{E} \tr_n(X^{2p}))^{1/2p} \leq 2 \norm{\eta(1)}^{1/2} + 2 p^{3/4} \norm{\eta(1)}^{1/4} \norm{T}^{1/4}. 
			\]
		\end{cor}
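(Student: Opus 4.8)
The plan is to read this off directly from Theorem~\ref{thm: BBvH moment bound}, the only extra ingredient being an a priori bound on the operator-valued semicircular moments $\tau(X_{\operatorname{free}}^{2p})$. Applying the triangle inequality to the estimate of Theorem~\ref{thm: BBvH moment bound} gives
\[
(\mathbb{E}\tr_n(X^{2p}))^{1/2p} \;\le\; \tau(X_{\operatorname{free}}^{2p})^{1/2p} + 2p^{3/4}\norm{\eta(1)}^{1/4}\norm{T}^{1/4},
\]
so it suffices to show $\tau(X_{\operatorname{free}}^{2p})^{1/2p}\le 2\norm{\eta(1)}^{1/2}$.

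To bound the semicircular moment I would use the partition formula. Here $X_{\operatorname{free}}$ is an $(\mathbb{M}_n,\eta)$-semicircular for the single map $\eta$ (index set a singleton), so Lemma~\ref{lem: semicircular partition formula} applied with all intervening matrices equal to $1$ yields $E_{\mathbb{M}_n}[X_{\operatorname{free}}^{2p}] = \sum_{\pi\in\mathcal{NC}_2(2p)}\eta_{\pi,\vec{\imath}}[1,\dots,1]$, hence $\tau(X_{\operatorname{free}}^{2p}) = \sum_{\pi\in\mathcal{NC}_2(2p)}\tr_n(\eta_{\pi,\vec{\imath}}[1,\dots,1])$. Since $\eta$ is completely positive, $\norm{\eta(a)}\le\norm{\eta(1)}\norm{a}$ for all $a$, and an easy induction on the number of blocks (following the recursion in Definition~\ref{def: cov monomial} and using submultiplicativity of the operator norm) gives $\norm{\eta_{\pi,\vec{\imath}}[1,\dots,1]}\le\norm{\eta(1)}^p$ for each $\pi\in\mathcal{NC}_2(2p)$; since $\tr_n$ is a state this forces $|\tr_n(\eta_{\pi,\vec{\imath}}[1,\dots,1])|\le\norm{\eta(1)}^p$. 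As $|\mathcal{NC}_2(2p)|$ is the Catalan number $\frac{1}{p+1}\binom{2p}{p}\le 4^p$, we conclude $\tau(X_{\operatorname{free}}^{2p})\le 4^p\norm{\eta(1)}^p$, i.e.\ $\tau(X_{\operatorname{free}}^{2p})^{1/2p}\le 2\norm{\eta(1)}^{1/2}$. Equivalently, one may simply invoke the standard bound $\norm{X_{\operatorname{free}}}\le 2\norm{\eta}_{\mathrm{cb}}^{1/2}=2\norm{\eta(1)}^{1/2}$ for operator-valued semicirculars with completely positive covariance, which dominates every $L^{2p}$-norm. Combining with the first display gives the corollary.

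There is essentially no obstacle here; the only step requiring any thought is the norm estimate $\norm{\eta_{\pi,\vec{\imath}}[1,\dots,1]}\le\norm{\eta(1)}^p$, which is a special case of the exponential-boundedness bookkeeping used elsewhere (cf.\ the proof of Proposition~\ref{prop:concrete_covariance_laws}) and follows verbatim from the recursive definition of $\eta_{\pi,\vec{\imath}}$.
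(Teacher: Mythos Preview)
Your proof is correct and follows the same approach as the paper: apply the triangle inequality to Theorem~\ref{thm: BBvH moment bound} and bound $\tau(X_{\operatorname{free}}^{2p})^{1/2p}$ by $2\norm{\eta(1)}^{1/2}$. The paper simply invokes the operator norm bound $\norm{X_{\operatorname{free}}}\le 2\norm{\eta(1)}^{1/2}$ from \cite[Equation (2.8)]{shlyakhtenko1999valued} (which you also mention as an alternative), whereas you additionally spell out the Catalan-number argument via the partition formula---both routes are fine.
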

		
		\begin{proof}
			Since $X_{\operatorname{free}}$ is an $(\mathbb{M}_n,\eta)$-semicircular family by Proposition~\ref{prop: identifying X free}, it follows that $\norm{X_{\operatorname{free}}} \leq 2 \norm{\eta(1)}^{1/2}$ (see \cite[Equation (2.8)]{shlyakhtenko1999valued}).  Therefore,
			\[
			\tau(X_{\operatorname{free}}^{2p})^{1/2p} \leq \norm{X_{\operatorname{free}}} \leq 2 \norm{\eta(1)}^{1/2}.
			\]
			The asserted bound thus follows from Theorem \ref{thm: BBvH moment bound} and the triangle inequality.
		\end{proof}
		
		As an intermediate step to showing convergence in expectation for covariance polynomials, we first show show convergence for \emph{non-commutative polynomials} but in \emph{random} approximations for $B$.

		\begin{lem} \label{lem: weak convergence part 1}
			Let $X$ be a $(B,\eta)$-semicircular family for a tracial von Neumann algebra $(B,\tau)$ generated by $b=(b_\omega)_{\omega\in \Omega}$ and a $B$-valued covariance matrix $\eta=(\eta_{i,j})_{i,j\in I}$. For a sequence of integers $(n(k))_{k\in \N}$, let $\eta^{(k)}=(\eta_{i,j}^{(k)})_{i,j\in I}$ be a $\tr_{n(k)}$-symmetric $\mb{M}_{n(k)}$-valued covariance matrix satisfyng $\sup_k \|\eta_{i,j}^{(k)}\| <\infty$ for each $i,j\in I$. For each finite $F\subseteq I$, let $T^{(k)}_F$ be the Choi matrix associated to $\eta_F=(\eta_{i,j})_{i,j\in F}$, and assume
			\[
			\lim_{k \to \infty} \norm{T^{(k)}}_F = 0.
			\]
			Let $B^{(k)}=(B_\omega^{(k)})_{\omega \in \Omega}$ be random matrices in $\mathbb{M}_{n(k)}$ such that $\sup_k \mathbb{E} \norm{B_\omega^{(k)}}^m < \infty$ for each $\omega\in \Omega$ and $m \in \N$ and assume that
			\[
			\lim_{k\to\infty} \mathbb{E} \tr_{n(k)}[f(\eta^{(k)},B^{(k)})] = \tau(f(\eta,b))
			\]
			for all covariance polynomials $f\in \C_I\<\Omega\>$. If $X^{(k)}$ is an $\eta^{(k)}$-Gaussian family independent of $B^{(k)}$ and $X$ is a $(B,\eta)$-semicircular family, then for every non-commutative polynomial $p\in \C\<\Omega\sqcup I\>$, we have
			\[
			\lim_{k \to \infty} \mathbb{E} \tr_{n(k)}[p(B^{(k)}, X^{(k)})] = \tau(p(b,X)).
			\]
		\end{lem}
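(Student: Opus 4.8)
The plan is to reduce to a single monomial, conditionally expand in the Wick formula of Lemma~\ref{lem: matrix Wick formula}, and then split the resulting pair partitions into non-crossing ones (which will reproduce $\tau(p(b,X))$ via Lemma~\ref{lem: semicircular partition formula}) and crossing ones (which will vanish by Lemma~\ref{lem: estimate for crossing terms}). By $\C$-linearity of $\tr_{n(k)}$ and $\tau$ it suffices to handle one non-commutative monomial, which I would write as $p = q_0(t)\,t_{i_1}q_1(t)\,t_{i_2}\cdots t_{i_\ell}q_\ell(t)$, where $t_{i_1},\dots,t_{i_\ell}$ are the occurrences of the $X$-variables (so $\vec{\imath}=(i_1,\dots,i_\ell)\in I^\ell$) and $q_0,\dots,q_\ell$ are monomials in the $b$-variables; if $\ell=0$ then $p\in\C\<b\>\subseteq\C_\eta\<b\>$ is itself a covariance polynomial and the hypothesis applies directly. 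For $\ell\geq 1$, set $F=\{i_1,\dots,i_\ell\}$, put $c_j^{(k)}=q_j(B^{(k)})$ for $1\leq j\leq\ell-1$ and $c_\ell^{(k)}=q_\ell(B^{(k)})q_0(B^{(k)})$; traciality of $\tr_{n(k)}$ gives $\tr_{n(k)}[p(B^{(k)},X^{(k)})] = \tr_{n(k)}[X_{i_1}^{(k)}c_1^{(k)}\cdots X_{i_\ell}^{(k)}c_\ell^{(k)}]$. Since $X^{(k)}$ is independent of $B^{(k)}$, I would condition on $B^{(k)}$, apply Lemma~\ref{lem: matrix Wick formula} to the now-deterministic insertions $c_1^{(k)},\dots,c_{\ell-1}^{(k)}$, and take the outer expectation, obtaining
\[
\mathbb{E}\tr_{n(k)}[p(B^{(k)},X^{(k)})] = \sum_{\pi\in\mathcal{P}_2(\ell)}\mathbb{E}\tr_{n(k)}\!\left[X_{\pi,\vec{\imath};1}^{(k)}c_1^{(k)}\cdots X_{\pi,\vec{\imath};\ell}^{(k)}c_\ell^{(k)}\right].
\]

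For a non-crossing $\pi\in\mathcal{NC}_2(\ell)$, Lemma~\ref{lem: evaluation of NC terms} applied conditionally on $B^{(k)}$ gives $\mathbb{E}[X_{\pi,\vec{\imath};1}^{(k)}c_1^{(k)}\cdots X_{\pi,\vec{\imath};\ell}^{(k)}\mid B^{(k)}] = \eta^{(k)}_{\pi,\vec{\imath}}[c_1^{(k)},\dots,c_{\ell-1}^{(k)}]$, so the corresponding summand equals $\mathbb{E}\tr_{n(k)}[f_\pi(\eta^{(k)},B^{(k)})]$ for the genuine covariance polynomial $f_\pi := \Lambda_{\pi,\vec{\imath}}[q_1(t),\dots,q_{\ell-1}(t)]\,q_\ell(t)q_0(t)\in\C_I\<\Omega\>$. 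The hypothesis then yields convergence to $\tau(f_\pi(\eta,b))$; summing over $\pi\in\mathcal{NC}_2(\ell)$ and invoking Lemma~\ref{lem: semicircular partition formula} together with traciality of $\tau$ and the $B$-bimodule property of $E_B$ recovers $\tau\bigl(q_0(b)\,E_B[X_{i_1}q_1(b)\cdots X_{i_\ell}]\,q_\ell(b)\bigr) = \tau(p(b,X))$. (When $\ell$ is odd, $\mathcal{P}_2(\ell)=\emptyset$ and both sides are $0$.)

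It remains to show the crossing partitions contribute nothing in the limit; this is vacuous unless $\ell\geq 4$. For $\pi\in\mathcal{P}_2(\ell)$ with a crossing, I would condition on $B^{(k)}$ and apply Lemma~\ref{lem: estimate for crossing terms} with $b_j=c_j^{(k)}$, using independence to replace $\mathbb{E}[\tr_{n(k)}(|X_i^{(k)}|^{\ell-4})\mid B^{(k)}]$ by the deterministic quantity $\mathbb{E}\tr_{n(k)}(|X_i^{(k)}|^{\ell-4})$; taking the outer expectation bounds the summand by
\[
2\norm{T_F^{(k)}}\,\norm{\eta_F^{(k)}(1)}\;\mathbb{E}\!\left[\norm{c_1^{(k)}}\cdots\norm{c_\ell^{(k)}}\right]\;\max_{i\in F}\mathbb{E}\tr_{n(k)}(|X_i^{(k)}|^{\ell-4}).
\]
Here $\norm{\eta_F^{(k)}(1)}\leq\bigl(\sum_{i,j\in F}\norm{\eta_{i,j}^{(k)}(1)}^2\bigr)^{1/2}$ is bounded in $k$ by hypothesis; $\mathbb{E}[\norm{c_1^{(k)}}\cdots\norm{c_\ell^{(k)}}]$ is bounded via generalized H{\"o}lder since each $\norm{c_j^{(k)}}$ is dominated by a product of norms $\norm{B_\omega^{(k)}}$ with $\sup_k\mathbb{E}\norm{B_\omega^{(k)}}^m<\infty$ for all $m$; and $\mathbb{E}\tr_{n(k)}(|X_i^{(k)}|^{\ell-4})$ is bounded by Corollary~\ref{cor: moment bound} (passing from $|X_i^{(k)}|^{\ell-4}$ to an even power by Jensen) using $\sup_k\norm{\eta_{i,i}^{(k)}}<\infty$ and $\sup_k\norm{T_F^{(k)}}<\infty$. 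Hence each crossing summand is $O(\norm{T_F^{(k)}})\to 0$, and since $\mathcal{P}_2(\ell)$ is finite the whole crossing contribution vanishes, leaving $\mathbb{E}\tr_{n(k)}[p(B^{(k)},X^{(k)})]\to\tau(p(b,X))$.

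I expect the main difficulty to be organizational rather than a single hard estimate: one must condition correctly on the random matrices $B^{(k)}$ so that Lemmas~\ref{lem: matrix Wick formula}, \ref{lem: evaluation of NC terms}, and~\ref{lem: estimate for crossing terms}, all stated for deterministic insertions, apply; one must recognize the non-crossing Wick terms as traces of honest covariance polynomials so that the weak-convergence hypothesis is used verbatim; and one must verify that, in the absence of any a priori operator-norm bound on $B_\omega^{(k)}$ or $X_i^{(k)}$, the moment assumptions on $B_\omega^{(k)}$ together with Corollary~\ref{cor: moment bound} still furnish enough uniform integrable control to send the crossing terms to zero along with $\norm{T_F^{(k)}}$.
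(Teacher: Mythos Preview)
Your proposal is correct and follows essentially the same approach as the paper's proof: reduce to a monomial, use traciality to absorb the outer factor, condition on $B^{(k)}$ to apply the Wick expansion (Lemma~\ref{lem: matrix Wick formula}), evaluate the non-crossing terms via Lemma~\ref{lem: evaluation of NC terms} as covariance polynomials so the hypothesis and Lemma~\ref{lem: semicircular partition formula} give the limit $\tau(p(b,X))$, and kill the crossing terms with Lemma~\ref{lem: estimate for crossing terms} together with Corollary~\ref{cor: moment bound} and H{\"o}lder. Your explicit handling of the $\ell=0$, odd $\ell$, and $\ell<4$ edge cases, and your remark that passing from $|X_i|^{\ell-4}$ to an even power needs Jensen/H{\"o}lder, are details the paper leaves implicit but treats the same way.
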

		
		\begin{proof}
			It suffices to consider a monomial of the form
			\[
			g_0(B^{(k)}) X_{i_1}^{(k)} g_1(B^{(k)}) \cdots X_{i_\ell}^{(k)} g_\ell(B^{(k)}),
			\]
			where the $g_t$'s are non-commutative polynomials.   By cyclic symmetry of the trace, we may combine $g_0(B^{(k)})$ with $g_\ell(B^{(k)})$. Set $\vec{\imath}:=(i_1,\ldots, i_\ell)$ and $F:=\{i_1,\ldots, i_\ell\}$.  By conditioning on the value of $B^{(k)}$ and applying Lemma~\ref{lem: matrix Wick formula},
			\[
			\mathbb{E} \tr_{n(k)}[X_{i_1}^{(k)} g_1(B^{(k)}) \cdots  X_{i_\ell}^{(k)} g_\ell(B^{(k)})] = \sum_{\pi \in \mathcal{P}_2(\ell)} \mathbb{E} \tr_n[X_{\pi,\vec{\imath};1}^{(k)} g_1(B^{(k)}) \cdots X_{\pi,\vec{\imath};\ell}^{(k)} g_\ell(B^{(k)})].
			\]
			By conditioning on the value of $B^{(k)}$ and applying Lemma \ref{lem: estimate for crossing terms}, if $\pi$ has a crossing, then
			\begin{multline*}
				|\mathbb{E} \tr_n[X_{\pi,\vec{\imath};1}^{(k)} g_1(B^{(k)}) \cdots X_{\pi,\vec{\imath};\ell}^{(k)} g_\ell(B^{(k)})]| \\
				\leq 2 \norm{T_F^{(k)}} \norm{\eta_F^{(k)}(1)} \mathbb{E}\left[ \norm{g_1(B^{(k)})} \cdots \norm{g_\ell(B^{(k)})} \right] \max_{i\in F} \mathbb{E} \tr_{n(k)}(|X_i^{(k)}|^{\ell-4}).
			\end{multline*}
			By Corollary \ref{cor: moment bound} and H{\"o}lder's inequality, $\max_i \mathbb{E} \tr_n(|X_i^{(k)}|^{\ell-4})$ is bounded as $k \to \infty$. Our norm assumptions then imply the overall contribution from each partition $\pi$ with a crossing vanishes.  Hence, we are left with the contributions from non-crossing partitions, which in turn can be evaluated using Lemma \ref{lem: evaluation of NC terms} to give
			\[
			\lim_{k \to \infty} \left| \mathbb{E} \tr_{n(k)}[X_{i_1}^{(k)} g_1(B^{(k)}) \cdots  X_{i_\ell}^{(k)} g_\ell(B^{(k)})] - \sum_{\pi \in \mathcal{NC}_2(\ell)} \mathbb{E} \tr_{n(k)}\left[\eta^{(k)}_{\pi,\vec{\imath}}[g_1(B^{(k)}),\dots,g_{\ell-1}(B^{(k)})] g_\ell(B^{(k)})\right] \right| = 0.
			\]
			Our assumption of convergence for covariance polynomials then gives
			\begin{align*}
				\lim_{k \to \infty} \tr_{n(k)}\left[\eta^{(k)}_{\pi,\vec{\imath}}[g_1(B^{(k)}),\dots,g_{\ell-1}(B^{(k)})] g_\ell(B^{(k)})\right] &= \tau(\eta_{\pi,\vec{\imath}}[g_1(b),\dots,g_{\ell-1}(b)] g_\ell(b))\\
				&= \tau(X_{i_1} g_1(b) \cdots X_{i_\ell} g_\ell(b)),
			\end{align*}
			where the last equality comes from Lemma~\ref{lem: semicircular partition formula}.  Hence, the desired conclusion follows from the triangle inequality.
		\end{proof}

		\begin{lem} \label{lem: weak convergence part 2}
			Consider the same setup and hypotheses as Lemma~\ref{lem: weak convergence part 1}.  Then for every covariance polynomial $f\in \C_I\<\Omega\sqcup I\>$, we have
			\[
			\lim_{k \to \infty} \mathbb{E} \tr_{n(k)}[f(\eta^{(k)}, B^{(k)}, X^{(k)})] = \tau[f(\eta,b,X)].
			\]
		\end{lem}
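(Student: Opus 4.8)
The plan is to deduce this from Lemma~\ref{lem: weak convergence part 1} by an ``unrolling'' device. I will rewrite $f(\eta^{(k)},B^{(k)},X^{(k)})$, without changing its expected trace, so that each application of a map $\eta^{(k)}_{i,j}$ is turned into a sandwich between two fresh independent $\eta^{(k)}$-Gaussian matrices; this converts the covariance polynomial into an ordinary non-commutative polynomial in an enlarged Gaussian family, to which Lemma~\ref{lem: weak convergence part 1} applies directly. It then remains to identify the resulting limit with $\tau[f(\eta,b,X)]$, and for this the fresh copies in the limit will play the role of free copies of $X$ with amalgamation over $B$.

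By linearity it suffices to treat a single covariance monomial $f=m_0\,\Lambda_{\pi,\vec{\imath}}[m_1,\dots,m_{2k-1}]\,m_{2k}$ with $\pi\in\mathcal{NC}_2(2k)$, $\vec{\imath}\in I^{2k}$, and the $m_r$ monic $*$-monomials in the indeterminates indexed by $\Omega\sqcup I$ (the nesting of the maps being encoded in $\pi$). Fix a countable family $Y^{(k)}=(Y^{(k)}_{(s)})_{s\in\N}$ of mutually independent $\eta^{(k)}$-Gaussian families, independent also of $(B^{(k)},X^{(k)})$, and build the decorated tuple $Y^{(k)}_{\pi,\vec{\imath}}$ as in the preamble to Section~\ref{sec: weak convergence}, using one of these copies for each block of $\pi$. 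Writing $c_r:=m_r(B^{(k)},X^{(k)})$ and conditioning on the $\sigma$-algebra $\mc{G}$ generated by $(B^{(k)},X^{(k)})$, Lemma~\ref{lem: evaluation of NC terms} applies verbatim with $\mathbb{E}$ replaced by $\mathbb{E}(\,\cdot\mid\mc{G})$ (the $c_r$ being $\mc{G}$-measurable and the $Y^{(k)}$ independent of $\mc{G}$), so that
\[
\mathbb{E}\big[\,Y^{(k)}_{\pi,\vec{\imath};1}\,c_1\,Y^{(k)}_{\pi,\vec{\imath};2}\cdots c_{2k-1}\,Y^{(k)}_{\pi,\vec{\imath};2k}\,\big|\,\mc{G}\,\big]=\eta^{(k)}_{\pi,\vec{\imath}}[c_1,\dots,c_{2k-1}].
\]
Multiplying by $c_0:=m_0(B^{(k)},X^{(k)})$ and $c_{2k}:=m_{2k}(B^{(k)},X^{(k)})$, applying $\tr_{n(k)}$ and $\mathbb{E}$, and using the tower property, we obtain
\[
\mathbb{E}\,\tr_{n(k)}\big[f(\eta^{(k)},B^{(k)},X^{(k)})\big]=\mathbb{E}\,\tr_{n(k)}\big[\,p_f(B^{(k)},X^{(k)},Y^{(k)})\,\big],
\]
where $p_f$ is the ordinary non-commutative $*$-monomial $m_0\,y_{\pi,\vec{\imath};1}\,m_1\cdots y_{\pi,\vec{\imath};2k}\,m_{2k}$ in the variables of $B^{(k)}$, $X^{(k)}$ and the copies $Y^{(k)}$.

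The combined family $(X^{(k)},Y^{(k)})$ is an $\hat\eta^{(k)}$-Gaussian family for the block-diagonal covariance matrix $\hat\eta^{(k)}$ on $\hat I:=I\sqcup(\N\times I)$ with $\hat\eta^{(k)}_{(i,s),(j,s')}=\delta_{s,s'}\,\eta^{(k)}_{i,j}$ (the original $X^{(k)}$ placed on the $s=0$ copy). All hypotheses of Lemma~\ref{lem: weak convergence part 1} transfer to this enlarged data: for finite $F\subseteq\hat I$ the Choi matrix of $\hat\eta^{(k)}$ restricted to $F$ is block-diagonal with blocks among the matrices $T^{(k)}_{F'}$, $F'\subseteq I$ finite, so its norm still tends to $0$; the operator-norm bounds are inherited; and $(\hat\eta^{(k)},B^{(k)})$ converges in covariance law to $(\hat\eta,b)$ because a covariance polynomial over $(\hat\eta,b)$ either vanishes identically (whenever it uses a map $\Lambda_{(i,s),(j,s')}$ with $s\ne s'$) or reduces to a covariance polynomial over $(\eta,b)$. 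Lemma~\ref{lem: weak convergence part 1} thus yields
\[
\lim_{k\to\infty}\mathbb{E}\,\tr_{n(k)}\big[p_f(B^{(k)},X^{(k)},Y^{(k)})\big]=\tau\big(p_f(b,X,S)\big),
\]
where $(X,S)$ is a $(B,\hat\eta)$-semicircular family with $X$ its $s=0$ component (permissible by uniqueness of distribution); by the vanishing of mixed $B$-valued cumulants (cf.\ Proposition~\ref{prop: identifying X free}), the components $S=(S_{(s)})_{s\in\N}$ are free copies of $X$ with amalgamation over $B$, and $S$ is free over $B$ from $M:=\mathrm{W}^*(b,X)$.

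It remains to show $\tau(p_f(b,X,S))=\tau(f(\eta,b,X))$, which I would get by computing $E_M[p_f(b,X,S)]$, where $E_M$ is the $\tau$-preserving conditional expectation onto $M$. Being a $(B,\hat\eta|_{\N\times I})$-semicircular family free over $B$ from $M$, the family $S$ is itself a semicircular family over $M$, with covariance $(S_{(s),i},S_{(s'),j})\mapsto\delta_{s,s'}\,\eta_{i,j}(E^M_B(\cdot))$ --- the standard base change for operator-valued semicircular families in the presence of freeness with amalgamation, which can be checked from the cumulant characterization or the Fock-space model. Applying the moment formula of Lemma~\ref{lem: semicircular partition formula}, now with base algebra $M$, to $E_M[S_{\pi,\vec{\imath};1}\,m_1(b,X)\,S_{\pi,\vec{\imath};2}\cdots S_{\pi,\vec{\imath};2k}]$, each $\rho\in\mathcal{NC}_2(2k)$ contributes a product, over the pairs of $\rho$, of the covariance maps just described; such a contribution vanishes unless every pair of $\rho$ joins two $S$-variables from the same independent copy, that is, unless $\rho=\pi$. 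Hence $E_M[S_{\pi,\vec{\imath};1}\,m_1(b,X)\cdots S_{\pi,\vec{\imath};2k}]=\eta_{\pi,\vec{\imath}}[m_1(b,X),\dots,m_{2k-1}(b,X)]$, with $\eta_{i,j}$ read on $M$ by precomposition with $E^M_B$ --- exactly the reading used in $f(\eta,b,X)$ --- so that $E_M[p_f(b,X,S)]=f(\eta,b,X)$ and therefore $\tau(p_f(b,X,S))=\tau(f(\eta,b,X))$. Chaining the three displayed identities proves the lemma. The one genuinely delicate point is this last step: one must either invoke the base-change principle for operator-valued semicircular families, or else establish $\tau(p_f(b,X,S))=\tau(f(\eta,b,X))$ by hand, expanding both sides with the moment--cumulant formula and matching the surviving non-crossing pairings against the recursive definition of $\eta_{\pi,\vec{\imath}}$; by contrast the unrolling step and the transfer of hypotheses to $\hat\eta^{(k)}$ are routine.
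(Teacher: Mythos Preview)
Your proof is correct and follows essentially the same approach as the paper: unroll each occurrence of $\eta^{(k)}_{i,j}$ into a pair of fresh independent Gaussian copies via Lemma~\ref{lem: evaluation of NC terms}, apply Lemma~\ref{lem: weak convergence part 1} to the enlarged block-diagonal Gaussian family, and identify the limit using the fact that $B$-free copies of $X$ remain semicircular over $M=\mathrm{W}^*(B,X)$. For the step you correctly flag as delicate---the base change making $S$ an $(M,\eta\circ E_B)$-semicircular family---the paper invokes \cite[Proposition 3.8]{shlyakhtenko2000cpentropy} (together with \cite[Example 2.6]{shlyakhtenko1999valued} for the block-diagonal covariance giving $B$-freeness), which is precisely the reference that underwrites your ``only $\rho=\pi$ survives'' argument.
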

		
		\begin{proof}
			It suffices to consider the case where $f$ is a covariance monomial.  In particular, suppose
			\[
			f(\eta^{(k)}, B^{(k)}, X^{(k)}) = g_0(B^{(k)},X^{(k)}) \eta^{(k)}_{\pi,\vec{\imath}}[g_1(B^{(k)},X^{(k)}), \dots, g_{\ell-1}(B^{(k)},X^{(k)})] g_\ell(B^{(k)},X^{(k)}).
			\]
			By cyclic symmetry of the trace, absorb $g_0$ into $g_\ell$. Set $\vec{\imath}:=(i_1,\ldots, i_\ell)$ and let $X_{\pi,\vec{\imath}}^{(k)}$ be defined as at the beginning of this section, and assume that $X_{\pi,\vec{\imath}}^{(k)}$ is independent of $(B^{(k)},X^{(k)})$.  By conditioning and Lemma~\ref{lem: evaluation of NC terms}, we get
			\begin{align*}
				\mathbb{E} \tr_{n(k)} &\left[\eta^{(k)}_{\pi,\vec{\imath}}[g_1(B^{(k)},X^{(k)}), \dots, g_{\ell-1}(B^{(k)},X^{(k)})] g_\ell(B^{(k)},X^{(k)})\right] \\
				&= \mathbb{E} \tr_{n(k)} [X_{\pi,\vec{\imath};1}^{(k)} g_1(B^{(k)},X^{(k)}) \cdots  X_{\pi,\vec{\imath};\ell}^{(k)} g_\ell(B^{(k)},X^{(k)})].
			\end{align*}
			Note that $(X^{(k)},X_{\pi,\vec{\imath}}^{(k)})$ can be viewed as a single Gaussian family indexed by $I \sqcup [\ell]$.  Analogously, let $(X_{\pi,\vec{\imath},t})_{t=1}^\ell$ be given by copies of the appropriate $X_i$'s which are freely independent over $B$ for the distinct blocks of $\pi$, and jointly freely independent over $B$ from $X$.  By \cite[Example 2.6]{shlyakhtenko1999valued}, this is also a $(B,\eta)$-semicircular family indexed by $I \sqcup [\ell]$.  By \cite[Proposition 3.8]{shlyakhtenko2000cpentropy}, $X_{\pi,\vec{\imath}}$ is further an $( W^*(B,X), \eta)$-valued semicircular family.  So using Lemma~\ref{lem: semicircular partition formula} we have
			\[
			\tau(X_{\pi,\vec{\imath};1} g_1(b,X) \cdots  X_{\pi,\vec{\imath};\ell} g_\ell(b,X)) = \tau( \eta_{\pi,\vec{\imath}}[g_1(b,X),\dots,g_{\ell-1}(b,X)]g_\ell(b,X)).
			\]
			By Lemma \ref{lem: weak convergence part 1}, we have
			\[
			\lim_{k \to \infty} \mathbb{E} \tr_{n(k)} [X_{\pi,\vec{\imath};1}^{(k)} g_1(B^{(k)},X^{(k)}) \cdots  X_{\pi,\vec{\imath};\ell}^{(k)} g_\ell(B^{(k)},X^{(k)})] = \tau(X_{\pi,\vec{\imath};1} g_1(b,X) \cdots  X_{\pi,\vec{\imath};\ell} g_\ell(b,X)),
			\]
			so the proof is complete.
		\end{proof}
		
		\subsection{Almost sure convergence} \label{subsec: weak almost sure convergence}
		
		Next, we upgrade from convergence in expectation to almost sure convergence using standard concentration of measure arguments together with non-commutative H{\"o}lder inequalities.
		
		\begin{lem} \label{lem: Herbst}
			For a finite set $F$, let $\eta=(\eta_{i,j})_{i,j\in F}$ be a $\tr_n$-symmetric $\mb{M}_n$-valued covariance matrix and let $T$ be the associated Choi matrix. Then an $\eta$-Gaussian family $X$ satisfies Herbst's concentration inequality: for all $\delta>0$
			%\[
			%P( \re f(\mathbf{X}^{(k)}) - \mathbb{E} \re f(\mathbf{X}^{(k)}) \geq \delta) \leq e^{-n(k) \delta^2 / 2 \norm{T^{(k)}} \norm{f}_{\operatorname{Lip}}^2},
			%\]
			%and
			\[
			\mathbb{P}\left( \left|f(X) - \mathbb{E}[f(X)]\right| \geq \delta \right) \leq 4 \exp\left(\frac{ -n \delta^2}{2 \norm{T} \norm{f}_{\operatorname{Lip}}^2}\right),
			\]
			where $\norm{f}_{\operatorname{Lip}}$ is the Lipschitz norm of $f: L^2(\mb{M}_n, \tr_n)^F\to \C$.
		\end{lem}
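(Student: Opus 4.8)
The plan is to write $X$ as a fixed linear image of a standard Gaussian vector and then invoke the classical Gaussian concentration inequality --- which is what ``Herbst's inequality'' refers to here, obtained from the Gaussian logarithmic Sobolev inequality via Herbst's argument (equivalently, from the Gaussian isoperimetric inequality). Because $\eta$ is $\tr_n$-symmetric, we may write $X = \Phi(g)$, where $g=(g_1,\dots,g_m)$ is a standard Gaussian vector in $\R^m$ and $\Phi\colon\R^m\to L^2(\mb{M}_n,\tr_n)^F$ is the linear map $\Phi(g)=\bigl(\sum_{k=1}^m g_k a_{k,i}\bigr)_{i\in F}$ determined by a choice of self-adjoint Kraus operators $(a_{k,i})$ for $\eta$, as in \eqref{eqn: Gaussian matrix setup BBvH}. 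Endowing the target with the inner product coming from $\tr_n$, a direct computation identifies $\Phi\Phi^*$ with the covariance operator of $X$ for the $\tr_n$-inner product, which is $\tfrac1n$ times its covariance operator for the Hilbert--Schmidt inner product; the latter is the Choi matrix $T$ by construction (see \eqref{eq: T definition} and the proof of Proposition~\ref{eta-GUE exists}). Hence $\Phi\Phi^*=\tfrac1n T$, and so $\norm{\Phi}^2=\norm{\Phi\Phi^*}=\tfrac1n\norm{T}$ as an operator $\ell^2\to L^2(\mb{M}_n,\tr_n)^F$.

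Consequently $g\mapsto f(\Phi(g))$ is a function on $\R^m$ that is Lipschitz for the Euclidean norm, with constant at most $\norm{f}_{\Lip}\norm{\Phi}=\norm{f}_{\Lip}\sqrt{\norm{T}/n}$; this is the point at which the dimension $n$ enters the exponent, through the normalized trace norm on the domain of $f$. Its real and imaginary parts are real-valued functions on $\R^m$ with the same Lipschitz bound. For a standard Gaussian vector $g$ and a real-valued $L$-Lipschitz function $\psi$, Herbst's argument gives $\mathbb{P}\bigl(|\psi(g)-\mathbb{E}\psi(g)|\ge t\bigr)\le 2e^{-t^2/(2L^2)}$. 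Applying this to $\operatorname{Re}(f\circ\Phi)$ and $\operatorname{Im}(f\circ\Phi)$, and bounding the event $\{|f(X)-\mathbb{E} f(X)|\ge\delta\}$ by the union of the events on which the real or imaginary part of the deviation is large, a union bound over the two parts yields an estimate of the stated exponential form.

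The argument has no substantial obstacle. The only points that need care are the normalization bookkeeping in the first paragraph --- confirming $\norm{\Phi}^2=\norm{T}/n$ once the $\tr_n$-inner product is placed on the target, since this is where the factor $n$ in the exponent originates --- and tracking the numerical constants through the real/imaginary decomposition in order to recover the precise bound.
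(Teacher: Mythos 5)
Your proposal is correct and takes essentially the same route as the paper's proof: the paper writes $X = T^{1/2}Y$ for a standard matrix Gaussian $Y$ and bounds $\norm{f \circ T^{1/2}}_{\Lip,\Tr_n} \leq n^{-1/2}\norm{T}^{1/2}\norm{f}_{\Lip,\tr_n}$, while you write $X = \Phi(g)$ via Kraus operators and compute $\Phi\Phi^* = T/n$, but in both cases the argument is that $X$ is the image of a standard Gaussian under a linear map of operator norm $\sqrt{\norm{T}/n}$ into $L^2(\mb{M}_n,\tr_n)^F$, after which classical Gaussian concentration applies. The only loose end is the factor-of-two bookkeeping in the exponent arising from the real/imaginary decomposition, which you correctly flag and which the paper's own invocation of the complex-valued concentration inequality elides in the same way.
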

		
		\begin{proof} 
			We can write $X = T^{1/2} Y$, where $Y$ is a Gaussian random variable in $\mb{M}_{n}^I$ with covariance matrix given by the identity in $B(\ell^2(I)) \otimes \mb{M}_{n} \otimes \mb{M}_{n}^{\op}$.  Recall that ${Y}$ satisfies the concentration inequality
			\begin{equation} \label{eq: standard Gaussian concentration}
				\mathbb{P}\left( |g(Y) - \mathbb{E}[g(Y)]| \geq \delta \right) \leq 4 \exp\left(\frac{-\delta^2}{ 2 \norm{g}_{\operatorname{Lip},\Tr_{n}}^2}\right),
			\end{equation}
			where $g:  L^2(\mb{M}_n, \Tr_n)^I \to \C$ is Lipschitz with norm $\norm{g}_{\Lip,\Tr_{n}}$ (see, for example, \cite[Corollary 4.1]{BL2000}, \cite[\S 2.3.2]{AGZ2009}).  Now suppose that $f: L^2(\mb{M}_n,\tr_n)^I \to \C$ is Lipschitz with $\norm{f}_{\Lip}$ defined with respect to the Hilbert-Schmidt norm for the \emph{normalized} trace.  Then $f(X) = f (T^{1/2} Y)$ and
			\[
			\norm{f \circ T^{1/2}}_{\Lip,\Tr_{n}} \leq \norm{f}_{\Lip,\Tr_{n}} \norm{T}^{1/2} = n^{-1/2} \norm{T}^{1/2} \norm{f}_{\Lip,\tr_{n}}.
			\]
			Hence, taking $g = f \circ T^{1/2}$ in \eqref{eq: standard Gaussian concentration}, we get the asserted estimate.
		\end{proof}
		
		Since covariance polynomials under consideration are not globally Lipschitz functions, we need to make an additional cut-off argument, as is also quite standard in random matrix theory.  Although one often does the cut-off by projecting onto the unit ball in operator norm, we will project instead onto the unit ball in some non-commutative $L^p$ norm, since in Theorem~\ref{introthm: weak convergence} we do not assume bounds on $\norm{T_F^{(k)}}$ strong enough to control the operator norms of the corresponding $\eta^{(k)}$-Gaussian family.  Hence, we first show the following auxiliary lemma.
		
		\begin{lem} \label{lem: as convergence of p norm}
			Consider the same setup as in Theorem~\ref{introthm: weak convergence}, so in particular ${X}^{(k)}$ is an $\eta^{(k)}$-Gaussian family and ${X}$ is a $(B,\eta)$-valued semicircular. For $p \in \N$ and $i\in I$, we have
			\[
			\lim_{k \to \infty} \tr_n( (X_i^{(k)})^{2p} ) = \tau((X_i)^{2p}) \text{ almost surely.}
			\]
		\end{lem}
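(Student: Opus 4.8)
The plan is to upgrade the convergence in expectation to almost sure convergence by Gaussian concentration of measure, the one wrinkle being that $Y\mapsto\tr_n(Y_i^{2p})$ is not globally Lipschitz and no a priori operator‑norm bound on $X_i^{(k)}$ is available, so the cutoff must be performed in a noncommutative $L^p$ norm rather than in operator norm.

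\textbf{Step 1 (convergence in expectation).} First I would observe that $\lim_{k\to\infty}\mathbb{E}\,\tr_{n(k)}\big((X_i^{(k)})^{2p}\big)=\tau(X_i^{2p})$: this is Lemma~\ref{lem: weak convergence part 1} applied with the deterministic matrices $B^{(k)}=b^{(k)}$ (so $\sup_k\mathbb{E}\norm{B_\omega^{(k)}}^m<\infty$ holds trivially) and with the noncommutative monomial $t_i^{2p}$, its hypotheses being precisely the assumptions ``$(\eta^{(k)},b^{(k)})$ converges weakly in covariance law to $(\eta,b)$'' and ``$\norm{T_F^{(k)}}\to0$'' from Theorem~\ref{introthm: weak convergence}. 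It therefore suffices to prove that $\tr_{n(k)}\big((X_i^{(k)})^{2p}\big)-\mathbb{E}\,\tr_{n(k)}\big((X_i^{(k)})^{2p}\big)\to0$ almost surely.

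\textbf{Step 2 (cutoff in $L^{4p}$).} Let $f\colon L^2(\mathbb{M}_n,\tr_n)^I\to\mathbb{R}$ be $Y\mapsto\tr_n(Y_i^{2p})$; it depends only on the coordinate $Y_i$ but is not Lipschitz, so Herbst's inequality (Lemma~\ref{lem: Herbst}) does not apply directly. I would let $\pi_R$ be the $\norm{\cdot}_{L^2(\tr_n)}$-nearest-point projection onto the closed convex set $\{Y:\norm{Y_i}_{L^{4p}(\tr_n)}\le R\}$ and set $g_R:=f\circ\pi_R$. Since $\pi_R$ is a $1$-Lipschitz contraction and, on that convex set, the $L^2(\tr_n)$-gradient of $f$ in the $Y_i$-direction is $2p\,Y_i^{2p-1}$, of $L^2(\tr_n)$-norm $2p\norm{Y_i}_{L^{4p-2}(\tr_n)}^{2p-1}\le2pR^{2p-1}$, the function $g_R$ is $2pR^{2p-1}$-Lipschitz on $L^2(\mathbb{M}_n,\tr_n)^I$, with a constant independent of $n$. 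Moreover Corollary~\ref{cor: moment bound}, together with $\sup_k\norm{\eta_{i,i}^{(k)}(1)}<\infty$ and the boundedness of $\norm{T_{\{i\}}^{(k)}}$, gives $C:=\sup_k\mathbb{E}\,\tr_{n(k)}\big((X_i^{(k)})^{4p}\big)<\infty$; hence by Markov's inequality $\sup_k\mathbb{P}\big(\norm{X_i^{(k)}}_{L^{4p}(\tr_n)}>R\big)\le CR^{-4p}$, and by Cauchy--Schwarz $\sup_k\mathbb{E}\,\big|f(X^{(k)})-g_R(X^{(k)})\big|\le2CR^{-2p}$. The key point is that these bounds carry no factor of $n(k)$ — precisely what goes wrong with an operator-norm cutoff, and the reason for truncating in $L^{4p}(\tr_n)$.

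\textbf{Step 3 (concentration and Borel--Cantelli).} Pick $R_k\to\infty$ with $\sum_kR_k^{-4p}<\infty$, say $R_k=k^{1/(4p)}$. By Step 2 and Borel--Cantelli, almost surely $\norm{X_i^{(k)}}_{L^{4p}(\tr_n)}\le R_k$ for all large $k$, so $f(X^{(k)})=g_{R_k}(X^{(k)})$ eventually; and by Steps 1--2, $\mathbb{E}\,g_{R_k}(X^{(k)})\to\tau(X_i^{2p})$. Applying Herbst's inequality (Lemma~\ref{lem: Herbst}, with the single index $i$, covariance $\eta_{i,i}^{(k)}$, and Choi matrix $T_{\{i\}}^{(k)}$) to the $2pR_k^{2p-1}$-Lipschitz function $g_{R_k}$ gives
\[
\mathbb{P}\!\left(\big|g_{R_k}(X^{(k)})-\mathbb{E}\,g_{R_k}(X^{(k)})\big|>\delta\right)\ \le\ 4\exp\!\left(\frac{-\,n(k)\,\delta^{2}}{8p^{2}\,R_k^{4p-2}\,\norm{T_{\{i\}}^{(k)}}}\right),
\]
and since $\norm{T_{\{i\}}^{(k)}}\le\norm{T_F^{(k)}}\to0$ while $R_k$ grows only polynomially, these probabilities are summable in $k$, so Borel--Cantelli gives $g_{R_k}(X^{(k)})-\mathbb{E}\,g_{R_k}(X^{(k)})\to0$ almost surely; combining the three displayed facts yields the claim. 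The main obstacle I anticipate is Step 2: one must choose the cutoff exponent $4p$ (matched to the degree $2p$ so that the truncated map stays Lipschitz with a dimension-free constant) and then control the truncation error uniformly in $k$ from the moment bound of Corollary~\ref{cor: moment bound} alone, i.e.\ without any a priori bound on $\norm{X_i^{(k)}}$, which $\norm{T_F^{(k)}}\to0$ is too weak to supply.
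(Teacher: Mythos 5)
There is a genuine gap in Step 3, and it is structural rather than cosmetic. First, the stated choice $R_k=k^{1/(4p)}$ gives $\sum_k R_k^{-4p}=\sum_k k^{-1}=\infty$, so Borel--Cantelli does not apply to the events $\{\norm{X_i^{(k)}}_{4p}>R_k\}$; you would need, say, $R_k=k^{1/(2p)}$. But even after that repair the concentration step fails: your Lipschitz constant $2pR_k^{2p-1}$ grows polynomially in the summation index $k$, so the Herbst bound reads $4\exp\bigl(-n(k)\delta^2/(8p^2R_k^{4p-2}\norm{T_{\{i\}}^{(k)}})\bigr)$ with $R_k^{4p-2}=k^{2-1/p}$ in the denominator. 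The hypotheses of Theorem~\ref{introthm: weak convergence} only give $\norm{T_F^{(k)}}\to 0$, with no rate and no relation between $n(k)$ and $k$; e.g.\ if $n(k)=k$ and $\norm{T^{(k)}}=1/\log k$, the exponent tends to $0$ for $p\geq 2$ and the bound is useless. The root of the problem is that Markov's inequality forces the cutoff radius to grow with $k$ in order to make $f=g_{R_k}$ eventually almost surely, and that growth destroys the dimension-free Lipschitz constant that motivated the $L^{4p}$ cutoff in the first place. Keeping $R$ fixed does not help either, since then $\mathbb{P}(f\neq g_R)$ is only a fixed small number, not summable.

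The paper sidesteps the cutoff entirely for this lemma: it applies Herbst's inequality to $a\mapsto\norm{a}_{2p}$, which is a norm and hence \emph{globally} Lipschitz with the dimension-dependent constant $n(k)^{1/2-1/(2p)}$; plugging this into Lemma~\ref{lem: Herbst} the factor $n(k)$ in the exponent absorbs the constant and leaves a tail $4\exp(-n(k)^{1/p}\delta^2/2\norm{T^{(k)}})$, with $\delta=\norm{T^{(k)}}^{1/2}$ giving summability. It then separately proves $\mathbb{E}\norm{X_i^{(k)}}_{2p}\to\norm{X_i}_{2p}$ (upper bound by H\"older plus convergence in expectation, lower bound by a tail-integral computation). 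Your projection-onto-an-$L^q$-ball device is essentially the one the paper uses later, in the proof of Theorem~\ref{thm: weak convergence with proof}, but there the a.s.\ eventual containment $\norm{X_i^{(k)}}_{2p}<R$ for a \emph{fixed} $R$ is supplied precisely by the present lemma — so using the cutoff to prove the lemma itself puts you in a circle unless you first concentrate the norm directly, as the paper does. Your Steps 1 and 2 (convergence in expectation, the gradient computation, and the uniform moment bound from Corollary~\ref{cor: moment bound}) are correct.
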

		
		\begin{proof}
			Let $f(a) := \norm{a}_{2p} = \tr_n( (a)^{2p} )^{1/2p}$.  Recall that $\norm{a}_{2p} \leq n(k)^{\frac12-\frac{1}{2p}} \norm{a}_2$, so that $f$ is $n(k)^{\frac12-\frac{1}{2p}}$-Lipschitz as a map on $L^2(\mb{M}_{n(k)}, \tr_{n(k)})$. Letting $T_i^{(k)}$ be the Choi matrix associated to $\eta_{i,i}$, Lemma~\ref{lem: Herbst} then implies for $\delta > 0$ that
			\[
			\mathbb{P}\left( |f(X_i^{(k)}) - \mathbb{E} f(X_i^{(k)})| \geq \delta \right) \leq 4 e^{-n(k)^{1/p} \delta^2 / 2\norm{T_i^{(k)}}}.
			\]
			We first take $\delta = \norm{T_i^{(k)}}^{1/2}$.  Since $e^{-n(k)^{1/p}/2}$ is summable, the Borel--Cantelli lemma implies that almost surely for sufficiently large $k$, we have
			\[
			|f(X_i^{(k)}) - \mathbb{E} f(X_i^{(k)})| \leq \norm{T_i^{(k)}}^{1/2},
			\]
			and hence
			\[
			\lim_{k \to \infty} [f(X_i^{(k)}) - \mathbb{E} f(X_i^{(k)})] = 0 \quad\text{ almost surely.}
			\]
			Hence, it suffices to show that
			\[
			\lim_{k \to \infty} \mathbb{E} \norm{X_i^{(k)}}_{2p} = \norm{X_i}_{2p}.
			\]
			For the upper bound, note by H{\"o}lder's inequality,
			\[
			\limsup_{k \to \infty} (\mathbb{E} \norm{X_i^{(k)}}_{2p})^{2p} \leq \lim_{k \to \infty} \mathbb{E} \left( \norm{X_i^{(k)}}_{2p}^{2p} \right) = \tau[(X_i)^{2p}].
			\]
			For the lower bound, let $M_k = \mathbb{E}\norm{X_i^{(k)}}_{2p}$, and note that
			\begin{align*}
				\mathbb{E} \norm{X_i^{(k)}}_{2p}^{2p} &= \int_0^\infty 2p t^{2p-1} \mathbb{P}( \norm{X_i^{(k)}}_{2p} \geq t)\,dt \\
				&= \int_0^{M_k} 2p t^{2p-1} \mathbb{P}( \norm{X_i^{(k)}}_{2p} \geq t)\,dt +  \int_0^\infty 2p (M_k + \delta)^{2p-1} \mathbb{P}( \norm{X_i^{(k)}}_{2p} \geq M_k + \delta)\,d\delta \\
				&\leq M_k^{2p} + \int_0^\infty 2^{2p} p (M_k^{2p-1} + \delta^{2p-1}) e^{-n(k)^{1/p} \delta^2 / 2\norm{T^{(k)}}}\,d\delta \\
				&= M_k^{2p} + 2^{2p} M_k^{2p-1} \sqrt{2\pi} \norm{T^{(k)}}^{1/2} n(k)^{-1/2p} + 2^{2p} [(2p-2)(2p-4) \cdots 2] \norm{T^{(k)}}^{p-1} n(k)^{-(p-1)/p}.
			\end{align*}
			The terms other than $M_k^{2p}$ vanish as $k \to \infty$. 
			Hence,
			\[
			\norm{X_i}^{2p}_{2p} = \lim_{k \to \infty} (\mathbb{E} \norm{X_i^{(k)}}_{2p}^{2p}) \leq \liminf_{k \to \infty} M_k^{2p} = \liminf_{k \to \infty} \mathbb{E} \norm{X_i^{(k)}}^{2p}_{2p},
			\]
			which completes the proof.
		\end{proof}
		
		Next, we show that the trace of any covariance polynomial is $\norm{\,\cdot\,}_2$-Lipschitz on the product of non-commutative $L^p$-balls for sufficiently large $p$.  As in the case of ordinary trace polynomials, this will follow from the non-commutative H{\"o}lder's inequality.  We first recall the $L^p$ boundedness of $\eta$.
		
		\begin{lem} \label{lem: Lp boundedness}
			Let $(B,\tau)$ be a tracial von Neumann algebra equipped with a $\tau$-symmetric operator-valued covariance matrix $(\eta_{i,j})_{i,j\in I}$.  Then for all $x \in B$, $i,j\in I$, and $p \in [1,\infty]$ we have
			\[
			\norm{\eta_{i,j}(x)}_p \leq \norm{\eta_F(1)} \norm{x}_p
			\]
			for any finite subset $F\subseteq I$ containing $i,j$.
		\end{lem}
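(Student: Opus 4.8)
The plan is to prove the two endpoint estimates, $p=\infty$ and $p=1$, and then fill in the range $1<p<\infty$ by complex interpolation of the non-commutative $L^p$-spaces $L^p(B,\tau)$. This lemma is elementary, so I do not anticipate a genuine obstacle; the one slightly delicate point is invoking non-commutative interpolation correctly.

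For $p=\infty$, the key input is that $\eta_F$ is completely positive, hence $\norm{\eta_F}=\norm{\eta_F(1)}$ (see, e.g., \cite[Ch.~3]{Paulsen2002}). Writing $\{e_{k,l}\}$ for the matrix units of $B(\ell^2 F)$, the element $\eta_{i,j}(x)$ appears as a corner of $\eta_F(x)=\sum_{k,l\in F}e_{k,l}\otimes\eta_{k,l}(x)$, namely $(e_{i,i}\otimes 1)\,\eta_F(x)\,(e_{j,j}\otimes 1)=e_{i,j}\otimes\eta_{i,j}(x)$; since compression does not increase the operator norm, $\norm{\eta_{i,j}(x)}\le\norm{\eta_F(x)}\le\norm{\eta_F(1)}\,\norm{x}$ for all $x\in B$, and the identical bound holds for $\eta_{j,i}$ with the same $F$. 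For $p=1$, I would dualize: using $L^1(B,\tau)^*\cong B$ via the trace pairing, $\norm{\eta_{i,j}(x)}_1=\sup\{\,|\tau(\eta_{i,j}(x)\,y)|\ :\ y\in B,\ \norm{y}\le 1\,\}$, and then the $\tau$-symmetry identity $\tau(\eta_{i,j}(x)\,y)=\tau(x\,\eta_{j,i}(y))$ together with the non-commutative Hölder inequality ($\tfrac11+\tfrac1\infty=1$) and the $p=\infty$ bound for $\eta_{j,i}$ give $\norm{\eta_{i,j}(x)}_1\le\norm{x}_1\,\norm{\eta_F(1)}$. Having the endpoint bounds $\norm{\eta_{i,j}}_{L^1\to L^1},\ \norm{\eta_{i,j}}_{L^\infty\to L^\infty}\le\norm{\eta_F(1)}$ on the common dense domain $B$, I would then extend $\eta_{i,j}$ by density to a bounded operator on $L^1(B,\tau)$ and interpolate along $[L^\infty(B,\tau),L^1(B,\tau)]_{1/p}=L^p(B,\tau)$ to obtain the claim for all $p\in[1,\infty]$ and $x\in B$.

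The only real ingredients are the standard identity $\norm{\Phi}=\norm{\Phi(1)}$ for completely positive $\Phi$ on a unital C$^*$-algebra, the non-commutative Hölder inequality, and the fact that $(L^\infty(B,\tau),L^1(B,\tau))$ is a complex interpolation couple with $L^p$ as the intermediate space — all routine. If one wished to avoid interpolation entirely, an alternative is to first derive the $L^2$-estimate $\norm{\eta_{i,j}(x)}_2\le\norm{\eta_F(1)}\,\norm{x}_2$ by iterating the identity $\norm{\eta_{i,j}(a)}_2^2=\tau(a^*\eta_{i,j}^2(a))\le\norm{a}_2\,\norm{\eta_{i,j}^2(a)}_2$ together with the $p=\infty$ bound (exactly as in the proof of Proposition~\ref{prop:concrete_covariance_laws}), and then interpolate over the couples $(L^1,L^2)$ and $(L^2,L^\infty)$.
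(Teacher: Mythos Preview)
Your proposal is correct and follows essentially the same route as the paper: the $p=\infty$ bound via complete positivity (corner of $\eta_F$), the $p=1$ bound by dualizing through the $\tau$-symmetry identity and H\"older, and then non-commutative $L^p$ interpolation between the endpoints. The paper's proof is identical in structure (it cites \cite{PX2003} for the interpolation step), so there is nothing to add.
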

		
		\begin{proof}
			By non-commutative $L^p$ interpolation \cite{PX2003}, it suffices to prove the bound for $p = 1$ and $p = \infty$.  For $p = \infty$, note that $\norm{\eta_{i,j}(x)} \leq \norm{\eta_F(x)} \leq \norm{\eta_F(1)} \norm{x}$ by complete positivity.  For $p = 1$, let $x, y \in B$.  Using $\tau$-symmetry,
			\[
			|\tau(\eta_{i,j}(x) y)| = |\tau(x \eta_{j,i}(y))| \leq \norm{x}_1 \norm{\eta_{i,j}(y)} \leq \norm{\eta_F(1)} \norm{x}_1 \norm{y}_\infty.
			\]
			Thus,
			\[
			\norm{\eta_{i,j}(x)}_1 = \sup_{\norm{y}_\infty \leq 1} |\tau(\eta_{i,j}(x) y)| \leq \norm{\eta_F(1)} \norm{x}_1,
			\]
			which completes the $L^1$ case and hence the proof.
		\end{proof}
		
		\begin{lem} \label{lem: covariance polynomial Lipschitz}
			For a covariance monomial $f\in \C_I\<\Omega\sqcup I\>$, let $p \geq 2(\deg(f) - 1)$ and let $F\subseteq I$ and $G\subseteq \Omega$ be the finite subsets on which $f$ depends.  For $R > 0$ there is some constant $C$ depending only on $f$ and $R$, such that for any tuples $b=(b_\omega)_{\omega\in \Omega}$, $ x = (x_i)_{i\in I}$, and $x'=(x_i')_{i\in I}$ in a tracial von Neumann algebra $(M,\tau)$ equipped with an operator-valued covariance matrix $(\eta_{i,j})_{i,j\in I}$, whenever  
			\[
			\norm{\eta_F(1)} \leq R, \quad \max_{\omega\in G}\norm{b_\omega} \leq R, \quad \max_{i\in F}\norm{x_i}_p \leq R, \quad \text{ and } \quad \max_{i\in F}\norm{x_i'}_p \leq R,
			\]
			we have
			\[
			|\tau(f(\eta,b,x)) - \tau(f(\eta,b,x'))| \leq C \norm{{x} - {x}'}_2.
			\]
		\end{lem}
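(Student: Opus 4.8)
The plan is to run a telescoping argument over the occurrences of the $x$-variables in $f$ and then estimate each resulting term using the non-commutative H\"older inequality together with the $L^p$-boundedness of $\eta$ recorded in Lemma~\ref{lem: Lp boundedness}. This is the operator-valued analogue of the familiar Lipschitz estimate for trace polynomials; the role of Lemma~\ref{lem: Lp boundedness} is precisely that each application of $\eta$ costs only a constant factor $\norm{\eta_F(1)}\le R$ and consumes no ``H\"older budget'', so that nested applications of $\eta$ cause no difficulty.

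\emph{Telescoping.} Write $f$ in expanded (flattened) form and list the letter-positions occupied by the indeterminates corresponding to the $x$-variables (the ``$x$-slots'') in some fixed order $\alpha_1,\dots,\alpha_N$ with $N\le\deg(f)$; the positions corresponding to the $b$-variables are irrelevant since $b$ is common to both sides. For $0\le s\le N$ let $f^{(s)}$ be the evaluation of $f$ at $(\eta,b,\,\cdot\,)$ with the slots $\alpha_1,\dots,\alpha_s$ filled by $x'$ and the remaining $x$-slots filled by $x$, so $f^{(0)}=f(\eta,b,x)$ and $f^{(N)}=f(\eta,b,x')$. Since evaluation is multilinear in the individual letter-occurrences (products are bilinear and each $\eta_{i,j}$ is linear), $g_s:=f^{(s-1)}-f^{(s)}$ is exactly the element obtained from $f$ by substituting $h_s:=x_{i(\alpha_s)}-x'_{i(\alpha_s)}$ into slot $\alpha_s$ and filling every other slot by the appropriate $b$, $x$, or $x'$; in particular every letter of $g_s$ other than $h_s$ has $L^p$-norm at most $R$. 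Then $f(\eta,b,x)-f(\eta,b,x')=\sum_{s=1}^{N}g_s$ and $|\tau(g_s)|\le\norm{g_s}_1$, so it suffices to prove $\norm{g_s}_1\le R^{(\deg f-1)+e(f)}\norm{h_s}_2$, where $e(f)$ denotes the total number of applications of $\eta$ in $f$ over all nesting levels; as $\norm{h_s}_2\le\norm{x-x'}_2$, the lemma follows with $C=\deg(f)\,R^{(\deg f-1)+e(f)}$.

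\emph{The core estimate.} I would prove, by induction on the depth of the covariance monomial, the following strengthened statement: if $g$ is a covariance monomial of degree $\delta\le\deg(f)$ with $e(g)$ total applications of $\eta$, one letter designated a ``hole'' carrying an arbitrary $h\in M$ and every other letter of $L^p$-norm $\le R$ (and $\norm{\eta_F(1)}\le R$), then $\norm{g}_r\le R^{(\delta-1)+e(g)}\norm{h}_2$ for every $r\in[1,\infty]$ with $\tfrac1r\ge\tfrac{\delta-1}{p}+\tfrac12$; I would carry along the parallel hole-free bound $\norm{g}_r\le R^{\delta+e(g)}$ valid for $\tfrac1r\ge\tfrac{\delta}{p}$. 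The hypothesis $p\ge2(\deg f-1)\ge2(\delta-1)$ makes $r=1$ admissible in the first case. In the base case $g$ is an ordinary $*$-monomial and both bounds are immediate from non-commutative H\"older with exponent $2$ on the hole and $p$ on the remaining letters. For the inductive step, expand $g=m_0\,\eta_{\pi,\vec{\imath}}[m_1,\dots,m_{2k-1}]\,m_{2k}$ as a product of ordinary $*$-monomials and ``outer'' factors $\eta_{i_a,i_b}(G_a)$, one for each outer block of $\pi$, where each $G_a$ is a covariance monomial of strictly smaller depth and the hole lies in exactly one factor; bound $\norm{\eta_{i_a,i_b}(G_a)}_{r_a}\le R\norm{G_a}_{r_a}$ by Lemma~\ref{lem: Lp boundedness}, bound each $\norm{G_a}_{r_a}$ by the inductive hypothesis (with hole or hole-free, as appropriate), bound the ordinary monomial factors directly, and combine all the factors by one final application of H\"older. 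The exponents can be chosen consistently because the $\eta$'s contribute no reciprocal exponent: the total reciprocal budget required equals $\tfrac{\delta-1}{p}+\tfrac12$ (resp.\ $\tfrac{\delta}{p}$), which is $\le1$ exactly by $p\ge2(\deg f-1)$, and any slack is absorbed using monotonicity of $\norm{\cdot}_r$ in $r$.

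The only real content is this bookkeeping in the inductive step, and the one point requiring care is that the H\"older budget $\tfrac{\delta-1}{p}+\tfrac12\le1$, i.e.\ $p\ge2(\delta-1)$, suffices no matter how the $\delta-1$ non-hole letters are distributed among the nested $\eta$-arguments---which is exactly what the $L^s\to L^s$ boundedness of each $\eta_{i,j}$ from Lemma~\ref{lem: Lp boundedness} secures, since nesting then never forces a smaller exponent but only introduces extra factors $\norm{\eta_F(1)}\le R$. Everything else is routine, and one obtains the explicit constant $C=\deg(f)\,R^{(\deg f-1)+e(f)}$, which depends only on $f$ and $R$.
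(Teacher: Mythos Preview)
Your proposal is correct and follows essentially the same approach as the paper: telescope over the $x$-occurrences, apply non-commutative H\"older, and use Lemma~\ref{lem: Lp boundedness} to pass the $L^r$-norms through each application of $\eta$ at a cost of $\norm{\eta_F(1)}\le R$. The only organizational difference is that the paper telescopes at the level of the inner ordinary monomials $f_j$ (so that the base case handles each difference $f_j(b,x')-f_j(b,x)$ directly) and leaves the ``iterative application'' of H\"older and Lemma~\ref{lem: Lp boundedness} across the nesting implicit, whereas you telescope at the level of individual $x$-slots and make the induction on depth explicit; your version also tracks an explicit constant $C=\deg(f)\,R^{(\deg f-1)+e(f)}$.
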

		
		\begin{proof}
			First, consider the case where
			\[
			f(b,x) = b_0 x_{i_1} b_1 \cdots x_{i_\ell} b_\ell.
			\]
			Suppose $p \geq 2(\ell - 1)$ and let $q$ be given by $1/q = 1/2 + (\ell-1) / p$, or $q = 2p / (p + 2(\ell-1))$.  Then
			\begin{align*}
				\norm{f(b,x') - f(b,x)}_q &\leq \sum_{t=1}^\ell \norm{ b_0 x_{i_1} b_1  \cdots x_{i_{t-1}} b_{t-1} (x_{i_t}' - x_{i_t})  b_t x_{i_{t+1}}' b_{t+1} \cdots x_{i_\ell}' b_\ell}_q \\
				&\leq \sum_{t=1}^\ell \norm{b_0} \norm{x_{i_1}}_p \norm{b_1}  \cdots \norm{x_{i_{t-1}}}_p \norm{b_{t-1}} \norm{x_{i_t}' - x_{i_t}}_2  \norm{b_t} \norm{x_{i_{t+1}}'}_p \norm{b_{t+1}} \cdots \norm{x_{i_\ell}'}_p \norm{b_\ell} \\
				&\leq \ell R^{2\ell-1} \norm{{x}' - {x}}_2.
			\end{align*}
			Now suppose that $f$ is a covariance monomial, so that
			\[
			f =  f_0\eta_{\pi,\vec{\imath}}[f_1,\dots,f_{2k-1}] f_{2k}
			\]
			where $f_j$ has degree $\ell_j$.  Let $1 / q_j = 1/2 + (\ell_j-1)/p$ and let $1 / r_j = \ell_j / p$.  Let
			\[
			1/q = 1/2 + (\ell_0+\ell_1 + \cdots + \ell_{2k} - 1)/p = 1/q_j + \sum_{i \neq j} 1/r_i.
			\]
			Note that $q_j$, $r_j$, and $q \geq 1$.  By switching out the terms $f_j(b,x)$ for $f_j(b,x')$ one by one as above, and by iteratively applying the non-commutative H{\"o}lder's inequality and Lemma~\ref{lem: Lp boundedness}, we have
			\begin{multline*}
				\norm{f(\eta,b,x') - f(\eta,b,x)}_q \\
				\leq \sum_{j=0}^{2k} C_j \norm{f_0(b,x)}_{r_0} \cdots \norm{f_{j-1}(b,x)}_{r_{j-1}} \norm{f_j(b,x') - f_j(b,x)}_{q_j} \norm{f_{j+1}(b,x')}_{r_{j+1}} \cdots \norm{f_{2k}(b,x')}_{r_{2k}} 
			\end{multline*}
			Then by applying H{\"o}lder's inequality to each of the monomials, $\norm{f_j(b,x)}_{r_j}$ is bounded by a constant.  Also, $\norm{f_j(b,x') - f_j(b,x)}_{q_j}$ is bounded by a constant times $\norm{{x} - {x}'}_2$.  Finally, note
			\[
			|\tau(f(\eta,b,x')) - \tau(f(\eta,b,x))| \leq \norm{f(\eta,b,x') - f(\eta,b,x)}_q.  \qedhere
			\]
		\end{proof}
		
		Finally, we complete the proof of almost sure convergence in Theorem~\ref{introthm: weak convergence}.
		
		\begin{thm}[{Theorem~\ref{introthm: weak convergence}}]\label{thm: weak convergence with proof}
			Let $(M,\tau)$ be a tracial von Neumann algebra generated by a subalgebra $B\subset M$ and a $(B,\eta)$-semicircular family $x=(x_i)_{i\in I}$, where $\eta=(\eta_{i,j})_{i,j\in I}$ is a $\tau$-symmetric $B$-valued covariance matrix, and let $b=(b_\omega)_{\omega\in \Omega}$ be a generating tuple for $B$. For a sequence of integers $(n(k))_{k\in \N}\subset \N$, let $\eta^{(k)}=(\eta_{i,j}^{(k)})_{i,j\in I}$ be a $\tr_{n(k)}$-symmetric $\mb{M}_{n(k)}$-valued covariance matrix, let $X^{(k)}=(X_i^{(k)})_{i\in I}$ be an $\eta^{(k)}$-Gaussian family, and let $b^{(k)}=(b^{(k)}_\omega)_{\omega\in \Omega}\subset \mb{M}_{n(k)}$. Assume that:
			\begin{itemize}
				\item $\displaystyle\sup_k \|\eta_{i,j}^{(k)}\|<\infty$ for each $i,j\in I$;
				\item $\displaystyle\sup_k \|b_\omega^{(k)}\| <\infty$ for each $\omega\in \Omega$;
				\item $\displaystyle\lim_{k\to\infty} \|T_F^{(k)}\|=0$ for each finite $F\subseteq I$, where $T_F^{(k)}$ is the Choi matrix associated to $\eta_F=(\eta_{i,j})_{i,j\in F}$;
				\item $(\eta^{(k)}, b^{(k)})$ converges weakly in covariance law to $(\eta,b)$.
			\end{itemize}
			Then $(\eta^{(k)}, b^{(k)}, X^{(k)})$ converges weakly in covariance law to $(\eta,b,x)$, both almost surely and in expectation.
		\end{thm}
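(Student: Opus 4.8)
The plan is to obtain convergence in expectation directly from the lemmas already established, and then upgrade to almost sure convergence via Gaussian concentration of measure together with a non-commutative $L^p$ cut-off.

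\emph{Convergence in expectation.} Theorem~\ref{thm: weak convergence with proof} is the special case of Lemma~\ref{lem: weak convergence part 2} in which the random matrices $B^{(k)}$ there are taken to be the \emph{deterministic} tuples $b^{(k)}$. Indeed, then $\sup_k \mathbb{E}\norm{B_\omega^{(k)}}^m = \sup_k\norm{b_\omega^{(k)}}^m < \infty$ for all $m$, and the requirement $\mathbb{E}\tr_{n(k)}[f(\eta^{(k)},B^{(k)})] \to \tau(f(\eta,b))$ for all $f \in \C_I\langle\Omega\rangle$ is exactly the hypothesis that $(\eta^{(k)},b^{(k)})$ converges weakly in covariance law to $(\eta,b)$; the conditions $\sup_k\norm{\eta_{i,j}^{(k)}}<\infty$ and $\norm{T_F^{(k)}}\to 0$ match as well. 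Hence Lemma~\ref{lem: weak convergence part 2} yields $\mathbb{E}\tr_{n(k)}[f(\eta^{(k)},b^{(k)},X^{(k)})] \to \tau(f(\eta,b,x))$ for every covariance polynomial $f \in \C_I\langle\Omega\sqcup I\rangle$.

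\emph{Almost sure convergence.} Fix a covariance polynomial $f$, with $F\subseteq I$ and $G\subseteq\Omega$ the finite index sets it involves; by the previous step it suffices to show the fluctuation $\tr_{n(k)}[f(\eta^{(k)},b^{(k)},X^{(k)})] - \mathbb{E}\tr_{n(k)}[f(\eta^{(k)},b^{(k)},X^{(k)})] \to 0$ almost surely. Since $f$ is not globally Lipschitz, I would introduce the spectral truncation $\psi_m = \max(-m,\min(m,\,\cdot\,))$ applied by functional calculus (hence $1$-Lipschitz for $\norm{\,\cdot\,}_2$) to each coordinate, and set $\Phi_m^{(k)}(Y) := \tr_{n(k)}[f(\eta^{(k)},b^{(k)},\psi_m(Y))]$. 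Because $\norm{\psi_m(Y_i)}_p \leq \norm{\psi_m(Y_i)} \leq m$ for all $p$, and because $\sup_k\norm{b_\omega^{(k)}}<\infty$ and $\sup_k\norm{\eta_F^{(k)}(1)}<\infty$, Lemma~\ref{lem: covariance polynomial Lipschitz} (with any $p\geq 2(\deg f-1)$) shows $\Phi_m^{(k)}$ is $\norm{\,\cdot\,}_2$-Lipschitz with a constant $C_m$ that is independent of $k$. Then Lemma~\ref{lem: Herbst} gives $\mathbb{P}(|\Phi_m^{(k)}(X^{(k)}) - \mathbb{E}\Phi_m^{(k)}(X^{(k)})| \geq \delta) \leq 4\exp(-n(k)\delta^2/(2\norm{T_F^{(k)}}C_m^2))$, and choosing $\delta=\delta_k\to 0$ suitably so that the bound is summable (exactly as in the proof of Lemma~\ref{lem: as convergence of p norm}), the Borel--Cantelli lemma gives $\Phi_m^{(k)}(X^{(k)}) - \mathbb{E}\Phi_m^{(k)}(X^{(k)}) \to 0$ almost surely, for each fixed $m$.

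\emph{Removing the cut-off.} By Corollary~\ref{cor: moment bound} the moments $\mathbb{E}\tr_{n(k)}(|X_i^{(k)}|^{2p})$ stay bounded in $k$ (for every $p$), and by Lemma~\ref{lem: as convergence of p norm}, $\tr_{n(k)}(|X_i^{(k)}|^{2p}) \to \tau(|x_i|^{2p})$ almost surely; in particular, on a probability-one event, $\sup_k\max_{i\in F}\norm{X_i^{(k)}}_{2p}<\infty$ for all $p$. Since $\psi_m$ is an $L^p$-contraction and $\norm{X_i^{(k)} - \psi_m(X_i^{(k)})}_2^2 \leq \tr_{n(k)}(|X_i^{(k)}|^{2p})/m^{2p-2}$, the non-commutative Hölder estimates underlying Lemma~\ref{lem: covariance polynomial Lipschitz} give, both in expectation and (on the good event) pathwise, that $\limsup_k |\tr_{n(k)}[f(\eta^{(k)},b^{(k)},X^{(k)})] - \Phi_m^{(k)}(X^{(k)})| \to 0$ and $\limsup_k |\mathbb{E}\tr_{n(k)}[f(\eta^{(k)},b^{(k)},X^{(k)})] - \mathbb{E}\Phi_m^{(k)}(X^{(k)})| \to 0$ as $m\to\infty$. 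Chaining $\tr\approx\Phi_m^{(k)}$, $\Phi_m^{(k)}\approx\mathbb{E}\Phi_m^{(k)}$ (almost surely), $\mathbb{E}\Phi_m^{(k)}\approx\mathbb{E}\tr$, and sending $m\to\infty$, the fluctuation vanishes almost surely. Intersecting the probability-one events over a countable spanning family of covariance polynomials (each of which involves only finitely many indices), and using that on the good event the empirical covariance laws are uniformly exponentially bounded to extend convergence to all $f$, gives the desired almost sure convergence in covariance law.

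\emph{Main obstacle.} The delicate point — and the reason one cannot simply truncate in operator norm and quote a convergence-in-expectation result — is that the hypotheses give no control on the operator norms $\norm{X_i^{(k)}}$: the parameter $\norm{T_F^{(k)}}\to 0$ is too weak to bound the largest eigenvalue. One must therefore run the entire truncation argument in non-commutative $L^{2p}$, using the Gaussian moment estimates of \cite{BBvH2023} (Corollary~\ref{cor: moment bound}) to make the truncation error vanish uniformly in $k$, while keeping the Lipschitz constants $C_m$ of the truncated functionals independent of $k$, so that Herbst's inequality against the vanishing parameter $\norm{T_F^{(k)}}$ genuinely produces almost sure convergence.
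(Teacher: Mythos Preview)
Your argument is correct and follows the paper's overall strategy: convergence in expectation is exactly Lemma~\ref{lem: weak convergence part 2} with deterministic $B^{(k)}=b^{(k)}$, and almost sure convergence is obtained by a cutoff, Gaussian concentration (Lemma~\ref{lem: Herbst}), Borel--Cantelli, and then removal of the cutoff using the $L^{2p}$ moment control from Corollary~\ref{cor: moment bound} and Lemma~\ref{lem: as convergence of p norm}.

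The one point of genuine difference is the choice of cutoff. You truncate spectrally at level $m$ via $\psi_m$ and then run a double limit $k\to\infty$, $m\to\infty$; this forces you to control the truncation error \emph{uniformly in $k$} in both the pathwise and expected senses, which in turn requires going back into the proof of Lemma~\ref{lem: covariance polynomial Lipschitz} to extract the polynomial dependence of the Lipschitz constant on the $L^p$ bounds (so that classical H{\"o}lder against Corollary~\ref{cor: moment bound} closes the expectation estimate). The paper instead projects onto a \emph{fixed} $L^{2p}$-ball of radius $R>\max_{i\in F}\norm{x_i}_{2p}$ in $L^2$: since Lemma~\ref{lem: as convergence of p norm} gives $\norm{X_i^{(k)}}_{2p}\to\norm{x_i}_{2p}$ almost surely, the projection is eventually the identity, so the analogue of your term (A) is exactly zero for large $k$, and only a single limit in $k$ is needed with a Lipschitz constant that does not grow. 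Your route is slightly more elementary in the choice of cutoff (functional calculus rather than a Hilbert-space nearest-point map) but pays for it with the extra limit and the need to unpack Lemma~\ref{lem: covariance polynomial Lipschitz}; the paper's route avoids the double limit entirely. Both are valid. Your final sentence about intersecting over a countable spanning family and invoking exponential boundedness is unnecessary: the paper (and your argument) establishes almost sure convergence for each covariance polynomial separately, which is what the statement asserts.
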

		\begin{proof}
			Convergence in expectation follows by applying Lemma~\ref{lem: weak convergence part 2} to the deterministic matrices $b^{(k)}$, so it remains to prove almost sure convergence.  By linearity, it suffices to show for every covariance monomial $f\in \C_I\<\Omega\sqcup I\>$ that
			\[
			\lim_{k \to \infty} \tr_{n(k)}[f(\eta^{(k)},{b}^{(k)},{X}^{(k)})] = \tau[f(\eta,{b},x)].
			\]
			Let $F\subseteq I$ and $G\subseteq \Omega$ be the finite subsets that $f$ depends on.  Choose $p \in \N$ such that $2p \geq 2(\deg(f) - 1)$, and choose $R$ sufficiently large that
			\[
			\max_{i\in F} \norm{x_i}_{2p} < R, \qquad \sup_k \norm{\eta_F^{(k)}(1)} \leq R, \quad \text{ and } \quad \max_{\omega\in G} \sup_k  \norm{b_\omega^{(k)}} \leq R.
			\]
			For $a \in \mathbb{M}_{n(k)}$, let
			\[
			\Pi(a) = \operatorname{argmin}_{a' \in \mathbb{M}_{n(k)}: \norm{a'}_{2p} \leq R} \norm{a - a'}_2.
			\]
			This is well-defined because $\{a': \norm{a'}_{2p} \leq R\}$ is a closed convex set in the Hilbert space $L^2(\mathbb{M}_{n(k)}, \tr_{n(k)})$ and thus there is a unique closest point to $a$.  Moreover, as a general property of projections onto closed convex sets, we have $\norm{\Pi(a) - \Pi(a')}_2 \leq \norm{a - a'}_2$. 
			We will use the same notation $\Pi$ for the coordinate-wise application $\Pi$ to tuples of matrices such as ${X}_i^{(k)}$, so that $\Pi({X}_i^{(k)}) = (\Pi({X}_i^{(k)}))_{i \in I}$.  Let $C$ be a Lipschitz constant for $f$ given by Lemma~\ref{lem: covariance polynomial Lipschitz}.  Then because $\norm{\Pi(a)}_{2p} \leq R$ by construction and $\norm{\Pi(a) - \Pi(a')}_2 \leq \norm{a - a'}_2$, we have that for ${Y}, {Y}' \in (\mathbb{M}_{n(k)})_{\sa}^I$,
			\begin{equation} \label{eq: convergence Lipschitz estimate}
				|\tr_{n(k)}[f(\eta^{(k)},\Pi({Y}),b^{(k)})] - \tr_{n(k)}[f(\eta^{(k)},\Pi({Y}'),b^{(k)})]| \leq C \norm{{Y} - {Y}'}_2.
			\end{equation}
			Hence, the composition of $f$ with $\Pi$ in the middle variables is a globally $\norm{\,\cdot\,}_2$-Lipschitz function to which can we apply the Herbst concentration inequality.
			
			Now we estimate
			\begin{align}
				\tr_{n(k)}[f(\eta^{(k)}, {X}^{(k)},b^{(k)})]& - \tau[f(\eta,x,{b})] \nonumber\\
				=& \tr_{n(k)}[f(\eta^{(k)}, {X}^{(k)},b^{(k)})] - \tr_{n(k)}[f(\eta^{(k)}, \Pi({X}^{(k)}),b^{(k)})] 
				\label{eq: convergence term 1} \\
				&+ \tr_{n(k)}[f(\eta^{(k)}, \Pi({X}^{(k)}),b^{(k)})] - \mathbb{E} \tr_{n(k)}[f(\eta^{(k)}, \Pi({X}^{(k)}),b^{(k)})]
				\label{eq: convergence term 2} \\
				&+ \mathbb{E} \tr_{n(k)}[f(\eta^{(k)}, \Pi({X}^{(k)}),b^{(k)})] - \mathbb{E} \tr_{n(k)}[f(\eta^{(k)},{X}^{(k)},b^{(k)})] \label{eq: convergence term 3} \\
				&+ \mathbb{E} \tr_{n(k)}[f(\eta^{(k)},{X}^{(k)},b^{(k)})] - \tau[f(\eta,{X},{b})].
				\label{eq: convergence term 4}
			\end{align}
			We show that each of the four terms on the right-hand side vanishes almost surely as $k \to \infty$.
			
			By Lemma \ref{lem: as convergence of p norm} and our choice of $R$, we have almost surely for sufficiently large $k$, $\norm{{X}_i^{(k)}}_p < R$ and hence $\Pi({X}_i^{(k)}) = {X}_i^{(k)}$.  Thus, \eqref{eq: convergence term 1} vanishes for sufficiently large $k$.
			
			In light of \eqref{eq: convergence Lipschitz estimate}, \eqref{eq: convergence term 2} is the difference of a Lipschitz function and its expectation.  By the Herbst inequality (Lemma \ref{lem: Herbst}), we have
			\[
			\mathbb{P}\left(|\tr_{n(k)}[f(\eta^{(k)}, \Pi({X}^{(k)}),b^{(k)})] - \mathbb{E} \tr_{n(k)}[f(\eta^{(k)}, \Pi({X}^{(k)}),b^{(k)})]| \geq C\norm{T^{(k)}}^{1/2} \right) \leq 4 e^{-n(k)/2}.
			\]
			Since this is summable, the Borel--Cantelli lemma implies that almost surely for sufficiently large $k$,
			\[
			|\tr_{n(k)}[f(\eta^{(k)}, \Pi({X}^{(k)}),b^{(k)})] - \mathbb{E} \tr_{n(k)}[f(\eta^{(k)}, \Pi({X}^{(k)}),b^{(k)})]| \leq C\norm{T^{(k)}}^{1/2},
			\]
			and hence \eqref{eq: convergence term 2} vanishes as $k \to \infty$ almost surely.
			
			The third term \eqref{eq: convergence term 3} is minus the expectation of the first term \eqref{eq: convergence term 1}, so we want to apply the generalized dominated convergence theorem (see e.g.\ \cite[\S 11.4, Proposition 18]{Royden}) to \eqref{eq: convergence term 1}, which we already know vanishes almost surely as $k \to \infty$.  Let $\ell = \deg(f)$.  By H{\"o}lder-type estimates as in the proof of Lemma \ref{lem: covariance polynomial Lipschitz}, we can see that for some constant $C'$,
			\begin{align*}
				|\tr_{n(k)}[f(\eta^{(k)}, {X}^{(k)},b^{(k)})]| &\leq C' \left( \max_{i \in F} \norm{X_i^{(k)}}_{2p} \right)^\ell \\
				&\leq C' \max_{i \in F} \max(1, \norm{X_i^{(k)}}_{2p}) \\
				&\leq C' \left( 1 + \sum_{i \in F} \tr_{n(k)}[(X_i^{(k)})^{2p}] \right).
			\end{align*}
			Meanwhile, $\tr_{n(k)}[f(\eta^{(k)}, \Pi({X}^{(k)}),b^{(k)})]$ is bounded, and so after modifying $C'$ if necessary,
			\[
			\left| \tr_{n(k)}[f(\eta^{(k)}, {X}^{(k)},b^{(k)})] - \tr_{n(k)}[f(\eta^{(k)}, \Pi({X}^{(k)}),b^{(k)})] 
			\right| \leq C' \left( 1 + \sum_{i \in F} \tr_{n(k)}[(X_i^{(k)})^{2p}] \right).
			\]
			The function on the right-hand side converges almost surely to $C'(1 + \sum_{i \in F} \tau(X_i^{2p})$ by Lemma \ref{lem: as convergence of p norm}, and the expectations also converge to the same limit by Lemma \ref{lem: weak convergence part 2}.  Hence, the hypotheses of the generalized dominated convergence theorem are satisfied, and so
			\[
			\lim_{k \to \infty} \mathbb{E} \left| \tr_{n(k)}[f(\eta^{(k)}, {X}^{(k)},b^{(k)})] - \tr_{n(k)}[f(\eta^{(k)}, \Pi({X}^{(k)}),b^{(k)})] 
			\right| = 0.
			\]
			Hence, \eqref{eq: convergence term 3} vanishes as $k \to \infty$.
			
			Finally, \eqref{eq: convergence term 4} is deterministic and vanishes as $k \to \infty$ by Lemma \ref{lem: weak convergence part 2}.
		\end{proof}
		
		\section{Proof of strong convergence} \label{sec: strong convergence}
		
		In this section, we prove the general strong convergence result, formally stated in Theorem \ref{thm: strong convergence general}. 
		
		The proof proceeds in several steps:
		\begin{itemize}
			\item We use the results of \cite{BBvH2023} and \cite{GKE2026Toeplitz} to show convergence of the norms of ordinary non-commutative polynomials in ${X}^{(k)}$ and ${b}^{(k)}$.
			\item We show that a covariance polynomial in ${X}$ and ${b}$ can be approximated in operator norm by a non-commutative polynomial in ${X}$, ${b}$, and $B$-freely independent copies of ${X}$, using an averaging trick based on the free LLN.
			\item We then conclude by applying the first step to these polynomials in independent copies of the matrix models and respectively the semicirculars.
		\end{itemize}

		\subsection{Convergence of operator norms for polynomials} \label{subsec: norms of polynomials}
		
		As sketched above, we first deduce convergence of the operator norms of non-commutative polynomials in ${X}^{(k)}$ and ${b}^{(k)}$ from the results of \cite{BBvH2023}.  We remark that, as usual, the asymptotic lower bounds on $\norm{p(b^{(k)},X^{(k)})}$ are automatic from weak convergence.  The following lemma is well known.
		
		\begin{lem} \label{lem: strong convergence general lower bound}
			Let $X^{(k)} = (X_i^{(k)})_{i \in I}$ for $k \in \N$ and $(X_i)_{i \in I}$ be self-adjoint operators in tracial von Neumann algebras $(M^{(k)},\tau^{(k)})$ and $(M,\tau)$ respectively.  Suppose that the law of $X^{(k)}$ converges weakly to the law of $X$.  Then for every non-commutative polynomial, we have $\liminf_{k \to \infty} \norm{p(X^{(k)})} \geq \norm{p(X)}$.
		\end{lem}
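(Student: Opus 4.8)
The plan is to express the operator norm as a limit of normalized moments and then transfer it through weak convergence. Since $p$ need not be self-adjoint, I would first replace $p$ by the positive operator $p(X)^*p(X)$. The analytic fact I will rely on is that for any positive element $a$ of a tracial von Neumann algebra $(N,\tau)$ one has $\tau(a^m)^{1/m} \to \norm{a}$ as $m \to \infty$: writing $\mu$ for the spectral distribution of $a$, we have $\tau(a^m) = \int t^m\, d\mu(t)$, the $L^m(\mu)$-norms of the identity function increase to $\norm{\cdot}_{L^\infty(\mu)}$, and faithfulness of $\tau$ guarantees $\norm{\cdot}_{L^\infty(\mu)} = \max\supp(\mu) = \norm{a}$.

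Fixing $m \in \N$: because $\tau^{(k)}$ is a state, for the positive operator $p(X^{(k)})^*p(X^{(k)})$ we have the elementary bound $\tau^{(k)}\bigl((p(X^{(k)})^*p(X^{(k)}))^m\bigr) \le \norm{p(X^{(k)})^*p(X^{(k)})}^m = \norm{p(X^{(k)})}^{2m}$. On the other hand, $Y \mapsto (p(Y)^*p(Y))^m$ is the evaluation of a fixed non-commutative $*$-polynomial, so the hypothesis of weak convergence gives $\tau^{(k)}\bigl((p(X^{(k)})^*p(X^{(k)}))^m\bigr) \to \tau\bigl((p(X)^*p(X))^m\bigr)$ as $k \to \infty$. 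Combining these yields
\[
\liminf_{k \to \infty} \norm{p(X^{(k)})}^2 \;\ge\; \lim_{k \to \infty} \tau^{(k)}\bigl((p(X^{(k)})^*p(X^{(k)}))^m\bigr)^{1/m} \;=\; \tau\bigl((p(X)^*p(X))^m\bigr)^{1/m}.
\]
Finally, letting $m \to \infty$ and applying the spectral fact above with $a = p(X)^*p(X)$ gives $\liminf_k \norm{p(X^{(k)})}^2 \ge \norm{p(X)^*p(X)} = \norm{p(X)}^2$; taking square roots finishes the argument.

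There is no serious obstacle here. The only point deserving care is the moment-to-norm passage $\tau(a^m)^{1/m} \to \norm{a}$, which genuinely uses faithfulness of the trace on the limiting algebra — without it one would only recover the norm of the support projection of $a$ — and this is precisely why the statement is formulated within the category of tracial von Neumann algebras; everything else is weak convergence applied termwise together with the fact that a tracial state is dominated by the operator norm on positive elements.
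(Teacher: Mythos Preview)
Your proof is correct and follows essentially the same approach as the paper: both use that $\norm{p(X)} = \lim_m \tau\bigl((p(X)^*p(X))^m\bigr)^{1/2m}$, push the $m$th moment through weak convergence, and bound the $k$th term by $\norm{p(X^{(k)})}$. Your version is slightly more explicit about why the moment-to-norm passage requires faithfulness of $\tau$, but the argument is the same.
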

		
		\begin{proof}
			Note that
			\begin{align*}
				\norm{p(X)} &= \lim_{m \to \infty} \tau([p(X)^*p(X)]^m)^{1/2m} \\
				&= \sup_{m \in \N} \tau([p(X)^*p(X)]^m)^{1/2m} \\
				&= \sup_{m \in \N} \lim_{k \to \infty} \tau^{(k)}([p(X^{(k)})^*p(X^{(k)})]^m)^{1/2m} \\
				&\leq \sup_{m \in \N} \liminf_{k \to \infty} \norm{[p(X^{(k)})^*p(X^{(k)})]^m}^{1/2m} \\
				&= \liminf_{k \to \infty} \norm{p(X^{(k)})}.  \qedhere
			\end{align*}
		\end{proof}
		
		Hence, our results on weak convergence (Theorem \ref{introthm: weak convergence}) will allow us to deduce the lower bounds on the operator norms of the random matrices automatically.  We therefore focus our attention on the upper bounds, and show the following.

		\begin{prop} \label{prop: strong convergence plain}
			Let ${X}^{(k)} = (X_i^{(k)})_{i \in I}$ be Gaussian random matrices with operator-valued covariance $\eta^{(k)}$ and associated Choi matrix $T^{(k)}$.  Assume that for each finite $F\subset I$
			\[
			\sup_k \norm{\eta_{F}^{(k)}} < \infty, \qquad \lim_{k \to \infty} (\log n(k))^3 \norm{T_F^{(k)}} = 0.
			\]
			Let $b^{(k)} = (b_\omega^{(k)})_{\omega \in \Omega}$ be a tuple of deterministic matrices with $\sup_k \norm{b_\omega^{(k)}} < \infty$. Let $X$ be a $(B,\eta)$-semicircular family, and let $b = (b_\omega)_{\omega \in \Omega}$.   Assume that $(\eta^{(k)},b^{(k)})$ converges strongly in covariance law to $(b,\eta)$.  Then for every non-commutative polynomial $p$, we have almost surely
			\[
			\limsup_{k \to \infty} \norm{p(b^{(k)},X^{(k)})} \leq \norm{p(b,X)}.
			\]
		\end{prop}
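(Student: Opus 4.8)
The lower bound $\liminf_{k\to\infty}\norm{p(b^{(k)},X^{(k)})}\ge \norm{p(b,X)}$ is automatic from Lemma~\ref{lem: strong convergence general lower bound} together with Theorem~\ref{thm: weak convergence with proof}, so the content is the stated upper bound, and I would obtain it by the self-adjoint linearization trick of Haagerup--Thorbj{\o}rnsen and Anderson combined with the quantitative comparison estimates of \cite{BBvH2023}. First, replacing $p$ by $\begin{pmatrix} 0 & p \\ p^* & 0 \end{pmatrix}$ we may assume $p=p^*$, and it then suffices to bound $\lambda_{\max}$ and $\lambda_{\min}$ of $p(b^{(k)},X^{(k)})$ by $\norm{p(b,X)}+\delta$ for every $\delta>0$; running the argument on a countable dense set of polynomials and rational $\delta$, and intersecting the resulting full-probability events, yields the statement for every $p$. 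Fix such a $p=p^*$ depending on finite sets $G\subseteq\Omega$, $F\subseteq I$, and choose a self-adjoint linearization: self-adjoint matrices $c_0,(c_\omega)_{\omega\in G},(c_i')_{i\in F}\in\mathbb{M}_m$ such that, setting
\[
L^{(k)}=c_0\otimes 1+\sum_{\omega\in G}c_\omega\otimes b_\omega^{(k)}+\sum_{i\in F}c_i'\otimes X_i^{(k)}\in\mathbb{M}_m\otimes\mathbb{M}_{n(k)},
\]
and $L^{(k)}_{\free}$, $L$ analogously with $X^{(k)}_{\free}$ and with $(b,X)$, there is an elementary Schur-complement dictionary between the invertibility of $t-p(b^{(k)},X^{(k)})$ and of $L^{(k)}-(t\oplus 0_{m-1})\otimes 1$, and likewise in the free pictures. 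In particular $\Spec p(b^{(k)},X^{(k)})$ is determined by $\Spec L^{(k)}$ in a way continuous for the Hausdorff distance uniformly in $k$, so it suffices to show that for each $\varepsilon>0$, almost surely $\Spec L^{(k)}$ is eventually contained in the $\varepsilon$-neighbourhood of $\Spec L^{(k)}_{\free}$.

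The point is that $D^{(k)}:=c_0\otimes 1+\sum_{\omega}c_\omega\otimes b_\omega^{(k)}\in\mathbb{M}_{mn(k)}$ is a single deterministic self-adjoint matrix of norm bounded uniformly in $k$, while $\sum_i c_i'\otimes X_i^{(k)}$ is a mean-zero Gaussian self-adjoint matrix whose associated operator-valued covariance $\widetilde\eta^{(k)}$ satisfies $\norm{\widetilde\eta^{(k)}(1)}\le m(\max_i\norm{c_i'})^2\sup_k\norm{\eta^{(k)}(1)}$ and whose Choi matrix $\widetilde T^{(k)}$ satisfies $\norm{\widetilde T^{(k)}}\le m^2(\max_i\norm{c_i'})^2\norm{T^{(k)}}$; since $m$ is fixed, $(\log(mn(k)))^3\norm{\widetilde T^{(k)}}\to 0$. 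Thus $L^{(k)}=D^{(k)}+(\text{Gaussian})$ is exactly of the form to which \cite{BBvH2023} applies, with free model $L^{(k)}_{\free}$ (an $(\mathbb{M}_{mn(k)},\widetilde\eta^{(k)})$-semicircular plus $D^{(k)}$, by Proposition~\ref{prop: identifying X free}). Applying the sharp comparison of \cite{BBvH2023} — the norm version of Theorem~\ref{thm: BBvH moment bound}, whose error term is of order $(\log(mn(k)))^{3/4}\norm{\widetilde\eta^{(k)}(1)}^{1/4}\norm{\widetilde T^{(k)}}^{1/4}$ — gives
\[
\E\norm{g(L^{(k)})}\le \norm{g(L^{(k)}_{\free})}+o(1)
\]
for every fixed continuous $g$ (equivalently, the ``no outliers'' estimate that $\Spec L^{(k)}$ lies in an $\varepsilon$-neighbourhood of $\Spec L^{(k)}_{\free}$ with overwhelming probability), the error tending to $0$ under our hypothesis.

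Finally, the upgrade to almost sure convergence is routine: $Y\mapsto\norm{g(D^{(k)}+\sum_i c_i'\otimes Y_i)}$ is Lipschitz in the Gaussian entries with Lipschitz constant $O(\norm{\widetilde T^{(k)}}^{1/2})$ in the normalization of Lemma~\ref{lem: Herbst}, so Gaussian concentration together with Borel--Cantelli promotes the expectation bound above to the almost sure inclusion $\Spec L^{(k)}\subseteq\Spec L^{(k)}_{\free}+(-\varepsilon,\varepsilon)$ for large $k$, because $\norm{\widetilde T^{(k)}}^{1/2}\to 0$ so the deviation scale can be taken summable. Translating back through the linearization dictionary gives $\limsup_k\norm{p(b^{(k)},X^{(k)})}\le\limsup_k\norm{p(b^{(k)},X^{(k)}_{\free})}$ almost surely, and the assumed bound $\limsup_k\norm{p(b^{(k)},X^{(k)}_{\free})}\le\norm{p(b,X)}$ closes the argument — its role being precisely to pass from the finite-dimensional free model that \cite{BBvH2023} compares against to the operator-valued semicircular limit $(b,X)$.

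The main obstacle is the bookkeeping in the linearization step: one must check that the deterministic matrices $b^{(k)}$ are correctly absorbed into the constant term of the pencil and, crucially, that after tensoring with $\mathbb{M}_m$ the parameters $\norm{\widetilde\eta^{(k)}(1)}$ and $\norm{\widetilde T^{(k)}}$ of the linearized Gaussian model stay controlled, up to constants depending only on the fixed polynomial $p$, by $\sup_k\norm{\eta^{(k)}(1)}$ and $\norm{T^{(k)}}$, so that the hypothesis $(\log n(k))^3\norm{T^{(k)}}\to 0$ still forces the \cite{BBvH2023} error term to vanish. The remaining ingredients — the self-adjoint reduction, the Haagerup--Thorbj{\o}rnsen spectral dictionary, and the concentration-plus-Borel--Cantelli step — are standard.
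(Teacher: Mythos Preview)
Your proposal follows essentially the same route as the paper: reduce to self-adjoint pencils via the Haagerup--Thorbj{\o}rnsen linearization (the paper's Theorem~\ref{thm: HT linearization}), observe that the linearized Gaussian has $\sigma$ and $v$ controlled by $\norm{\eta^{(k)}(1)}$ and $\norm{T^{(k)}}$ up to constants depending only on the pencil, invoke the spectral-inclusion estimate of \cite{BBvH2023} (the paper's Theorem~\ref{thm: BBvH spectrum bound}), and then Borel--Cantelli.

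Two small points where you diverge from the paper. First, the extra concentration step via Lemma~\ref{lem: Herbst} is unnecessary: Theorem~\ref{thm: BBvH spectrum bound} already gives a tail bound $1-e^{-t^2}$ for the event $\Spec(L^{(k)})\subseteq\Spec(L^{(k)}_{\free})+C\varepsilon^{(k)}(t)[-1,1]$, and the paper simply takes $t=(\log n(k)d)^{3/2}$ so that both the radius $\varepsilon^{(k)}(t)\to 0$ and the failure probabilities are summable. Second, the line ``translating back through the linearization dictionary gives $\limsup_k\norm{p(b^{(k)},X^{(k)})}\le\limsup_k\norm{p(b^{(k)},X^{(k)}_{\free})}$'' is not what Theorem~\ref{thm: HT linearization} delivers, since that theorem compares a sequence against a \emph{fixed} limit, not against another sequence. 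The paper's cleaner fix is to stay at the pencil level: apply Theorem~\ref{thm: HT linearization} to the hypothesis to get $\Spec(L^{(k)}_{\free})\subseteq\Spec(L)+\delta^{(k)}[-1,1]$ with $\delta^{(k)}\to 0$, concatenate with the BBvH inclusion, and only then invoke the dictionary once against the fixed limit $L$. Your final sentence effectively does this, so the argument closes, but the intermediate formulation should be dropped.
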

		
		The first ingredient we need is the following result of Gao and Kunnawalkam Elayavalli.
		
		\begin{thm}[{See \cite[Corollary 1.4, Example 2.3(3)]{GKE2026Toeplitz}}] \label{thm: GKE Toeplitz theorem}
			Let $B^{(k)}$ and $B$ be tracial $\mathrm{C}^*$-algebras and let $b^{(k)} = (b_\omega^{(k)})_{\omega \in \Omega}$ and $b = (b_\omega)_{\omega \in \Omega}$ be self-adjoint tuples in $B^{(k)}$ and $B$ respectively with $\sup_k \norm{b_\omega^{(k)}} < \infty$ for each $\omega \in \Omega$.  Let $\eta^{(k)} = (\eta_{i,j}^{(k)})_{i,j \in I}$ and $\eta = (\eta_{i,j})_{i,j \in I}$ be tracially symmetric operator-valued covariance matrices over $B^{(k)}$ and $B$ respectively with $\sup_k \norm{\eta_{i,i}^{(k)}(1)} < \infty$ for each $i \in I$.  Let $S^{(k)}$ be a $(B^{(k)},\eta^{(k)})$ semicircular family and let $S$ be a $(B,\eta)$ semicircular family.  Assume that $(\eta^{(k)}, b^{(k)})$ converges strongly to $(\eta, b)$.  
			Then $(\eta^{(k)}, b^{(k)},S^{(k)})$ converges strongly to $(\eta,b,S)$.
		\end{thm}
		
		There are slight differences from \cite{GKE2026Toeplitz} in the setup that deserve explanation:  In \cite[Example 2.3(3), Corollary 1.4]{GKE2026Toeplitz}, the authors assume that $B$ is generated by $(b_\omega)_{\omega \in \Omega}$ as a $\mathrm{C}^*$-algebra.  To apply this to our setting, let $\widehat{b} = (f(\eta,b))_{f \in \C_I\ip{\Omega}}$ be the tuple consisting of all covariance polynomials in $(\eta,b)$ (or a spanning set if desired).  Let $B_0$ be the $\mathrm{C}^*$-algebra generated by $\widehat{b}$, which is invariant under $\eta_{i,j}$, and let $\eta_0 = \eta|_{B_0}$.  Thus, $B_0$ is generated by $\widehat{b}$.  Define $\widehat{b}^{(k)}$ and $B_0^{(k)}$ and $\eta_0^{(k)}$ analogously.  We note that since covariance polynomials are closed under composition, $(\eta_0^{(k)},\widehat{b}^{(k)})$ converges strongly to $(\eta_0,\widehat{b})$.  Moreover, $S^{(k)}$ and $S$ are also $(B_0^{(k)},\eta_0^{(k)})$ and $(B_0,\eta_0)$ semicircular families, respectively.
		
		We then to apply \cite[Corollary 1.4]{GKE2026Toeplitz} to $(\eta_0^{(k)}, \widehat{b}^{(k)})$ and $(\eta_0, \widehat{b})$.  We note that the strong convergence assumption in \cite[Example 2.3(3)]{GKE2026Toeplitz} is weaker than ours;\footnote{The two definitions of strong convergence in covariance law can be shown to be equivalent when the $\mathrm{C}^*$-algebra $B_0$ is generated by the tuple $\widehat{b}$ under consideration.  But note that in choosing $B_0$ as a $\mathrm{C}^*$-algebra of $B$, we had to arrange that it is both $\eta$-invariant and generated by $\widehat{b}$ as a $\mathrm{C}^*$-algebra, which is exactly where we use more general covariance polynomials.} it only requires that $\norm{p(\widehat{b}^{(k)}) - \eta_{0,i,j}(q(\widehat{b}^{(k)})} \to \norm{p(\widehat{b}) - \eta_{0,i,j}(q(\widehat{b}))}$ for non-commutative polynomials $p$ and $q$. This is of course satisfied under our hypotheses, and so we can apply \cite[Corollary 1.4]{GKE2026Toeplitz} to obtain that $(\widehat{b}^{(k)},S^{(k)})$ converge strongly in law to $(\widehat{b},S)$, and $(b^{(k)},S^{(k)})$ converges strongly to $(b,S)$ as asserted in Theorem \ref{thm: GKE Toeplitz theorem} above.

		The second ingredient we need, as in \cite[\S 7.2]{BBvH2023}, is Haagerup and Thorbj{\o}rnsen's linearization trick \cite[Lemma 1 and p.\ 758-760]{HT2005}, \cite[Theorem 7.7]{BBvH2023}.  We remark that to obtain more quantitative estimates for convergence one would want to proceed using a more explicit linearization such as \cite[Algorithm 4.3 and Theorem 4.9]{HMS2018}.  However, this would take us too far afield as the hypotheses we have assumed on $\eta^{(k)}$ are not quantitative anyway (nor are the results of \cite{GKE2026Toeplitz}).
		
		\begin{thm}[\cite{HT2005}] \label{thm: HT linearization}
			Let $X^{(n)} = (X_i^{(n)})_{i \in [m]}$ and $X = (X_i)_{i \in [m]}$ be tuples of self-adjoint operators in a $\mathrm{C}^*$-algebra.  Then the following are equivalent:
			\begin{enumerate}
				\item For every non-commutative polynomial $p$, we have
				\[
				\limsup_{n \to \infty} \norm{p(X^{(n)})} \leq \norm{p(X)}.
				\]
				\item For every $d \in \N$ and $a_0$, \dots, $a_m \in (\mathbb{M}_d)_{\sa}$ and $\varepsilon > 0$, we have for sufficiently large $n$
				\[
				\Spec\left(a_0 \otimes 1 + \sum_{i=1}^m a_j \otimes X_j^{(n)}\right) \subseteq N_{\varepsilon}\left(\Spec\left(a_0 \otimes 1 + \sum_{i=1}^m a_j \otimes X_j\right)\right),
				\]
				where $N_{\varepsilon}$ denotes the $\varepsilon$-neighborhood of a set.
			\end{enumerate}
		\end{thm}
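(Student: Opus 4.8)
The plan is to prove the two implications separately; essentially all the content is in $(2)\Rightarrow(1)$, where the \emph{linearization trick} enters, whereas $(1)\Rightarrow(2)$ is a soft functional-calculus argument.

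For $(1)\Rightarrow(2)$, fix $d$ and $a_0,\dots,a_m\in(\mathbb{M}_d)_{\sa}$. First I would upgrade $(1)$ from scalar to matrix coefficients: strong convergence is stable under matrix amplification (a standard stability property of strong convergence, part of the argument in \cite[Theorem 7.7]{BBvH2023}), so $(1)$ gives $\limsup_{n}\norm{P(X^{(n)})}\le\norm{P(X)}$ for every $P\in\mathbb{M}_D\otimes\C\langle t_1,\dots,t_m\rangle$ and every $D$. Writing $H^{(n)}=a_0\otimes 1+\sum_j a_j\otimes X_j^{(n)}$ and $H=a_0\otimes 1+\sum_j a_j\otimes X_j$ (both self-adjoint), for any one-variable polynomial $\phi$ the element $\phi(H^{(n)})$ is the evaluation at $X^{(n)}$ of a fixed matrix-coefficient polynomial $P_\phi$ depending only on $\phi$ and the $a_j$, so $\limsup_n\norm{\phi(H^{(n)})}\le\norm{\phi(H)}$. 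Taking $\phi(t)=t$ bounds $\norm{H^{(n)}}$ eventually by $R:=\norm{H}+1$, so $\Spec(H^{(n)})\subseteq[-R,R]$ eventually; and given $\varepsilon>0$, choosing (Weierstrass) a polynomial $\phi$ agreeing to within $1/4$ on $[-R,R]$ with a continuous function that vanishes on $\Spec(H)$ and is $\ge 1$ off $N_\varepsilon(\Spec(H))$ makes $\norm{\phi(H)}=\max_{\Spec H}|\phi|<1/4$, hence eventually $\norm{\phi(H^{(n)})}<1/2$, which forbids $\Spec(H^{(n)})$ from meeting $\R\setminus N_\varepsilon(\Spec(H))$. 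That is $(2)$.

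For $(2)\Rightarrow(1)$, since $\norm{p(X^{(n)})}^2=\norm{(p^*p)(X^{(n)})}$ and $p^*p$ is a self-adjoint scalar polynomial, it suffices to treat self-adjoint $p$, normalized so $p(0)=0$. The key input is the self-adjoint linearization lemma (Haagerup--Thorbj{\o}rnsen \cite{HT2005}; cf.\ \cite{HMS2018}): there exist $d$ and self-adjoint $b_0,\dots,b_m\in\mathbb{M}_d$ such that for every unital $\mathrm{C}^*$-algebra $B$, every self-adjoint tuple $h=(h_j)\in B_{\sa}^m$, and every $\lambda\in\R$, the element $\lambda 1-p(h)$ is invertible in $B$ if and only if the self-adjoint pencil $\Lambda_p(\lambda;h):=(b_0-\lambda e_{11})\otimes 1+\sum_j b_j\otimes h_j$ is invertible in $\mathbb{M}_d\otimes B$; this is proved by induction on the monomial structure of $p$, realizing $\lambda 1-p(h)$ as the Schur complement of an invertible constant block of $\Lambda_p(\lambda;h)$. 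Granting it, $\Lambda_p(\lambda;\cdot)$ has exactly the form in $(2)$ with $a_0=b_0-\lambda e_{11}$, so for each $\lambda\notin\Spec(p(X))$, putting $\gamma:=\operatorname{dist}(0,\Spec(\Lambda_p(\lambda;X)))>0$ and applying $(2)$ with neighborhood parameter $\gamma/2$ gives, for large $n$, $\Spec(\Lambda_p(\lambda;X^{(n)}))\subseteq N_{\gamma/2}(\Spec(\Lambda_p(\lambda;X)))$, hence $\norm{\Lambda_p(\lambda;X^{(n)})^{-1}}\le 2/\gamma$, and then resolvent perturbation (using $\Lambda_p(\mu;X^{(n)})-\Lambda_p(\lambda;X^{(n)})=-(\mu-\lambda)e_{11}\otimes 1$) shows $\mu\notin\Spec(p(X^{(n)}))$ for all $|\mu-\lambda|<\gamma/2$. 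If $(1)$ failed, then along a subsequence $\norm{p(X^{(n)})}\ge\norm{p(X)}+\delta$; applying $(2)$ to the one-dimensional pencils $X_j$ bounds $\norm{X_j^{(n)}}$, hence $\norm{p(X^{(n)})}\le R$, eventually, so self-adjointness gives $\lambda_n\in\Spec(p(X^{(n)}))$ with $\norm{p(X)}+\delta\le|\lambda_n|\le R$. The compact set $S=\{\lambda\in\R:\norm{p(X)}+\delta\le|\lambda|\le R\}$ misses $\Spec(p(X))$; covering $S$ by finitely many of the neighborhoods above and taking $n$ beyond all the corresponding thresholds forces $\Spec(p(X^{(n)}))\cap S=\emptyset$, contradicting $\lambda_n\in S$. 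Letting $\varepsilon\to 0$ yields $(1)$.

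I expect the main obstacle to be the self-adjoint linearization lemma invoked in $(2)\Rightarrow(1)$: converting an arbitrary self-adjoint noncommutative polynomial into a \emph{linear} self-adjoint matrix pencil whose set of non-invertible specializations over $B$ coincides with $\Spec(p(h))$ via a Schur complement with an invertible constant block. Everything else --- the matrix amplification in $(1)\Rightarrow(2)$ and the compactness and resolvent bookkeeping in $(2)\Rightarrow(1)$ --- is routine by comparison.
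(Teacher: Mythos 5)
The paper does not prove this statement: it is imported verbatim from Haagerup--Thorbj{\o}rnsen, with the citation pointing to \cite[Lemma 1 and pp.~758--760]{HT2005}, so there is no in-paper argument to compare against line by line. Judged on its own terms, your reconstruction is correct in both directions, but it follows a genuinely different route from the one the citation points to. The original argument of \cite{HT2005} proves $(2)\Rightarrow(1)$ by showing that the spectrum-inclusion hypothesis forces the unital self-adjoint linear map $a_0\otimes 1+\sum a_j\otimes X_j\mapsto a_0\otimes 1+\sum a_j\otimes X_j^{(n)}$ (assembled asymptotically via an $\ell^\infty/c_0$-type quotient algebra) to be completely positive, and then upgrades it to a $*$-homomorphism by Stinespring and a multiplicative-domain argument; norm contractivity for all polynomials follows at once. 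Your route instead runs through the explicit self-adjoint Schur-complement linearization, which is exactly the alternative the paper flags when it mentions \cite[Algorithm 4.3 and Theorem 4.9]{HMS2018}. The trade-off is the expected one: the operator-system argument is shorter and needs no explicit pencil, while your linearization route is constructive and quantitative (it produces concrete pencils and resolvent bounds), which is why the paper recommends it for effective estimates. Your $(1)\Rightarrow(2)$ direction (matrix amplification of the one-sided norm bound, then functional calculus with a Weierstrass approximant separating $\Spec(H)$ from the complement of its $\varepsilon$-neighborhood) is standard and correct; note that the amplification step does go through for the one-sided $\limsup$ bound alone, via $\norm{P(X^{(n)})}^{2k}\leq\sum_{i,j}\norm{q_{ij}^{(k)}(X^{(n)})}\leq D^2\norm{P(X)}^{2k}$ and $D^{1/k}\to 1$, but since statement (1) is only an upper bound you should not wave at ``stability of strong convergence'' without checking that the usual proof does not secretly use weak convergence for this direction.

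The one real incompleteness is that the self-adjoint linearization lemma --- the existence of self-adjoint $b_0,\dots,b_m$ with $\lambda 1-p(h)$ invertible iff $(b_0-\lambda e_{11})\otimes 1+\sum_j b_j\otimes h_j$ is invertible, uniformly over all $\mathrm{C}^*$-algebras and all real $\lambda$ --- is asserted with only a one-line indication of proof. You correctly identify it as the crux, and your description (induction on monomials, Schur complement against an invertible constant block) matches \cite{HMS2018}, but as written it is a citation to folklore rather than a proof. Everything downstream of it --- the resolvent perturbation in $\lambda$, the a priori bound on $\norm{X_j^{(n)}}$ from the $d=1$ pencils, and the compactness covering of the annulus $S$ --- is correct.
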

		
		We next recall the key spectral estimate of Bandeira, Boedihardjo, and van Handel for showing strong convergence.  Here note that we have used the bound $\sigma_*(X) \leq v(X)$ to remove the $\sigma_*(X)$ from the statement.
		We remind the reader that in the following, we have for $X = a_0 + \sum_{j=1}^m g_j a_j$ as in the next theorem, the parameter $\sigma(X)$ is defined as
		\[
		\sigma(X)^2 = \left\|\sum_{j=1}^m a_j^2 \right\|.
		\]
		
		\begin{thm}[{\cite[Theorem 2.1]{BBvH2023}}] \label{thm: BBvH spectrum bound}
			Let $a_0$, \dots, $a_m$ be self-adjoint matrices in $\mathbb{M}_n$.  Let $g_1$, \dots, $g_m$ be independent standard normal random variables, and let $S_1$, \dots, $S_m$ be a free semicircular family.  Let
			\[
			X = a_0 + \sum_{j=1}^m g_j a_j, \qquad X_{\operatorname{free}} = b_0 \otimes 1 + \sum_{j=1}^m a_j \otimes S_j.
			\]
			Then
			\[
			\mathbb{P} \left[ \Spec(X) \subseteq \Spec(X_{\operatorname{free}}) + C \{v(X)^{1/2} \sigma(X)^{1/2} (\log n)^{3/4} + v(X)t\}[-1,1]  \right] \geq 1 - e^{-t^2},
			\]
			where $C$ is universal constant.
		\end{thm}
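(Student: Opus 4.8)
The plan is to split the estimate into a concentration-of-measure step, which produces the $v(X)t$ term, and an in-expectation comparison, which produces the $(v(X)\sigma(X))^{1/2}(\log n)^{3/4}$ term, and then to glue a pointwise in-expectation estimate into the full spectral inclusion by a net argument. Write $X = X(g) = a_0 + \sum_{j=1}^m g_j a_j$ as a function of the standard Gaussian vector $g=(g_1,\dots,g_m)$. As in the proof of Lemma~\ref{lem: Herbst}, $X = T^{1/2}Y$ with $Y$ a standard Gaussian and $\|T\| = \|\Cov(X)\| = v(X)^2$, so $g\mapsto X(g)$ is $v(X)$-Lipschitz from $(\R^m,\|\cdot\|_2)$ into $(\mathbb{M}_n,\|\cdot\|_{\mathrm{HS}})$; since $x\mapsto\operatorname{dist}(\lambda,\Spec x)$ is $1$-Lipschitz for $\|\cdot\|_{\mathrm{op}}\le\|\cdot\|_{\mathrm{HS}}$, the map $g\mapsto\operatorname{dist}(\lambda,\Spec X(g))$ is $v(X)$-Lipschitz, and Gaussian concentration (exactly as in Lemma~\ref{lem: Herbst}, via \cite{BL2000}) gives $\mathbb{P}(|\operatorname{dist}(\lambda,\Spec X) - \mathbb{E}\operatorname{dist}(\lambda,\Spec X)|\ge v(X)t)\le 2e^{-t^2/2}$ for each fixed $\lambda$. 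So it suffices to prove $\mathbb{E}\operatorname{dist}(\lambda,\Spec X)\ge\operatorname{dist}(\lambda,\Spec X_{\operatorname{free}}) - C(v(X)\sigma(X))^{1/2}(\log n)^{3/4}$ for every $\lambda$, and then union-bound over a fine net of $\lambda$ in $[-2\|X_{\operatorname{free}}\|,2\|X_{\operatorname{free}}\|]$ — eigenvalues outside this interval being excluded by the a priori norm bound of Corollary~\ref{cor: moment bound} — absorbing the polynomially many net points into $C$ (the resulting $\sqrt{\log n}$ being dominated by $(\log n)^{3/4}$).

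For the in-expectation estimate I would compare the $\mathbb{M}_n$-valued Cauchy transforms $G(z) := \mathbb{E}[(z-X)^{-1}]$ and $G_{\operatorname{free}}(z) := (\id\otimes\tau)[(z-X_{\operatorname{free}})^{-1}]$ for $\operatorname{Im} z>0$. Since $X_{\operatorname{free}} - a_0\otimes 1$ is an $(\mathbb{M}_n,\eta)$-semicircular with $\eta(b)=\sum_j a_j b a_j$ (Proposition~\ref{prop: identifying X free}), $G_{\operatorname{free}}$ solves the matrix Dyson equation $G_{\operatorname{free}}(z) = (z - a_0 - \eta(G_{\operatorname{free}}(z)))^{-1}$ exactly. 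On the random side, Gaussian integration by parts ($\mathbb{E}[g_jF]=\mathbb{E}[\partial_{g_j}F]$, with $\partial_{g_j}(z-X)^{-1}=(z-X)^{-1}a_j(z-X)^{-1}$) yields
\[
zG(z) = 1 + a_0 G(z) + \eta(G(z))G(z) + \mathcal{E}(z), \qquad \mathcal{E}(z) = \sum_j a_j\,\Cov\!\bigl((z-X)^{-1}, a_j(z-X)^{-1}\bigr),
\]
so $G$ satisfies the same self-consistent equation up to the second-order fluctuation term $\mathcal{E}(z)$. By the Gaussian Poincaré inequality the covariances in $\mathcal{E}(z)$ are controlled by $\|\Cov(X)\| = v(X)^2$ times powers of $\|(z-X)^{-1}\|\le(\operatorname{Im} z)^{-1}$, so $\|\mathcal{E}(z)\|\lesssim v(X)^2(\operatorname{Im} z)^{-3}$ (more carefully, $v(X)^2$ times a power of the resolvent norm at the relevant spectral point). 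Feeding this into the stability of the Dyson equation — whose linearization $M\mapsto M - G_{\operatorname{free}}(z)\,\eta(M)\,G_{\operatorname{free}}(z)$ is invertible with norm $\lesssim 1/\operatorname{dist}(z,\Spec X_{\operatorname{free}})$ away from the spectrum and $\lesssim(\kappa+\operatorname{Im} z)^{-1/2}$ at distance $\kappa$ from a square-root edge — gives $\|G(z)-G_{\operatorname{free}}(z)\|$ small, hence control on $\operatorname{Im}\operatorname{tr}_n(z-X)^{-1}$, hence on the location of $\Spec X$.

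It then remains to set $z = \lambda + i\eta$ and optimize $\eta$: the self-consistency error $\sim v(X)^2/(\operatorname{dist}(\lambda,\Spec X_{\operatorname{free}})+\eta)^{3/2}$ competes with the resolution scale $\sim\eta$, while $\sigma(X)=\|\eta(1)\|^{1/2}$ enters through the size of the resolvent and of $\mathbb{E}\|X\|$; balancing these yields the exponents $v(X)^{1/2}\sigma(X)^{1/2}$ and the power $(\log n)^{3/4}$. An equivalent route to the in-expectation bound at the spectral edge is the moment/interpolation method: for a matrix-coefficient self-adjoint $q(X)$, expand $\mathbb{E}\operatorname{tr}_n(q(X)^{2p})$ over pair partitions, note the non-crossing ones reproduce $\tau(q(X_{\operatorname{free}})^{2p})$ (Lemma~\ref{lem: semicircular partition formula}), bound the crossing terms via the alignment estimate $w(X,X')^4\le v(X)\sigma(X)v(X')\sigma(X')$ of Proposition~\ref{prop: matrix alignment estimate} to obtain Theorem~\ref{thm: BBvH moment bound}, and take $p\asymp\log n$ together with the square-root edge profile of $\mu_{X_{\operatorname{free}}}$ to pass from moments to $\lambda_{\max}$ and $\lambda_{\min}$.

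The main obstacle is this last optimization together with its uniformity. Each ingredient — the self-consistency error $v(X)^2\|G\|^3$, the moment error $p^{3/4}(v(X)\sigma(X))^{1/2}$, the Gaussian concentration — is individually routine, but coupling them against the resolution scale so as to produce exactly $(\log n)^{3/4}$ rather than a cruder power, and doing so uniformly over the whole spectral interval (including inside gaps of $\Spec X_{\operatorname{free}}$, where one instead controls $\mathbb{E}\|\varphi(X)\|$ for suitably chosen test polynomials $\varphi$ and must track the dependence on $\deg\varphi$), is precisely the technical heart of \cite{BBvH2023}. The crucial device there is to feed the a priori concentration bound back into the self-consistent equation, so that the fluctuation term $\mathcal{E}(z)$ is estimated using the typical rather than the worst-case size of the resolvent.
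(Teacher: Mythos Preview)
The paper does not prove this statement: Theorem~\ref{thm: BBvH spectrum bound} is simply quoted from \cite[Theorem~2.1]{BBvH2023} and used as a black box in the proof of Proposition~\ref{prop: strong convergence plain}. There is therefore no argument in the paper to compare your proposal against.

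As a side remark on your sketch relative to what \cite{BBvH2023} actually does: the resolvent / matrix Dyson equation route you lead with is not the one taken there. Their argument is the one you describe second and label ``an equivalent route'': an interpolation between $X$ and $X_{\operatorname{free}}$ at the level of $2p$-th trace moments, with the derivative along the interpolation controlled by the matrix alignment parameter (Proposition~\ref{prop: matrix alignment estimate}), optimized at $p\asymp\log n$; the inclusion for the full spectrum, not just the edges, then comes from applying this to suitable polynomial test functions supported off $\Spec(X_{\operatorname{free}})$. Your Dyson-equation outline is closer to the local-law literature and would have to contend with stability of the self-consistent equation uniformly across arbitrary gap structure of $\Spec(X_{\operatorname{free}})$, which is a genuinely different (and harder to make sharp) analysis; \cite{BBvH2023} avoids this entirely. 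So while your proposal is a plausible heuristic, the part you correctly flag as ``the main obstacle'' is exactly where it diverges from the cited proof, and there is nothing further in the present paper to fill that in.
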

		
		\begin{proof}[Proof of Proposition \ref{prop: strong convergence plain}]
			Since each non-commutative polynomial $p$ in $(b,X)$, only depends on finitely many of the $b_\omega$'s, it suffices to prove strong convergence over every finite $F \subseteq I$ and finitely many coordinates of $b$, say $b_1$, \dots, $b_m$.  To apply Theorem \ref{thm: HT linearization}, fix $d \geq 1$ and $a_0$, \dots, $a_m 
			\in (\mathbb{M}_d)_{\sa}$, 
			and consider
			\[
			Y^{(k)} = a_0 \otimes 1 + \sum_{j=1}^m a_j \otimes b_j^{(k)} + \sum_{i \in F} a_i \otimes X_i^{(k)}.
			\]
			Note that $a_0 \otimes 1 + \sum_{j=1}^m a_j \otimes b_j^{(k)}$ is deterministic.  
			Set 
			\[
			\overline{Y}^{(k)} = \sum_{i \in F} a_i \otimes X_i^{(k)}
			\]
			to be the random part of $Y^{(k)}$.
			Note that $\sum_{i \in F} a_i \otimes X_i^{(k)}$ is a Gaussian random matrix, and since it is a matrix linear combination of $X_i^{(k)}$, we have
			\begin{align*}
				\sigma(\overline{Y}^{(k)}) &= \norm{\mathbb{E} (\overline{Y}^{(k)})^2}^{1/2} \leq K_1 \norm{\eta^{(k)}(1)}^{1/2} \\
				v(\overline{Y}^{(k)}) &= \norm{\Cov(\overline{Y}^{(k)})}^{1/2} \leq K_2 \norm{T^{(k)}}^{1/2},
			\end{align*}
			where $K_j$ is a constant depending only on $d$ and $(a_i)_{i \in I}$.  By our assumptions on $\norm{\eta^{(k)}(1)}$ and $\norm{T^{(k)}}$, we have 
			\[
			\varepsilon^{(k)} := v(\overline{Y}^{(k)})^{1/2} \sigma(\overline{Y}^{(k)})^{1/2} (\log n(k)d)^{3/4} + v(\overline{Y}^{(k)}) (\log n(k)d)^{3/2} \to 0.
			\]
			By Theorem \ref{thm: BBvH spectrum bound}, we have
			\[
			\mathbb{P} \left[ \Spec(\overline{Y}^{(k)}) \subseteq \Spec(\overline{Y}_{\operatorname{free}}^{(k)}) + C \varepsilon^{(k)} \right] \geq 1 - e^{-(\log n(k)d)^3},
			\]
			where $\overline{Y}_{\operatorname{free}}^{(k)}$ is obtained by replacing the classical Gaussians by $(\mathbb{M}_{n(k)},\eta^{(k)})$-semicirculars.  Note that $\sum_k e^{-(\log n(k)d)^3} \leq \sum_n e^{-(\log nd)^3} < \infty$, so by the Borel--Cantelli lemma, the containment of the spectra stated above holds for sufficiently large $k$ almost surely.
			
			Now let $X = (X_i)_{i \in I}$ be a $(B,\eta)$-semicircular family, and let
			\[
			Y = a_0 \otimes 1 + \sum_{j=1}^m a_j \otimes b_j + \sum_{i \in F} a_i \otimes X_i.
			\]
			Also, set
			\[
			Y^{(k)}_{\free} := a_0 \otimes 1 + \sum_{j=1}^m a_j \otimes b_j^{(k)} + \overline{Y}^{(k)}_{\free}.
			\]
			By Theorem \ref{thm: GKE Toeplitz theorem}, we know that $(b^{(k)},X_{\free}^{(k)})$ strongly converges to $(b,X)$.  Therefore, using Theorem \ref{thm: HT linearization}, there is some sequence $\delta^{(k)}$ tending to zero such that
			\[
			\Spec \left(Y_{\free}^{(k)} - \overline{Y}^{(k)}_{\free} \right) \subseteq \Spec \left( Y - \sum_{i \in F} a_i \otimes X_i \right) + \delta^{(k)}[-1,1].
			\]
			Combining with our earlier spectral estimates on the centered variables $\overline{Y}_{\free}^{(k)}, \overline{Y}^{(k)}$, we have almost surely for sufficiently large $k$,
			\[
			\Spec(Y^{(k)}) \subseteq \Spec(Y) + (\delta^{(k)} + C \varepsilon^{(k)}) [-1,1].
			\]
			Finally, we apply Theorem \ref{thm: HT linearization} again to conclude that $\limsup_{k \to \infty} \norm{p(b^{(k)},X_{\free}^{(k)})} \leq \norm{p(b,X)}$ for non-commutative polynomials $p$.
		\end{proof}
		
		\subsection{Approximation of covariance polynomials by polynomials in free copies} \label{subsec: averaging argument}
		
		We need to upgrade the convergence of norms from ordinary polynomials to covariance polynomials.  To this end, we want to approximate any covariance polynomial in free semicirculars by an ordinary polynomial.  We accomplish this through an averaging trick using infinitely many freely independent copies of ${X}$.
		
		\begin{prop} \label{prop: approximating covariance polynomial}
			Let $f \in \C_I\<\Omega\sqcup I\>$ be a covariance polynomial.  Then for every $\varepsilon > 0$ and $R > 0$, there exists a polynomial $g$ only depending on $f$, $\varepsilon$, and $R$ such that the following hold:
			\begin{enumerate}[(1)]
				\item Let $B$ be a tracial von Neumann algebra and $\eta: B \to B(\ell^2 I) \otimes B$ an operator-valued covariance matrix with $\norm{\eta_F(1)} \leq R$ for finite $F \subseteq I$.  Let $(b_\omega)_{\omega \in \Omega} \subset B$ with $\norm{b_\omega} \leq R$.  Let $M$ be the von Neumann algebra generated by a $(B,\eta)$-semicircular family ${X} = (X_i)_{i \in I}$.  Let $N$ be the free product over $B$ of copies of $M$ indexed by $\N_0$ and let $\iota_t: M \to N$ be the inclusion of $t$-th copy.  Then
				\begin{equation} \label{eq: approximation difference}
					\norm{g(b,(\iota_t({X}))_{t \in \N_0}) - \iota_0 \circ f(\eta,b,X)} \leq \varepsilon.
				\end{equation}
				and
				\begin{equation} \label{eq: approximation expectation}
					E_{\iota_0(M)}[g(b,(\iota_t({X}))_{t \in \N_0})] = \iota_0 \circ f(\eta,b,X).
				\end{equation}
				\item Let $\eta^{(k)}$ be a $\tr_{n(k)}$-symmetric $\mathbb{M}_{n(k)}$-valued covariance matrix, and let $X^{(k)}$ be an $\eta^{(k)}$-Gaussian matrix.  For $t \in \N_0$, let ${X}^{(k,t)}$ be independent copies of ${X}^{(k)}$. Then for deterministic matrices $(b^{(k)}_\omega)_{\omega \in \Omega}$, we have
				\begin{equation} \label{eq: approximation expectation matrix version}
					\mathbb{E} \left[ g(b^{(k)},({X}^{(k,t)})_{t \in \N_0}) \mid {X}^{(k,0)} \right] = f(\eta^{(k)},b^{(k)},{X}^{(k,0)}).
				\end{equation}
			\end{enumerate}
		\end{prop}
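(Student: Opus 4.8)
The plan is to construct $g$ by induction on the depth of $f$, replacing each \emph{outermost} occurrence of a map $\eta_{i,j}$ inside $f$ by an empirical average over fresh, freely independent copies of $X$. By linearity it suffices to treat a single covariance monomial. If $\operatorname{depth}(f)=0$, then $f$ is an ordinary $*$-monomial in $b$ and $X$; we take $g$ to be the same monomial evaluated on $b$ and the $0$-th copy, and all three displayed identities hold with equality. If $\operatorname{depth}(f)=d\ge 1$, then reading off the outer blocks of the underlying non-crossing pairing in Definition~\ref{def: cov monomial} one sees that $f$ has the form $f=m_0\prod_{q=1}^r\bigl(\eta_{a_q,b_q}(u_q)\,p_q\bigr)$ with $m_0,p_1,\dots,p_r$ ordinary $*$-monomials in $b,X$ and each $u_q$ a covariance polynomial of depth $\le d-1$. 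Applying the induction hypothesis to each $u_q$ produces polynomials $g_{u_q}$ in $b$ and copies of $X$; since infinitely many copies are available I arrange that $g_{u_q}$ uses the $0$-th copy together with copies indexed by pairwise disjoint finite sets $S_q\subset\N\setminus\{0\}$. Choosing further pairwise disjoint $m$-element sets $T_q\subset\N$, disjoint from $\{0\}$ and all the $S_{q'}$, I set
\[
g := m_0\prod_{q=1}^r\Bigl(\Bigl(\tfrac1m\sum_{t\in T_q}X^{(t)}_{a_q}\,g_{u_q}\,X^{(t)}_{b_q}\Bigr)p_q\Bigr),
\]
with the $0$-th copy used inside $m_0,p_q,g_{u_q}$; the single parameter $m$ is chosen large at the very end.

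Everything rests on one identity and its matrix analogue. If $Y=(Y_i)_{i\in I}$ is $(B,\eta)$-semicircular and free over $B$ from a von Neumann algebra $P\supseteq B$, then $E_P(Y_i\,c\,Y_j)=\eta_{i,j}(E_B(c))$ for all $c\in P$: writing $c=E_B(c)+\mathring c$, the summand $E_P(Y_iE_B(c)Y_j)$ equals $E_B(Y_iE_B(c)Y_j)=\eta_{i,j}(E_B(c))$ by Lemma~\ref{lem: semicircular partition formula} (using that $W^*(B,Y)$ is free over $B$ from $P$), while $E_P(Y_i\mathring c\,Y_j)=0$ since, testing against $p\in P$, the number $\tau(Y_i\mathring c\,Y_jp)$ reduces to the trace of an alternating $B$-centered word. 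In the matrix setting the analogue is that $\mathbb E[X^{(k,t)}_i\,a\,X^{(k,t)}_j\mid\mathcal F]=\eta^{(k)}_{i,j}(a)$ whenever $a$ is measurable for a $\sigma$-algebra $\mathcal F$ independent of $X^{(k,t)}$, which is exactly \eqref{eq: matrix covariance}.

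For the expectation identities I push conditional expectation through the construction. Write $M_0:=\iota_0(M)$. The induction hypothesis gives $E_{M_0}(g_{u_q})=\iota_0\circ u_q(\eta,b,X)$, hence $E_B(g_{u_q})=\iota_0(E_B(u_q(\eta,b,X)))$. Since each $X^{(t)}$, $t\in T_q$, is free over $B$ from $Q_q:=W^*(M_0,(\iota_s(M))_{s\in S_q})\ni g_{u_q}$, the identity above yields $E_{M_0}(A_q)=\iota_0(\eta_{a_q,b_q}(u_q))$ for the $q$-th average $A_q$. By the associativity (re-amalgamation) of amalgamated free products, the algebras $R_q:=W^*(M_0,(\iota_t(M))_{t\in S_q\cup T_q})$ are free over $M_0$ for distinct $q$; since $A_q\in R_q$, $E_{M_0}(A_q)\in M_0$, and $m_0,p_q\in M_0$, only the term with every $A_q$ replaced by $E_{M_0}(A_q)$ survives under $E_{M_0}$, giving \eqref{eq: approximation expectation}. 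The matrix identity \eqref{eq: approximation expectation matrix version} follows the same way: conditioning on $\mathcal G:=\sigma(X^{(k,0)},(X^{(k,s)})_{s\in\cup S_q})$ makes $m_0,p_q,g_{u_q}$ constant and the mutually independent batches $(X^{(k,t)})_{t\in T_q}$ factor, then conditioning further on $\mathcal F_0:=\sigma(X^{(k,0)})$ the independent batches $(X^{(k,s)})_{s\in S_q}$ factor, and linearity of $\eta^{(k)}_{a_q,b_q}$ together with the induction hypothesis finishes.

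The remaining point, and the technical heart of the proposition, is to choose $m$ so that \eqref{eq: approximation difference} holds. All operators in play are bounded by a constant $C(f,R)$: indeed $\norm{b_\omega}\le R$, $\norm{X_i}\le 2\norm{\eta(1)}^{1/2}\le 2R^{1/2}$ by the norm bound for operator-valued semicirculars, and $\norm{g_{u_q}}\le C(f,R)$ by induction. Telescoping $g$ against $\iota_0\circ f(\eta,b,X)$ by swapping each $A_q$ for $\iota_0(\eta_{a_q,b_q}(u_q))=\eta_{a_q,b_q}(E_B(g_{u_q}))$ one at a time, it suffices to bound $\norm{A_q-\eta_{a_q,b_q}(E_B(g_{u_q}))}=\norm{\frac1m\sum_{t\in T_q}W_t}$, where $W_t:=X^{(t)}_{a_q}g_{u_q}X^{(t)}_{b_q}-\eta_{a_q,b_q}(E_B(g_{u_q}))$ are, by the averaging identity and re-amalgamation, $*$-free over $Q_q$, identically $Q_q$-distributed, centered over $Q_q$, and bounded by $C(f,R)$. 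Expanding $\tau\bigl((\,|\frac1m\sum_{t\in T_q}W_t|^2)^p\bigr)$ via the operator-valued non-crossing moment--cumulant expansion over $Q_q$ --- in which only partitions with monochromatic blocks of size $\ge 2$ contribute, at most $4^{2p}$ of them, with at most $m^p$ index assignments and total cumulant weight at most $C(f,R)^{2p}$ after absorbing combinatorial constants --- gives $\norm{\frac1m\sum_{t\in T_q}W_t}_{2p}\le C(f,R)/\sqrt m$ uniformly in $p$, hence $\norm{\frac1m\sum_{t\in T_q}W_t}\le C(f,R)/\sqrt m$ on letting $p\to\infty$. Taking $m$ large enough completes the proof. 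The only genuine difficulties are organizational --- laying out disjoint families of free copies so that the freeness and independence relations, and the associativity of amalgamated free products, apply cleanly --- and the fact that strong convergence forces the \emph{operator-norm} free law of large numbers in the last step rather than its trivial $L^2$ version.
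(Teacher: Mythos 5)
Your proof is correct in substance, but it is organized quite differently from the paper's. The paper inducts on the \emph{number} of occurrences of $\eta$ and removes one \emph{innermost} occurrence at a time; there the argument of the removed $\eta_{i,j}$ is an ordinary polynomial $p(b,\iota_0(X))\in\iota_0(M)$, so only the freeness over $B$ of the single family $\bigl(\iota_t(X_i)\,\iota_0(Y)\,\iota_t(X_j)\bigr)_t$ is needed (their unnamed lemma), the norm control comes from citing Voiculescu's operator-norm estimate for free centered sums, and the linearity of $\widehat g$ in the one substituted variable drives both the telescoping and the conditional-expectation identities. You instead induct on \emph{depth} and strip all \emph{outermost} blocks simultaneously, which forces the argument of each outer $\eta_{a_q,b_q}$ to be a polynomial $g_{u_q}$ living in a larger algebra $Q_q$ built from other free copies; this is why you need the more general identity $E_P(Y_i c Y_j)=\eta_{i,j}(E_B(c))$ for $c\in P$ free from $Y$ over $B$ (which you state and prove correctly), together with the re-amalgamation/associativity of amalgamated free products to get freeness of the summands $W_t$ over $Q_q$ and of the blocks $R_q$ over $\iota_0(M)$. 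Your route has a pleasant structural payoff that the paper's does not make visible: because the outer $\eta$ only sees $E_B$ of its argument and $E_B(g_{u_q})=E_B(u_q)$ \emph{exactly} by the expectation identity, the inner approximation errors do not propagate — \eqref{eq: approximation difference} at lower depth is used only to bound $\lVert g_{u_q}\rVert$, and the total error is just $r$ law-of-large-numbers terms of size $O(1/\sqrt m)$. The price is that you must supply the operator-norm free LLN over the amalgam $Q_q$ yourself, which you do by a uniform-in-$p$ bound on $L^{2p}$-norms via the operator-valued moment--cumulant expansion rather than by citing \cite[Lemma 3.2]{Voiculescu1986}; that argument is standard and correct but is the one place where you should spell out the bound on operator-valued cumulants ($\lVert\kappa^{Q}_\pi\rVert\le K^{2p}\prod\lVert W_{t_j}\rVert$ from M{\"o}bius inversion) so that the constant is genuinely independent of $p$ before letting $p\to\infty$.
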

		
		\begin{nota}
			For a tracial von Neumann algebra $M$ and self-adjoint $X = (X_i)_{i \in I}$ in $M$, define the map $\Ad_{X}: M \to B(\ell^2 I) \otimes M$ by
			\[
			(\Ad_{X})_{i,j}(Y) = X_i Y X_j.
			\]
		\end{nota}
		
		Note that $\Ad_{X}$ is an operator-valued covariance matrix over $M$.  Our goal is to show that we can simulate the application of $\eta$ by $(1/m) \sum_{t=1}^m \Ad_{\iota_t(X)}$ where $\iota_t({X})$ are freely independent copies of ${X}$.
		
		\begin{lem}
			Consider the same setup as Proposition \ref{prop: approximating covariance polynomial} (1).  Let $(Y_t)_{t=1}^{m}$ be elements of $M$, let $i(t)$, $j(t) \in I$, and let
			\[
			Z_t = (\Ad_{\iota_t({X})})_{i(t),j(t)}(\iota_0(Y_t)).
			\]
			Then $(Z_t)_{t=1}^m$ are freely independent over $B$.
		\end{lem}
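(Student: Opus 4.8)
The plan is to realize each $Z_t$ inside an amalgamated free product and then descend from freeness over $P:=\iota_0(M)$ to freeness over $B$. Write $Q_t:=\iota_t(M)$ and $X^{(t)}:=\iota_t(X)$ for $t\in\{1,\dots,m\}$, so that $Z_t=X^{(t)}_{i(t)}\,\iota_0(Y_t)\,X^{(t)}_{j(t)}$ with $X^{(t)}_{i(t)},X^{(t)}_{j(t)}\in Q_t$ and $\iota_0(Y_t)\in P$. Since $N$ is the free product of the $\iota_s(M)$ over $B$, the algebras $P,Q_1,\dots,Q_m$ are free over $B$, and by a standard fact about amalgamated free products the intermediate algebras $R_t:=Q_t\vee P$ are free over $P$. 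Setting $A_t:=\mathrm{W}^*(B,Z_t)$, we have $A_t\subseteq R_t$, so the goal becomes: $A_1,\dots,A_m$ are free over $B$.

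The heart of the argument is the claim $E_P(A_t)\subseteq B$ for each $t$. To prove it, I would use that $Q_t=\mathrm{W}^*(B,X^{(t)})$ is free over $B$ from $P$, so by \cite[Proposition 3.8]{shlyakhtenko2000cpentropy} the family $X^{(t)}$ is $(P,\eta^P)$-semicircular, where $\eta^P$ is the extension of $\eta$ to $P$ by precomposition with the conditional expectation $P\to B$; crucially, each $\eta^P_{i,j}$ has range contained in $B$. A dense $*$-subalgebra of $A_t$ is spanned by $1$ and the words $b_0Z_t^{\varepsilon_1}b_1\cdots Z_t^{\varepsilon_r}b_r$ ($b_l\in B$, $\varepsilon_l\in\{1,*\}$), and substituting the definitions of $Z_t$ and $Z_t^*=X^{(t)}_{j(t)}\iota_0(Y_t)^*X^{(t)}_{i(t)}$ rewrites such a word as $b_0\,X^{(t)}_{a_1}z_1X^{(t)}_{a_2}z_2\cdots z_{2r-1}X^{(t)}_{a_{2r}}\,b_r$, where $\vec{a}\in I^{2r}$ and every slot $z_l$ lies in $P$ (alternately an $\iota_0(Y_t)^{\pm}$ and a $b_l\in B$). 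Applying Lemma~\ref{lem: semicircular partition formula} to the $(P,\eta^P)$-semicircular family $X^{(t)}$ expresses the $E_P$ of this word as $b_0\bigl(\sum_{\pi\in\mathcal{NC}_2(2r)}\eta^P_{\pi,\vec{a}}[z_1,\dots,z_{2r-1}]\bigr)b_r$, and a short induction on the number of blocks (using Definition~\ref{def: cov monomial} and only that each $\eta^P_{i,j}$ maps into $B$) shows each $\eta^P_{\pi,\vec{a}}[z_1,\dots,z_{2r-1}]\in B$. Hence $E_P$ of the word lies in $B$, and by normality of $E_P$ together with weak-$*$ density, $E_P(A_t)\subseteq B$.

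The final step is an elementary descent. Given $n\ge1$, indices $t_1,\dots,t_n$ with $t_k\ne t_{k+1}$, and $w_k\in A_{t_k}$ with $E_B(w_k)=0$, the tower property $E_B=E_B\circ E_P$ for $B\subseteq P\subseteq N$ combined with $E_P(w_k)\in B$ gives $E_P(w_k)=E_B(E_P(w_k))=E_B(w_k)=0$, so each $w_k$ is centered with respect to $E_P$. Since $w_k\in R_{t_k}$ and the $R_t$ are free over $P$ along strictly alternating indices, $E_P(w_1\cdots w_n)=0$, whence $E_B(w_1\cdots w_n)=E_B(E_P(w_1\cdots w_n))=0$. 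This is exactly the freeness of $(Z_t)_{t=1}^m$ over $B$.

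I expect the main obstacle to be the bookkeeping in the claim $E_P(A_t)\subseteq B$: one must check that after expanding $Z_t$ and $Z_t^*$ every entry between two consecutive copies of $X^{(t)}$ genuinely lies in $P$, and that the covariance monomials $\eta^P_{\pi,\vec{a}}[\,\cdot\,]$ never escape $B$. The freeness of $R_1,\dots,R_m$ over $P$ is standard but is the structural input that avoids a direct analysis of reduced words in the full free product $N$; if a self-contained justification is wanted, it follows by noting that a $P$-centered element of $Q_t\ast_B P$ is a combination of reduced words each containing at least one letter from $Q_t\ominus B$, and letters from different $Q_t$'s cannot coalesce when multiplying such elements along alternating indices.
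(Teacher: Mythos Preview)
Your argument is correct and takes a genuinely different route from the paper. The paper introduces auxiliary variables $\tilde Z_t=(\Ad_{\iota_t(X)})_{i(t),j(t)}(\iota_{m+t}(Y_t))$, which are manifestly free over $B$ because the index sets $\{t,m+t\}$ are disjoint, and then shows that $(Z_t)$ and $(\tilde Z_t)$ have the same $B$-valued moments via the moment--cumulant formula together with a parity argument: in any nonvanishing cumulant term, two $\iota_0(Y)$'s in the same block force an even number of $\iota_{t}(X)$'s between them, which in turn forces the corresponding $t$-indices to agree. Your approach instead isolates the structural reason behind this, namely $E_P(A_t)\subseteq B$, proved via the $(P,\eta^P)$-semicircularity of $\iota_t(X)$ and the fact that $\eta^P$ has range in $B$; freeness over $B$ then follows from freeness of the $R_t$ over $P$ by the tower descent. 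The paper's route is entirely self-contained (no appeal to the ``standard fact'' that $P\vee Q_1,\dots,P\vee Q_m$ are free over $P$), while your route is more conceptual, replaces the parity counting by a single conditional-expectation computation, and makes transparent why shifting $\iota_0(Y_t)$ to $\iota_{m+t}(Y_t)$ cannot change moments.
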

		
		\begin{proof}
			Let $\tilde{Z}_t = (\Ad_{\iota_t({X})})_{i(t),j(t)}(\iota_{m+t}(Y_t))$.  Since the index sets $\{t,m+t\}$ are disjoint for $t = 1$, \dots, $m$, the variables $\tilde{Z}_t$ are freely independent over $B$.  Hence, it suffices to show that the $B$-valued moments of $(Z_t)_{t=1}^m$ agree with those of $(\tilde{Z}_t)_{t=1}^m$.
			
			Consider a $B$-valued monomial $Z_{t(1)} b_1 \cdots Z_{t(\ell)} b_\ell$.  By the moment-cumulant formula \eqref{eq: moment-cumulant formula}, 
			\begin{align*}
				E_{B}[Z_{t(1)} b_1 \cdots Z_{t(\ell)} b_\ell] &= E_{B}[\iota_{t(1)}(X_{i(1)}) \iota_0(Y_{t(1)}) \iota_{t(1)}(X_{j(1)})b_1 \cdots \iota_{t(\ell)}(X_{i(\ell)}) \iota_0(Y_{t(\ell)}) \iota_{t(\ell)}(X_{j(\ell)})b_\ell] \\
				&= \sum_{\pi \in \mathcal{NC}(3\ell)} K_\pi(\iota_{t(1)}(X_{i(1)}), \iota_0(Y_{t(1)}), \iota_{t(1)}(X_{j(1)})b_1, \dots, \iota_{t(\ell)}(X_{i(\ell)}), \iota_0(Y_{t(\ell)}), \iota_{t(\ell)}(X_{j(\ell)})b_\ell).
			\end{align*}
			Since the images of $\iota_t$ are freely independent, the only partitions $\pi$ that contribute are those where the index $t$ is constant on each block.  In other words, every block either connects variables $\iota_t(X_i)$ with the same value of $t$, or it connects variables $\iota_0(Y_i)$.  Furthermore, as $(\iota_t(X_i))_{i \in I}$ are jointly semicircular, the partition $\pi$ must connect the variables $\iota_t(X_i)$ in pairs.
			
			We claim that $\iota_0(Y_{t(r_1)})$ and $\iota_0(Y_{t(r_2)})$ cannot be connected by $\pi$ unless $t(r_1) = t(r_2)$.  Indeed, suppose that $\iota_0(Y_{t(r_1)})$ and $\iota_0(Y_{t(r_2)})$ are connected.  If $t(r_1) \neq t(r_2)$, then there is an odd number of semicircular variables from $\iota_{t(r_1)}(M)$ in between $\iota_0(Y_{t(r_2)})$ and $\iota_0(Y_{t(m)})$ in the monomial.  Since $\pi$ is non-crossing, these semicircular variables must be matched among themselves by the partition $\pi$; but they cannot be arranged into pairs since there is an odd number.
			
			Given that $\iota_0(Y_{t(r_1)})$ and $\iota_0(Y_{t(r_2)})$ cannot be connected by $\pi$ unless $t(r_1) = t(r_2)$, the resulting contributions will be unchanged if we substitute $\iota_{m+t}(Y_t)$ instead of $\iota_0(Y_t)$.  Hence, $(Z_t)_{t=1}^m$ have the same joint moments as $(\tilde{Z}_t)_{t=1}^m$ as desired.
		\end{proof}
		
		We recall the following bound on the norm of the sum of freely independent operators.  The case $B = \C$ is proved in \cite[Lemma 3.2]{Voiculescu1986} and the proof in the amalgamated setting is the same (see e.g. \cite[Proposition 3.19]{JekelLiu2020}).
		
		\begin{lem} \label{lem: norm of free sum}
			Let $B \subseteq M$ be tracial von Neumann algebras, and $Z_1$, \dots, $Z_m \in M$ be freely independent over $B$ with $E_{B}[Z_t] = 0$ for $t = 1$, \dots, $m$.  Then 
			\[
			\norm*{\sum_{t=1}^m Z_t } \leq \norm*{ \sum_{t=1}^m Z_t^* Z_t }^{1/2} + \norm*{ \sum_{t=1}^m Z_t Z_t^* }^{1/2} + \max_{t=1,\dots,m} \norm{Z_t}.
			\]
		\end{lem}
		
		\begin{lem} \label{lem: averaging trick}
			Consider the same setup as Proposition \ref{prop: approximating covariance polynomial} (1).  Let $m \in \N$ and $i, j \in I$.  Let $Y \in M$.  Then
			\begin{equation} \label{eq: averaging expectation}
				E_{\iota_0(M)} \left[ \frac{1}{m} \sum_{t=1}^m (\Ad_{\iota_t({X})})_{i,j}[\iota_0(Y)] \right] = \iota_0 \circ \eta_{i,j}(Y),
			\end{equation}
			and
			\begin{equation} \label{eq: averaging difference}
				\norm*{\frac{1}{m} \sum_{t=1}^m (\Ad_{\iota_t({X})})_{i,j}[\iota_0(Y)] - \iota_0 \circ \eta_{i,j}(Y)} \leq \frac{24R \norm{Y}}{\sqrt{m}}
			\end{equation}
		\end{lem}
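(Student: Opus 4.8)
The plan is to prove the two displayed identities separately. For \eqref{eq: averaging expectation} it suffices, by linearity of $E_{\iota_0(M)}$, to show that $E_{\iota_0(M)}\big[\iota_t(X_i)\,\iota_0(Y)\,\iota_t(X_j)\big]=\iota_0\circ\eta_{i,j}(Y)$ for each fixed $t\ge 1$ and then average over $t=1,\dots,m$. Split $Y=E_B[Y]+Y^\circ$ with $Y^\circ:=Y-E_B[Y]$, so $E_B[Y^\circ]=0$. Since $E_B[Y]\in B$ lies in the common subalgebra of all the $\iota_t(M)$, we have $\iota_t(X_i)\,\iota_0(E_B[Y])\,\iota_t(X_j)=\iota_t\big(X_iE_B[Y]X_j\big)$, and because $X$ is $(B,\eta)$-semicircular, $E_B[X_iE_B[Y]X_j]=\eta_{i,j}(E_B[Y])$; writing $X_iE_B[Y]X_j=\eta_{i,j}(E_B[Y])+Z^\circ$ with $E_B[Z^\circ]=0$, the fact that $\iota_t(M)$ is free over $B$ from $\iota_0(M)$ forces $E_{\iota_0(M)}[\iota_t(Z^\circ)]=0$, while $E_{\iota_0(M)}[\iota_t(\eta_{i,j}(E_B[Y]))]=\iota_0(\eta_{i,j}(E_B[Y]))$ because $\eta_{i,j}(E_B[Y])\in B$. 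The remaining term $\iota_t(X_i)\,\iota_0(Y^\circ)\,\iota_t(X_j)$ is a reduced word over $B$ in the amalgamated free product $\iota_0(M)*_B\iota_t(M)$: its syllables are centered (recall $E_B[X_i]=E_B[X_j]=0$) and alternate between $\iota_t(M)$ and $\iota_0(M)$, and it begins and ends in $\iota_t(M)\ne\iota_0(M)$, so it is orthogonal to $L^2(\iota_0(M))$ and $E_{\iota_0(M)}$ annihilates it. Adding the two contributions and recalling the convention $\eta_{i,j}(Y)=\eta_{i,j}(E_B[Y])$ from \S\ref{subsec: operator valued prelim} proves the fixed-$t$ identity, hence \eqref{eq: averaging expectation}.

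For \eqref{eq: averaging difference}, set $W_t:=(\Ad_{\iota_t(X)})_{i,j}[\iota_0(Y)]-\iota_0\circ\eta_{i,j}(Y)$, which lies in $\mathcal N_t:=W^*(\iota_0(M),\iota_t(M))$ and satisfies $E_{\iota_0(M)}[W_t]=0$ by \eqref{eq: averaging expectation}; the left-hand side of \eqref{eq: averaging difference} is exactly $\big\|\tfrac1m\sum_{t=1}^mW_t\big\|$. Since the $\iota_t(M)$ for $t\ge1$ are freely independent over $B$ from $\iota_0(M)$, the family $(\iota_t(X))_{t\ge1}$ is a $(\iota_0(M),\eta')$-semicircular family by \cite[Example~2.6]{shlyakhtenko1999valued} and \cite[Proposition~3.8]{shlyakhtenko2000cpentropy} (applied as in the proof of Lemma~\ref{lem: weak convergence part 2}), with covariance $\eta'$ that is block-diagonal in $t$; by the cumulant characterization of operator-valued semicircular families (only $K_2$ is nonzero, and it vanishes across blocks), this forces the blockwise-generated subalgebras $(\mathcal N_t)_{t\ge1}$ to be freely independent over $\iota_0(M)$, and by symmetry the $W_t$ share a common $\iota_0(M)$-valued $*$-distribution. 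Thus $(W_t)_{t=1}^m$ is an i.i.d.\ family, freely independent and centered over $\iota_0(M)$. Using $\|X_i\|\le2\|\eta_{i,i}(1)\|^{1/2}\le2R^{1/2}$ for a $(B,\eta)$-semicircular family (\cite[Eq.~(2.8)]{shlyakhtenko1999valued}, cf.\ Corollary~\ref{cor: moment bound}) and contractivity of $E_{\iota_0(M)}$, we get $\|W_t\|\le\|(\Ad_{\iota_t(X)})_{i,j}[\iota_0(Y)]\|+\|\iota_0\circ\eta_{i,j}(Y)\|\le2\cdot4R\|Y\|=8R\|Y\|$.

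It then remains to bound $\big\|\sum_{t=1}^mW_t\big\|$, and here I would invoke a free Rosenthal/Khintchine-type inequality for amalgamated free independence (or, equivalently, estimate $\tau\big(((\sum_tW_t)^*(\sum_tW_t))^N\big)$ directly via non-crossing partitions using the $\iota_0(M)$-valued semicircular structure), in the form
\[
\Big\|\sum_{t=1}^m W_t\Big\|\ \le\ \Big\|\Big(\sum_{t=1}^m E_{\iota_0(M)}[W_t^* W_t]\Big)^{1/2}\Big\|+\Big\|\Big(\sum_{t=1}^m E_{\iota_0(M)}[W_tW_t^*]\Big)^{1/2}\Big\|+\max_{1\le t\le m}\|W_t\|.
\]
Since $\|E_{\iota_0(M)}[W_t^* W_t]\|,\ \|E_{\iota_0(M)}[W_tW_t^*]\|\le\|W_t\|^2\le(8R\|Y\|)^2$, each of the three terms on the right is at most $8R\|Y\|\sqrt m$, whence $\big\|\sum_{t=1}^mW_t\big\|\le24R\|Y\|\sqrt m$; dividing by $m$ gives \eqref{eq: averaging difference} (the constant $24$ is not optimized). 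The main obstacle is exactly the passage from freeness over $B$ to freeness over the larger algebra $\iota_0(M)$ — the claim that $(\mathcal N_t)_{t\ge1}$ are free over $\iota_0(M)$, which is what turns the sum of $m$ uniformly bounded pieces into an object of norm $O(1/\sqrt m)$ — together with fixing the precise free moment inequality that yields the explicit constant.
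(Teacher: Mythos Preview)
Your proof is correct, and in fact your route to \eqref{eq: averaging difference} is somewhat different from the paper's. You argue that the subalgebras $\mathcal N_t=W^*(\iota_0(M),\iota_t(M))$ are freely independent \emph{over $\iota_0(M)$}, by first upgrading $(\iota_t(X))_{t\ge1}$ to a $(\iota_0(M),\eta')$-semicircular family via \cite[Proposition~3.8]{shlyakhtenko2000cpentropy} and then using block-diagonality of $\eta'$ to conclude vanishing of mixed cumulants. The paper instead works over the smaller algebra $B$: the (unnumbered) lemma immediately preceding Lemma~\ref{lem: averaging trick} is a combinatorial parity argument showing directly that the elements $(\Ad_{\iota_t(X)})_{i,j}[\iota_0(Y)]$ are freely independent over $B$; since $\iota_0\circ\eta_{i,j}(Y)\in B$, the centered $Z_t$ inherit this, and the paper then invokes \cite[Lemma~3.2]{Voiculescu1986}. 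Your approach sidesteps that combinatorial lemma entirely at the cost of importing Shlyakhtenko's proposition; the paper's approach is more self-contained but pays for it with the extra lemma. The Rosenthal-type inequality you write down and the one the paper cites are the same estimate (in both cases one is working over a nontrivial base, $\iota_0(M)$ or $B$), and both yield the identical numerics $(2\sqrt m+1)\cdot 8R\|Y\|\le 24\sqrt m\,R\|Y\|$. Your treatment of \eqref{eq: averaging expectation} via the decomposition $Y=E_B[Y]+Y^\circ$ is equivalent to the paper's test-against-$Y'$ argument.
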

		
		\begin{proof}
			Let
			\[
			Z_t = (\Ad_{\iota_t({X})})_{i,j}[\iota_0(Y)] - \iota_0 \circ \eta_{i,j}(Y).
			\]
			For $Y' \in M$, using free independence of $\iota_t({X})$ and $M$ and the free moment-cumulant formula,
			\[
			E_{B}[\iota_t(X_i) \iota_0(Y) \iota_t(X_j) Y'] = \iota_0 \circ \eta_{i,j} \circ E_{B}(Y) E_{B}[Y'] = E_{B}[\eta_{i,j} \circ E_{B}(Y) Y']
			\]
			Since this holds for all $Y'$, we have $E_{\iota_0(M)}[\iota_t(X_i) \iota_0(Y) \iota_t(X_j)] = \iota_0 \circ \eta_{i,j} \circ E_{B}(Y)$, and averaging this over $t$ yields \eqref{eq: averaging expectation}.
			
			The above computation also implies that $E_{B}[Z_t] = 0$. 
			Furthermore, note that 
			\[
			\norm{X_i} \leq \norm{L(\xi_i)} + \norm{L(\xi_i)^*} \leq 2 \sqrt{\norm{\eta_{i,i}(1)}} \leq 2R^{1/2},
			\]
			where $L(\xi_i)$ is the creation operator used to define $X_i$, as in the construction given by \cite[\S~2.4 and Equation~(2.6)]{shlyakhtenko1999valued}.
			Thus, $\norm{\iota_t(X_i) \iota_0(Y) \iota_t(X_j)} \leq 4R \norm{Y}$, and hence $\norm{Z_t} \leq 8R \norm{Y}$.  Thus, by Lemma \ref{lem: norm of free sum}, we have
			\[
			\norm*{\sum_{t=1}^{m} (\Ad_{\iota_t({X})})_{i,j}[\iota_0(Y)] - m \iota_0 \circ \eta_{i,j}(Y)} \leq \sqrt{m(8R \norm{Y})^2} + \sqrt{m(8R \norm{Y})^2} + 8R \norm{Y} = (2\sqrt{m} + 1) 8R \norm{Y}.
			\]
			We bound this (crudely) by $24 \sqrt{m} R \norm{Y}$ and then divide both sides by $m$ to obtain \eqref{eq: averaging difference}.
		\end{proof}
		
		\begin{proof}[Proof of Proposition \ref{prop: approximating covariance polynomial}]
			We proceed by induction on the number of occurrences of $\Lambda$ in $f$. (Recall here we write $\Lambda = (\Lambda_{i,j})_{i,j \in I}$ for any formal linear map as in Definition~\ref{defi: formal covariance polynomials}). The base case is when there are zero occurrences of $\Lambda$ and we take $g = f$.  For the induction step, consider an arbitrary covariance polynomial $f$.  By linearity, it suffices to consider the case where $f$ is a monomial, i.e.\ it is $\Lambda_{\pi,\vec{\imath}}$ applied to some $*$-polynomials.  Pick some innermost block in the pair partition $\pi$, and suppose it is associated to an occurrence of $\Lambda_{i,j}$ and that the term inside $\Lambda_{i,j}$ is a non-commutative polynomial $p$.  Thus, there is a covariance polynomial $\widehat{f}$ with fewer occurrences of $\Lambda$, such that for any covariance matrix $\eta$ and inputs $X$ and $b$,
			\[
			f(\eta,b,X) = \widehat{f}(\eta,b,X,\eta_{i,j}(p(b,X))),
			\]
			where $\widehat{f}$ is linear in the variable corresponding to $\eta_{i,j}(p(b,X))$.  Note that $\eta_{i,j}(p(b,X)) \in B$ is bounded by a constant only depending on $R$ and $p$.  By the induction hypothesis, there is a polynomial $\widehat{g}$ only depending on $\widehat{f}$, $\varepsilon$, and $R$ such that
			\[
			\norm*{ \iota_0 \circ \widehat{f}(\eta,b,X,\eta_{i,j}(p(b,X))) - \widehat{g}(b, \iota_t({X})_{t=0}^{t_0},\eta_{i,j}(p(b,\iota_0({X})))) } \leq \frac{\varepsilon}{2},
			\]
			and the relations \eqref{eq: approximation expectation} and \eqref{eq: approximation expectation matrix version} are satisfied; where we have used indices $t = 0$, \dots, $t_0$ rather than all $\N_0$ because the polynomial only depends on finitely many variables.  Let $m \in \N$ to be chosen later, and let
			\[
			g(b,\iota_t({X})_{t=0}^{t_0 + m})
			= \widehat{g}\left(b, \iota_t({X})_{t=0}^{t_0}, \frac{1}{m} \sum_{t=1}^{m} \iota_{t_0 +t}(X_i) p(b,\iota_0({X})) \iota_{t_0 +t}(X_j) \right).
			\]
			Since $\widehat{g}$ is linear in the variable corresponding to $\eta_{i,j}(p(b,X))$, there is a constant $C$ depending only on $\widehat{g}$ and $R$ such that
			\begin{multline*}
				\norm*{\widehat{g}(b,\iota_t({X})_{t=0}^{t_0},\eta_{i,j}(p(b,\iota_0({X})))) - 
					\widehat{g}\left(b, \iota_t({X})_{t=0}^{t_0}, \frac{1}{m} \sum_{t=1}^m \iota_{t_0 +t}(X_i) p(b,X) \iota_{t_0 +t}(X_j)\right) } \\
				\leq C \norm*{\frac{1}{m} \sum_{t=1}^m \iota_{t_0 +t}(X_i) p(b,\iota_0({X})) \iota_{t_0 +t}(X_j) - \eta_{i,j}(p(b,\iota_0({X})))}
			\end{multline*}
			By Lemma \ref{lem: averaging trick}, we have
			\[
			\norm*{\eta_{i,j}(p(b,\iota_0({X}))) - \frac{1}{m} \sum_{t=1}^m \iota_{t_0 +t}(X_i) p({X},{b}) \iota_{t_0 +t}(X_j) } \leq \frac{24R}{\sqrt{m}} \norm{p(b,X)},
			\]
			which is bounded by $1/\sqrt{m}$ times a constant that only depends on $p$ and $R$.  By choosing $m$ large enough (only depending on $\widehat{g}$, $p$, $R$ and $\varepsilon$), we can guarantee that
			\[
			\norm*{\widehat{g}(b,\iota_t({X})_{t=1}^{t_0},\eta_{i,j}(p(b,X))) - 
				\widehat{g}\left(b,\iota_t({X})_{t=0}^{t_0}, \frac{1}{m} \sum_{t=0}^m \iota_{t_0+t}(X_i) p(b,\iota_0({X})) \iota_{t_0+t}(X_j)\right) } \leq \frac{\varepsilon}{2},
			\]
			and therefore,
			\[
			\norm*{g(b,(\iota_t({X}))_{t=0}^{t_0+m}) - f(\eta,b,(\iota_t({X}))_{t=0}^{t_0})} \leq \varepsilon.
			\]
			Thus, \eqref{eq: approximation difference} is satisfied.
			
			To verify \eqref{eq: approximation expectation}, first observe that by Lemma \ref{lem: averaging trick}
			\[
			E_{\iota_0(M)} \left[ \frac{1}{m} \sum_{t=1}^m \iota_{t_0+t}(X_i) p(b,\iota_0({X})) \iota_{t_0+t}(X_j) \right] = \eta_{i,j}(p(b,X)).
			\]
			Since the $\iota_t(M)$'s are freely independent over $B$, we also have that
			\[
			E_{\iota_0(M) \vee \dots \vee \iota_{t_0}(M)} \left[ \frac{1}{m} \sum_{t=1}^m \iota_{t_0+t}(X_i) p(b,\iota_0({X})) \iota_{t_0+t}(X_j) \right] = \eta_{i,j}(p(b,X)).
			\]
			From the bimodule property of $E_{\iota_0(M) \vee \dots \vee \iota_{t_0}(M)}$ and the fact that the variable corresponding to $\eta_{i,j}(p(b,\iota_0(X)))$ only occurs once in $\widehat{g}$, we have that
			\begin{multline} \label{eq: conditional expectation manipulation}
				E_{\iota_0(M) \vee \dots \vee \iota_{t_0}(M)} \left[ \widehat{g}\left(b, \iota_t({X})_{t=1}^{t_0}, \frac{1}{m} \sum_{t=1}^m \iota_{t_0+t}(X_i) p(b,\iota_0(X)) \iota_{t_0+t}(X_j) \right) \right] \\
				= \widehat{g}\left(b, \iota_t({X})_{t=1}^{t_0}, E_{\iota_0(M) \vee \dots \vee \iota_{t_0}(M)} \left[ \frac{1}{m} \sum_{t=1}^m \iota_{t_0+t}(X_i) p(b,\iota_0({X})) \iota_{t_0+t}(X_j) \right] \right),
			\end{multline}
			and therefore,
			\begin{equation} \label{eq: conditional expectation manipulation 2}
				E_{\iota_0(M) \vee \dots \vee \iota_{t_0}(M)}[g(b,\iota_t({X})_{t=1}^{t_0+m})] = \widehat{g}(b,\iota_t({X})_{t=1}^{t_0},\eta_{i,j}(p(b,X))).
			\end{equation}
			By applying $E_{\iota_0(M)}$ to \eqref{eq: conditional expectation manipulation 2} and using the induction hypothesis \eqref{eq: approximation expectation} for $\widehat{f}$ and $\widehat{g}$, we obtain \eqref{eq: approximation expectation} for $f$ and $g$.  This completes the proof of (1).
			
			It remains to show \eqref{eq: approximation expectation matrix version} holds.  Let $\eta^{(k)}$ and ${X}^{(k,t)}$ be as in (2).  Using the independence of the ${X}^{(k,t)}$'s and the definition of the Gaussian matrix associated to $\eta^{(k)}$, we have
			\[
			\mathbb{E}\left[ \frac{1}{m} \sum_{t=1}^m X_i^{(k,t_0+t)} p(b^{(k)},X^{(k,0)}) X_j^{(k,t_0+t)} \Big \mid ({X}^{(k,t)})_{t=1}^{t_0} \right] = \eta_{i,j}^{(k)}(p(b^{(k)},X^{(k,0)}).
			\]
			Now since $\widehat{g}$ is linear in the variable corresponding to $\eta_{i,j}^{(k)}(p(b^{(k)},X^{(k,0)}))$ and since all the other terms in $\widehat{g}$ depend only on $({X}^{(k,t)})_{t=1}^{t_0}$, we have
			\[
			\mathbb{E} \left[g(b^{(k)},({X}^{(k,t)})_{t=1}^{t_0+m}) \Big\mid ({X}^{(k,t)})_{t=0}^{t_0}  \right] = \widehat{g}(b^{(k)},({X}^{(k,t)})_{t=1}^{t_0},\eta_{i,j}^{(n)}(p(b^{(k)},{X}^{(k,0)})).
			\]
			Now taking the conditional expectation given ${X}^{(k,0)}$ on both sides and applying the induction hypothesis \eqref{eq: approximation expectation matrix version} to $\widehat{g}$ and $\widehat{f}$, we obtain \eqref{eq: approximation expectation matrix version} for $g$ and $f$ as desired, which completes the proof of (2).
		\end{proof}
		
		\subsection{Convergence of operator norms for covariance polynomials}
		
		We are almost ready to complete the proof of Theorem \ref{introthm: strong convergence}.  As the proof requires an argument to exchange limit and expectation, similar to the one in the proof of weak convergence, we first record the following lemma.
		
		\begin{lem} \label{lem: dominated convergence argument}
			Let $X^{(k)}$ be an $\eta^{(k)}$-Gaussian family of random matrices and assume that $\sup_k \norm{\eta^{(k)}(1)} < \infty$ and $\sup_k (\log n(k))^3 \norm{T^{(k)}} < \infty$.  Let $f^{(k)}$ be a real-valued function and $L$ a constant such that
			\[
			\limsup_{k \to \infty} f^{(k)}({X}^{(k)}) \leq L \text{ almost surely,}
			\]
			and suppose $M$ and $r$ are constants such that
			$|f^{(k)}({X}^{(k)})| \leq M(1 + \max_i \norm{X_i^{(k)}}^r)$.
			Then
			\[
			\limsup_{k \to \infty} \mathbb{E} f^{(k)}({X^{(k)}}) \leq L.
			\]
		\end{lem}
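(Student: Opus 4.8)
The plan is a uniform-integrability argument of ``reverse Fatou'' type. Write $g^{(k)} := M\bigl(1 + \max_{i\in F}\norm{X_i^{(k)}}^r\bigr)$, where $F\subseteq I$ is the finite set of indices actually occurring in $f^{(k)}$ (which we may take independent of $k$); by hypothesis $|f^{(k)}(X^{(k)})| \le g^{(k)}$, so $\mathbb{E}\,f^{(k)}(X^{(k)})$ is well defined once $g^{(k)}\in L^1$. I will need two things: (a) a uniform second-moment bound $\sup_k \mathbb{E}[(g^{(k)})^2] < \infty$, which makes $(g^{(k)})_k$ uniformly integrable; and (b) the a.s.\ $\limsup$ hypothesis, which forces the ``bad'' events $\{f^{(k)}(X^{(k)})> L+\delta\}$ to have vanishing probability.

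For (a) it suffices to bound $\mathbb{E}\norm{X_i^{(k)}}^{2r}$ uniformly in $k$ for each $i\in F$. Using $\norm{X_i^{(k)}}^{2p} \le \Tr_{n(k)}\bigl((X_i^{(k)})^{2p}\bigr) = n(k)\,\tr_{n(k)}\bigl((X_i^{(k)})^{2p}\bigr)$ together with Corollary~\ref{cor: moment bound} applied to $\eta_{i,i}^{(k)}$ (whose Choi matrix $T_i^{(k)}$ is a compression of $T^{(k)}$, hence $\norm{T_i^{(k)}}\le\norm{T^{(k)}}$), one gets for every $p\in\N$
\[
\bigl(\mathbb{E}\norm{X_i^{(k)}}^{2p}\bigr)^{1/2p} \;\le\; n(k)^{1/2p}\Bigl(2\norm{\eta_{i,i}^{(k)}(1)}^{1/2} + 2\,p^{3/4}\norm{\eta_{i,i}^{(k)}(1)}^{1/4}\norm{T^{(k)}}^{1/4}\Bigr).
\]
Choosing $p = p(k) := \max\bigl(\lceil r\rceil,\lceil\log n(k)\rceil\bigr)$ makes $n(k)^{1/2p(k)} \le e^{1/2}$, while the hypotheses $\sup_k\norm{\eta^{(k)}(1)}<\infty$ and $\sup_k(\log n(k))^3\norm{T^{(k)}}<\infty$ make the parenthetical factor bounded uniformly in $k$ (the quantity $p(k)^3\norm{T^{(k)}}$ is bounded, and when $\log n(k)$ stays bounded along a subsequence so does $p(k)$, with $\norm{T^{(k)}}$ bounded there; the case $n(k)=1$ is the trivial scalar-Gaussian one). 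Hence $\bigl(\mathbb{E}\norm{X_i^{(k)}}^{2p(k)}\bigr)^{1/2p(k)} \le C$ for a constant $C$ independent of $k$, and since $2p(k)\ge 2r$, Lyapunov's inequality gives $\mathbb{E}\norm{X_i^{(k)}}^{2r}\le C^{2r}$. Summing over $i\in F$ yields $\sup_k\mathbb{E}[(g^{(k)})^2] =: C_2 < \infty$, so in particular $\mathbb{E}[g^{(k)}\mathbf{1}_{\{g^{(k)}>R\}}] \le C_2/R$ for all $k$ and all $R>0$.

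For the conclusion, fix $\delta>0$ and set $A_k := \{f^{(k)}(X^{(k)}) > L+\delta\}$. Since $\limsup_k f^{(k)}(X^{(k)}) \le L$ almost surely, $\mathbf{1}_{A_k}\to 0$ a.s., hence $\mathbb{P}(A_k)\to 0$ by dominated convergence. Pointwise, $f^{(k)}(X^{(k)}) \le (L+\delta) + \bigl(f^{(k)}(X^{(k)}) - (L+\delta)\bigr)^+$ and $\bigl(f^{(k)}(X^{(k)}) - (L+\delta)\bigr)^+ \le \bigl(g^{(k)} + |L+\delta|\bigr)\mathbf{1}_{A_k}$, so taking expectations,
\[
\mathbb{E}\,f^{(k)}(X^{(k)}) \;\le\; (L+\delta) + \mathbb{E}\bigl[g^{(k)}\mathbf{1}_{A_k}\bigr] + |L+\delta|\,\mathbb{P}(A_k).
\]
Since $\mathbb{E}[g^{(k)}\mathbf{1}_{A_k}] \le R\,\mathbb{P}(A_k) + C_2/R$, letting first $k\to\infty$ and then $R\to\infty$ gives $\limsup_k \mathbb{E}[g^{(k)}\mathbf{1}_{A_k}] = 0$; combined with $\mathbb{P}(A_k)\to 0$ this yields $\limsup_k \mathbb{E}\,f^{(k)}(X^{(k)}) \le L+\delta$. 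Letting $\delta\downarrow 0$ finishes the argument.

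The main obstacle is step (a): Corollary~\ref{cor: moment bound} controls only the normalized-trace moments, and converting to the operator norm costs a factor $n(k)$ which must be absorbed by taking $p(k)$ of logarithmic order — precisely the point where the boundedness of $(\log n(k))^3\norm{T^{(k)}}$ is used. Everything after that is the routine truncation/uniform-integrability packaging.
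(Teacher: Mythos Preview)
Your proof is correct and takes a genuinely different route from the paper's. The paper invokes the spectral tail estimate of \cite{BBvH2023} (Theorem~\ref{thm: BBvH spectrum bound}) to obtain a uniform sub-Gaussian tail $\mathbb{P}\bigl[\norm{X^{(k)}}\ge C_1+C_2t\bigr]\le e^{-t^2}$, then controls $\mathbb{E}\bigl[|Z^{(k)}|\mathbf{1}_{|Z^{(k)}|\ge K_\ell}\bigr]$ by a dyadic sum and finishes with Fatou on the truncated part. You instead use only the moment bound already established in Corollary~\ref{cor: moment bound}, choosing $p(k)\sim\log n(k)$ so that the dimension factor $n(k)^{1/2p(k)}$ stays bounded while $p(k)^{3/4}\norm{T^{(k)}}^{1/4}=\bigl(p(k)^3\norm{T^{(k)}}\bigr)^{1/4}$ is controlled by the hypothesis; this yields a uniform $L^2$ bound on the dominating sequence $g^{(k)}$, hence uniform integrability, after which the reverse-Fatou packaging is standard. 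Your approach is more self-contained in that it avoids importing the full spectral theorem of \cite{BBvH2023} a second time, whereas the paper's route gives sharper tail information that is not actually needed here. The one place to be slightly more careful is the edge case where $n(k)$ stays bounded (so $p(k)=\lceil r\rceil$): for $n(k)\ge 2$ the hypothesis gives $\norm{T^{(k)}}\le K/(\log 2)^3$, and for $n(k)=1$ the Choi matrix coincides with $\eta^{(k)}(1)$, so your parenthetical remark is justified but could be stated more explicitly.
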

		
		\begin{proof}
			From Theorem \ref{thm: BBvH spectrum bound}, we have
			\[
			\mathbb{P}\left[\norm{X^{(k)}} \leq \norm{X_{\operatorname{free}}^{(k)}} + C \{v(X^{(k)})^{1/2} \sigma(X^{(k)})^{1/2} (\log n(k))^{3/4} + v(X^{(k)})t\}  \right] \geq 1 - e^{-t^2}.
			\]
			Since $\norm{X_{\operatorname{free}}^{(k)}} \leq 2 \norm{\eta^{(k)}(1)}^{1/2}$ is bounded as $k \to \infty$, and similarly $(\log n(k))^3 v(X^{(k)})^2 = (\log n(k))^3 \norm{T^{(k)}}$ and $\sigma(X^{(k)}) = \norm{\eta^{(k)}(1)}^{1/2}$ are bounded, there is a constants $C_1$ and $C_2$ such that
			\[
			\mathbb{P}\left[\norm{X^{(k)}} \leq C_1 + C_2 t \right] \geq 1 - e^{-t^2}.
			\]
			Now let $Z^{(k)} = f^{(k)}({X}^{(k)})$.  Thus,
			\[
			\mathbb{P}\left[|Z^{(k)}| \geq M(1 + (C_1 + C_2 t)^r)\right] \leq e^{-t^2}.
			\]
			Thus also for $\ell \in \N$,
			\begin{align*}
				\mathbb{E} [|Z^{(k)}| \mathbf{1}_{|Z^{(k)}| \geq M(1 + (C_1 + C_2 2^\ell)^r}] &= \sum_{j=\ell}^\infty \mathbb{E} [Z^{(k)} \mathbf{1}_{M(1 + (C_1 + C_2 2^j)^r \leq |Z^{(k)}| \leq M(1 + (C_1 + C_2 2^{j+1})^r}] \\
				&\leq \sum_{j=\ell}^\infty M(1 + (C_1 + C_2 2^{j+1}))^r e^{-(2^j)^2}\\
				&\leq \sum_{j=\ell}^\infty (C_3 2^j)^r e^{-2^{j+1}}.
			\end{align*}
			Denote $K_\ell = M(1 + (C_1 + C_2 2^\ell)^r$ and $\varepsilon_\ell = \sum_{j=\ell}^\infty (C_3 2^j)^r e^{-2^{j+1}}$.  Because the series converges, we have $\varepsilon_\ell \to 0$, and the above equation says that
			\[
			\mathbb{E} [|Z^{(k)}| \mathbf{1}_{|Z^{(k)}| \geq K_\ell}] \leq \varepsilon_\ell.
			\]
			Now we observe that
			\[
			\limsup_{k \to \infty} Z^{(k)} \mathbf{1}_{|Z^{(k)}| < K_\ell} \leq L \text{ almost surely.}
			\]
			Moreover, $Z^{(k)} \mathbf{1}_{|Z^{(k)}|< K_\ell}$ is bounded by $K_\ell$ and so by applying Fatou's lemma to $K_\ell - Z^{(k)} \mathbf{1}_{|Z^{(k)}|< K_\ell}$ we have
			\[
			\limsup_{k \to \infty} \mathbb{E}[Z^{(k)} \mathbf{1}_{|Z^{(k)}| < K_\ell}] \leq L.
			\]
			Therefore,
			\[
			\limsup_{k \to \infty} \mathbb{E}[Z^{(k)}] \leq \limsup_{k \to \infty} \mathbb{E} [|Z^{(k)}| \mathbf{1}_{|Z^{(k)}| < K_\ell}]  + \limsup_{k \to \infty}  \mathbb{E} [|Z^{(k)}| \mathbf{1}_{|Z^{(k)}| \geq K_\ell}] \leq L + \varepsilon_\ell.
			\]
			Since $\varepsilon_\ell \to 0$ as $\ell \to \infty$, we have $\limsup_{k \to \infty} \mathbb{E}[Z^{(k)}] \leq L$ as desired.
		\end{proof}
		
		\begin{thm}[Theorem \ref{introthm: strong convergence}] \label{thm: strong convergence general}
			Let $(M,\tau)$ be a tracial von Neumann algebra generated by a subalgebra $B\subset M$ and a $(B,\eta)$-semicircular family $x=(x_i)_{i\in I}$, where $\eta=(\eta_{i,j})_{i,j\in I}$ is a $\tau$-symmetric $B$-valued covariance matrix, and let $b=(b_\omega)_{\omega\in \Omega}$ be a generating tuple for $B$. For a sequence of integers $(n(k))_{k\in \N}\subset \N$, let $\eta^{(k)}=(\eta_{i,j}^{(k)})_{i,j\in I}$ be a $\tr_{n(k)}$-symmetric $\mb{M}_{n(k)}$-valued covariance matrix, let $X^{(k)}=(X_i^{(k)})_{i\in I}$ be an $\eta^{(k)}$-Gaussian family, and let $b^{(k)}=(b^{(k)}_\omega)_{\omega\in \Omega}\subset \mb{M}_{n(k)}$. Assume that:
			\begin{itemize}
				\item $\displaystyle\sup_k \|\eta_{i,j}^{(k)}\|<\infty$ for each $i,j\in I$;
				\item $\displaystyle\sup_k \|b_\omega^{(k)}\| <\infty$ for each $\omega\in \Omega$;
				\item $\displaystyle\lim_{k\to\infty} (\log n(k))^3\|T_F^{(k)}\|=0$ for each finite $F\subseteq I$, where $T_F^{(k)}$ is the Choi matrix associated to $\eta_F=(\eta_{i,j})_{i,j\in F}$;
				\item $(\eta^{(k)}, b^{(k)})$ converges strongly in covariance law to $(\eta,b)$.
			\end{itemize}
			Then $(\eta^{(k)}, b^{(k)}, X^{(k)})$ converges strongly in covariance law to $(\eta,b,x)$, both almost surely and in expectation.
		\end{thm}
		
		\begin{proof}
			First, we observe that weak convergence holds.  Indeed, we assumed strong convergence of $(\eta^{(k)},b^{(k)})$ which includes weak convergence by definition.  Hence, Theorem \ref{introthm: weak convergence} implies that $(\eta^{(k)},b^{(k)},X^{(k)})$ converges weakly to $(\eta,b,X)$ both almost surely and in expectation.
			
			Next, we prove the easier direction of strong convergence, the asymptotic lower bounds on the operator norms of the matrix models, both almost surely and in expectation.  Weak convergence implies in particular that for each covariance polynomial $f$, we have that $Z^{(k)} = f(\eta^{(k)},b^{(k)},X^{(k)})^* f(\eta^{(k)},b^{(k)},X^{(k)})$ converges in non-commutative law to $Z = f(\eta,b,X)^* f(\eta,b,X)$ almost surely, since each monomial in $Z^{(k)}$ is a covariance polynomial in $(\eta^{(k)},b^{(k)},X^{(k)})$, and there are only countably many such monomials.  Therefore, by Lemma \ref{lem: strong convergence general lower bound}, we have almost surely
			\[
			\liminf_{k \to \infty} \norm{Z^{(k)}} \geq \norm{Z}
			\]
			or
			\[
			\liminf_{k \to \infty} \norm{f(\eta^{(k)},b^{(k)},X^{(k)})} \geq \norm{f(\eta,b,X)},
			\]
			which is the desired almost sure lower bound.  For the lower bound in expectation, note that by Fatou's lemma,
			\[
			\liminf_{k \to \infty} \mathbb{E} \norm{f(\eta^{(k)},b^{(k)},X^{(k)})} \geq \mathbb{E} \liminf_{k \to \infty} \norm{f(\eta^{(k)},b^{(k)},X^{(k)})} \geq \norm{f(\eta,b,X)}. 
			\]
			
			Finally, we prove the harder direction of strong convergence, giving upper bounds on the operator norms for the matrix models.  Since we only need to deal with one covariance polynomial at a time, we can assume without loss of generality that $I$ is finite.  Let $M$ be the von Neumann algebra generated by $B$ and $X$.  Let $N$ be the free product over $B$ of $\N_0$-many copies of $M$, let $\iota_t: M \to N$ be the corresponding inclusion of the $t$-th copy, and let $S = (\iota_t(X))_{t \in \N_0}$.  Similarly, let $M^{(k)}$ be the von Neumann algebra associated to ${X}_{\free}^{(k)}$, let $N^{(k)}$ be the free product of $\N_0$-many copies over $\mathbb{M}_{n(k)}$, let $\iota_t^{(k)}$ the associated inclusion, and let $S^{(k)} = (\iota_t(X_{\free}^{(k)}))_{t \in \N_0}$.  Let $({X}^{(k,t)})_{t \in \N_0}$ be independent copies of the random matrix tuple ${X}^{(k)}$.  We assume these copies are realized on a probability space which is the product of copies of the given probability space on which the $X^{(k)}$'s live.
			
			Let $f(\eta,b,X)$ be a covariance polynomial.  It only depends on finitely many coordinates $b_1$, \dots, $b_h$, so there is some $R > 0$ with
			\[
			\sup_k \norm{\eta^{(k)}(1)} \leq R, \qquad \sup_k \norm{b_j^{(k)}} \leq R \, \text{ for all } 1 \leq j \leq h.
			\]
			Fix $\varepsilon > 0$, and let
			\[
			g(b,(\iota_t({X}))_{t \in \mathbb{N}_0})
			\]
			be the associated polynomial given by Proposition \ref{prop: approximating covariance polynomial}.  Again $g$ only depends on finitely many values of $t$, say $t = 1$, \dots, $t_0$.  Since the covariance map $\tilde{\eta}^{(k)}$ associated to $({X}^{(k,t)})_{t=1}^{t_0}$ is a direct sum of copies of $\eta^{(k)}$, the Choi matrices $\tilde{T}^{(k)}$ also satisfy $(\log n(k))^3 \norm{\tilde{T}^{(k)}} \to 0$.  As $\tilde{\eta}^{(k)}$ is given by a direct sum, the associated $\mathbb{M}_{n(k)}$-valued semicircular families are given by freely independent copies of ${X}_{\free}^{(k)}$, that is, they can be identified with $({X}_{\free}^{(k,t)})_{t=1}^{t_0}$.  We also note that $(\tilde{\eta}^{(k)}, b^{(k)})$ converge strongly in covariance law to $(\tilde{\eta}, b)$; indeed, any covariance polynomial in $(\tilde{\eta}^{(k)}, b^{(k)})$ is equal to a covariance polynomial in $(\eta^{(k)}, b^{(k)})$ since each entry of $\tilde{\eta}^{(k)}$ is either zero or equal to an entry of $\eta^{(k)}$.  Hence, Proposition \ref{prop: strong convergence plain} can be applied to the independent copies.
			
			Therefore, we have
			\[
			\limsup_{k \to \infty} \norm{g(b^{(k)},(X^{(k,t)})_{t=1}^{t_0})} \leq \norm{g(b,S)} \text{ almost surely.}
			\]
			By Proposition \ref{prop: approximating covariance polynomial}, equation \eqref{eq: approximation expectation matrix version},
			\[
			f(\eta^{(k)},b^{(k)},X^{(k,0)}) = \mathbb{E} \left[ g(b^{(k)},(X^{(k,t)})_{t=1}^{t_0}) \Big \mid X^{(k,0)} \right],
			\]
			and hence
			\[
			\norm{ f(\eta^{(k)},b^{(k)},X^{(k,0)}) } \leq \mathbb{E} \left[ \norm{g(b^{(k)},(X^{(k,t)})_{t=1}^{t_0})} \Big \mid X^{(k,0)} \right].
			\]
			We need to show that the conditional expectation on the right-hand side is bounded asymptotically by $\norm{g(b,(\iota_t(X))_{t \in \mathbb{N}_0})}$, or in other words, we must take the limit inside the conditional expectation.  By our assumption, the probability space decomposes as a product $(\Omega_1,P_1) \times (\Omega_2,P_2)$, where $X^{(k,0)}$ depends on $\omega_1 \in \Omega_1$ and $X^{(k,t)}$ depends on $\omega_2 \in \Omega_2$.  By the Fubini-Tonelli theorem, we have
			\[
			\text{for a.e. } \omega_1, \text{ for a.e. } \omega_2, \quad \limsup_{k \to \infty} \norm{g(b^{(k)}(X^{(k,t)})_{t=1}^{t_0})} \leq \norm{g(b,S)}.
			\]
			Moreover, by applying the submultiplicativity of the operator norm and the uniform boundedness of $\eta^{(k)}$, we can see that there are some constants $M$ and $\ell$ such that
			\[
			\norm{g(b^{(k)},(X^{(k,t)})_{t=1}^{t_0})} \leq M \left( 1 + \max_i \norm{X_i^{(k,0)}}^\ell + \max_{i \in I, t =1,\dots,t_0} \norm{X_i^{(k,t)}}^\ell \right).
			\]
			Therefore, for almost every $\omega_1$, the hypotheses are satisfied for applying Lemma \ref{lem: dominated convergence argument} to the random variables $(X^{(k,t)})_{t=1}^{t_0}$ on $\Omega_2$ and the function $\norm{g(b^{(k)},({X}^{(k,t)})_{t=1}^{t_0})}$.  Therefore,
			\[
			\text{for a.e. } \omega_1,\quad \limsup_{k \to \infty} \norm*{f(\eta^{(k)},b^{(k)},X^{(k,0)})} \leq \norm{g(b,S)}.
			\]
			Then by Proposition \ref{prop: approximating covariance polynomial}, equation \eqref{eq: approximation difference},
			\[
			\norm{g(b,S)} \leq \norm{f(\eta,b,X)} + \varepsilon.
			\]
			Since $\varepsilon$ was arbitrary, we have the desired almost sure upper bound for strong convergence.  For the upper bound in expectation, note from the preceding argument that
			\[
			\mathbb{E} \norm{ f(\eta^{(k)},b^{(k)},X^{(k,0)}) } \leq \mathbb{E} \norm{g(b^{(k)},({X}^{(k,t)})_{t=0}^{t_0})}.
			\]
			Moreover, the hypotheses are satisfied for applying Lemma \ref{lem: dominated convergence argument} to the random variables $(X^{(k,t)})_{t=0}^{t_0}$ and the function $\norm{g(b^{(k)},(X^{(k,t)})_{t=0}^{t_0})}$, this time viewed as a random variables on the entire probability space $\Omega_1 \times \Omega_2$ rather than on $\Omega_2$.  Lemma \ref{lem: dominated convergence argument} then yields
			\[
			\limsup_{k \to \infty} \mathbb{E} \norm{ f(\eta^{(k)},b^{(k)},X^{(k,0)}) } \leq \limsup_{k \to \infty} \mathbb{E} \norm{g(b^{(k)}({X}^{(k,t)})_{t=0}^{t_0})} \leq \norm{f(\eta,b,X)} + \varepsilon.
			\]
			Since $\varepsilon$ was arbitrary, the upper bound for strong convergence holds in expectation.
		\end{proof}

		\begin{rem}\label{rem:DG+SKE_equivalence}
			The following fact can be useful for checking the hypotheses of Theorem \ref{introthm: strong convergence}.  Fix operator-valued covariance matrices $\eta^{(k)}$ and $\eta$ over $B^{(k)}$ and $B$ respectively, and tuples $(b_\omega^{(k)})_{\omega \in \Omega}$ and $(b_\omega)_{\omega \in \Omega}$.  Then the following are equivalent:
			\begin{enumerate}[(1)]
				\item $(\eta^{(k)},b^{(k)})$ converges strongly to $(\eta,b)$.
				\item Let $Y^{(k)} = (Y_i^{(k)})_{i \in I}$ be a $(B^{(k)},\eta^{(k)})$-semicircular family and $Y = (Y_i)_{i \in I}$ a $(B,\eta)$-semicircular family.  Let $S^{(k)}$ and $S$ be tuples of freely independent copies of $Y^{(k)}$ and $Y$ respectively.  Then $(b^{(k)},S^{(k)})$ converges strongly to $(b,S)$. 
			\end{enumerate}
			Indeed, (1) $\Rightarrow$ (2) follows from Gao and Kunnawalkam Elayavalli's result Theorem~\ref{thm: GKE Toeplitz theorem} (applying it to a direct sum of copies of $\eta$ as in the proof of Theorem \ref{thm: strong convergence general}).  On the other hand, we can prove (2) $\Rightarrow$ (1) from Proposition \ref{prop: approximating covariance polynomial} as follows.  Since each covariance polynomial only depends on finitely many variables, we can assume without loss of generality that $I$ and $\Omega$ are finite.  The strong convergence assumption of (2) also implies that $\sup_k \norm{b_\omega^{(k)}} < \infty$ and $\sup_k \norm{Y_i^{(k)}} < \infty$, so also $\sup_k \norm{\eta_{i,j}^{(k)}} < \infty$.  Given $\varepsilon > 0$ and a covariance polynomial $f$ in variables corresponding to $(\eta,b)$, Proposition \ref{prop: approximating covariance polynomial} produces a plain polynomial $g$ in variables corresponding to $(b,S)$ such that $\norm{f(\eta^{(k)},b^{(k)}) - g(b^{(k)},S^{(k)})} \leq \varepsilon$ and $\norm{f(\eta,b) - g(b,S)} \leq \varepsilon$.  Since $\norm{g(b^{(k)},S^{(k)}} \to \norm{g(b,S)}$ and $\varepsilon$ was arbitrary, we obtain $\norm{f(\eta^{(k)},b^{(k)})} \to \norm{f(\eta,b)}$.  The convergence of the trace of $f(\eta^{(k)},b)$ to that of $f(\eta,b)$ follows similarly.
		\end{rem}
		
		\section{Applications} \label{sec: weighted GUE}
		
		\subsection{Continuously weighted Gaussian Wigner ensembles}
		In this section we apply Theorems~\ref{introthm: weak convergence} and \ref{introthm: strong convergence} to prove Theorem~\ref{introthm: strong convergence weighted}. As examples, we consider Gaussian band matrices and random matrix ensembles whose asymptotic limits generate an interpolated free group factor. 
		
		\begin{thm}[{Theorem~\ref{introthm: strong convergence weighted}}]\label{thm: strong convergence weighted proof}
			Equip $L^\infty[0,1]$ with the trace $\tau=\int_{[0,1]}$ and let $b=(b_\omega)_{\omega\in \Omega}\subset C[0,1]$ be a generating tuple for $L^\infty[0,1]$. Fix a family $(h_{i,j})_{i,j \in I}\subset C[0,1]^2$ such that for all $(s,t)\in [0,1]^2$:
			\begin{itemize}
				\item $h_{i,j}(s,t) = h_{j,i}(t,s)$;
				
				\item $(h_{i,j}(s,t))_{i,j \in F}\geq 0$ in $\mb{M}_{|F|}$ for all finite $F\subseteq I$.
			\end{itemize}
			Define a $\tau$-symmetric operator-valued covariance matrix $\eta = (\eta_{i,j})_{i,j\in I}$ over $L^\infty[0,1]$  by
			\[
			\eta_{i,j}(f)(s) = \int_{[0,1]} h_{i,j}(s,t)f(t)\,dt,
			\]
			and let $X=(X_i)_{i\in I}$ be a $(L^\infty[0,1],\eta)$-semicircular family. 
			
			For each $n\in \N$, let $E_n\colon \mb{M}_n\to \mb{D}_n$ be the $\tr_n$-preserving conditional expectation onto the subalgebra of diagonal matrices, and let us also identify $\mb{D}_n$ as a subalgebra of $L^\infty[0,1]$ so that
			\[
			E^{(n)}(f):= \diag\left(n \int_0^{1/n} f,\ldots , n\int_{(n-1)/n}^1 f\right) 
			\]
			gives the unique $\tau$-preserving conditional expectation onto $\mb{D}_n$. Define a $\mb{D}_n$-valued covariance matrix over $\mb{M}_n$ by $\eta^{(n)}:=(E^{(n)}\circ \eta_{i,j}\circ E_n)_{i,j\in I}$, let ${X}^{(n)}$ be an $\eta^{(n)}$-Gaussian family, and let $b^{(n)}= (E^{(n)}(b_\omega))_{\omega\in \Omega}$. 
			
			Then $(\eta^{(n)}, b^{(n)}, X^{(n)})$ converges weakly and strongly in covariance law to $(\eta,b,X)$, both almost surely and in expectation.
		\end{thm}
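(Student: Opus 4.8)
The plan is to deduce the result directly from Theorem~\ref{thm: weak convergence with proof} and Theorem~\ref{thm: strong convergence general} by checking their hypotheses. Throughout I identify $\mb{D}_n$ with the step functions in $L^\infty[0,1]$ constant on each interval $I^{(n)}_r:=[(r-1)/n,r/n)$; this identification is $\tr_n$-to-$\tau$ preserving, $E_n\colon\mb{M}_n\to\mb{D}_n$ and $E^{(n)}\colon L^\infty[0,1]\to\mb{D}_n$ are both $\tr_n$-preserving conditional expectations, and $E^{(n)}$ and $E_n$ restrict to the identity on $\mb{D}_n$. From this, $\eta^{(n)}$ is $\tr_n$-symmetric (inherited from the $\tau$-symmetry of $\eta$ since $E_n,E^{(n)}$ are trace-preserving conditional expectations), and the boundedness hypotheses are immediate: $\norm{\eta^{(n)}_{i,j}}\le\norm{\eta_{i,j}}\le\sup_{(s,t)}\abs{h_{i,j}(s,t)}$ and $\norm{b^{(n)}_\omega}\le\norm{b_\omega}_{L^\infty}$. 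For the Choi matrix, I would use that $E_n(E_{s,t})=\delta_{s,t}E_{s,s}$, which gives $\eta^{(n)}_{i,j}(E_{s,t})=\delta_{s,t}\sum_r c^{(n)}_{i,j}(r,s)E_{r,r}$ with $c^{(n)}_{i,j}(r,s)=n\int_{I^{(n)}_r\times I^{(n)}_s}h_{i,j}$, so $T^{(n)}_F$ is block diagonal over the $n^2$ rank-one projections $E_{r,r}\otimes E_{s,s}$ with $(r,s)$-block $\big(c^{(n)}_{i,j}(r,s)\big)_{i,j\in F}=n\int_{I^{(n)}_r\times I^{(n)}_s}\big(h_{i,j}\big)_{i,j\in F}$; this block is positive semidefinite by hypothesis, hence of norm at most $\sum_{i\in F}c^{(n)}_{i,i}(r,s)\le\abs{F}\max_{i\in F}\sup\abs{h_{i,i}}\cdot n^{-1}$. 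Thus $\norm{T^{(n)}_F}=O(1/n)$, so $(\log n)^3\norm{T^{(n)}_F}\to 0$ for every finite $F\subseteq I$, dispatching the Choi-norm hypotheses of both theorems.

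For the weak covariance-law convergence of $(\eta^{(n)},b^{(n)})$, the key observation is that since $b^{(n)}_\omega\in\mb{D}_n$ and $\eta^{(n)}$ and multiplication preserve $\mb{D}_n$, every covariance polynomial $f(\eta^{(n)},b^{(n)})$ lies in $\mb{D}_n\subset L^\infty[0,1]$, hence $\tr_n(f(\eta^{(n)},b^{(n)}))=\int_{[0,1]}f(\eta^{(n)},b^{(n)})$. I would then prove $\norm{f(\eta^{(n)},b^{(n)})-f(\eta,b)}_{L^\infty}\to 0$ by induction on the depth of $f$. The base case is uniform convergence $E^{(n)}(g)\to g$ for $g\in C[0,1]$, which applies to polynomials in $b$ since $b_\omega\in C[0,1]$. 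For the inductive step, since $\eta_{i,j}$ maps $L^\infty[0,1]$ into $C[0,1]$ with $\norm{\eta_{i,j}(g)}_{L^\infty}\le\sup\abs{h_{i,j}}\cdot\norm{g}_{L^1}$, peeling off an innermost application $\eta_{i,j}(g(b))$ and treating $c^{(n)}:=\eta^{(n)}_{i,j}(g(b^{(n)}))=E^{(n)}(\eta_{i,j}(g(b^{(n)})))$ as a new $\mb{D}_n$-generator converging uniformly to the continuous function $c:=\eta_{i,j}(g(b))$ reduces $f$ to a covariance polynomial of lower depth in the enlarged continuous generating tuple. Taking $\int_{[0,1]}$ then gives $\tr_n(f(\eta^{(n)},b^{(n)}))\to\tau(f(\eta,b))$, so Theorem~\ref{thm: weak convergence with proof} yields the weak convergence of $(\eta^{(n)},b^{(n)},X^{(n)})$, both almost surely and in expectation.

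For strong convergence it remains to verify the free-copy hypothesis of Theorem~\ref{thm: strong convergence general}: with $S^{(n)}$ a tuple of copies of an $(\mb{M}_n,\eta^{(n)})$-semicircular family freely amalgamated over $\mb{M}_n$ and $S$ the corresponding copies of $X$ over $L^\infty[0,1]$, one must show $(b^{(n)},S^{(n)})\to(b,S)$ strongly. The plan is a two-step ``compression'' reduction followed by a coupling. Step one: since $\eta^{(n)}$ has range in $\mb{D}_n$ (so $\eta^{(n)}=E_n\circ\eta^{(n)}\circ E_n$), a moment computation via Lemma~\ref{lem: semicircular partition formula}, together with $E_{\mb{D}_n}=E_{\mb{D}_n}\circ E_{\mb{M}_n}$ and the fact that $E_{\mb{M}_n}$ of any $*$-polynomial in $\mb{D}_n$-coefficients and $S^{(n)}$ lands back in $\mb{D}_n$, shows that the $S^{(n)}_t$ are in fact freely independent over $\mb{D}_n$, each with the $\mb{D}_n$-distribution of a $(\mb{D}_n,\tilde\eta^{(n)})$-semicircular family where $\tilde\eta^{(n)}:=\eta^{(n)}|_{\mb{D}_n}$; since $b^{(n)}\subset\mb{D}_n$, this lets me replace $S^{(n)}$ by freely-over-$\mb{D}_n$ copies $\tilde S^{(n)}$ without changing joint $*$-distributions. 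Applying the same reduction in reverse along $\mb{D}_n\subset L^\infty[0,1]$ to the $L^\infty[0,1]$-valued covariance matrix $\bar\eta^{(n)}:=E^{(n)}\circ\eta\circ E^{(n)}$ (which is $\tau$-symmetric, completely positive, has range in $\mb{D}_n$, and satisfies $\bar\eta^{(n)}|_{\mb{D}_n}=\tilde\eta^{(n)}$), I may further replace $\tilde S^{(n)}$ by freely-over-$L^\infty[0,1]$ copies $\bar S^{(n)}$ of a $(L^\infty[0,1],\bar\eta^{(n)})$-semicircular family. Step two: $\bar\eta^{(n)}$ is the integral operator with kernel $\bar h^{(n)}_{i,j}(s,t)=n^2\int_{I^{(n)}_{r(s)}\times I^{(n)}_{q(t)}}h_{i,j}$, the box-average of $h_{i,j}$, so $\bar h^{(n)}_{i,j}\to h_{i,j}$ uniformly on $[0,1]^2$ by continuity; I would then couple the $(L^\infty[0,1],\bar\eta^{(n)})$- and $(L^\infty[0,1],\eta)$-semicircular families (and all their free copies) on a single von Neumann algebra so that the operator norms of the differences tend to $0$, together with $\norm{b^{(n)}_\omega-b_\omega}_{L^\infty}\to 0$. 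This gives $\norm{p(b^{(n)},S^{(n)})}=\norm{p(b^{(n)},\bar S^{(n)})}\to\norm{p(b,S)}$ and likewise for traces, for every noncommutative polynomial $p$, which is exactly the input needed; Theorem~\ref{thm: strong convergence general} then produces the strong convergence of $(\eta^{(n)},b^{(n)},X^{(n)})$ to $(\eta,b,X)$, both almost surely and in expectation.

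The main obstacle is the last paragraph. The two compression identifications — that freeness over $\mb{M}_n$ descends to freeness over $\mb{D}_n$ for a covariance matrix supported on $\mb{D}_n$, and its mirror along $\mb{D}_n\subset L^\infty[0,1]$ — are where the bulk of the bookkeeping lives, and above all the construction of the norm-close coupling of the operator-valued semicircular families attached to the uniformly convergent continuous positive-definite kernels $\bar h^{(n)}\to h$ (making precise the slogan that $L^\infty[0,1]$ is an inductive limit of the $\mb{D}_n$, despite the $\mb{D}_n$ not being nested; the key point is that a uniformly convergent sequence of continuous $\mb{M}_{\abs{F}}^{+}$-valued kernels yields creation operators converging in operator norm). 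Everything else — the Choi-norm estimate and the depth induction for the weak covariance-law convergence — is routine.
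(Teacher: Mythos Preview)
Your proposal is correct and follows essentially the same route as the paper: the Choi-matrix estimate, the operator-norm induction for weak convergence of covariance polynomials, and the reduction from $(\mb{M}_n,\eta^{(n)})$- to $(\mb{D}_n,\eta^{(n)})$-semicirculars followed by a norm-close coupling over $L^\infty[0,1]$ all match. The paper carries out the coupling you leave as the ``main obstacle'' by realizing both families concretely on the free Fock space over the $L^\infty[0,1]$-correspondence $L^2[0,1]^2\otimes\C^F$: it takes the pointwise matrix square roots $(g_{i,j})$ and $(g_{i,j}^{(n)})$ of $(h_{i,j})$ and its box-average, builds the corresponding creation vectors $\zeta_i,\zeta_i^{(n)}$, and invokes Shlyakhtenko's bound $\|X_i-X_{\operatorname{free},i}^{(n)}\|\le 2\|\langle\zeta_i-\zeta_i^{(n)}\mid\zeta_i-\zeta_i^{(n)}\rangle_{L^\infty[0,1]}\|^{1/2}$, which goes to zero by uniform convergence of the kernels (and hence of their square roots); the free copies are handled by tensoring the correspondence with $\ell^2(\N_0)$.
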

		
		\begin{proof}
			We will reduce the proof to applications of Theorems~\ref{introthm: weak convergence} and \ref{introthm: strong convergence}, so we begin by establishing a bound for the Choi matrices. Fixing a finite $F\subseteq I$, observe that the Choi matrix $T_F^{(n)}$ associated to $\eta_F^{(n)}$ is given by
			\begin{align*}
				T_F^{(n)} &= \sum_{i,j \in F} \sum_{k,\ell=1}^n E_{i,j} \otimes \eta_{i,j}^{(n)} (E_{k,\ell}) \otimes E_{\ell,k}^{\op}\\
				&= \sum_{i,j \in F} \sum_{k=1}^n E_{i,j} \otimes \eta_{i,j}^{(n)} (E_{k,k}) \otimes E_{k,k}^{\op}\\
				&= \sum_{i,j \in F} E_{i,j} \otimes \left[ \sum_{k=1}^n \diag \left( n \int_{0}^{1/n} \int_{(k-1)/n}^{k/n} h_{i,j}, \ldots, n \int_{(n-1)/n}^{1} \int_{(k-1)/n}^{k/n} h_{i,j} \right) \otimes E_{k,k}^{\op} \right].
			\end{align*}
			As the bracketed terms in the last expression are  diagonal matrices for each $i,j\in F$, we obtain
			\begin{align}\label{eqn: weighted GUE covariance norm}
				\|T_F^{(n)} \| \leq \sum_{i,j\in F} \frac{1}{n} \|h_{i,j}\|_\infty.
			\end{align}
			In particular, $(\log{n})^3\| T_F^{(n)}\|\to 0$ as $n\to\infty$ for all finite $F\subseteq I$.\\
			
			\noindent(\textbf{Weak Convergence}): In light of (\ref{eqn: weighted GUE covariance norm}), to apply Theorem~\ref{introthm: weak convergence} it remains to show $(\eta^{(n)},b^{(n)})$ converges in covariance law to $(\eta,b)$. We will establish the following stronger claim: for all covariance monomials $f=m_0(t) \Lambda_{\pi,\vec{\imath}}[m_1(t),\ldots, m_{2k-1}(t)]m_{2k}(t)$ one has
			\[
			\lim_{n\to\infty} \| f(\eta^{(n)},b^{(n)}) - f(\eta,b)\| =0.
			\]
			To prove this claim, we will proceed by induction on $|\pi|=k$. The base case $k=0$ corresponds to when $f=t_{\omega_1}\cdots t_{\omega_d}$ is a non-commutative monomial, and follows since $b_{\omega_j}^{(n)}=E^{(n)}(b_{\omega_j})\to b_{\omega_j}$ in operator norm by the continuity of $b_{\omega_j}$ for each $j=1,\ldots, d$. Suppose now the the claim has been verified for non-crossing pair partitions with at most $k-1$ blocks, and let $f$, $\pi$, and $\vec{\imath}$ be as above. If $\pi$ does not have the outer block $\{1,2k\}$, then $f=f_1 f_2$ where $f_1,f_2$ are covariance monomials corresponding corresponding to partitions $\pi_1,\pi_2$ with most $k-1$ blocks. Hence
			\[
			\| f(\eta^{(n)}, b^{(n)}) - f(\eta, b)\| \leq \| f_1(\eta^{(n)},b^{(n)}) - f_1(\eta, b)\| \|f_2(\eta^{(n)}, b^{(n)})\| + \| f_1(\eta,b)\| \| f_2(\eta^{(n)}, b^{(n)}) - f(\eta, b)\| \to 0
			\]
			as $n\to\infty$ by the induction hypothesis. Otherwise $\pi$ has the outer block $\{1,2k\}$, and so $f = m_0(t) \Lambda_{i,j}(f_0) m_{2k}(t)$ for some $i,j\in I$ and $f_0$ a covariance monomial corresponding to a partition $\pi_0$ with $k-1$ blocks. Hence
			\begin{align*}
				f(\eta^{(n)}, b^{(n)}) &= m_0(b^{(n)}) \eta_{i,j}^{(n)}[f_0(\eta^{(n)}, b^{(n)})] m_{2k}(b^{(n)})\\
				&=  m_0(b^{(n)}) E^{(n)}\circ\eta_{i,j}[f_0(\eta^{(n)}, b^{(n)}))] m_{2k}(b^{(n)}).
			\end{align*}
			The base case ensures the first and last factors converge to $m_0(b)$ and $m_{2k}(b)$, respectively. For the middle factor observe that
			\begin{align*}
				\| E^{(n)}\circ\eta_{i,j}[f_0(\eta^{(n)}, b^{(n)}))]& - \eta_{i,j}(f_0(\eta,b))\| \\
				&\leq \| E^{(n)}\circ \eta_{i,j} \left[f_0(\eta^{(n)}, b^{(n)})) - f_0(\eta,b) \right] \| + \| (E^{(n)} -1)[\eta_{i,j}(f_0(\eta,b))] \|\\
				&\leq \|h_{i,j}\| \|f_0(\eta^{(n)}, b^{(n)})) - f_0(\eta,b)\| + \| (E^{(n)} -1)[\eta_{i,j}(f_0(\eta,b))] \|.
			\end{align*}
			As $n\to\infty$, the first term in the last expression tends to zero by the induction hypothesis, and the second term tends to zero by the continuity of $\eta_{i,j}(f_0(\eta,b))$ as a function on $[0,1]$. Hence $f(\eta^{(n)},b^{(n)})\to f(\eta,b)$ in operator norm, completing the proof of the claim.\\

			\noindent(\textbf{Strong Convergence}): We must show the hypotheses of Theorem~\ref{introthm: strong convergence} are satisfied, but we have already seen $(\log{n})^3 \|T_F^{(n)}\| \to 0$ as $n\to\infty$ for all finite $F\subseteq I$ by (\ref{eqn: weighted GUE covariance norm}). Thus, it remains to check that $(\eta^{(n)},b^{(n)})$ converges strongly to $(\eta,b)$.  Let $\tilde{\eta}^{(n)}$ and $\tilde{\eta}$ be the direct sum of infinitely many copies of $\eta^{(n)}$ and $\eta$ respectively.  Let $S^{(n)}$ be a $(\mathbb{M}_n,\tilde{\eta}^{(n)})$-semicircular family, and let $S$ be an $(L^\infty[0,1],\tilde{\eta})$-semicircular family. Using Proposition \ref{prop: approximating covariance polynomial} to approximate covariance polynomials by ordinary non-commutative polynomials, it suffices to show that $(b^{(n)},S^{(n)})$ converges strongly to $(b,S)$ in order to establish strong convergence of $(\eta^{(n)},b^{(n)})$ to $(\eta,b)$ (see also Remark~\ref{rem:DG+SKE_equivalence}). 
			We further observe that since $b^{(n)}$ is contained in $\mathbb{D}_n$ and $\mathbb{D}_n$ is $\tilde{\eta}^{(n)}$-invariant, it follows from Lemma~\ref{lem: semicircular partition formula} that our $(\mathbb{M}_n,\tilde{\eta}^{(n)})$-semicircular family $S^{(n)}$ is also a $(\mathbb{D}_n,\tilde{\eta}^{(n)})$-semicircular family.  Additionally, since any given polynomial only depends on finitely many indices in $I$, it suffices to consider $(\mb{D}_n, \tilde{\eta}^{(n)}_F)$-semicircular families for a fixed finite $F \subseteq I \times \N$.
			
			Finally, by replacing $\eta^{(n)}$ and $\eta$ with $\tilde{\eta}^{(n)}|_F$ and $\tilde{\eta}|_F$ as above, the proof of strong convergence in Theorem \ref{thm: strong convergence weighted proof} is reduced to the following claim:
			
			\begin{clm} \label{clm: strong convergence of weighted semicirculars}
				Let $(\eta^{(n)}, b^{(n)})$ and $(\eta, b)$ be as in Theorem \ref{thm: strong convergence weighted proof} with the index set $I$ finite.  Let $S^{(n)}$ be an $(\mathbb{D}_n,\eta^{(n)})$-semicircular family and let $S$ be an $(L^\infty[0,1],\eta)$-semicircular family.  Then $(b^{(n)},S^{(n)})$ converges strongly to $(b,S)$.
			\end{clm}
			
			We prove the claim by realizing $(b^{(n)},S^{(n)})$ and $(b,S)$ on the same Hilbert space, such that they are close together in operator norm.  Let $\mathbb{D} = L^\infty[0,1]$, and recall that $(b,S)$ can be realized on an $\mathbb{D}$-valued Fock space as follows (see \cite{shlyakhtenko1999valued} for background).  View $\mathcal{H} = L^2([0,1]^2) \otimes \C^I$ as a bimodule over $\mathbb{D}$ by defining
			\[
			\varphi \cdot (f \otimes e_j) \cdot \psi := \varphi \cdot f \cdot \psi \otimes e_j.
			\]
			for $f\in L^2([0,]^2)$, $j\in I$, and $\varphi$, $\psi \in L^\infty[0,1]$, where 
			\[
			(\varphi \cdot f \cdot \psi)(s,t) = \varphi(s) f(s,t) \psi(t).
			\]
			We can then construct the relative tensor product
			\[
			\mathcal{H}^{\otimes_{\mathbb{D}} k} = \underbrace{\mathcal{H} \otimes_{\mathbb{D}} \mathcal{H} \otimes_{\mathbb{D}} \dots \otimes_{\mathbb{D}} \mathcal{H}}_k \cong L^2([0,1]^{k+1}),
			\]
			and the Fock space
			\[
			\mathcal{F}(\mathcal{H}) = \bigoplus_{k \geq 0} \mathcal{H}^{\otimes_{\mathbb{D}} k},
			\]
			where the $0$th term is $L^2[0,1]$, or equivalently the $L^2$-completion of $\mathbb{D}$ viewed as a $\mathbb{D}$-bimodule by the left and right multiplication actions.  For a vector $f \in \mathcal{H}$, the \emph{left creation operator} is the operator acting on simple tensors as
			\[
			\ell(f) (f_1 \otimes \dots \otimes f_k) = f \otimes f_1 \otimes \dots \otimes f_k;
			\]
			as noted in \cite[Equation (2.8)]{shlyakhtenko1999valued}), this produces a well-defined bounded operator on $\mathcal{F}(\mathcal{H})$ with
			\[
			\norm{\ell(f)} \leq \norm{\ip{f, f}_{\mathbb{D}}}^{1/2},
			\]
			where $\ip{\cdot,\cdot}_{\mathbb{D}}$ is the right $\mathbb{D}$-valued inner product given by
			\[
			\ip{a \otimes e_i, b \otimes e_j}_{\mathbb{D}}(s) = \delta_{i,j} \int_{[0,1]} \overline{a(s,t)} b(s,t)\,dt.
			\]
			
			For each $i,j\in I$ and $n\in \N$ set
			\[
			h_{i,j}^{(n)} := (E^{(n)}\otimes E^{(n)})(h_{i,j}),
			\]
			and let $(g_{i,j})_{i,j\in I},(g_{i,j}^{(n)})_{i,j\in I} \in \mb{M}_{|I|}\otimes C[0,1]^2$ be the respective (positive) square roots of $(h_{i,j})_{i,j\in I}, (h_{i,j}^{(n)})_{i,j\in I}\in \mb{M}_{|I|} \otimes C[0,1]^2$. Then for each $i\in I$ and $n\in \N$ define vectors in $L^2[0,1]^2\otimes \C^I$ by
			\[
			\zeta_i :=\sum_{j\in I} g_{i,j} \otimes e_j \qquad \text{ and } \qquad \zeta_i^{(n)}:= \sum_{j\in I} g_{i,j}^{(n)}\otimes e_j.
			\]
			Consider the operators on $\mathcal{F}(\mathcal{H})$ given by
			\[
			S_i:= \ell(\zeta_i)+ \ell(\zeta_i)^* \qquad \text{ and } \qquad S_i^{(n)}:= \ell(\zeta_i^{(n)}) + \ell(\zeta_i^{(n)})^* \qquad i\in F.
			\]
			Then
			\begin{align*}
				E_{L^\infty[0,1]}(X_i f X_j) &= \sum_{k\in F}  \int_{[0,1]} \overline{g_{i,k}(s,t)} f(s) g_{j,k}(s,t)\ ds\\
				&= \sum_{k\in F} \int_{[0,1]} g_{k,i}(s,t) f(s) g_{j,k}(s,t)\ ds \\
				&= \int_{[0,1]} f(s) h_{j,i}(s,t)\ ds = \int_{[0,1]} h_{i,j}(t,s) f(s) \ ds = \eta_{i,j}(f)(t),
			\end{align*}
			where the second equality used that $(g_{i,j})_{i,j\in F}$ is self-adjoint and the fourth equality used the first assumption on $h_{i,j}$. Similarly for the $S_i^{(n)}$. So, by the definition given in \cite{shlyakhtenko1999valued}, $S = (S_i)_{i \in I}$ is an $(L^\infty[0,1],\eta)$-semicircular family and $S^{(n)}=(S_i^{(n)})_{i\in I}$ is a $(\mb{D}_n, \eta^{(n)})$-semicircular family. Moreover, by \cite[Equation (2.8)]{shlyakhtenko1999valued}, for each $i\in F$ and $n\in \N$ we have 
			\begin{align*}\label{eqn:common rep norm difference}
				\| S_i - S_i^{(n)}\| &\leq 2 \norm*{ \< \zeta_i - \zeta_i^{(n)}, \zeta_i  - \zeta_i^{(n)}\>_{L^\infty([0,1]^2)} }^{\frac{1}{2}} \\
				&= 2\left\| \sum_{j\in F} \int_{[0,1]} |g_{i,j}(s,t) - g_{i,j}^{(n)}(s,t)|^2\ ds \right\|^{\frac12} \\
				&\leq 2\left(\sum_{j\in F} \|g_{i,j} - g_{i,j}^{(n)}\|_\infty^2 \right)^{\frac12} \to 0.
			\end{align*}
			Moreover, since each $b_\omega$ is continuous on $[0,1]$, we have that
			\[
			\lim_{n \to \infty} \norm{b_\omega^{(n)} - b_\omega} = 0.
			\]
			Thus, $(b^{(n)},S^{(n)})$ converges to $(b,S)$ coordinate-wise in operator norm, which of course implies that $\norm{p(b^{(n)},S^{(n)})} \to \norm{p(b,S)}$ for every non-commutative polynomial $p$.  This completes the proof of Claim \ref{clm: strong convergence of weighted semicirculars} and hence Theorem \ref{thm: strong convergence weighted proof}.
		\end{proof}
		
		\begin{rem} \label{rem: weak convergence without continuity}
			For $a\in L^\infty[0,1]$, note that the sequence $(E^{(n)}(a) )_{n\in\N}$ is uniformly bounded in operator norm and converges to $a$ in the strong operator topology. Using this observation, one can extend the weak convergence result in Theorem~\ref{thm: strong convergence weighted proof} to the case when $(h_{i,j})_{i,j\in I} \subset L^\infty[0,1]^2$ by a similar argument. However, to obtain the strong convergence result in this case would require a novel argument, if it is even true.
		\end{rem}

		\begin{ex}[Gaussian band matrices]\label{ex: Guassian_band_matrices}
			One can recover strong convergence for generalized Gaussian band random matrices (implicit in \cite{BBvH2023}, see also \cite{Au2021band}) using Theorem~\ref{introthm: strong convergence weighted}.
			Fix $\epsilon >0$, and consider the case where $I$ is a singleton. Specify a single continuous function $h \in C[0,1]^2$ such that $h(s,t)=h(t,s)\geq 0$ and
			\[
			\operatorname{supp}(h) \subseteq \left\{(s,t)\in [0,1]^2 : |s-t| < \epsilon \right\}.
			\]
			(In particular, one can do this with $h(s,t) = f(s-t)$ for an appropriately chosen $f \in C[0,1]$.)
			The covariance of the associated $\eta^{(n)}$-Gaussian $X^{(n)}$ is given by
			\[
			\Cov(X^{(n)}_{i,j}, X^{(n)}_{k,\ell}) = \mathbf{1}_{(i,j) = (k,\ell)} n \, \int_{(i-1)/n}^{i/n} \int_{(j-1)/n}^{j/n} h(s,t)\ dsdt.
			\]
			Thus $X^{(n)}_{i,j} = 0$ whenever $\left[\frac{i-1}{n}, \frac{i}{n}\right]\times \left[ \frac{j-1}{n}, \frac{j}{n}\right]$ is disjoint from $\operatorname{supp}(h)$. In particular, $X^{(n)}$ has zeroes outside of a band of size $\lceil \epsilon n \rceil$ around the diagonal, and joint covariances of entries otherwise specified by $h$ as above.
		\end{ex}
		
		\begin{ex}[Random matrix models for interpolated free group factors]\label{ex: interpolated_fgf}
			Theorem~\ref{introthm: strong convergence weighted} can be used to produce random matrix ensembles that converge weakly and strongly in covariance law to generators of interpolated free group factors. Let $I:=J_1\sqcup J_2$ be a countable index set and let $\{S_i:i\in \{0\}\sqcup I\}$ be a free semicircular family. Set $B:=W^\ast(S_0)\cong L^\infty[0,1]$ which we equip with the trace $\tau= \int_{[0,1]}$, and for each $i\in I$ let $f_i, g_i\in C[0,1]\subset B$ satisfy either
			\[
			\begin{cases}
				f_i = \overline{g_i}, & \text{if } i\in J_1\\
				\operatorname{supp}(f_i)\cap \operatorname{supp} (g_i)=\emptyset, & \text{if } i\in J_2
			\end{cases}.
			\]
			Assume that there exists at least one $i\in J_2$ with $m(\supp(f_i)\cup \supp(g_i))=1$ and $0 < m(\supp(f_i)) < 1$, where $m$ denotes the Lebesgue measure on $[0,1]$. For each $i,j\in J_1\sqcup J_2$, define $h_{i,j}\in C[0,1]^2$ by
			\[
			h_{i,j}(s,t) = \delta_{i=j} \begin{cases}
				|f_i(s) f_i(t)|^2 & \text{if }i = j \in J_1\\
				|f_i(s) g_i(t)|^2 + |g_i(s) f_i(t)|^2 & \text{if }i = j \in J_2 \\
				0, & \text{if } i \neq j.
			\end{cases},
			\]
			and note that the family $(h_{i,j})_{i,j\in I}$ satisfies the symmetry and positivity conditions in the statement of Theorem~\ref{thm: strong convergence weighted proof}. Using this family of continuous functions, define a $\tau$-symmetric operator valued covariance matrix $\eta=(\eta_{i,j})_{i,j\in I}$ on $B$ as in Theorem~\ref{thm: strong convergence weighted proof}. For each $i\in I$, letting
			\[
			X_i:=\begin{cases} f_i S_i \overline{f_i}, & \text{if }i\in J_1\\
				f_iS_ig_i+\overline{g_i}S_i\overline{f_i}, & \text{if } i\in J_2
			\end{cases},
			\] 
			we claim that $X=(X_i)_{i\in I}$ is a $(B,\eta)$-semicircular family. Let $E_B\colon W^*(S_i,i\in \{0\}\sqcup I)\to B$ be the trace-preserving conditional expectation and let $K_k$ be the associated cumulant maps for $k\in \N$. Any cumulant $K_k[X_{i_1} b_1,\ldots, X_{i_k} b_k]$ can be expanded as a sum of terms of the form $K_k[ c_1 S_{i_1} d_1, \ldots, c_k S_{i_k} d_k]$ for $c_1,d_1,\ldots, c_k, d_k\in B$, and for $k\neq 2$ these terms all vanish since $(S_i)_{i\in I}$ is a $(B,\operatorname{id}_B)$-semicircular family (see, for example, \cite[Proposition 3.8]{shlyakhtenko2000cpentropy}). The second order cumulants are therefore given by the second order moments, and these can be computed by taking their trace against an element in $B$. For $i,j\in I$ and $b_1,b_2\in B$, using the free independence of $S_i, S_j$ from $B$ (and each other if $i\neq j$), it follows that
			\[
			\tau(K_2[X_i, b_1 X_j] b_2) = \tau(E_B(X_i b_1 X_j) b_2)= \int_{[0,1]}\int_{[0,1]} h_{i,j}(s,t) b_1(t) b_2(s)\ dsdt = \tau( \eta_{i,j}(b_1) b_2), 
			\]
			and therefore $K_2[X_i, b_1 X_j] = \eta_{i,j}(b_1)$. Thus $X=(X_i)_{i\in I}$ forms a $(B,\eta)$-semicircular family, as claimed. In particular, Theorem~\ref{thm: strong convergence weighted proof} indicates how to find random matrix ensembles that converge weakly and strongly to $(b,X)$ in covariance law for any tuple $b=(b_\omega)_{\omega\in \Omega} \subset C[0,1]$ generating $B$.
			
			Finally, we claim that $W^*(B\cup \{X_i\colon i\in I\})\cong L(\mathbb{F}_t)$, where
			\[
			t = 1 + \sum_{i\in J_1} m(\supp(f_i))^2 + 2\sum_{i\in J_2}  m(\supp(f_i)) m(\supp(g_i)).
			\]
			Indeed, if $p_i := 1_{\supp(f_i)}$ and $q_i :=1_{\supp(g_i)}$, then by assumption $0<p_i=1-q_i<1$ for at least one $i\in J_2$.  Therefore $ W^*(B\cup \{ p_i S_i q_i\colon i\in I\}) \cong L(\mathbb{F}_t)$ by \cite[Definition 4.1]{Rad94} (and the discussion following it). On the other hand, $p_i,q_i\in B$ for each $i\in I$ implies $W^*(B\cup \{X_i\colon i\in I\}) = W^*( B \cup \{ f_i S_i g_i\colon i\in I\})$, and a routine approximation argument shows the latter algebra equals $W^*(B\cup \{ p_i S_i q_i\colon i\in I\})$.
		\end{ex}
		
		\subsection{Random band matrices with width approaching zero} \label{subsec: shrinking band matrix}
		
		Another related application is to show strong convergence for Gaussian band random matrices with width $k_n = n \varepsilon_n$, where $\varepsilon_n$ satisfies $\frac{(\log n)^3}{n \epsilon_n} \rightarrow 0$.  Periodic-band matrices with a shrinking width relative to $n$ have been studied in many previous works.  This includes a detailed analysis of eigenvector (de)localization and local laws (see \cite{XYYY2024,YauYin2025} and the references therein) as well as the largest eigenvalue (see \cite{Sodin2010} and the references therein).  In particular, \cite[Theorem 1.3]{Sodin2010} shows convergence of the operator norm of $X^{(n)}$ to the operator norm of the corresponding semicircular whenever the width $k_n$ of the band satisfies $k_n / \log n \to \infty$.  The bulk convergence of a single band matrix to the semicircular law when $k_n \to \infty$ was proved in \cite{BMP1991}.  Moreover, for several independent band matrices, we have weak convergence to a free semicircular family when $k_n \to \infty$ and strong convergence when $(\log n)^3 / k_n \to 0$ thanks to \cite[Theorem 2.10]{BBvH2023}.  Hence, the new contribution of Theorem \ref{thm: shrinking band} is the operator-valued strong convergence that includes the deterministic matrices modeling $C(\R / \Z)$.
		
		In our theorem, we use an $L^1$ profile $g_i$ to define the $i$th band matrix rather than simply making each entry have variance zero or a fixed constant.  The profile $g_i$ is rescaled and periodized to a function $g_{i,\varepsilon_n}$.  The setup of the matrix models is similar to Theorem \ref{thm: strong convergence weighted proof} and Example \ref{ex: Guassian_band_matrices}, but with a weight function $h_{i,j}^{(n)}(s,t) = g_{i,\varepsilon_n}(s-t)$ that now varies with $n$.
		
		The classic Gaussian band matrix whose entries have variance zero or one is essentially a special case of this construction.  Indeed, let $k_n$ be a sequence of integers with $k_n \in \{1,\dots,n\}$.  Let $\varepsilon_n = k_n / n$, and take $g_i = \frac{1}{2} \mathbf{1}_{[-1,1]}$ and   Then the $(i,j)$ entry of $X^{(n)}$ will have variance $0$ when $|i - j| > k_n$, variance $1/2k_n$ when $|i - j| < k_n$, and variance $1/4k_n$ when $|i - j| = k_n$.  This case at the edge of the band results from the fact that half of the square $[(i-1)/n,i/n] \times [(j-1)/n,j/n]$ is in the region where $g((s-t) / \varepsilon_n) = 1$.  However, if we want the variance at the edge of the band to be equal to the variance inside the band, this can be arranged by a small modification; instead of using the weight function $h^{(n)}(s,t) = g_\varepsilon(s-t)$, let $\eta^{(n)}$ be given by the weight function $h^{(n)}(s,t)$ which is constant on the union of the squares $[(i-1)/n,i/n] \times [(j-1)/n,j/n]$ where $|i - j| \leq k_n$, and zero elsewhere, and which integrates to $1$.  Our proof (see below) will still work since it only requires that $\eta^{(n)}$ is unital, $\norm{\eta^{(n)}(b^{(n)}) - b^{(n)}} \to 0$, and $\norm{T^{(n)}} (\log n)^3 \to 0$.  Our result then applies whenever $(\log n)^3 / k_n \to 0$ and $k_n / n \to 0$.
		
		The first part of the proof is similar to Theorem \ref{thm: strong convergence weighted proof}.  However, to check the strong convergence hypotheses, rather than using semicirculars as before, we argue directly that $(\eta^{(n)},b^{(n)})$ converges strongly to $(\tilde{\eta},b)$ using the properties of convolution.
		
		\begin{thm} \label{thm: shrinking band}
			Equip $B := L^\infty(\R / \Z) \cong L^\infty(\mathbb{T})$ with the trace $\tau$ given by integration with respect to Haar measure and let $b=(b_\omega)_{\omega\in \Omega}\subset C(\R/\Z)$ be a generating tuple for $B$. Fix a family $(g_i)_{i \in I}\subset L^\infty(\R)$ such that $\norm{g_i}_1 = 1$ for all $i$. Define for any $\varepsilon > 0$ the function $g_{i, \varepsilon}(x) = \sum_{k \in \mathbb{Z}} \varepsilon^{-1} g_i(\frac{x-k}{\varepsilon})$, and fix a sequence $\varepsilon_n$ such that $\frac{(\log n)^3}{n \epsilon_n} \rightarrow 0$. Define an operator-valued covariance matrix $\tilde{\eta}^{(n)} = (\tilde{\eta}_{i,j}^{(n)})_{i, j \in I}$ over $B$ by 
			\[
			\tilde{\eta}^{(n)}_{i,j}(f)(s) = \delta_{i=j} \int_{\R / \mathbb{Z}} f(s-t) g_{i, \varepsilon_n}(t) dt,
			\]
			and set $\eta^{(n)} := (E^{(n)} \circ \tilde{\eta}^{(n)} \circ E_n)$, where $E_n: \mathbb{M}_n \rightarrow \mathbb{D}_n$ is the conditional expectation onto the diagonal matrices, and we identify $\mathbb{D}_n$ as a subalgebra of $B$ with conditional expectation onto $\mathbb{D}_n$ given by 
			\[
			E^{(n)}(f):= \diag\left(n \int_0^{1/n} f,\ldots , n\int_{(n-1)/n}^1 f\right) 
			\]
			Let ${X}^{(n)}$ be an $\eta^{(n)}$-Gaussian family, and let $b^{(n)}= (E^{(n)}(b_\omega))_{\omega\in \Omega}$. 
			
			For the $\tau$-symmetric $B$-valued covariance matrix $\eta:=(\delta_{i=j} \id)_{i,j\in I}$, let $X = (X_i)_{i \in I}$ be an $(B, \eta)$-semicircular family. Then $(\eta^{(n)}, b^{(n)}, X^{(n)})$ converges weakly and strongly in covariance law to $(\eta,b,X)$, both almost surely and in expectation.
		\end{thm}
		
		\begin{proof}[Proof of Theorem \ref{thm: shrinking band}]       % For each $i$ in a fixed index set $I$, fix $g_i \in L^\infty(\mathbb{R})$ satisfying $\norm{g_i}_1 = 1$. Define for any $\varepsilon > 0$ the function $g_{i, \varepsilon}(x) = \sum_{k \in \mathbb{Z}} \varepsilon^{-1} g_i(\frac{x-k}{\varepsilon})$. 
			By small modifications of the weak convergence argument of Theorem \ref{thm: strong convergence weighted proof} along with Remark \ref{rem: weak convergence without continuity} (applied with $L^\infty(\R/\mathbb{Z})$ in place of $L^\infty[0,1]$), we obtain weak convergence in covariance law of $(\eta^{(n)}, b^{(n)}, X^{(n)})$ to $(\eta, b, X)$.
			(Notice that $\eta^{(n)}$ could be presented resembling the framework of Theorem \ref{thm: strong convergence weighted proof} by using $h_{i,j}^{(n)}(s,t) := \delta_{i = j} g_{i, \varepsilon_n} (s-t)$.)
			
			We now check the assumptions of Theorem \ref{introthm: strong convergence}.  First, similarly to (\ref{eqn: weighted GUE covariance norm}), 
			\[
			\norm{T_F^{(n)}} \leq \sum_{i \in F} \frac{1}{n} \norm{g_{i, \varepsilon_n}}_\infty = \sum_{i \in F} \frac{1}{n \varepsilon_n} \norm{g_{i}}_\infty,
			\]
			and since $\varepsilon_n$ satisfies $\frac{(\log n)^3}{n \varepsilon_n} \rightarrow 0$, we have $(\log n)^3 \norm{T_F^{(n)}} \rightarrow 0$ for every finite $F \subseteq I$.

			For the latter hypothesis, we must show that $(\eta^{(n)},b^{(n)})$ converges strongly to $(\eta,b)$.  We again view $\mathbb{D}_n \subseteq L^\infty(\R / \Z)$.  In preparation for an inductive argument on covariance polynomials, we note the following claim.
			
			\begin{clm}
				Let $c \in C(\R / \Z)$ and let $c^{(n)} \in \mathbb{D}^{(n)}$ such that $\norm{c^{(n)} - c}_{L^\infty(\R/\Z)} \to 0$.  Then $\norm{\tilde{\eta}_{j,j}^{(n)}(c^{(n)}) - c}_{L^\infty(\R/\Z)} \to 0$.
			\end{clm}
			
			\begin{proof}[Proof of claim]
				Note that
				\begin{align*}
					\norm{\tilde{\eta}_{j,j}^{(n)}(c^{(n)}) - c}_{L^\infty(\R/\Z)} &\leq \norm{\tilde{\eta}_{j,j}^{(n)}(c^{(n)} - c)}_{L^\infty(\R/\Z)} + \norm{\tilde{\eta}_{j,j}^{(n)}(c) - c}_{L^\infty(\R/\Z)} \\
					&\leq \norm{c^{(n)} - c}_{L^\infty(\R / \Z)} + \norm{g_{j,\varepsilon_n} * c - c}_{L^\infty(\R/\Z)}.
				\end{align*}
				The first term vanishes by assumption, and the last term vanishes because $\norm{g_{j, \varepsilon_n}}_1 = \norm{g_j}_1 = 1$ and $c$ is uniformly continuous (see e.g.\ \cite[Theorem 8.14]{Folland1999}).
			\end{proof}
			
			We claim that for all covariance polynomials $f$, we have
			\begin{equation} \label{eq: convergence of covariance polynomials}
				\lim_{n \to \infty} \norm{f(\eta^{(n)},b^{(n)}) - f(\tilde{\eta},b)}_{L^\infty(\R / \Z)} = 0.
			\end{equation}
			Letting $\mathcal{P}$ be the set of covariance polynomials satisfying this conclusion, it suffices to show that $\mathcal{P}$ contains the degree-$1$ monomials and is closed under linear combinations, products, and applications of the covariance symbols $\Lambda_{i,j}$.  The fact that it contains degree-$1$ monomials is clear by hypothesis, and the closure under linear combinations and products follows easily by the triangle inequality and submultiplicativity of the norm.  Now suppose that $f \in \mathcal{P}$ and we will show that $\Lambda_{i,j}(f) \in \mathcal{P}$.  If $i \neq j$, then $\eta_{i,j}^{(n)}(f(\eta^{(n)},b^{(n)})) = \tilde{\eta}_{i,j}^{(n)}(f(\tilde{\eta}^{(n)},b^{(n)})) = 0$ and $\tilde{\eta}_{i,j}(f(\tilde{\eta},b^{(n)}))  = 0$, so we are done.  If $i = j$, then we apply the preceding claim to $c^{(n)} = f(\tilde{\eta}^{(n)},b^{(n)})$ and $c = f(\tilde{\eta},b)$.  Thus, \eqref{eq: convergence of covariance polynomials} holds, so the strong convergence hypothesis of Theorem \ref{introthm: strong convergence} is satisfied, and the desired conclusions follow from that theorem.
		\end{proof}

		\bibliographystyle{amsalpha}
		\bibliography{main}
		
		\section*{Open Access and Data Statement}
		For the purpose of Open Access, the authors have applied a CC BY public copyright license to any Author Accepted Manuscript (AAM) version arising from this submission.

		Data Access Statement: Data sharing is not applicable to this article as no new data were created or analysed in this work.
		
	\end{document}